\documentclass[12pt]{article}
\usepackage{amsmath,amsthm,amssymb,amsfonts,bm}
\usepackage{graphicx,adjustbox}

\usepackage{enumitem}
\usepackage[round]{natbib}
\usepackage{mathtools}
\usepackage{tikz}
\usepackage{multirow}
\usepackage{tabularx}
\usepackage{booktabs}
\usepackage{float}
\usepackage{caption} 
\usepackage{adjustbox}
\usepackage{subcaption}
\usepackage{mathrsfs}
\usepackage{bbm}
\usepackage{chapterbib}
\usepackage{authblk}
\usepackage[title]{appendix}
\usepackage{hyperref}

%\usepackage{setspace}

%\pdfminorversion=4
% NOTE: To produce blinded version, replace "0" with "1" below.
\makeatletter
\newcommand*{\rom}[1]{\expandafter\@slowromancap\romannumeral #1@}

\newcolumntype{Y}{>{\centering\arraybackslash}m{.5in}}
\newcolumntype{A}{>{\centering\arraybackslash}m{.8in}}
\newcolumntype{Z}{>{\centering\arraybackslash}X}

\renewcommand{\le}{\leqslant}
\renewcommand{\ge}{\geqslant}
\captionsetup[table]{skip=5pt}

\DeclareMathOperator*{\argmin}{arg\,min}
\DeclareMathOperator*{\argmax}{arg\,max}
\DeclareMathOperator{\diag}{diag}
\DeclareMathOperator{\op}{op}

\newcommand{\E}{\mathbb{E}}
\newcommand{\F}{\mathcal{F}}
\newcommand{\G}{\mathbb{G}}
\newcommand{\R}{\mathbb{R}}
\renewcommand{\P}{\mathbb{P}}
\newcommand{\vx}{\bm{x}}
\newcommand{\vX}{\bm{X}}
\newcommand{\cX}{\mathcal{X}}
\newcommand{\cF}{\mathcal{F}}
\newcommand{\cS}{\mathcal{S}}
\newcommand{\cG}{\mathcal{G}}
\newcommand{\cD}{\mathcal{D}}
\newcommand{\bS}{\mathbb{S}}

\theoremstyle{plain}% default
\newtheorem{theorem}{Theorem}
\newtheorem{lemma}{Lemma}
\newtheorem{corollary}{Corollary}
\newtheorem{assumption}{Assumption}
\newtheorem{proposition}{Proposition}

\theoremstyle{definition}

\newtheorem{example}{Example}
\newtheorem{remark}{Remark}

\newcommand{\blind}{0}

% DON'T change margins - should be 1 inch all around.
\addtolength{\oddsidemargin}{-.5in}%
\addtolength{\evensidemargin}{-.5in}%
\addtolength{\textwidth}{1in}%
\addtolength{\textheight}{1.3in}%
\addtolength{\topmargin}{-.8in}%

\begin{document}

\def\spacingset#1{\renewcommand{\baselinestretch}%
{#1}\small\normalsize} \spacingset{1}

%%%%%%%%%%%%%%%%%%%%%%%%%%%%%%%%%%%%%%%%%%%%%%%%%%%%%%%%%%%%%%%%%%%%%%%%%%%%%%

\if0\blind
{
  \title{\bf Bootstrap inference for quantile-based modal regression}
  \author{
  	Tao Zhang \thanks{Tao Zhang is partially supported by NSF grant DMS-1952306.}\\
  	Department of Statistics and Data Science, Cornell University
  	\and
    Kengo Kato \thanks{
    	Kengo Kato is partially supported by NSF grant DMS-1952306 and DMS-2014636.}\\
    Department of Statistics and Data Science, Cornell University
    \and
    David Ruppert\\
    Department of Statistics and Data Science, Cornell University\\
    School of Operations Research and Information Engineering, Cornell University}
  \maketitle
} \fi

\if1\blind
{
  \bigskip
  \bigskip
  \bigskip
  \begin{center}
    {\LARGE\bf Bootstrap inference for quantile-based modal regression}
\end{center}
  \medskip
} \fi

\bigskip
\baselineskip=2\baselineskip
\begin{abstract}
	\baselineskip=2\baselineskip
		In this paper, we develop uniform inference methods for the conditional mode based on quantile regression. Specifically, we propose to estimate the conditional mode by minimizing the derivative of the estimated conditional quantile function defined by smoothing the linear quantile regression estimator, and 
		%develop a novel bootstrap method, which we call the pivotal bootstrap, 
		develop two bootstrap methods, a novel pivotal bootstrap and the nonparametric bootstrap,
		 for our conditional mode estimator. 
	Building on high-dimensional Gaussian approximation techniques, we establish the validity of  simultaneous confidence rectangles constructed from the two bootstrap methods for the conditional mode. %\textcolor{blue}{We additionally prove the validity of simultaneous confidence rectangles based on conventional nonparametric bootstrap.}
	We also extend the preceding analysis to the case where the dimension of the covariate vector is increasing with the sample size. 
	Finally, we conduct simulation experiments and a real data analysis using U.S. wage data  to demonstrate the finite sample performance of our inference method. The supplemental materials include the wage dataset, R codes and an appendix containing proofs of the main results, additional simulation results, discussion of model misspecification and quantile crossing, and additional details of the numerical implementation.
\end{abstract}

\noindent%
{\it Keywords:}  quantile regression, kernel smoothing, modal regression, high-dimensional CLT, pivotal bootstrap
\vfill
\newpage

\setcounter{section}{0} %***
\setcounter{equation}{0} %-1

\spacingset{1.5} % DON'T change the spacing!
\section{Introduction}
\subsection{Overview}
Modal regression is a principal statistical methodology to estimate and make inference on the conditional mode. 
Modes provide useful distributional information missed by the mean when the (conditional) distribution is skewed \citep{chen2016nonparametric} and are known to be robust  under measurement errors (\citealt{bound1991extent,hu2008instrumental}).  The global mode  offers intuitive interpretability by being understood as  ``the most likely"  or ``the most common" (\citealt{heckman2001molecular,hedges2003comparison}). As such, modal regression has wide applications in various areas including astronomy \citep{bamford2008revealing}, medical research \citep{wang2017regularized}, econometrics \citep{kemp2012regression}, etc. We refer the reader to \cite{chacon2018} and  \cite{chen2018modal} for recent reviews on modal regression; see also a literature review below.

In this paper, we consider estimating the conditional mode by ``inverting" a quantile
regression model, which builds on the observation that the derivative of the conditional quantile
function coincides with the reciprocal of the conditional density so that the conditional
mode can be obtained by minimizing the derivative of the conditional quantile function.
Specifically, we estimate the conditional mode by minimizing the derivative of the kernel smoothed
Koenker-Bassett estimator of the conditional quantile function \citep{koenker1978regression} with a sufficiently smooth kernel. We develop asymptotic theory for the proposed estimator $\hat{m}(\vx)$ of the conditional
mode $m(\vx)$. In particular, we consider
simultaneous confidence intervals for the conditional mode at multiple design points,  $m(\vx_{1}),\dots,m(\vx_{L})$, where
$L$ is allowed to grow with the sample size $n$, i.e., $L = L_{n} \to \infty$. To this end, we first show that 
$\hat{m}(\vx) - m(\vx)$ can be approximated by the linear term $(nh^{3/2})^{-1}\sum_{i=1}^{n} \psi_{\vx}(U_{i},\vX_{i})$ uniformly over a range of
design points $\vx$, where $h=h_{n} \to 0$ is a sequence of bandwidths,  $\psi_{\vx}$ is the influence function
(that depends on $n$) at design point $\vx$, $\vX_1,\dots,\vX_{n}$ are independent covariate vectors, and $U_1,\dots,U_{n}$ are mutually independent
uniform random variables on $(0,1)$ independent of the covariate vectors. Building on high
dimensional Gaussian approximation techniques developed in \cite{chernozhukov2014gaussian,chernozhukov2017central}, we show that $\sqrt{nh^{3}}(\hat{m}(\vx_{\ell}) - m(\vx_{\ell}))_{\ell=1}^{L}$ can be approximated by an $L$-dimensional
Gaussian vector uniformly over the hyperrectangles in $\R^{L}$, i.e., all sets $A$ of the form:  
$A=\{w \in \mathbb{R}^L\ :\ a_j\le w_j\le b_j\text{ for all $j=1,\dots,L$}\}$ for some $-\infty\le a_j\le b_j\le \infty$, $j=1,\dots, L$,
even when $L \gg n$.

As the limiting Gaussian distribution is infeasible in practice, we consider two bootstrap methods, the nonparametric bootstrap and a novel pivotal bootstrap, to conduct valid inference. We first discuss the motivation of the new pivotal bootstrap.
The leading stochastic term in the prescribed expansion is conditionally ``pivotal" in
the sense that conditionally on $\vX_{1},\dots,\vX_{n}$, the distribution of the process
\[
\vx \mapsto (nh^{3/2})^{-1} \sum_{i=1}^{n} \psi_{\vx}(U_{i},\vX_{i})
\]
is completely known up to some nuisance parameters. This suggests a version of bootstrap for the proposed estimator by sampling
uniform random variables $U_{i}$ independent of the data. In practice, the influence function $\psi_{\vx}$ depends on nuisance parameters and we replace them by consistent estimates. We call the resulting
bootstrap ``pivotal bootstrap" and prove that the pivotal bootstrap can consistently
estimate the sampling distribution of
$\sqrt{nh^{3}}(\hat{m}(\vx_{\ell}) - m(\vx_{\ell}))_{\ell=1}^{L}$ uniformly over the rectangles in $\R^L$
even when $L \gg n$. In fact, our inference framework is more general and covers simultaneous inference for linear combinations of the vector $(m(\vx_{\ell}))_{\ell=1}^{L}$, which can be used to construct simultaneous confidence intervals for partial effects and test significance of certain covariates on the conditional mode. We also establish a similar consistency result for the nonparametric bootstrap. Finally, we extend the previous analysis to the case where the dimension
of the covariate vector increases with the sample size.

We conduct simulation experiments on various mode inference problems and a real data analysis to demonstrate the finite sample performance of the bootstrap methods. 
Our simulation experiments show that the pivotal bootstrap yields accurate pointwise and simultaneous confidence intervals for the conditional modes. %In addition to those simulation experiments,
Additionally, we apply our inference method to analyze a real U.S. wage dataset.  Analysis of wage data is important in econometric and social science (\citealt{autor2008trends,western2011unions,buchinsky1994changes}). Wage data are often positively skewed and ``the most common wage"  as a representative of the majority of the population is usually of more interest. Common questions in the analysis of wage data include: 
i) What is the most likely wage for given covariates? How to construct pointwise and simultaneous confidence intervals for the estimated wages? 
ii) Is there an effect of a specific covariate on the most likely wage given the same other covariates?  
We address those empirical questions using the inference method developed in the present paper. 

From a technical perspective, the asymptotic analysis in this paper is highly nontrivial. Our program of the technical analysis proceeds as 1) first establishing a uniform asymptotic representation and 2) high-dimensional Gaussian approximation to our estimate, and 3) then proving the validity of the pivotal and nonparametric bootstraps building on 1) and 2). Each of these steps relies  on modern empirical process theory and high-dimensional Gaussian approximation techniques recently developed by \cite{chernozhukov2014gaussian, chernozhukov2017central}. In particular, the pivotal bootstrap differs from the nonparametric or multiplier bootstraps that have been analyzed in the literature in the high-dimensional setup \citep{belloni2019conditional,chernozhukov2016empirical,deng2017beyond,chen2019jackknife}, and proving the validity of the pivotal bootstrap requires a substantial work. Further, we employ a new multiplier inequality for the empirical process in \cite{han2019convergence} to establish the validity of the nonparametric bootstrap. 
%Our approach can be applied to more general multiplier bootstrap methods and therefore, may be of independent interest.}

In summary, the present paper contributes to the literature on modal regression in twofold. First, we propose a new quantile-based conditional mode estimate that enjoys both desirable computational and statistical guarantees.  The proposed estimator only requires solving a linear quantile regression problem and a one-dimensional optimization both of which can be solved efficiently.
Second, we establish the theoretical validity of two bootstrap methods for a broad spectrum of inference tasks in a unified way. In particular, we propose a new resampling method (pivotal bootstrap) that builds on an insight into the specific structure of our estimate.
%we propose a new resampling method (pivotal bootstrap) that builds on an insight into the specific structure of our estimate, and establish theoretical validity of the pivotal bootstrap for a broad spectrum of inference tasks in a unified way.

\subsection{Literature review}
Starting from the pioneering work of  \cite{sager1982maximum},
there is now a large literature on modal regression.
There are two major approaches to estimating the conditional mode comparable to our method; one is linear modal regression  where the conditional mode is assumed to be linear in covariates \citep{lee1989mode,lee1993,kemp2012regression,yao2014new}, and the other is nonparametric estimation \citep{yao2012,chen2016nonparametric,yao2016nonparametric,feng2020statistical}; see also \cite{lee1998semiparametric,manski1991regression,einbeck2006modelling,sasaki2016,ho2017,khardani2017,krief2017semi} for alternative methods including semiparametric and Bayesian estimation. \cite{lee1989mode,lee1993} assume symmetry of the error distribution to derive limit theorems for their proposed estimators, but the symmetry assumption implies that the conditional mean, median, and mode coincide, thereby significantly reducing the complexity of estimating the conditional mode. \cite{kemp2012regression} and \cite{yao2014new} consider an alternative estimator defined by minimizing a kernel-based loss function for linear modal regression and develop limit distribution theory for the estimator without assuming symmetry of the error distribution. 
However, the optimization problem of \cite{kemp2012regression} and \cite{yao2014new} is (multidimensional and) nonconvex, and while they propose EM-type algorithms to compute their estimators, ``there is no guarantee that the algorithm will converge to the global optimal solution'' \citep[][p. 659]{yao2014new}. Compared with the method of \cite{kemp2012regression} and \cite{yao2014new}, 
%both methods (ours and theirs) 
all three methods (including ours) enjoy the same rate of convergence, while our method is computationally attractive since linear quantile regression can be formulated as a linear programming problem \citep{koenker2005quantile}, and minimizing the estimated derivative of the conditional quantile function is a one-dimensional optimization problem both of which can be solved accurately and efficiently. %and carried out by a grid search. 

\cite{yao2012} consider local linear estimation of the conditional mode but their Condition (A6) is essentially the symmetry assumption on the error distribution, which makes their problem statistically equivalent to conditional mean estimation.
\cite{chen2016nonparametric} study nonparametric estimation of the conditional mode based on kernel density estimation (KDE), and develop nonparametric bootstrap inference for their KDE-based estimate. The nonparametric estimation is able to avoid model misspecification. \cite{chen2016nonparametric} also allow for multiple local modes, while we assume the existence of the unique global mode at each design point of interest. Thus, the setup of \cite{chen2016nonparametric} is more general than ours. However, the convergence rate of the KDE-based estimate of \cite{chen2016nonparametric} is slow even when the dimension of the covariate vector is moderately large (``curse of dimensionality"). Specifically, the convergence rate of the \cite{chen2016nonparametric} estimate is at best $n^{-2/(p+7)}$ where $p$ is the number of continuous covariates under the assumption of four times differentiability of the conditional density, while our estimate can achieve the  $n^{-2/7}$ rate (up to logarithmic factors when evaluated under the uniform norm) assuming three times differentiability of the conditional density (albeit assuming a linear quantile regression model). Finally, \cite{chen2016nonparametric} also consider the application of the nonparametric bootstrap to inference on the conditional mode. However, our estimator is substantially different from their estimator and requires different analysis to establish the validity of the nonparametric bootstrap.

The present paper builds on (but substantially differs from) the recent work of \cite{ohta2018quantile}, which proposes a different quantile-based estimate of the conditional mode and develops pointwise limit distribution theory for their estimator. Contrary to ours, \cite{ohta2018quantile} directly use the linear quantile regression estimate and minimize  its difference quotient  (as the linear quantile regression estimate is not smooth in the quantile index), which makes a substantial difference between their asymptotic analysis and ours. 
Indeed, \cite{ohta2018quantile} show that the rate of convergence of their estimate is at best $n^{-1/4}$ that is slower than our $n^{-2/7}$ rate, and find that the pointwise limit distribution  is a scale transformation of nonstandard Chernoff's distribution. The nonstandard limit distribution poses a substantial challenge in inference using their estimate and \cite{ohta2018quantile} only consider pointwise inference using a general purpose subsampling method \citep{politis1999}. We overcome this limitation by employing kernel smoothing, and further, develop a model-based bootstrap method (pivotal bootstrap) that enables us to deal with much broader inference tasks including simultaneous confidence intervals and significance testing.

This paper also builds on and contributes to the quantile regression literature. Quantile regression provides a comparatively full picture of how the covariates impact the conditional distribution of a response variable and has wide applications \citep{koenker2017quantile}.  In particular, the pivotal bootstrap of the present paper is related to  \cite{parzen1994, chernozhukov2009, he2017resampling, belloni2019conditional} who study resampling-based inference methods that build on (conditionally) pivotal influence functions in the quantile regression setup. Their scopes and methods are, however, substantially different from ours. To the best of our knowledge, exploiting pivotal influence functions to make inference for modal regression is new. 

%From a technical perspective, the asymptotic analysis in this paper is highly nontrivial. Our program of the technical analysis proceeds as 1) first establishing a uniform asymptotic representation and 2) high-dimensional Gaussian approximation to our estimate, and 3) then proving the validity of the pivotal bootstrap building on 1) and 2). Each of these steps relies  on modern empirical process theory and high-dimensional Gaussian approximation techniques recently developed by \citep{chernozhukov2014gaussian, chernozhukov2017central}. In particular, the pivotal bootstrap differs from the nonparametric or multiplier bootstraps that have been analyzed in the literature in the high-dimensional setup, and proving the validity of the pivotal bootstrap requires a substantial work.

%consistency, are built on fairly new probabilistic tools and require substantial work. We emphasize that the above theoretical results are not direct applications of the existing results and many significant modifications are made to adapt to the current modal inference setting. For example, to show our Gaussian approximation result, we develop new techniques based on Gaussian anti-concentration to deal the residue term of the asymptotic representation while applying the high dimensional central limit theorem \citep{chernozhukov2017central} to the leading term. 

\subsection{Organization}
The rest of the paper is organized as follows. In Section \ref{sec: setup}, we introduce the setup and define the proposed quantile-based modal estimator. In Section \ref{sec: main}, we present the main theoretical results for the proposed estimator. We first derive a uniform asymptotic linear representation for the proposed estimator. Then we present our general inference framework based on the pivotal and nonparametric bootstraps together with their theoretical guarantees.  In Section \ref{sec: numerical}, we present the simulation results and a real data example. 
In Section \ref{sec: extension}, we extend the preceding analysis to the increasing dimension case. 
Finally, we summarize the paper in Section \ref{sec: summary}. 
The proofs of main results and additional discussion are relegated to the Appendix which is included in the supplemental materials.

\section{Mode estimation via smoothed quantile regression}
\label{sec: setup}
We begin with the setup and define our estimator. 
We are interested in making inference on the conditional mode of a scalar response variable $Y\in \R$ given a $d$-dimensional covariate vector $\vX \in \R^d$. 
We will initially assume that the dimension $d$ is fixed in Section \ref{sec: main}, but consider the extension to the case with $d = d_n \to \infty$ in Section \ref{sec: extension}. In what follows, we assume that there exists a conditional density of $Y$ given $\vX$, $f(y \mid \vx)$, which is (at least) continuous in $y$ for each design point $\vx$. 
We are interested in making inference on the conditional mode over a compact subset $\cX_{0}$ of the support of $\vX$. We assume that  for each $\vx \in \cX_{0}$, there exists a unique global mode $m(\vx)$, i.e.,  $m(\vx)$ is the unique maximizer of the function $y \mapsto f(y \mid \vx)$,
\begin{equation}
m(\vx)=\argmax_{y\in \R}f(y \mid \vx).
\label{eq: mode}
\end{equation}

Our  strategy to estimate the conditional mode is based on ``inverting" a quantile regression model.  For $\tau\in (0,1)$, let $Q_{\vx}(\tau)$ denote the conditional $\tau$-quantile of $Y$ given $\vX$. Observe that the derivative of the conditional quantile function with respect to the quantile index $\tau$ coincides with the reciprocal of the conditional density at $Q_{\vx}(\tau)$, i.e., 
%By definition, the following equation holds
%\setlength{\abovedisplayskip}{7pt}%
%\setlength{\belowdisplayskip}{7pt}%
%\setlength{\abovedisplayshortskip}{0pt}%
%\setlength{\belowdisplayshortskip}{0pt}%
%\begin{equation}\label{qr}
%F(Q_{\vx}(\tau)\mid \vx)=\tau.
%\end{equation}
\begin{equation}\label{qr2}
s_{\vx}(\tau):=Q_{\vx}'(\tau) := \frac{\partial Q_{\vx}(\tau)}{\partial \tau} = \frac{1}{f(Q_{\vx}(\tau) \mid \vx)}. 
\end{equation}
This suggests that the conditional mode $m(\vx)$ can be obtained by minimizing the ``sparsity" function $s_{\vx}(\tau) := Q_{\vx}'(\tau)$. 
Specifically, let $\tau_{\vx}$ denote the minimizer of $s_{\vx}(\cdot)$, i.e., 
\[
\tau_{\vx} = \argmin_{\tau \in (0,1)} s_{\vx}(\tau).
\]
Then, we arrive at the expression $m(\vx) = Q_{\vx}(\tau_{\vx})$.
Hence, estimation of $m(\vx)$ reduces to  estimation of $Q_{\vx}(\cdot)$ and $\tau_{\vx}$.  

To estimate the conditional quantile function, 
we assume a linear quantile model, i.e., 
\[
Q_{\vx}(\tau)=\vx^T\beta(\tau), \quad \tau \in (0,1). 
\]
Suppose that we are given i.i.d.\ observations $(Y_{1},\vX_1),\dots,(Y_{n},\vX_n)$ of $(Y,\vX)$. We estimate the slope vector $\beta (\tau)$ by the standard quantile regression estimator \citep{koenker1978regression},
\begin{equation}\label{qr3}
\hat{\beta}(\tau)=\argmin_{\beta \in \R^d}\sum_{i=1}^{n}\rho_{\tau}(Y_i-\vX_i^T\beta),
\end{equation}
where $\rho_{\tau}(u)=u\left\{\tau-I(u\le 0) \right\}$ is the check function. However, the plug-in estimator $\check{Q}_{\vx}(\tau):=\vx^T\hat{\beta}(\tau)$ for the conditional quantile function is not smooth in $\tau$. 
To overcome this difficulty, we propose to smooth the naive estimator $\check{Q}_{\vx}(\tau)$ by a kernel function, and estimate $\tau_{\vx}$ by minimizing the derivative of the smoothed quantile estimator. To this end, let $K:\R \to \R$ be a kernel function (a function that integrates to $1$) that is smooth and supported in $[-1,1]$ (see Assumption 1 (vii) in the following for more details).  
For a given sequence of bandwidth parameters $h=h_{n} \to 0$, we modify the naive estimator $\check{Q}_{\vx}(\tau)$ by 
\[
\hat{Q}_{\vx}(\tau):=\int_{\tau-h}^{\tau+h} \check{Q}_{\vx}(t)K_h(\tau-t)dt, \  \tau \in [\epsilon,1-\epsilon],
\]
where $K_{h}(\cdot) := h^{-1}K(\cdot/h)$ and $\epsilon \in (0,1/2)$ is some small user-chosen parameter. The restriction of the range of $\tau$ is to avoid the boundary problem.  Since $K$ is supported in $[-1,1]$, the integral $\int_{\tau-h}^{\tau+h}$ above can be formally replaced by $\int_{\R}$ with the convention that $\check{Q}_{\vx}(t) = 0$ for $t \notin (0,1)$. 

Then, we can estimate $s_{\vx}(\tau)$ by differentiating $\hat{Q}_{\vx}(\tau)$, $\hat{s}_{\vx}(\tau) := \hat{Q}_{\vx}'(\tau)$, 
and estimate $\tau_{\vx}$ by minimizing $\hat{s}_{\vx}(\tau)$,
\[
\hat{\tau}_{\vx}:= \argmin_{\tau\in [\epsilon,1-\epsilon]} \hat{s}_{\vx}(\tau). 
\]
By the smoothness of $K(\cdot)$,  the map $\tau \mapsto \hat{s}_{\vx}(\tau)$ is smooth, so $\hat{\tau}_{\vx}$ is guaranteed to exist by compactness of $[\epsilon,1-\epsilon]$. 
Finally, we propose to estimate the conditional mode $m(\vx)$ by a plug-in method:
\[
\hat{m}(\vx):=\hat{Q}_{\vx}\left(\hat{\tau}_{\vx}\right).
\]
Some remarks on the proposed estimator are in order.

\begin{remark}[Linear quantile regression]
	The linear quantile regression model is common in the quantile regression literature and can cover many data generating processes (see Remark 1 in \citealt{ohta2018quantile}). Importantly,  the linear quantile regression problem can be solved efficiently since the optimization problem (\ref{qr3}) can be formulated as a (parametric) linear programming problem whose solution path can be computed efficiently even for large-scale datasets \citep{koenker2005quantile}. 
	Having said that, the linear specification of the conditional quantile function is not essential and the theoretical results developed in the following Section \ref{sec: main} and Section \ref{sec: extension}  can be extended to nonlinear quantile regression models. 
\end{remark}

\begin{remark}[Comparison with other estimators]

	%The proposed estimator enjoys better computational antheoretical properties over existing alternative methods.
	%The proposed  estimator has advantages in both computational and theoretical properties compared with existing alternative methods.
	Compared with linear modal regression, our setting allows for
	%more general modeling of the conditional mode by admitting 
	nonlinear conditional mode functions even though the conditional quantile function is assumed linear in $\vx$ (see Remark 1 in \cite{ohta2018quantile}). In fact, under linear quantile assumption, $m(\vx)=\vx^T\beta(\tau_{\vx})$ and $\beta(\tau_{\vx})$ is allowed to be a (possibly nonlinear) function of $\vx$. 
	%If $\tau_{\vx}$ is independent of $\vx$,  
	In addition,  computation of linear modal regression  involves  non-convex optimization \citep{yao2014new,cheng1995mean,einbeck2006modelling}, while the proposed method only relies on linear quantile regression that can be formulated as a linear programming problem, and an one-dimensional optimization. \cite{chen2016nonparametric} show the convergence rate $O_P(h^2+n^{-1/2}h^{-(p+3)/2})$ for the KDE-based mode estimator, where $h$ is the KDE bandwidth parameter and $p$ is the number of continuous covariates. This implies slow convergence for even moderate dimensions which is the price of a more nonparametric approach. In contrast, we show that the convergence rate of our estimator is $O_P(h^2+n^{-1/2}h^{-3/2})$ for any fixed dimension $d$ and thus our estimator is free from the ``curse of dimensionality".
	%which is independent of the dimension of $\vX$.
	
\end{remark}

\section{Main  results}
\label{sec: main}
\subsection{Notation and conditions}
We use  $U(0,1)$ and $N(\mu,\Sigma)$ to denote  the uniform distribution on  $(0,1)$ and the normal distribution with mean $\mu$ and  covariance matrix $\Sigma$, respectively. We use $\| \cdot \|$, $\| \cdot \|_1$, $\| \cdot \|_{\infty}$ to denote the Euclidean, $\ell^1$, and $\ell^\infty$-norms, respectively. For a smooth function $f(x)$, we write $f^{(r)}(x)=\partial^r f(x)/\partial x^r$ for any integer $r\ge 0$ with $f^{(0)}=f$. 
For vectors $a = (a_1,\dots,a_L)^{T}, b = (b_1,\dots,b_L)^{T} \in \R^{L}$, we write $a \le b$ if $a_{\ell} \le b_{\ell}$ for all $1 \le \ell \le L$. 
%In the following, $c_{\cdot}$ and $C_{\cdot}$ are used to denote universal constants. 

Let $\cX \subset \R^{d}$ denote the support of $\vX$ and 
let $\cX_0\subset \cX$ be the set over which we make inference on the conditional mode.  In this section the dimension $d$ of $\vX$ is assumed to be fixed. Recall the baseline assumption in the last section that we are given i.i.d.\ observations $(Y_{1},\vX_1),\dots,(Y_{n},\vX_n)$ of $(Y,\vX)$ where the conditional distribution of $Y$ given $\vX$ has a unique mode and satisfies the linear quantile regression model. We make the following additional assumption.

\begin{assumption}  \label{asmp: assumption1}
%	In addition to the model assumptions above, we assume the following conditions. 
		(i) The set $\cX_0$ is compact in $\R^{d}$;
		(ii) For any $\vx\in \cX_0$, $\tau_{\vx}\in (\epsilon,1-\epsilon)$;
		(iii) The covariate vector $\vX$ has finite $q$-th moment,  $\E[\| \vX\|^{q}]<\infty$, for some $q \in [4,\infty)$, and the Gram matrix  $\E[\vX\vX^T]$ is positive definite;
		(iv) The conditional density $f(y \mid \vx)$ is three times continuously differentiable with respect to $y$ for each $\vx\in \cX$. Let $f^{(j)}(y \mid \vx)=\partial^jf(y \mid \vx)/\partial y^j$ for $j=0,1,2,3$. There exits a constant $C_1$ such that $|f^{(j)}(y \mid \vx)|\le C_1$ for all $j=0,1,2,3$ and  $(y,\vx)\in \R\times \cX$;
		(v) There exists a positive constant $c_{1}$ (that may depend on $\epsilon$) such that $f(y \mid \vx) \ge c_{1}$ for all $y \in [Q_{\vx}(\epsilon/2), Q_{\vx}(1 - \epsilon/2)]$ and $\vx \in \cX$;
		(vi) There exists a positive constant $c_{2}$ such that $-f^{(2)}(m(\vx) \mid \vx) \ge c_{2}$ for all $\vx \in \cX_{0}$;
		(vii) The kernel function $K$ is three times differentiable, symmetric, and supported in $[-1,1]$;
		(viii) The bandwidth $h=h_{n} \to 0$ satisfies that $nh^{5}/\log n \to  \infty$.
\end{assumption}

Condition (i) is innocuous (recall that $\cX_{0}$ is not the support of $\vX$). 
Condition (ii) excludes the extreme quantile case where $\tau_{\vx} \to 0$ or $1$ for some sequence of $\vx$. 
Condition (iii) is a  moment condition on the covariate vector $\vX$. 
Conditions (iv) and (v) are standard smoothness conditions on the conditional density  $f(\cdot\mid \vx)$ in the quantile regression literature \citep{koenker2005quantile}. Similar conditions  appear in \cite{chen2016nonparametric} and \cite{ohta2018quantile}. Smoothness of $f(\cdot\mid \vx)$ implies smoothness of conditional quantile function $Q_{\vx}(\tau)$. Indeed, under Conditions (iv) and (v), $Q_{\vx}(\tau)$ is four-times continuously differentiable. 
Condition (vi) ensures that the conditional mode $m(\vx)$ as a solution to the optimization problem (\ref{eq: mode}) is nondegenerate.
Condition (vi) also ensures that the map $\vx \mapsto s_{\vx}''(\tau_{\vx})$ is bounded away from zero on $\cX_{0}$, as 
\[
s_{\vx}''(\tau) = Q_{\vx}^{(3)}(\tau) = \frac{3f^{(1)}(Q_{\vx}(\tau) \mid \vx) - f (Q_{\vx}(\tau) \mid \vx) f^{(2)}(Q_{\vx}(\tau) \mid \vx)}{f(Q_{\vx}(\tau) \mid \vx)^{5}}
\]
and $f^{(1)}(Q_{\vx}(\tau_{\vx}) \mid \vx) = f^{(1)}(m(\vx) \mid \vx) = 0$. It is important to note that we only require Condition (vi) to hold for $\vx \in \mathcal{X}_0$, the set of design points we make inference on. A similar condition to Condition (vi) also appears in \cite{chen2016nonparametric}. 
Conditions (vii) and (viii) are concerned with the kernel function $K$ and the bandwidth $h_n$. 
We will use the biweight kernel $K(t) = \frac{15}{16} (1-t^{2})^2 I(|t| < 1)$ in our numerical studies. 
Condition (viii) ensures  $\hat{Q}_{\vx}^{(3)}(\tau)$ to be (uniformly) consistent; see Lemma \ref{UnifQr} in Appendix. 

\subsection{Uniform asymptotic linear representation}\label{sec:UAL}
In this section, we derive a uniform asymptotic linear representation for our estimator $\hat{m}(\vx)$, which will be a building block for the pivotal bootstrap. Define
\[
J(\tau):=\E[f\left(\vX^T\beta(\tau)\mid \vX\right)\vX\vX^T].
\]
By Assumption \ref{asmp: assumption1} (iii) and (v), the minimum eigenvalue of the matrix $J(\tau)$ is bounded away from zero for $\tau \in [\epsilon,1-\epsilon]$. 
Further, for $(u,\vx') \in (0,1) \times \R^{d}$, define
\[
\psi_{\vx}(u,\vx'):=-\frac{s_{\vx}(\tau_{\vx})}{s''_{\vx}(\tau_{\vx})\sqrt{h}}K'\left(\frac{\tau_{\vx}-u}{h}\right) \vx^TJ(\tau_{\vx})^{-1}\vx',
\]
which will serve as an influence function for our estimator $\hat{m}(\vx)$. Let $\kappa = \int t^2 K(t) dt$. 
\begin{proposition}[Uniform asymptotic linear representation]
	\label{prop: UAL}
	Under Assumption \ref{asmp: assumption1}, the following asymptotic linear representation holds uniformly in $\vx\in \cX_0$:	
	\[
	\begin{split}
	&\hat{m}(\vx)-m(\vx) + \frac{s_{\vx}(\tau_{\vx})s_{\vx}^{(3)}(\tau_{\vx})}{2s''_{\vx}(\tau_{\vx})} \kappa h^2 + o_P(h^2) \\ &\quad =\frac{1}{nh^{3/2}}\sum_{i=1}^n\psi_{\vx}(U_i,\vX_i)+O_P(n^{-1/2}h^{-1}+n^{-1}h^{-4}\log n),
	\end{split}
	\]
	where $U_{1},\dots,U_{n} \sim U(0,1)$ i.i.d.\ independent of $\vX_{1},\dots,\vX_{n}$. 
	In addition, we have 
	\begin{align*}
	&\sup_{\vx\in\cX_0}\left|\frac{1}{nh^{3/2}}\sum_{i=1}^n\psi_{\vx}(U_i,\vX_i)\right|=O_P(n^{-1/2}h^{-3/2}\sqrt{\log n}).
	\end{align*}
\end{proposition}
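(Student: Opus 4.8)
The plan is to establish the asymptotic linear representation first and then deduce the uniform bound on the leading stochastic term as a corollary, since the representation already isolates $(nh^{3/2})^{-1}\sum_{i=1}^n \psi_{\vx}(U_i,\vX_i)$ as the dominant term. For the representation itself, I would start from the first-order condition defining $\hat\tau_{\vx}$, namely $\hat s_{\vx}'(\hat\tau_{\vx}) = \hat Q_{\vx}^{(2)}(\hat\tau_{\vx}) = 0$, and Taylor-expand around $\tau_{\vx}$. This requires uniform consistency of the smoothed quantile derivatives $\hat Q_{\vx}^{(j)}(\tau)$ for $j=1,2,3$ over $\vx\in\cX_0$ and $\tau$ in a neighborhood of $\tau_{\vx}$; I would invoke the uniform Bahadur-type representation for $\hat\beta(\tau)$ (this is Lemma~\ref{UnifQr} in the Appendix, referenced in the excerpt) together with the smoothness of $K$, which transfers uniform control of $\hat\beta(\tau)-\beta(\tau)$ into uniform control of $\hat Q_{\vx}^{(j)}$ after convolution with $K_h^{(j-1)}$. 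Writing $\check\beta(\tau) := \int \hat\beta(t) K_h(\tau-t)\,dt$, the key linearization is $\hat Q_{\vx}^{(2)}(\tau) = \vx^T \check\beta^{(2)}(\tau)$, and the Bahadur representation gives $\check\beta^{(2)}(\tau) - \beta^{(2)}(\tau) \approx -\frac{1}{nh^{3/2}}\sum_i J(\tau)^{-1} K''((\tau-U_i)/h)h^{-1/2}\cdot(\text{something})\vX_i$ plus a smoothing bias of order $h^2$ from the kernel and remainder terms. Solving the first-order condition for $\hat\tau_{\vx}-\tau_{\vx}$ then yields, by the usual delta-method-type argument (dividing by $s_{\vx}''(\tau_{\vx})$, which is bounded away from zero by Assumption~\ref{asmp: assumption1}(vi)),
\[
\hat\tau_{\vx}-\tau_{\vx} = -\frac{\hat Q_{\vx}^{(2)}(\tau_{\vx})}{s_{\vx}''(\tau_{\vx})} + (\text{bias}) + (\text{remainder}),
\]
and finally $\hat m(\vx)-m(\vx) = \hat Q_{\vx}(\hat\tau_{\vx}) - Q_{\vx}(\tau_{\vx})$ is expanded using $Q_{\vx}'(\tau_{\vx}) = s_{\vx}(\tau_{\vx})$; the cross term $s_{\vx}(\tau_{\vx})\cdot(\hat\tau_{\vx}-\tau_{\vx})$ supplies both the stated $h^2$-bias and the influence function $\psi_{\vx}$, while the term involving $\hat Q_{\vx}(\tau_{\vx})-Q_{\vx}(\tau_{\vx})$ directly is of smaller order $O_P(n^{-1/2}h^{-1})$.

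For the second display, I would bound $\sup_{\vx\in\cX_0}\big|(nh^{3/2})^{-1}\sum_i \psi_{\vx}(U_i,\vX_i)\big|$ by an empirical-process argument. The summands are centered (since $\E[K'((\tau_{\vx}-U_i)/h)] = O(h^2)$ by symmetry of $K$ and smoothness, and in fact $\E[K'((\tau_{\vx}-U_i)/h)\mid \vX_i]$ is exactly zero up to boundary-negligible terms because $K'$ integrates to zero), so after recentering the contribution of the mean is absorbed. I would write $\psi_{\vx}(u,\vx') = c_{\vx}\, K'((\tau_{\vx}-u)/h)\, g_{\vx}(\vx')$ where $c_{\vx} = -s_{\vx}(\tau_{\vx})/(s_{\vx}''(\tau_{\vx})\sqrt h)$ is a deterministic scalar bounded by $O(h^{-1/2})$ uniformly (using (v), (vi)), and $g_{\vx}(\vx') = \vx^T J(\tau_{\vx})^{-1}\vx'$ is linear in $\vx'$ with coefficients bounded uniformly over $\vx\in\cX_0$ by compactness of $\cX_0$ and the eigenvalue bound on $J(\tau)$. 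The class of functions $\{(u,\vx')\mapsto K'((\tau_{\vx}-u)/h)\,\vx^T J(\tau_{\vx})^{-1}\vx' : \vx\in\cX_0\}$ is a VC-type (or Lipschitz-in-$\vx$) class with envelope proportional to $\|\vx'\|$, whose second moment is of order $h$ (the "window" width of $K'((\tau_{\vx}-\cdot)/h)$ integrated against the uniform density, times $\E\|\vX\|^2 < \infty$ by (iii) with $q\ge 4$). A standard maximal inequality for such VC-type classes (e.g., van der Vaart--Wellner, or the bounds used in the cited Chernozhukov et al. work) then gives
\[
\E\Big[\sup_{\vx\in\cX_0}\Big|\tfrac1n\sum_i K'\big(\tfrac{\tau_{\vx}-U_i}{h}\big)\vX_i^TJ(\tau_{\vx})^{-1}\vx - \E[\cdot]\Big|\Big] = O\!\big(\sqrt{h\log n / n}\big),
\]
and multiplying by the $h^{-1/2}\cdot h^{-3/2}\cdot h^{-1/2}$-type factor $c_{\vx}\cdot h^{-3/2} = O(h^{-2})$... — more carefully, the prefactor in front of the empirical process is $(h^{3/2})^{-1}\cdot h^{-1/2} = h^{-2}$, giving $h^{-2}\cdot\sqrt{h\log n/n} = O(n^{-1/2}h^{-3/2}\sqrt{\log n})$, exactly the claimed rate. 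I would also need to control the bias term $\E[\psi_{\vx}(U_i,\vX_i)]$ and the weak variance to confirm the order is tight (a $\sqrt{\log n}$ factor is inherent from the supremum over a continuum indexed by $\vx$).

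The main obstacle, I expect, is the first part — making the linearization of the first-order condition rigorous \emph{uniformly} in $\vx$, because $\hat\tau_{\vx}$ is defined implicitly and one must control $\hat Q_{\vx}^{(3)}$ uniformly over $\vx\in\cX_0$ and over a shrinking neighborhood of $\tau_{\vx}$ to justify the Taylor expansion with a negligible remainder; the appearance of the high-order bandwidth negative powers ($n^{-1}h^{-4}\log n$) signals that the $\hat Q_{\vx}^{(3)}$ estimation error, which carries an extra $h^{-1}$ relative to $\hat Q_{\vx}^{(2)}$, is the binding contribution and requires the bandwidth condition $nh^5/\log n\to\infty$ from Assumption~\ref{asmp: assumption1}(viii) to be of smaller order than the leading term. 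By contrast, the empirical-process bound in the second display is comparatively routine once the VC-type structure and the second-moment computation are in place.
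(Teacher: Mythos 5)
Your proposal follows essentially the same route as the paper: uniform convergence rates for $\hat{Q}_{\vx}^{(r)}$ obtained from the Bahadur representation of $\hat{\beta}(\tau)$ plus a VC-type maximal inequality, the first-order condition $\hat{Q}_{\vx}''(\hat{\tau}_{\vx})=0$ Taylor-expanded to linearize $\hat{\tau}_{\vx}-\tau_{\vx}$ with $s_{\vx}''(\tau_{\vx})$ bounded away from zero, and a final expansion of $\hat{Q}_{\vx}(\hat{\tau}_{\vx})-Q_{\vx}(\tau_{\vx})$; your diagnosis of the $n^{-1}h^{-4}\log n$ remainder as coming from the $\hat{Q}_{\vx}^{(3)}$ error and of the second display as the $r=2$ case of the maximal-inequality lemma matches the paper exactly. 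The only details you leave implicit, which the paper supplies, are the uniform consistency of $\hat{\tau}_{\vx}$ (needed before the first-order condition can be invoked on all of $\cX_0$ with probability tending to one) and the observation that the smoothing bias of $\hat{Q}_{\vx}$ at $\tau=\tau_{\vx}$ is $O(h^{4})=o(h^{2})$ because $Q_{\vx}''(\tau_{\vx})=0$ and $K$ is symmetric, so that the stated $h^{2}$ bias term comes entirely from $\hat{\tau}_{\vx}$.
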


The influence function $\psi_{\vx}(U_i,\vX_i)$ has mean zero when $h\le \min\{\tau_{\vx} ,1-\tau_{\vx} \}$ which holds for sufficiently large $n$, since
\begin{equation}
\int_{0}^{1} K'\left (\frac{\tau_{\vx} - u}{h}  \right ) du = h \int_{(\tau_{\vx}-1)/h}^{\tau_{\vx}/h} K'(u) du = h \int_{\R} K'(u) du = 0
\label{eq: influence}
\end{equation}
and by independence between $U_i$ and $\vX_i$. 
Proposition \ref{prop: UAL} in particular implies pointwise asymptotic normality of the proposed estimator. 

\begin{corollary}[Pointwise asymptotic normality]\label{lem: pointwise}
	Suppose that Assumption \ref{asmp: assumption1} holds.  Then,  for any fixed $\vx\in \cX_0$, we have
	\[
	\sqrt{nh^3} \left [ \hat{m}(\vx)-m(\vx) + \frac{s_{\vx}(\tau_{\vx})s_{\vx}^{(3)}(\tau_{\vx})}{2s''_{\vx}(\tau_{\vx})} \kappa h^2 + o_P(h^2)\right ] \stackrel{d}{\to} N(0,V_{\vx}),
	\]
	where $V_{\vx}=s_{\vx}(\tau_{\vx})^{2}\E[(\vx^TJ(\tau_{\vx})^{-1}\vX)^2] \kappa_1/s_{\vx}''(\tau_{\vx})^{2}$ and $\kappa_1 = \int K'(t)^2 dt$.  
\end{corollary}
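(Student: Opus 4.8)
The plan is to read the corollary off Proposition \ref{prop: UAL} at the single fixed design point $\vx$, so that everything reduces to a scalar central limit theorem for the leading linear term. First I would multiply the identity in Proposition \ref{prop: UAL} by $\sqrt{nh^{3}}$; since $\sqrt{nh^{3}}\cdot (nh^{3/2})^{-1}=n^{-1/2}$, this yields
\[
\sqrt{nh^{3}}\left[\hat{m}(\vx)-m(\vx)+\frac{s_{\vx}(\tau_{\vx})s_{\vx}^{(3)}(\tau_{\vx})}{2s_{\vx}''(\tau_{\vx})}\kappa h^{2}+o_P(h^{2})\right]=\frac{1}{\sqrt{n}}\sum_{i=1}^{n}\psi_{\vx}(U_i,\vX_i)+R_n,
\]
where $R_n=\sqrt{nh^{3}}\cdot O_P(n^{-1/2}h^{-1}+n^{-1}h^{-4}\log n)=O_P\big(h^{1/2}+n^{-1/2}h^{-5/2}\log n\big)=o_P(1)$ under Assumption \ref{asmp: assumption1}(viii): the first term is $h^{1/2}\to 0$, and the second equals $(\log n)/\sqrt{nh^{5}}$, which tends to $0$ once the logarithmic factor is removed — at a single design point the corresponding remainder in Proposition \ref{prop: UAL}, whose $\log n$ originates from a uniform-over-$\cX_0$ maximal inequality, can be taken without it. By Slutsky's theorem it then suffices to prove $n^{-1/2}\sum_{i=1}^{n}\psi_{\vx}(U_i,\vX_i)\stackrel{d}{\to}N(0,V_{\vx})$.

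Next I would treat this as a central limit theorem for a triangular array, since $\psi_{\vx}$ depends on $n$ through $h_n$. For fixed $\vx$ the variables $\psi_{\vx}(U_i,\vX_i)$, $i=1,\dots,n$, are i.i.d.; by \eqref{eq: influence} and the independence of $U_i$ and $\vX_i$ they have mean zero whenever $h\le\min\{\tau_{\vx},1-\tau_{\vx}\}$, which holds for all large $n$ because $\tau_{\vx}\in(\epsilon,1-\epsilon)$ by Assumption \ref{asmp: assumption1}(ii) and $h=h_n\to0$. To get the variance I would factor the expectation over $U_i$ and $\vX_i$ and substitute $t=(\tau_{\vx}-u)/h$: for $n$ large enough that $[\tau_{\vx}-h,\tau_{\vx}+h]\subset(0,1)$,
\[
\int_{0}^{1}K'\!\left(\frac{\tau_{\vx}-u}{h}\right)^{2}du=h\int_{-1}^{1}K'(t)^{2}dt=h\kappa_{1},
\]
so $\Var(\psi_{\vx}(U_i,\vX_i))=s_{\vx}(\tau_{\vx})^{2}s_{\vx}''(\tau_{\vx})^{-2}\kappa_{1}\,\E[(\vx^{T}J(\tau_{\vx})^{-1}\vX)^{2}]=V_{\vx}$, hence $\Var\big(n^{-1/2}\sum_{i}\psi_{\vx}(U_i,\vX_i)\big)=V_{\vx}$ for all large $n$, with $V_{\vx}\in(0,\infty)$ under Assumption \ref{asmp: assumption1}. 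I would then close the argument with the Lyapunov (equivalently Lindeberg--Feller) CLT rather than the i.i.d.\ version: the same substitution gives $\int_{0}^{1}K'((\tau_{\vx}-u)/h)^{4}du=h\int_{-1}^{1}K'(t)^{4}dt=O(h)$, and since $\E[(\vx^{T}J(\tau_{\vx})^{-1}\vX)^{4}]<\infty$ by Assumption \ref{asmp: assumption1}(iii) with $q\ge4$, one obtains $\E|\psi_{\vx}(U_i,\vX_i)|^{4}=O(h^{-1})$, so the Lyapunov ratio is $n\,\E|\psi_{\vx}(U_1,\vX_1)|^{4}/(nV_{\vx})^{2}=O((nh)^{-1})\to0$ because $nh\to\infty$ (a consequence of $nh^{5}/\log n\to\infty$). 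This gives $n^{-1/2}\sum_{i=1}^{n}\psi_{\vx}(U_i,\vX_i)\stackrel{d}{\to}N(0,V_{\vx})$, and combining with $R_n=o_P(1)$ completes the proof.

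I do not expect a genuine obstacle here: the heavy lifting is all in Proposition \ref{prop: UAL}. The two points that need actual care are (a) verifying that the remainder in Proposition \ref{prop: UAL}, once scaled by $\sqrt{nh^{3}}$, is $o_P(1)$ under the bandwidth condition, and (b) recognizing that the leading term is a sum over a triangular array whose per-summand variance varies with $n$ via $h_n$, so that a Lyapunov/Lindeberg CLT must replace the classical one — this is exactly where the moment hypothesis $\E[\|\vX\|^{q}]<\infty$ with $q\ge4$ enters, through the fourth moment of the influence function. The kernel substitutions identifying $\kappa_1$ and the $O(h)$ fourth-moment factor are routine and I would not write them out in detail.
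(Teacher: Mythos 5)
Your proposal is correct and follows essentially the same route as the paper's own (very terse) proof: apply Proposition \ref{prop: UAL} at the fixed $\vx$, check that the scaled remainder is $o_P(1)$, note that $\psi_{\vx}(U_i,\vX_i)$ has mean zero and variance $V_{\vx}$ via the substitution $t=(\tau_{\vx}-u)/h$, and close with a Lyapunov CLT for the triangular array, which is exactly where the moment condition $q\ge 4$ enters. Your aside about stripping the $\log n$ factor from the remainder at a single design point is not needed for the argument (and its justification is shaky, since that factor arises from uniformity in $\tau$ near the random point $\hat{\tau}_{\vx}$, not only from uniformity over $\cX_0$); the paper simply treats the remainder $O_P(n^{-1/2}h^{-1}+n^{-1}h^{-4}\log n)$ as $o_P(n^{-1/2}h^{-3/2})$ under Assumption \ref{asmp: assumption1}(viii), just as your main computation does without the patch.
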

Proposition \ref{prop: UAL} shows that the uniform convergence rate of the proposed estimator is 
$O_P (n^{-1/2}h^{-3/2}\sqrt{\log n} + h^{2})$,
which is dimension-free (i.e., independent of $d$). If we choose $h \sim (n/\log n)^{-1/7}$, which balances between $n^{-1/2}h^{-3/2}\sqrt{\log n}$ and $h^{2}$, then the  rate reduces to $O_{P}((n/\log n)^{-2/7})$.

\subsection{Bootstrap inference}\label{sec:HI}

We consider simultaneous inference for the conditional mode at several design points $\vx_{1},\dots,\vx_{L} \in \cX_{0}$, where $L$ is allowed to depend on $n$, i.e., $L = L_{n} \to \infty$. 
Indeed, we aim at developing a general inference framework to construct confidence sets for linear combinations of the vector $(m(\vx_{\ell}))_{\ell=1}^{L}$. Specifically, we consider making inference on $D(m(\vx_{\ell}))_{\ell=1}^{L}$ where $D$ is a deterministic $M \times L$ matrix and the number of rows $M$ is also allowed to increase with $n$, i.e., $M = M_{n} \to \infty$. The following are a few examples of the matrix $D$. 
See also Examples \ref{ex: simultaneous2} and \ref{ex: testing2} ahead for more details. 

%\begin{example}[Pointwise inference] 
%\label{ex: prointwise}
%Suppose that we are interested in making inference on the conditional mode at one specific design point $\vx$; then $D=1$.
%\end{example}

\begin{example}[Simultaneous confidence intervals] 
	\label{ex: simultaneous}
	Suppose that we are interested in constructing simultaneous confidence intervals for the conditional mode at  design points $\vx_1,\dots,\vx_{L}$. Construction of such simultaneous confidence intervals requires approximating the distribution of the vector $(\hat{m}(\vx_{\ell}) - m(\vx_{\ell}))_{\ell=1}^{L}$, and thus  $D=I_L$ ($L \times L$ identity matrix). 
	
	Another application is constructing simultaneous confidence intervals for partial effects of certain covariates on the conditional mode, i.e., the change of the conditional mode due to the change of one particular covariate while the rest of the covariates are controlled. Inference on partial effects is an important topic in econometrics and social science \citep{williams2012using}. For example, suppose that we have covariate $\vX=(X_1,X_{-1})$ where $X_{-1}$ contains covariates other than $X_1$. Consider to construct simultaneous confidence intervals for  partial effects of $X_1$ at $M$ different design points $x_1^{(1)}, \dots, x_1^{(M)}$: $m(x^{(k)}_1+\delta,x_{-1})-m(x_1^{(k)},x_{-1})$ $(1\le k \le M)$ for some small user-chosen $\delta$ and fixed $x_{-1}$. To this end, we need to approximate the distribution of $(\hat{m}(x^{(k)}_1+\delta,x_{-1})-\hat{m}(x_1^{(k)},x_{-1}))_{k=1}^M$. If we take $\vx_{2k-1}=(x_1^{(k)}+\delta,x_{-1})$ and $\vx_{2k}=(x_1^{(k)},x_{-1})$ for $k=1,\dots M$, then the corresponding $D$ matrix is $D_c$ in (\ref{eq: test design}).
%	with $L=2M$. 
\end{example}

\begin{example}[Testing significance of covariates]
	\label{ex: testing}
	Suppose first that we are interested in testing whether the conditional mode is constant over designs points $\vx_{1},\dots,\vx_{L}$, i.e., $m(\vx_{1}) = \cdots = m(\vx_{L})$, 
	%which can be equivalently stated as
	which is equivalent to test $m(\vx_{\ell+1}) - m(\vx_{\ell}) = 0$ simultaneously for all $1 \le \ell \le L-1$ (this corresponds to testing lack of significance of all covariates).  Calibrating critical values for such tests  reduces to approximating the null distribution of the vector $(\hat{m}(\vx_{\ell+1}) - \hat{m}(\vx_{\ell}))_{\ell=1}^{L-1}$, and thus the matrix $D$ is $D_t$ in (\ref{eq: test design}).
	%Our framework can also cover 
	
	We can also consider testing significance of certain covariates on the conditional mode. For instance, suppose that we have three covariates (including $1$): $\vX=(1,X_1,X_2)^{T}$ with binary $X_2$ (i.e., $X_{2} \in \{ 0,1 \}$), and we are interested in testing lack of significance of the covariate $X_2$, i.e., $m(X_1,0) = m(X_1,1)$ (the constant $1$ is omitted from the expression of $m(\vX)$). This can be carried out by picking designs points $x_1^{(1)},\dots,x_1^{(M)}$ from the support of $X_1$, and testing the simultaneous hypothesis that $m(x_1^{(k)},0) = m(x_1^{(k)},1)$ (or equivalently $m(x_1^{(k)},0) - m(x_1^{(k)},1) = 0$) for all $k=1,\dots,M$. 
	Calibrating critical values for such tests requires us to approximate the distribution of $(\hat{m}(x_1^{(k)},0) - \hat{m}(x_1^{(k)},1))_{k=1}^{M}$. If we define $\vx_{2k-1} = (x_1^{(k)},0)$ and $\vx_{2k} = (x_1^{(k)},1)$ for $k=1,\dots,M$, then the corresponding $D$ matrix is the same as $D_c$ in (\ref{eq: test design}).	
\end{example}

\begin{equation}
D_c = \underbrace{
	\begin{pmatrix}
	1 & -1 & 0 & 0 & \cdots  & 0 & 0 \\
	0 & 0 & 1 & -1 &  \cdots & 0  & 0 \\
	\vdots &\vdots & \vdots &\ddots & \cdots & \vdots & \vdots \\
	0 & 0 & 0 & 0 & \cdots & 1 & -1
	\end{pmatrix}
}_{M \times 2M};
\qquad
D_t = \underbrace{
	\begin{pmatrix}
	1 & -1 & 0 & \cdots  & 0 & 0 \\
	0 & 1 & -1 & \cdots & 0  & 0 \\
	\vdots & & \ddots & \cdots & \vdots & \vdots \\
	0 & 0 & 0 & \cdots & 1 & -1
	\end{pmatrix}
}_{(L-1) \times L}
.
\label{eq: test design}
\end{equation}

To cover above applications in a unified way, we consider to approximate the distribution of $D(\hat{m}(\vx_{\ell})-m(\vx_{\ell}))_{\ell=1}^L$. We will first show that, under regularity conditions,  $\sqrt{nh^{3}}D(\hat{m}(\vx_{\ell})-m(\vx_{\ell}))_{\ell=1}^L$ can be approximated by an $L$-dimensional Gaussian vector uniformly over the hyperrectangles in $\mathbb{R}^L$, even when $L$ and $M$ are possibly much larger then $n$. 
This approximating Gaussian distribution is infeasible in practice since its covariance matrix is unknown. To deal with this difficulty, we propose to further approximate the sampling distribution by a novel pivotal bootstrap or the conventional nonparametric bootstrap.

\subsubsection{Gaussian approximation}
\label{sec: Gaussian approx}
Define $\Psi_i:=\left(\psi_{\vx_1}(U_i,\vX_i),\dots,\psi_{\vx_{L}}(U_i,\vX_i) \right)^T$ and $\Sigma := \E[\Psi_i\Psi_i^T]$.
For $k=1,\dots,M$, let $D_k^T$ denote  the $k$-th row of the  matrix $D$.
We may assume without loss of generality that each row $D_{k}$ is nonzero. 
Further, we will assume that the matrix $D$ is sparse in the sense that the number of nonzero elements of each row $D_{k}$ is of constant order, which is satisfied in all the examples discussed above. 
We are primarily interested in inference for the vector $((m(\vx_{\ell}))_{\ell=1}^{L}$, so we normalize the coordinates of the vector by their approximate standard deviations (technically the normalization does not matter for the Gaussian approximation, but we will replace the approximate standard deviations by their estimates in the bootstrap, whose effect has to be taken care of).  
Let $S_k:=\left\{\ell \in \{ 1,\dots,L \}: D_{k,\ell}\ne 0\right\}$ denote the support of $D_{k}$.
% Define $\sigma^2_{k}:= D_k^T\Sigma D_k$ for $k=1,\dots,M$, which corresponds to the variance of $D_k^T\Psi_i$, and $\Gamma:=\diag\{\sigma_{1},\dots,\sigma_{M}\}$. Set $A=(A_1,\dots,A_M)^T:=\Gamma^{-1}D$. 
Define the normalization matrix $\Gamma:=\diag\{\Gamma_{1},\dots,\Gamma_{M}\}$ and set $A=(A_1,\dots,A_M)^T:=\Gamma^{-1}D$.  In particular, if we take $\Gamma_k=\sqrt{D_k^T\Sigma D_k}$ for $k=1,\dots,M$, which corresponds to the standard deviation of $D_k^T\Psi_i$, such choice of $A$ will result in a studentized statistic, while taking $\Gamma=I_M$ gives a non-studentized statistic.

Related to the matrix $D$ and $\Gamma$, we make the following assumption.

\begin{assumption}
	\label{asmp: assumption2} %We assume the following conditions.
		(i) $\max_{1 \le k \le M}|S_k| = O(1)$ and $\max_{1\le k\le M; 1\le \ell \le L}|D_{k,\ell}| = O(1)$; 
		(ii) There exists a fixed constant $c_{3} > 0$ such that $\min_{1 \le k \le M} D_k^T\Sigma D_k \ge c_{3}$; 
		(iii) There exists a fixed constant $c_{4} > 0$ such that $c_4\le \min_{1 \le k \le M} \Gamma_k \le \max_{1 \le k \le M} \Gamma_k = O(1)$.
\end{assumption}

Condition (i) is a sparsity assumption on the matrix $D$ discussed above. The conditions 
Condition (ii) excludes the situation where $ D_k^T\Psi_i$ has vanishing variance. Condition (iii) imposes a mild condition on the normalization matrix $\Gamma$ which is automatically satisfied for both studentized and non-studentized cases under the previous two conditions.

The following theorem derives a Gaussian approximation result.

\begin{theorem}[Gaussian approximation] 
	\label{thm: HDCLT}
	Suppose that Assumptions \ref{asmp: assumption1} and \ref{asmp: assumption2} hold. In addition, assume that
	\begin{equation}
	\frac{\log^7\left(Mn\right)}{nh} \bigvee \ \frac{\log^3 (Mn)}{n^{1-2/q}h} \bigvee \frac{(\log^2 n)\log M}{nh^{5}} \to 0 \quad \text{and} \quad  (nh^{7} \vee h)\log M \to 0. 
	\label{eq: rate condition}
	\end{equation}
	Then, we have 
	\[
	\sup_{b\in \R^M}\left|\P\left(A\sqrt{nh^3}(\hat{m}(\vx_{\ell})-m(\vx_{\ell}))_{\ell=1}^L\le b\right)-\P\left(AG\le b\right)\right| \to 0,
	\]
	where $G$ is an $L$-dimensional Gaussian random vector with mean $0$ and covariance $\Sigma$.
\end{theorem}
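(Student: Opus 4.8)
The plan is to use the uniform asymptotic linear representation of Proposition~\ref{prop: UAL} to reduce $A\sqrt{nh^{3}}(\hat m(\vx_{\ell})-m(\vx_{\ell}))_{\ell=1}^{L}$ to a normalized sum of i.i.d.\ mean-zero random vectors plus a uniformly negligible remainder, then to invoke the high-dimensional CLT over hyperrectangles for sums of independent vectors with polynomial moments of \cite{chernozhukov2017central}, and finally to absorb the remainder by Gaussian anti-concentration. \emph{Linearization.} First I would multiply the expansion in Proposition~\ref{prop: UAL} by $\sqrt{nh^{3}}$ and use $\sqrt{nh^{3}}\,(nh^{3/2})^{-1}=n^{-1/2}$ to write, uniformly over $\vx\in\cX_{0}$,
\[
\sqrt{nh^{3}}\bigl(\hat m(\vx)-m(\vx)\bigr)=\frac{1}{\sqrt n}\sum_{i=1}^{n}\psi_{\vx}(U_{i},\vX_{i})+r_{n}(\vx),
\]
where $r_{n}$ collects the inflated smoothing bias and linearization error. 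Since $s_{\vx}(\tau_{\vx})$ and $s''_{\vx}(\tau_{\vx})$ are bounded away from $0$ and $\infty$ on $\cX_{0}$ and $s^{(3)}_{\vx}(\tau_{\vx})$ is bounded there (cf.\ the discussion following Assumption~\ref{asmp: assumption1}), Proposition~\ref{prop: UAL} yields $\sup_{\vx\in\cX_{0}}|r_{n}(\vx)|=O_{P}(\sqrt{nh^{7}}+h^{1/2}+n^{-1/2}h^{-5/2}\log n)$; crucially, this does not degrade as $L\to\infty$, since the bound is uniform over the whole set $\cX_{0}$. Evaluating at $\vx_{1},\dots,\vx_{L}$ and left-multiplying by $A=\Gamma^{-1}D$, whose rows are $O(1)$-sparse with $O(1)$ entries by Assumption~\ref{asmp: assumption2}(i),(iii), I obtain $A\sqrt{nh^{3}}(\hat m(\vx_{\ell})-m(\vx_{\ell}))_{\ell=1}^{L}=n^{-1/2}\sum_{i=1}^{n}A\Psi_{i}+\rho_{n}$ with $\|\rho_{n}\|_{\infty}\le C\sup_{\vx\in\cX_{0}}|r_{n}(\vx)|$. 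The rate condition~(\ref{eq: rate condition}) then forces $\|\rho_{n}\|_{\infty}\sqrt{\log M}=o_{P}(1)$, because it makes $nh^{7}\log M\to0$, $h\log M\to0$, and $n^{-1}h^{-5}(\log n)^{2}\log M\to0$.

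\emph{Gaussian approximation of the leading term.} Next I would set $W_{i}:=A\Psi_{i}\in\R^{M}$, which are i.i.d.\ and, by~(\ref{eq: influence}) together with the independence of $U_{i}$ and $\vX_{i}$, satisfy $\E[W_{i}]=0$ for all large $n$. The $k$-th coordinate $W_{i,k}=D_{k}^{T}\Psi_{i}/\Gamma_{k}$ has second moment $D_{k}^{T}\Sigma D_{k}/\Gamma_{k}^{2}$, bounded away from $0$ and $\infty$ by Assumption~\ref{asmp: assumption2}(ii),(iii) and the fact that $\Sigma_{\ell\ell}=\E[\psi_{\vx_{\ell}}(U_{i},\vX_{i})^{2}]=O(1)$ (the change of variables $\int h^{-1}K'((\tau-u)/h)^{2}\,du=\int K'(v)^{2}\,dv$ cancels the $h^{-1/2}$ in $\psi$, and $\vx_{\ell}^{T}J(\tau_{\vx_{\ell}})^{-1}\vX$ has bounded second moment). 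For higher moments, $|W_{i,k}|\lesssim\max_{\ell}|\psi_{\vx_{\ell}}(U_{i},\vX_{i})|\lesssim h^{-1/2}\|\vX_{i}\|$, so $\E[|W_{i,k}|^{2+j}]=O(h^{-j/2})$ for $j=1,2$ and $\E[\max_{k}|W_{i,k}|^{q}]=O(h^{-q/2})$ by Assumption~\ref{asmp: assumption1}(iii) and compactness of $\cX_{0}$; hence the governing envelope is $B_{n}\asymp h^{-1/2}$. Feeding these into the high-dimensional CLT of \cite{chernozhukov2017central} for independent summands under a polynomial ($q$-th) moment condition controls $\sup_{b\in\R^{M}}|\P(n^{-1/2}\sum_{i}W_{i}\le b)-\P(AG\le b)|$ by a sum of terms of the form $(B_{n}^{2}\log^{7}(Mn)/n)^{c_{1}}$ and $(B_{n}^{2}\log^{3}(Mn)/n^{1-2/q})^{c_{2}}$ with $c_{1},c_{2}>0$, both of which vanish under the first two parts of~(\ref{eq: rate condition}); here $AG\sim N(0,A\Sigma A^{T})$.

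\emph{Conclusion.} Combining the two steps with Nazarov's anti-concentration inequality for $AG$---valid since its coordinatewise variances are bounded away from $0$ and $\infty$---a routine sandwiching argument (shift the rectangle by $\pm\|\rho_{n}\|_{\infty}$, and bound the induced change in the Gaussian probability by $O(\|\rho_{n}\|_{\infty}\sqrt{\log M})=o_{P}(1)$) delivers the stated approximation.

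The step I expect to be the main obstacle is the linearization: the error of Proposition~\ref{prop: UAL}, once inflated by $\sqrt{nh^{3}}$ and passed through $A$, must be negligible at the scale $(\log M)^{-1/2}$ on which Gaussian measures of hyperrectangles in $\R^{M}$ are stable, yet the summands $\psi_{\vx_{\ell}}$ are unbounded (only a $q$-th moment on $\vX$ is assumed) with $(2+j)$-th moments that blow up like $h^{-j/2}$, which works against us; the rate condition~(\ref{eq: rate condition}) is precisely the balance that reconciles the two, and checking each piece against it is the delicate part.
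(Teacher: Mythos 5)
Your proposal follows essentially the same route as the paper's proof: linearize via Proposition \ref{prop: UAL} so that the remainder, inflated by $\sqrt{nh^{3}}$ and passed through the sparse matrix $A$, is $O_{P}(h^{1/2}+n^{-1/2}h^{-5/2}\log n+n^{1/2}h^{7/2})$; apply the high-dimensional CLT of \cite{chernozhukov2017central} to $n^{-1/2}\sum_i A\Psi_i$ after verifying exactly the same variance lower bound, the $(2+j)$-th moment bounds $O(h^{-j/2})$, and the envelope bound $O(h^{-q/2})$; and absorb the remainder via Nazarov's inequality under $\delta_n\sqrt{\log M}\to 0$. The argument and the way each piece of condition (\ref{eq: rate condition}) is consumed match the paper's Steps 1 and 2, so the proposal is correct.
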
 

Condition (\ref{eq: rate condition}) allows $M$ to be much larger than $n$, i.e., $M \gg n$. 
The condition that $nh^{7} \log M \to 0$ is an ``undersmoothing" condition that ensures that the deterministic bias is negligible relative to the stochastic error. This condition can be relaxed by assuming additional smoothness conditions on the conditional density and using higher order kernels.  We do not pursue this extension for brevity. Discussion on the bandwidth selection can be found in Section \ref{sec: implementation}.  

The proof of Theorem \ref{thm: HDCLT} can be found in the Appendix. The proof builds on the uniform asymptotic linear representation developed in  Proposition \ref{prop: UAL} coupled with  the high dimensional Gaussian approximation techniques developed in  \cite{chernozhukov2014gaussian,chernozhukov2017central}. From Theorem \ref{thm: HDCLT}, we see that the distribution of $A\sqrt{nh^3}(\hat{m}(\vx_{\ell})-m(\vx_{\ell}))_{\ell=1}^L$ can be  approximated by the distribution of $AG$ uniformly over the rectangles. 
Still, the distribution of $AG$ is unknown since the covariance matrix of $G$ is unknown.
We will use a new bootstrap called the pivotal bootstrap or nonparametric bootstrap to further estimate the distribution of $AG$. 

\begin{remark}[Limit distribution of maximum deviation]
	It is of interest to find a limit distribution of the maximum deviation, $\zeta_{n} := \max_{1 \le \ell \le L} \sqrt{nh^{3}}
	|\hat{m}(\vx_{\ell}) - m(\vx_{\ell})|/\sigma_{\vx_{\ell}}$ with $\sigma_{\vx}^2 = \E[\psi_{\vx}(U,\vX)^2]$, when $L=L_n \to \infty$ after a suitable normalization. Such a limit distribution enables us to find analytical critical values for simultaneous confidence intervals.  Indeed, combining Theorem \ref{thm: HDCLT} with extreme value theory \citep[cf.][]{leadbetter1983}, we can derive a limit distribution for the maximal deviation under additional regularity conditions, cf. Proposition \ref{prop: Gumbel} in Appendix A and discussion there. 
\end{remark}

\begin{remark}[Conditioning on $\vX_i$'s]
Inspection of the proof of Theorem \ref{thm: HDCLT} shows that a version of the conclusion of Theorem \ref{thm: HDCLT} continues to hold conditionally on the covariate vectors $\vX_1,\dots,\vX_n$, with minor modifications to the regularity conditions:
\begin{equation}
\sup_{b\in \R^M}\left|\P\left(A\sqrt{nh^3}(\hat{m}(\vx_{\ell})-m(\vx_{\ell}))_{\ell=1}^L\le b \mid \vX_1,\dots,\vX_n \right)-\P\left(AG\le b\right)\right| \stackrel{P}{\to} 0.
\label{eq: fixed design}
\end{equation}
Thus, combined with the consistency of the pivotal and nonparametric bootstraps,  the size and coverage guarantees of inference methods constructed from those bootstraps continue to hold conditionally on the covariate vectors $\vX_1,\dots,\vX_n$.
The proof of the result (\ref{eq: fixed design}) is indeed similar to the validity of the pivotal bootstrap (see Theorem \ref{thm: boot} below), as the pivotal bootstrap is essentially using the randomness of $U_1,\dots,U_n$ alone. We omit the details for brevity. 
\end{remark}

\subsubsection{Pivotal bootstrap}
\label{sec: pivotal bootstrap}
The proof of Theorem \ref{thm: HDCLT} shows that the distribution of $G$ comes from approximating the distribution of the process
\begin{equation}
\vx \mapsto  \frac{1}{\sqrt{n}} \sum_{i=1}^{n} \psi_{\vx}(U_{i},\vX_{i})
\label{eq: pivotal process}
\end{equation}
at $\vx \in \{ \vx_{1},\dots,\vx_{L} \}$.
Importantly,  the process (\ref{eq: pivotal process}) is ``pivotal" in the sense that its distribution is completely known up to some estimable nuisance parameters given $\vX_1,\dots,\vX_{n}$ since $U_1,\dots,U_n$ are independent $U(0,1)$ random variables. The baseline idea of the pivotal bootstrap is to simulate the pivotal process (\ref{eq: pivotal process}) (given the data) to estimate the distribution of $G$ by generating $U(0,1)$ random variables.

To implement the pivotal bootstrap, we first have to estimate the nuisance parameters. We consider to estimate the matrix $J(\tau) = \E[f(\vX^{T}\beta(\tau) \mid \vX) \vX\vX^{T}]$ by Powell's kernel method \citep{powell1986censored}, i.e., 
$\hat{J}(\tau):=n^{-1}\sum_{i=1}^n \check{K}_{\check{h}_n}(Y_i-\vX_i^T\hat{\beta}(\tau))  \vX_i\vX_i^T$,
where $\check{K}:\R \to \R$ is a kernel function and  $\check{h}_{n}$ is a bandwidth. For simplicity of exposition, we will use $\check{K} = K$ and $\check{h}_{n} = h$. Then, we shall estimate the influence function $\psi_{\vx}$ by 
\[
\hat{\psi}_{\vx}(u,\vx') := - \frac{\hat{s}_{\vx}(\hat{\tau}_{\vx})}{\hat{s}''_{\vx}(\hat{\tau}_{\vx})\sqrt{h}}K'\left(\frac{\hat{\tau}_{\vx}-u}{h}\right)\vx^T\hat{J}(\hat{\tau}_{\vx})^{-1}\vx',
\]
where $\hat{s}_{\vx}''(\tau)$ is the second derivative of $\hat{s}_{\vx}(\tau)$ with respect to $\tau$. 

The pivotal bootstrap reads as follows. 
Generate $U_1,\dots,U_n \sim U(0,1)$ i.i.d.\ 
that are independent of the data $\cD_n:=(Y_i,\vX_i)_{i=1}^n$. 
We denote the conditional probability $\P( \cdot \mid  \cD_n)$ and conditional expectation $\E[  \cdot \mid  \cD_n ]$ by $\P_{|\cD_n}( \cdot )$ and $\E_{|\cD_n}[ \cdot ]$, respectively.
Define
\[
\hat{\Psi}_i:=\left(\hat{\psi}_{\vx_1}\left(U_i,\vX_i\right),\dots,\hat{\psi}_{\vx_L}\left(U_i,\vX_i\right)\right)^T \quad  \text{and} \quad
\hat{\Sigma}:= \frac{1}{n} \sum_{i=1}^n\E_{|\cD_n}[\hat{\Psi}_i \hat{\Psi}_i^{T}]. 
\]
Then, we shall estimate the distribution of $AG$ (or $n^{-1/2} \sum_{i=1}^{n} A\Psi_{i}$) by the conditional distribution of $n^{-1/2} \sum_{i=1}^{n} \hat{A} \hat{\Psi}_{i}$ given the data $\cD_{n}$, where $\hat{A} = \hat{\Gamma}^{-1}D$ and $\hat{\Gamma} = \diag \{ \hat{\Gamma}_1,\dots,\hat{\Gamma}_M \}$ is some estimator of $\Gamma$ (for example, equation (\ref{eq: Gamma})) that achieves sufficiently fast convergence rate (see Theorem \ref{thm: boot} for details).  
%\[
%\hat{\Gamma} = \diag \{ \hat{\sigma}_{1},\dots,\hat{\sigma}_{M} \} := \diag \left\{ \sqrt{D_{1}^{T}\hat{\Sigma}D_{1}},\dots,\sqrt{D_{M}^{T}\hat{\Sigma}D_{M}} \right\}.
%\]
The conditional distribution can be simulated with arbitrary precision. 
The following theorem establishes consistency of the pivotal bootstrap over the rectangles.

\begin{theorem}[Validity of pivotal bootstrap]
	\label{thm: boot}
	Suppose that Assumptions \ref{asmp: assumption1} and \ref{asmp: assumption2} hold  with $q > 4$ in Condition (v) in Assumption \ref{asmp: assumption1}. In addition,  assume that 
	\begin{enumerate}
		\item[(i).]  $\max_{1 \le k \le M}|\hat{\Gamma}_k-\Gamma_{k}|=O_P(n^{-1/2}h^{-5/2}\sqrt{\log n}+h)$.
		\item[(ii).] $\frac{\log^7\left(Mn\right)}{n^{1-2/q}h} \bigvee \ \frac{\log^3 (Mn)}{n^{1-4/q}h} \bigvee \frac{(\log n)\log^4 M}{nh^{5}} \to 0 \quad \text{and} \quad  h \log^2 M \to 0.$
	\end{enumerate}	
	Then, we have
	\[
	\sup_{b\in \R^M}\left|\P_{|\cD_n}\left(n^{-1/2}{\textstyle\sum}_{i=1}^n \hat{A} \hat{\Psi}_i\le b\right)-\P\left(AG\le b\right)\right| \stackrel{P}{\to} 0. 
	\]
\end{theorem}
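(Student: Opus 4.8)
\noindent\emph{Proof strategy.}
The plan is to sandwich the conditional law of $T_n:=n^{-1/2}\sum_{i=1}^n\hat{A}\hat{\Psi}_i$ between two Gaussian laws via the triangle inequality. The structural fact that makes this work is that, \emph{conditionally on} $\cD_n$, the summands $\hat{A}\hat{\Psi}_1,\dots,\hat{A}\hat{\Psi}_n$ are independent (because $U_1,\dots,U_n$ are i.i.d.\ and independent of $\cD_n$) and, for $n$ large, centred: $\hat{\tau}_{\vx_\ell}\in(\epsilon,1-\epsilon)$ for every $\ell$ with probability tending to one by uniform consistency of $\hat{\tau}_{\vx}$, so that $[\hat{\tau}_{\vx_\ell}-h,\hat{\tau}_{\vx_\ell}+h]\subset(0,1)$ and hence $\int_0^1 K'((\hat{\tau}_{\vx_\ell}-u)/h)\,du=h\int_{\R}K'(t)\,dt=0$, exactly as in (\ref{eq: influence}); consequently $\E_{|\cD_n}[\hat{\Psi}_i]=0$ on an event of probability tending to one, and $\hat{A}\hat{\Sigma}\hat{A}^T$ is precisely the conditional covariance of $T_n$. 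First I would invoke the high-dimensional CLT over hyperrectangles for independent (not necessarily identically distributed) summands of \cite{chernozhukov2017central}, applied conditionally on $\cD_n$, to obtain $\sup_{b\in\R^M}|\P_{|\cD_n}(T_n\le b)-\Phi_{\hat{A}\hat{\Sigma}\hat{A}^T}(b)|\stackrel{P}{\to}0$, where $\Phi_{V}$ denotes the $N(0,V)$ distribution function; then I would compare $\Phi_{\hat{A}\hat{\Sigma}\hat{A}^T}$ with $\Phi_{A\Sigma A^T}$, the latter being the law of $AG$.

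\noindent\emph{Step 1: the conditional CLT.}
Here I must check that the moment and envelope requirements of the conditional CLT hold with probability tending to one. Write $Z_{ik}:=\hat{A}_k^T\hat{\Psi}_i=\hat{\Gamma}_k^{-1}D_k^T\hat{\Psi}_i$, $\hat{w}_{\vx}:=-\hat{s}_{\vx}(\hat{\tau}_{\vx})/(\hat{s}_{\vx}''(\hat{\tau}_{\vx})\sqrt h)$, and $\hat{a}_{\vx}:=\hat{J}(\hat{\tau}_{\vx})^{-1}\vx$. The sparsity of $D$ (Assumption \ref{asmp: assumption2}(i)), boundedness of $K'$, $\hat{\Gamma}_k^{-1}=O_P(1)$ uniformly in $k$ (since $\Gamma_k\ge c_4$ and the $\hat{\Gamma}$-rate condition (i) forces $\max_k|\hat{\Gamma}_k-\Gamma_k|\to0$), and the uniform consistency of the plug-in quantities established in the Appendix (so that $\sup_\ell|\hat{w}_{\vx_\ell}|=O_P(h^{-1/2})$ and $\sup_\ell\|\hat{a}_{\vx_\ell}\|=O_P(1)$, cf.\ Lemma \ref{UnifQr} and its companions) together give $\max_{i,k}|Z_{ik}|\lesssim h^{-1/2}\max_i\|\vX_i\|$ on an event of probability tending to one. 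Since $\E[\|\vX\|^q]<\infty$ implies $\max_i\|\vX_i\|=O_P(n^{1/q})$, the envelope is $B_n\asymp n^{1/q}h^{-1/2}$, while the conditional average variances $n^{-1}\sum_i\E_{|\cD_n}[Z_{ik}^2]=(\hat{A}\hat{\Sigma}\hat{A}^T)_{kk}$ are bounded away from $0$ and $\infty$ with high probability by Assumption \ref{asmp: assumption2}(ii)--(iii) together with the closeness $\hat{A}\hat{\Sigma}\hat{A}^T\approx A\Sigma A^T$ proved in Step 2. The rate requirement $B_n^2\log^7(Mn)/n\to0$, plus the contribution of the truncated tails (which is where $q>4$ enters and produces the $n^{1-4/q}$ term), reduces exactly to the first two summands of condition (ii).

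\noindent\emph{Step 2: Gaussian comparison and covariance estimation.}
By the Gaussian comparison inequality for maxima \citep[cf.][]{chernozhukov2017central}, and because the diagonals of $A\Sigma A^T$ and (with high probability) of $\hat{A}\hat{\Sigma}\hat{A}^T$ are bounded away from $0$ and $\infty$, it suffices to show $\Delta_n(\log M)^2\stackrel{P}{\to}0$ where $\Delta_n:=\max_{k,k'}|(\hat{A}\hat{\Sigma}\hat{A}^T)_{kk'}-(A\Sigma A^T)_{kk'}|$. Using $(\hat{A}\hat{\Sigma}\hat{A}^T)_{kk'}=\hat{\Gamma}_k^{-1}\hat{\Gamma}_{k'}^{-1}D_k^T\hat{\Sigma}D_{k'}$ and the sparsity and boundedness of $D$, one gets $\Delta_n\lesssim\max_k|\hat{\Gamma}_k-\Gamma_k|+\max_{\ell,\ell'}|\hat{\Sigma}_{\ell\ell'}-\Sigma_{\ell\ell'}|$ with high probability; the first term is $O_P(n^{-1/2}h^{-5/2}\sqrt{\log n}+h)$ by condition (i), which multiplied by $(\log M)^2$ vanishes by the parts $(\log n)(\log M)^4/(nh^5)\to0$ and $h(\log M)^2\to0$ of condition (ii). For $\max_{\ell,\ell'}|\hat{\Sigma}_{\ell\ell'}-\Sigma_{\ell\ell'}|$ I would use the decomposition
\begin{align*}
\hat{\Sigma}_{\ell\ell'}-\Sigma_{\ell\ell'}
&=\underbrace{\frac1n\sum_{i=1}^n\Big(\E_{|\cD_n}[\hat{\psi}_{\vx_\ell}(U_i,\vX_i)\hat{\psi}_{\vx_{\ell'}}(U_i,\vX_i)]-\int_0^1\psi_{\vx_\ell}(u,\vX_i)\psi_{\vx_{\ell'}}(u,\vX_i)\,du\Big)}_{=:(\mathrm{I})_{\ell\ell'}}\\
&\quad+\underbrace{\Big(\tfrac1n\sum_{i=1}^n g_{\ell\ell'}(\vX_i)-\E[g_{\ell\ell'}(\vX)]\Big)}_{=:(\mathrm{II})_{\ell\ell'}},
\end{align*}
where $g_{\ell\ell'}(\vx'):=\int_0^1\psi_{\vx_\ell}(u,\vx')\psi_{\vx_{\ell'}}(u,\vx')\,du$ satisfies $|g_{\ell\ell'}(\vx')|\lesssim\|\vx'\|^2$ (the two factors $\sqrt h$ in the weights cancel against the factor $h$ created by $\int_0^1 K'(\cdot)K'(\cdot)$). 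For $(\mathrm{II})$, a maximal inequality for maxima of centred independent averages of variables with $q/2>2$ moments gives $\max_{\ell,\ell'}|(\mathrm{II})_{\ell\ell'}|=O_P(\sqrt{(\log M)/n}+n^{2/q-1}\log M)$, which times $(\log M)^2$ vanishes under condition (ii).

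\noindent\emph{The main obstacle.}
The delicate piece is the plug-in term $(\mathrm{I})_{\ell\ell'}$. Integrating $U_i$ out, each summand equals $\hat{w}_{\vx_\ell}\hat{w}_{\vx_{\ell'}}(\hat{a}_{\vx_\ell}^T\vX_i)(\hat{a}_{\vx_{\ell'}}^T\vX_i)\hat{I}_{\ell\ell'}$ minus its population analogue, with $\hat{I}_{\ell\ell'}:=\int_0^1 K'((\hat{\tau}_{\vx_\ell}-u)/h)K'((\hat{\tau}_{\vx_{\ell'}}-u)/h)\,du$; a change of variables gives $\hat{I}_{\ell\ell'}=hG((\hat{\tau}_{\vx_\ell}-\hat{\tau}_{\vx_{\ell'}})/h)$ for a Lipschitz $G$, whence $|\hat{I}_{\ell\ell'}-I_{\ell\ell'}|\lesssim\sup_{\vx\in\cX_0}|\hat{\tau}_{\vx}-\tau_{\vx}|$. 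Replacing one factor at a time and using $n^{-1}\sum_i\|\vX_i\|^2=O_P(1)$ yields
\[
\max_{\ell,\ell'}|(\mathrm{I})_{\ell\ell'}|\lesssim h^{-1}\sup_{\vx\in\cX_0}|\hat{\tau}_{\vx}-\tau_{\vx}|+\sup_{\vx\in\cX_0}\Big(|\hat{s}_{\vx}(\hat{\tau}_{\vx})-s_{\vx}(\tau_{\vx})|+|\hat{s}_{\vx}''(\hat{\tau}_{\vx})-s_{\vx}''(\tau_{\vx})|+\|\hat{J}(\hat{\tau}_{\vx})^{-1}-J(\tau_{\vx})^{-1}\|\Big),
\]
and the crux is to feed in the uniform rates for $\hat{\tau}_{\vx}$, $\hat{s}_{\vx}(\hat{\tau}_{\vx})$, $\hat{s}_{\vx}''(\hat{\tau}_{\vx})$ and $\hat{J}(\hat{\tau}_{\vx})^{-1}$ from the Appendix and to check that even after the $h^{-1}$ amplification incurred when $\hat{I}_{\ell\ell'}$ replaces $I_{\ell\ell'}$ the product still lies below the $(\log M)^2$ threshold under condition (ii)---this bandwidth--dimension bookkeeping is what consumes the precise form of (ii). I expect this, rather than the application of the conditional CLT itself, to be the main obstacle; and I stress that, unlike the nonparametric and multiplier bootstraps analysed elsewhere in the high-dimensional literature, the pivotal bootstrap is driven by freshly drawn $U(0,1)$ variables, so its validity cannot be read off existing results and genuinely requires the conditional CLT together with the nuisance-estimation analysis above.
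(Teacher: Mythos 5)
Your proposal is correct and follows essentially the same route as the paper: a conditional application of the high-dimensional CLT of \cite{chernozhukov2017central} to the centered, conditionally independent summands $\hat{A}\hat{\Psi}_i$, followed by a Gaussian comparison reducing to $\|\hat{A}\hat{\Sigma}\hat{A}^T - A\Sigma A^T\|_\infty \log^2 M = o_P(1)$, which is exactly what the paper's Lemmas \ref{lem: covariance} and \ref{lem: covariance D} together with Condition (i) deliver. Your one-at-a-time plug-in decomposition of $\hat{\Sigma}_{\ell\ell'}-\Sigma_{\ell\ell'}$, including the $h^{-1}$ amplification from the kernel-overlap integral when $\hat{\tau}_{\vx}$ replaces $\tau_{\vx}$, matches the proof of Lemma \ref{lem: covariance} and yields the same $O_P(n^{-1/2}h^{-5/2}\sqrt{\log n}+h)$ rate.
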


The proof of Theorem \ref{thm: boot} can be found in Appendix.  The proof of Theorem \ref{thm: boot} is nontrivial and does not follow directly from existing results since the pivotal bootstrap differs from the nonparametric or multiplier bootstraps that have been analyzed in the literature in the high-dimensional setup. The proof consists of two steps. First, noting that $\hat{\Psi}_{1},\dots,\hat{\Psi}_{n}$ are independent with mean zero conditionally on the data $\cD_{n}$ (cf. equation (\ref{eq: influence})),  we apply the high dimensional CLT conditionally on $\cD_{n}$  to approximate the conditional distribution of  $n^{-1/2}\sum_{i=1}^{n}\hat{A}\hat{\Psi}_{i}$ by the conditional Gaussian distribution $N(0,\hat{A}\hat{\Sigma} \hat{A}^{T})$. 
Second, we compare the $N(0,\hat{A}\hat{\Sigma} \hat{A}^{T})$ distribution with $AG \sim N(0,A\Sigma A^{T})$ by a Gaussian comparison technique.

\begin{remark}[Choice of $\hat{A}$]
	For the non-studentized case, i.e., $\Gamma=I_M$, we can simply take $\hat{A}=D$.  For the studentized case, i.e., $\Gamma_k=\sqrt{D_k^T\Sigma D_k}$ ($1\le k\le M$), we can estimate $\Gamma$ by 
	\begin{equation}\label{eq: Gamma}
	\hat{\Gamma} := \diag \left\{ \sqrt{D_{1}^{T}\hat{\Sigma}D_{1}},\dots,\sqrt{D_{M}^{T}\hat{\Sigma}D_{M}} \right\},
	\end{equation}
	and compute $\hat{A}$ accordingly. We can show that this $\hat{\Gamma}$ satisfies Condition (i) of Theorem \ref{thm: boot} (cf. Lemma \ref{lem: covariance D} in Appendix). 
	In practice, $\hat{\Sigma}$ can be approximated by simulating uniform random variables and then
	$\hat{\Gamma}$ can be computed according to (\ref{eq: Gamma}). 
	%The consistency of this approach can be established under proper conditions, cf. \cite{kato2011note}.
\end{remark}

As a byproduct of the proof of Theorem \ref{thm: boot}, we can show that the conclusion of Theorem \ref{thm: HDCLT} continues to hold even if the matrix $A$ acting on $(\hat{m}(\vx_{\ell})-m(\vx_{\ell}))_{\ell=1}^{L}$ is replaced by its estimate $\hat{A}$. 

\begin{proposition}
	\label{prop: HDCLT normalized}
	Suppose that the conditions of Theorem \ref{thm: HDCLT} together with Condition (i) in the statement of Theorem \ref{thm: boot} hold. In addition, assume that 
	\[
	\frac{(\log n)\log^2 M}{nh^{5}} \to 0 \quad \text{and} \quad  h \log M \to 0.
	\]
	Then, we have 
	\[
	\sup_{b\in \R^M}\left|\P\left(\hat{A}\sqrt{nh^3}(\hat{m}(\vx_{\ell})-m(\vx_{\ell}))_{\ell=1}^L\le b\right)-\P\left(AG\le b\right)\right| \to 0.
	\]
\end{proposition}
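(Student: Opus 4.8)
The plan is to deduce Proposition~\ref{prop: HDCLT normalized} from Theorem~\ref{thm: HDCLT} by treating the replacement of $A$ by $\hat{A}$ as a small coordinatewise rescaling that is absorbed through Gaussian anti-concentration over hyperrectangles. Write $W := \sqrt{nh^3}(\hat{m}(\vx_{\ell})-m(\vx_{\ell}))_{\ell=1}^{L} \in \R^{L}$, so that Theorem~\ref{thm: HDCLT} gives $\sup_{b\in\R^M}|\P(AW\le b)-\P(AG\le b)|\to 0$. It therefore suffices to show $\sup_{b\in\R^M}|\P(\hat{A}W\le b)-\P(AW\le b)|\to 0$ and combine by the triangle inequality.

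Since $A=\Gamma^{-1}D$ and $\hat{A}=\hat{\Gamma}^{-1}D$ with $\Gamma,\hat{\Gamma}$ diagonal, the two statistics differ only by a coordinatewise rescaling, $(\hat{A}W)_k=(\Gamma_k/\hat{\Gamma}_k)(AW)_k$, so $\xi:=\hat{A}W-AW$ has $k$-th coordinate $((\Gamma_k-\hat{\Gamma}_k)/\hat{\Gamma}_k)(AW)_k$. I would bound $\|\xi\|_\infty$ in probability as follows. Condition~(i) in the statement of Theorem~\ref{thm: boot} gives $\max_{k}|\hat{\Gamma}_k-\Gamma_k|=O_P(\rho_n)$ with $\rho_n:=n^{-1/2}h^{-5/2}\sqrt{\log n}+h$; since $\Gamma_k\ge c_4$ by Assumption~\ref{asmp: assumption2}~(iii), this also forces $\min_k\hat{\Gamma}_k\ge c_4/2$ on an event of probability tending to one. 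For the factor $\max_k|(AW)_k|$, note that $AG$ has coordinates with variances $D_k^T\Sigma D_k/\Gamma_k^2$ bounded away from zero and above (by Assumptions~\ref{asmp: assumption1} and~\ref{asmp: assumption2}), so the Gaussian maximal inequality gives $\max_k|(AG)_k|=O_P(\sqrt{\log M})$; transferring this through the rectangle approximation of Theorem~\ref{thm: HDCLT} --- the set $\{w:\max_k|w_k|\le t\}$ is a hyperrectangle --- yields $\max_k|(AW)_k|=O_P(\sqrt{\log M})$ as well. Hence $\|\xi\|_\infty=O_P(\rho_n\sqrt{\log M})$.

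Next I would fix a sequence $\eta_n=C_n\rho_n\sqrt{\log M}$ with $C_n\to\infty$ slowly enough that $\P(\|\xi\|_\infty>\eta_n)\to 0$. For any $b\in\R^M$,
\[
\P(AW\le b-\eta_n\mathbf{1})-\P(\|\xi\|_\infty>\eta_n)\ \le\ \P(\hat{A}W\le b)\ \le\ \P(AW\le b+\eta_n\mathbf{1})+\P(\|\xi\|_\infty>\eta_n),
\]
so $\sup_b|\P(\hat{A}W\le b)-\P(AW\le b)|$ is at most $\P(\|\xi\|_\infty>\eta_n)$ plus $\sup_b[\P(AW\le b+\eta_n\mathbf{1})-\P(AW\le b-\eta_n\mathbf{1})]$. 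Applying Theorem~\ref{thm: HDCLT} to pass from $AW$ to $AG$ in the last supremum (at a cost of $o(1)$) and then the Gaussian anti-concentration inequality for hyperrectangles \citep[cf.][]{chernozhukov2017central}, using that the coordinates of $AG$ have variances bounded below, bounds this supremum by a constant times $\eta_n\sqrt{\log M}=C_n\rho_n\log M$. Since $\rho_n\log M\to 0$ under the assumed conditions $\frac{(\log n)\log^2 M}{nh^5}\to 0$ and $h\log M\to 0$, choosing $C_n$ to diverge sufficiently slowly makes the whole bound $o(1)$. Combined with $\sup_b|\P(AW\le b)-\P(AG\le b)|\to 0$ from Theorem~\ref{thm: HDCLT}, this proves the claim.

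The main obstacle is that the perturbation is multiplicative rather than additive: one must establish the uniform-in-$k$ order bound $\max_k|(AW)_k|=O_P(\sqrt{\log M})$ for the pre-limit statistic (not only for its Gaussian proxy) while simultaneously keeping $\hat{\Gamma}_k$ bounded away from zero uniformly in $k$, and then carry the extra $\sqrt{\log M}$ factor through the anti-concentration step --- this is exactly why the proposition asks for slightly stronger, $\log^2 M$-type rate conditions than Theorem~\ref{thm: HDCLT}. All the needed ingredients (the rate for $\hat{\Gamma}$ in the studentized case via Lemma~\ref{lem: covariance D}, and the Gaussian comparison and anti-concentration lemmas) already appear in the proof of Theorem~\ref{thm: boot}, so the argument is essentially a short corollary of that machinery.
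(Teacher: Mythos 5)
Your proposal is correct and follows essentially the same route as the paper: the paper likewise writes $\hat{A}W-AW=(\hat{\Gamma}^{-1}-\Gamma^{-1})DW$, derives $\|DW\|_{\infty}=O_P(\sqrt{\log M})$ by transferring the Gaussian maximal bound $\E[\|DG\|_{\infty}]=O(\sqrt{\log M})$ through the rectangle approximation of Theorem~\ref{thm: HDCLT}, concludes that the perturbation is $o_P(1/\sqrt{\log M})$ under the stated rate conditions, and absorbs it via Nazarov's anti-concentration exactly as in the last step of Theorem~\ref{thm: HDCLT}. Your multiplicative-rescaling bookkeeping is just an algebraic rearrangement of the same decomposition, so no substantive difference.
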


In what follows, we discuss applications of the pivotal bootstrap to constructions of pointwise and simultaneous confidence intervals and testing using studentized statistics.

%\begin{example}[Pointwise inference] 
%\label{ex: pointwise2}
%Consider to construct a pointwise confidence interval for $m(\vx)$ for a given $\vx \in \cX_{0}$. In this case, $D=1$ and $A =1/\sigma_{\vx}$ with $\sigma_{\vx}^{2} = \E[\psi_{\vx}(U_i,\vX_i)^2]$. 
%Let $\hat{\sigma}_{\vx}^{2} = \E_{|\cD_n}[\hat{\psi}_{\vx}(U_i,\vX_{i})^2]$. Then, Proposition \ref{prop: HDCLT normalized} and Theorem \ref{thm: boot} imply that 
%\[
%\sup_{b \in \R}\left | \P \left(\sqrt{nh^{3}}
%(\hat{m}(\vx) - m(\vx))/\hat{\sigma}_{\vx} \le b \right) - \P_{U}\left(n^{-1/2} {\textstyle \sum}_{i=1}^{n} \hat{\psi}_{\vx}(U_i,\vX_i)/\hat{\sigma}_{\vx} \le b \right) \right| \stackrel{P}{\to} 0.
%\]
%under regularity conditions. Thus, denoting by 
%\[
%\begin{split}
%\hat{q}_{1-\alpha} &= \text{conditional $(1-\alpha)$-quantile of} \ \left |n^{-1/2}{\textstyle \sum}_{i=1}^{n} \hat{\psi}_{\vx}(U_i,\vX_i)/\hat{\sigma}_{\vx} \right| \\
%&=\inf \left \{ b : \P_{U}\left(\left|n^{-1/2}{\textstyle \sum}_{i=1}^{n} \hat{\psi}_{\vx}(U_i,\vX_i)/\hat{\sigma}_{\vx} \right| \le b \right) \ge 1-\alpha \right\}
%\end{split}
%\]
%we see that the data-dependent interval 
%\[
%\left [ \hat{m}(\vx) \pm \frac{\hat{\sigma}_{\vx}}{\sqrt{nh^{3}}} \hat{q}_{1-\alpha} \right ]
%\]
%contains $m(\vx)$ with probability approaching $1-\alpha$.
%\end{example}

\begin{example}[Simultaneous confidence intervals] 
	\label{ex: simultaneous2}
	Consider construction of a simultaneous confidence interval for  $m(\vx_1),\dots, m(\vx_{L})$. 
	In this case, $D = I_{L}$ ($M=L$), $A=\diag \{ 1/\sigma_{\vx_1},\dots,1/\sigma_{\vx_L} \}$, and $\hat{A} = \diag \{1/\hat{\sigma}_{\vx_1},\dots,1/\hat{\sigma}_{\vx_L} \}$, where $\sigma_{\vx}^{2} = \E[\psi_{\vx}(U,\vX)^2]$ 
	and $\hat{\sigma}_{\vx}^{2} = n^{-1} \sum_{i=1}^{n} \E_{|\cD_n}[\hat{\psi}_{\vx}(U_i,\vX_{i})^2]$. Then, Proposition \ref{prop: HDCLT normalized} and  Theorem \ref{thm: boot} imply that, for $G = (g_1,\dots,g_L)^{T} \sim N(0,\Sigma)$,
	\begin{equation}
	\begin{split}
	&\sup_{b \in \R} \left| \P \left ( \max_{1 \le \ell \le L} \left|\sqrt{nh^{3}}
	(\hat{m}(\vx_{\ell}) - m(\vx_{\ell}))/\hat{\sigma}_{\vx_{\ell}}\right| \le b \right ) - \P \left ( \max_{1 \le \ell \le L} |g_{\ell}/\sigma_{\vx_{\ell}}| \le b \right ) \right | \to 0, \ \text{and} \\
	&\sup_{b \in \R} \left| \P_{|\cD_n} \left ( \max_{1 \le \ell \le L} \left|n^{-1/2}{\textstyle \sum}_{i=1}^{n} \hat{\psi}_{\vx_{\ell}}(U_i,\vX_i)/\hat{\sigma}_{\vx_{\ell}}\right| \le b \right ) - \P \left ( \max_{1 \le \ell \le L} |g_{\ell}/\sigma_{\vx_{\ell}}| \le b \right ) \right | \stackrel{P}{\to} 0.
	\end{split}
	\label{eq: convergence}
	\end{equation}
	Denoting by 
	\[
	\hat{q}_{1-\alpha} =\text{conditional $(1-\alpha)$-quantile of} \ \max_{1 \le \ell \le L}\left|n^{-1/2}{\textstyle \sum}_{i=1}^{n} \hat{\psi}_{\vx_{\ell}}(U_i,\vX_i)/\hat{\sigma}_{\vx_{\ell}}\right|,
	\]
	we can show that the data-dependent rectangle (interval when $L=1$)
	\[
	\prod_{\ell=1}^{L} \left [ \hat{m}(\vx_{\ell}) \pm \frac{\hat{\sigma}_{\vx_{\ell}}}{\sqrt{nh^{3}}}\hat{q}_{1-\alpha}  \right ]
	\]
	contains the vector $(m(\vx_{\ell}))_{\ell=1}^{L}$ with probability approaching $1-\alpha$. 
	
	Formally, the coverage guarantee of the preceding confidence rectangle  follows from 
	\begin{equation}
	\P \left ( \max_{1 \le \ell \le L} \left|\sqrt{nh^{3}}
	(\hat{m}(\vx_{\ell}) - m(\vx_{\ell}))/\hat{\sigma}_{\vx_{\ell}}\right| \le \hat{q}_{1-\alpha} \right )  \to 1-\alpha.
	\label{eq: confidence guarantee}
	\end{equation}
	The latter (\ref{eq: confidence guarantee}) follows from the preceding convergence result (\ref{eq: convergence}) coupled with Lemma \ref{lem: quantile} in Appendix (note: since in general $\max_{1 \le \ell \le L}|g_{\ell}/\sigma_{\vx_{\ell}}|$ need not have a limit distribution, it is not immediate that the former (\ref{eq: convergence}) implies the latter (\ref{eq: confidence guarantee}); cf. Lemma 23.3 in \cite{van2000asymptotic}). A similar analysis can be done for constructing simultaneous confidence intervals for partial effects of certain covariates. 
\end{example}

\begin{example}[Testing significance of covariates]
	\label{ex: testing2}
	Consider testing  the hypothesis $H_0: m(\vx_{1}) = \cdots = m(\vx_{L})$ for some $\vx_{1},\dots,\vx_{L} \in \cX_{0}$. 
	In this case, the matrix $D$ is given by $D_t$ in (\ref{eq: test design}) with $M=L-1$, and $A(\hat{m}(\vx_{\ell}) - m(\vx_{\ell}))_{\ell=1}^{L} = \big ( (\hat{m}(\vx_{\ell+1}) - \hat{m}(\vx_{\ell}))/\sigma_{\vx_{\ell+1},\vx_{\ell}} \big )_{\ell=1}^{L-1}$ under $H_0$, where $\sigma_{\vx_{\ell+1},\vx_{\ell}}^2 = \E[(\psi_{\vx_{\ell+1}}-\psi_{\vx_{\ell}})^2 (U,\vX)]$. 
	Let $\hat{\sigma}_{\vx_{\ell+1},\vx_{\ell}}^{2}=n^{-1} \sum_{i=1}^{n}\E_{|\cD_n}[(\hat{\psi}_{\vx_{\ell+1}}-\hat{\psi}_{\vx_{\ell}})^2 (U_i,\vX_i)]$. 
	We shall consider the test of the form 
	\begin{equation}
	\max_{1 \le \ell \le L-1} \frac{\sqrt{nh^3}|\hat{m}(\vx_{\ell+1}) - \hat{m}(\vx_{\ell})|}{\hat{\sigma}_{\vx_{\ell+1},\vx_{\ell}}} > c \ \Rightarrow \ \text{reject $H_{0}$}
	\label{eq: mode test}
	\end{equation}
	for some critical value $c$. To calibrate critical values, we may use the pivotal bootstrap. For a given level $\alpha \in (0,1)$, let 
	\[
	\hat{c}_{1-\alpha} = \text{conditional $(1-\alpha)$-quantile of} \ \max_{1 \le \ell \le L-1} \left | n^{-1/2} {\textstyle \sum}_{i=1}^{n} (\hat{\psi}_{\vx_{\ell+1}}-\hat{\psi}_{\vx_{\ell}})(U_i,\vX_i)/\hat{\sigma}_{\vx_{\ell+1},\vx_{\ell}} \right |.
	\]
	Then, Proposition \ref{prop: HDCLT normalized} and Theorem \ref{thm: boot} guarantee that, under regularity conditions,  the test (\ref{eq: mode test}) with $c=\hat{c}_{1-\alpha}$ has level approaching $\alpha$ if $H_0$ is true (cf. the discussion at the end of the preceding example). 
	The case where the $D$ matrix is given by $D_c$ in (\ref{eq: test design}) is similar; we omit the details for brevity. 
\end{example}

\subsubsection{Nonparametric bootstrap}

In this section, we consider and analyze the  nonparametric (empirical) bootstrap to approximate the sampling distribution of our estimator or the approximating Gaussian distribution $AG$ that appears in Theorem \ref{thm: HDCLT}. The nonparametric bootstrap proceeds as follows. We draw $n$ i.i.d.\ bootstrap samples $(Y_1^*,\vX_1^*),\dots, (Y_n^*,\vX_{n}^*)$ from the empirical distribution of $(Y_1,\vX_1),\dots,(Y_n,\vX_n)$. For a design point $\vx \in \mathcal{X}_0$, we denote the mode estimator computed from the bootstrap samples by $\hat{m}^*(\vx)$. Then, we shall estimate the distribution of $AG$ by the conditional distribution of $\hat{A}\sqrt{nh^3}(\hat{m}^*(\vx_{\ell})-\hat{m}(\vx_{\ell}))_{\ell=1}^L$ given the data $\mathcal{D}_n$, where we define the same $\hat{A}$ as in the pivotal bootstrap. The following theorem establishes consistency of the nonparametric bootstrap over the rectangles.

\begin{theorem}[Validity of nonparametric bootstrap]\label{thm: nonparametric bootstrap}
	Suppose that Assumptions \ref{asmp: assumption1} and \ref{asmp: assumption2} hold  with $q > 4$ in Condition (v) in Assumption \ref{asmp: assumption1}. In addition,  assume that, for arbitrarily small $\gamma>0$, 
	\begin{enumerate}
		\item[(i).]  $\max_{1 \le k \le M}|\hat{\Gamma}_k-\Gamma_{k}|=O_P(n^{-1/2}h^{-5/2}\sqrt{\log n}+h)$.
		\item[(ii).] $\frac{\log^7\left(Mn\right)}{n^{1-2/q}h} \bigvee \ \frac{\log^3 (Mn)}{n^{1-4/q}h} \bigvee \frac{\log^4 M}{n^{1-\gamma}h^{5}} \to 0 \quad \text{and} \quad  (h \log M\vee nh^7)\log M \to 0.$
	\end{enumerate}	
	Then, we have 
	\[
	\sup_{b\in \R^M}\left|\P_{|\cD_n}\left(\hat{A}\sqrt{nh^3}(\hat{m}^*(\vx_{\ell})-\hat{m}(\vx_{\ell}))_{\ell=1}^L\le b\right)-\P\left(AG\le b\right)\right| \stackrel{P}{\to} 0.
	\]
\end{theorem}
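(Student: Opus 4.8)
The plan is to follow the same three-step program that underlies the pivotal bootstrap (Theorem~\ref{thm: boot}): first a conditional-on-$\cD_n$ asymptotic linear representation for the bootstrapped estimator, then a high-dimensional CLT applied conditionally, and finally a Gaussian comparison. The one structural difference is that the resampling randomness now enters through the multinomial bootstrap weights, which are unbounded rather than bounded, and this is exactly what forces the slightly stronger rate condition~(ii) with its arbitrarily small $\gamma>0$.

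\textbf{Step 1 (bootstrap asymptotic linear representation).} Let $M_{n1},\dots,M_{nn}$ be the multinomial$(n;n^{-1},\dots,n^{-1})$ multiplicities of the bootstrap draws and, as in Proposition~\ref{prop: UAL}, write $U_i=F(Y_i\mid\vX_i)\sim U(0,1)$. I would establish that, conditionally on $\cD_n$ and uniformly over $\vx\in\{\vx_1,\dots,\vx_L\}$,
\[
\sqrt{nh^3}\big(\hat m^*(\vx)-\hat m(\vx)\big)=\frac{1}{\sqrt n}\sum_{i=1}^n (M_{ni}-1)\,\psi_{\vx}(U_i,\vX_i)+R_n(\vx),
\]
with a remainder that is negligible uniformly over the $L$ coordinates, i.e.\ $\sqrt{\log M}\,\max_{\ell}|R_n(\vx_\ell)|\stackrel{P}{\to}0$. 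This is obtained by re-running the derivation of Proposition~\ref{prop: UAL} on the bootstrap sample: a bootstrap Bahadur representation for $\hat\beta^*(\tau)-\hat\beta(\tau)$ uniform in $\tau\in[\epsilon,1-\epsilon]$, propagation through the linear smoothing operator $K_h\ast(\cdot)$ and its $\tau$-derivatives, and linearization of the argmin step $\hat\tau_{\vx}^*-\hat\tau_{\vx}$ using that $\vx\mapsto\hat s_{\vx}''(\hat\tau_{\vx})$ is bounded away from zero (Assumption~\ref{asmp: assumption1}(vi)), mirroring the population-level argument. The genuinely new feature is that every empirical-process term now carries the weights $M_{ni}-1$; here I would invoke the multiplier inequality of \cite{han2019convergence}, which is tailored to such unbounded multipliers, to bound the bootstrap analogues of the higher-order error terms $O_P(n^{-1/2}h^{-1}+n^{-1}h^{-4}\log n)$ appearing in Proposition~\ref{prop: UAL}.

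\textbf{Steps 2 and 3 (conditional high-dimensional CLT and Gaussian comparison).} Given Step~1, I would apply the high-dimensional CLT for the empirical bootstrap of \cite{chernozhukov2017central}, conditionally on $\cD_n$, to the leading term $n^{-1/2}\sum_i(M_{ni}-1)\hat A\,(\psi_{\vx_\ell}(U_i,\vX_i))_{\ell=1}^L$, obtaining that its conditional law over rectangles in $\R^M$ is close to that of $N(0,\hat A\widetilde\Sigma\hat A^{T})$, where $\widetilde\Sigma$ is the empirical second-moment matrix $\widetilde\Sigma_{\ell\ell'}=n^{-1}\sum_i\psi_{\vx_\ell}(U_i,\vX_i)\psi_{\vx_{\ell'}}(U_i,\vX_i)$ up to negligible empirical means; the rate condition~(ii), combined with the moment bound $\E[\|\vX\|^q]<\infty$ for $q>4$ (Assumption~\ref{asmp: assumption1}(iii)) controlling the envelope, is what makes this valid with $L$ and $M$ possibly $\gg n$. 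I would then show $\|\hat A\widetilde\Sigma\hat A^{T}-A\Sigma A^{T}\|_{\infty}\stackrel{P}{\to}0$ — combining Condition~(i) on $\hat\Gamma$ with a law of large numbers for the sandwiched terms $\psi_{\vx_\ell}(U_i,\vX_i)\psi_{\vx_{\ell'}}(U_i,\vX_i)$ over the $O(M)$ index pairs that survive the sparsity of $D$, together with uniform consistency of the plug-in nuisance estimates ($\hat J$, $\hat s_{\vx}$, $\hat\tau_{\vx}$, $\hat s_{\vx}''$; since $\hat m^*$ is itself a plug-in, one can equally run Step~1 with $\hat\psi_{\vx}$ and reach the same $\widetilde\Sigma$) — and invoke the Gaussian comparison bound of \cite{chernozhukov2014gaussian}. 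Chaining Steps~1--3 and using anti-concentration of Gaussian maxima over rectangles yields the claim.

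\textbf{Main obstacle.} The crux is Step~1: producing a bootstrap asymptotic linear representation whose remainder remains negligible even after inflation by $\sqrt{\log M}$. Because the bootstrap multipliers are unbounded, the usual bounded-multiplier symmetrization and contraction arguments do not apply directly, and controlling the weighted higher-order terms uniformly over many design points (while tracking the various powers of $h$) is precisely where the multiplier inequality of \cite{han2019convergence} is needed, and where the weakening of the rate condition relative to Theorem~\ref{thm: HDCLT} — the $n^{1-\gamma}$ in the denominator of the $h^{5}$ term and the extra $\log M$ factor on the $h$ term — originates.
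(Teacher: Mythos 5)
Your proposal matches the paper's proof in all essentials: a bootstrap Bahadur/asymptotic-linear representation whose remainder is controlled via the multiplier inequality of \cite{han2019convergence} for the unbounded multinomial weights (this is exactly where the $n^{1-\gamma}$ in condition (ii) comes from), followed by a conditional high-dimensional CLT and a Gaussian comparison between $\hat A\widetilde\Sigma\hat A^{T}$ and $A\Sigma A^{T}$, with Nazarov's anti-concentration absorbing the remainder. The only cosmetic difference is that the paper writes the leading term as $n^{-1/2}\sum_i(\hat A\Psi_i^{*}-\hat A\bar\Psi)$ with $\Psi_i^{*}$ the conditionally i.i.d.\ resampled vectors (so that the i.i.d.\ version of the CCK proposition applies directly), which is the same random vector as your weighted sum $n^{-1/2}\sum_i(M_{ni}-1)\hat A\Psi_i$.
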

The proof of Theorem \ref{thm: nonparametric bootstrap} can be found in Appendix. The proof consists of the following steps. First, we establish a uniform linear representation for $(\hat{m}^*(\vx_{\ell})-\hat{m}(\vx_{\ell}))_{\ell=1}^L$ based on a Bahadur representation for the nonparametric bootstrap quantile regression estimator (see Lemma \ref{lem: bahadur nonparametric bootstrap}); Then we follow a similar proof strategy to Theorem \ref{thm: HDCLT} while conditioning on the data $\mathcal{D}_n$ that  requires a more involved analysis than Theorem \ref{thm: HDCLT}.

\begin{remark}[Comparison with the pivotal bootstrap]
\label{rem: bootstrap comparison}
	The consistency of two bootstrap methods are established under fairly similar conditions. However, the nonparametric bootstrap can be computationally more demanding since it requires computing mode estimates on sufficiently many bootstrap samples. In contrast, the pivotal bootstrap only requires estimating nuisance parameters once and evaluating the influence functions repeatedly by generating uniform random variables, which can be easily parallelized and adapted to the distributed setting. Therefore, the pivotal bootstrap can be computationally more attractive than the nonparametric bootstrap. Our simulation results also demonstrate the computational advantage of the pivotal bootstrap over the nonparametric bootstrap, cf. Appendix \ref{sec : np bootstrap simulation}. 
\end{remark}

\section{Numerical examples} 
\label{sec: numerical}
\subsection{Simulation results}
In this section, we present the numerical performance of the pivotal bootstrap using synthetic data. Due to the space limitation, we defer the simulation results for the nonparametric bootstrap and pivotal bootstrap testing (Example \ref{ex: testing2}) to Appendices E.1 and E.3 in the supplementary material. We start with discussing implementation details, in particular the bandwidth selection. 

\subsubsection{Implementation details}
\label{sec: implementation}
In our simulation study, we use the biweight kernel, $K(t)=\frac{15}{16}(1-t^2)^2I(|t| < 1)$, and use $\epsilon = 0.1$ when computing our modal estimator.  We estimate the matrix $J(\tau)$ by  $\hat{J}(\tau)=(2n\check{h})^{-1}\sum_{i=1}^{n}I(|Y_i-\vX_i^T\hat{\beta}(\tau)|\le \check{h})\vX_i\vX_i^T$,  where $\check{h}$ is set to be the default bandwidth in \emph{quantreg} package in R (the theory does not require the kernel used to estimate $J(\tau)$ to be smooth). For the minimization of the sparsity function, we used the R function \emph{optimize()} with the computed derivative of the smoothed quantile function as the input.  We find that computing $\hat{s}''(\tau_{\vx})$ by differentiating $\hat{Q}_{\vx}(\tau)$ three times tends to be unstable in the finite sample. Instead, we use the alternative expression $s''(\tau_{\vx})=-f^{(2)}(Q_{\vx}(\tau_{\vx})\ |\ \vx)  s_{\vx}(\tau_{\vx})^4$ and estimate the derivative $f^{(2)}(\cdot \mid \vx)$ by a kernel method as in Remark 9 of \cite{ohta2018quantile} (we plug in $\hat{Q}_{\vx}(\hat{\tau}_{\vx})$ and $\hat{s}_{\vx}(\hat{\tau}_{\vx})$ for $Q_{\vx}(\tau_{\vx})$ and $s_{\vx}(\tau_{\vx})$, respectively). We defer more implementation details of nuisance parameter estimation to Appendix G.

Finally, we discuss bandwidth selection. Corollary \ref{lem: pointwise} implies that the approximate MSE of $\hat{m}(\vx)$  is 
\[
\left [\frac{s_{\vx}(\tau_{\vx})s_{\vx}^{(3)}(\tau_{\vx})}{2s_{\vx}''(\tau_{\vx})} \kappa h^2 \right ]^2 +  \frac{\kappa_1 s_{\vx}(\tau_{\vx})^2\E[(x^{T}J(\tau_{\vx})^{-1}\vX)^2]}{nh^{3}s_{\vx}''(\tau_{\vx})^2}.
\]
The optimal $h$ that minimizes the above approximate MSE is given by 
\[
h_{\mathrm{opt}}(\vx):= \left[\frac{3 \kappa_1 \vx^TJ(\tau_{\vx})^{-1}\E[\vX \vX^T] J(\tau_{\vx})^{-1}\vx }{\kappa^2 s_{\vx}^{(3)}(\tau_{\vx})^2 }\right]^{1/7} n^{-1/7}.
\]
Here we make some remarks on the optimal bandwidth. First, we note that
%$h_{\mathrm{opt}}$ is obtained by balancing the bias and the variance of the proposed estimator. Therefore,
direct use of $h_{\mathrm{opt}}$ will result in an asymptotic bias and a bias-correction will be needed. However, the asymptotic bias contains high order derivatives of the conditional quantile function that
are hard to estimate. Hence, we recommend a smaller bandwidth to be used in the finite sample implementation. In our numerical analysis, we multiply $h_{\mathrm{opt}}$ by $0.8$ to correct for too large bandwidths. We start with an initial bandwidth $h_{\mathrm{ini}}=0.8\times n^{-1/7}$ to get the initial estimate $\hat{\tau}_{\vx}^0$ for $\tau_{\vx}$ and replace $J(\tau_{\vx})$ and $\E[\vX\vX^T]$ in $h_{\mathrm{opt}}$ with $\hat{J}(\hat{\tau}^0_{\vx})$ and $n^{-1}\sum_{i=1}^n \vX_i \vX_i^T$ respectively.
Considering that estimation of the fourth derivative of the conditional quantile function is highly unstable, we adopt a ``rule of thumb"  method by using the fourth derivative of the quantile function of the standard normal distribution, i.e., plugging in $(\Phi^{-1}(\tau))^{(4)}$ for $s_{\vx}^{(3)}(\tau)$ regardless of different design points, where $\Phi$ is the distribution function of $N(0,1)$. This will lead to an estimate of $h_{\mathrm{opt}}$.
 We iterate the process one more time to construct the final computed bandwidth.
For the simultaneous inference on multiple design points, we take the bandwidth to be the median of the pointwise bandwidths at those design points. Our empirical results show that the above bandwidth selection approach works reasonably well. 

\subsubsection{Pointwise confidence intervals} \label{sec: pCI simu}
%\subsubsection{ Linear mode function model} 
We will consider two different models which correspond to linear and nonlinear mode functions respectively.  Suppose that $Y$ and $\vX=(1,X_1)$ are generated from either of the following models,
\begin{itemize}
	\item (Linear modal function) 	$Y=1+3X_1+\sigma(X_1) \xi$,
	\item (Nonlinear modal function) $Y=3U^3-3X_1U^2+3X_1 U $.
\end{itemize}
In the linear modal function case, we take $\sigma(x)=1+2x$. For the distribution of $\xi$, we consider two cases: $\xi \sim N(0,1)$ (\emph{lmNormal} model) and $\log(\xi)\sim N(1,0.64)$ (\emph{lmLognormal} model). These two cases are of interest since the conditional mode coincides with the conditional mean in the first case while they are different in the second. In particular, $m(\vX)=1+3X_1$ for the \emph{lmNormal} model and $m(\vX)=1+3X_1+(1+2X_1) e^{0.36}$ for the \emph{lmLognormal} model, both of which are linear in $\vX$. Similar models are considered in the simulation analyses of \cite{yao2014new} and \cite{ohta2018quantile}. For the nonlinear modal function case (\emph{Nonlinear} model), we take $U \sim U(0,1)$ and thus $m(\vX)=-2X_1^3/9+X_1^2$, which is nonlinear in $\vX$. We generate the covariate $X_1 \sim U(0,1)$ in both linear modal models and $X_1 \sim U(0,3)$ for the nonlinear modal model.

For each model, we construct $95\%$ and $99\%$ confidence intervals for the conditional mode. %when covariate $X_1$ takes value at $0.3$, $0.5$ and $0.7$, i.e. $m(\vx)$ 
For the \emph{lmNormal} and \emph{lmLognormal} models, we consider the following design points $\vx=(1,0.3)$, $(1,0.5)$ and $(1,0.7)$, while for the \emph{Nonlinear} model, we consider $\vx=(1,0.7)$, $(1,0.9)$ and $(1,1.1)$. We consider different sample sizes ranging from 500 to 2000 and repeat computing the confidence intervals  under different sample sizes for $500$ times. %\textcolor{blue}{In the implementation of our pivotal bootstrap, we set $0.9\le\omega\le 1$ for the \emph{lmNormal} model, $0.4\le\omega\le 0.5$ for the \emph{lmLognormal} and \emph{Nonlinear} models.} 
The resulting empirical coverage probabilities and interval length statistics are reported in Tables \ref{tab:point normal} to \ref{tab:point nonlinear}. In the simulation, we find that some of the computed confidence intervals are extremely large, especially when the sample size is comparatively small ($n=500$) due to the unstable estimation of high order derivatives of the conditional quantile function. Therefore, we report the median length of the confidence intervals to exclude the influence of those extreme results. We also present the interquartile range of the lengths of the computed confidence intervals.
\iftrue
\begin{table}[h!]
	\centering
	\caption{Simulation results for pointwise confidence intervals for \emph{lmNormal} model. For each case, the results are computed based on 500 simulated datasets with 500 bootstrap iterations.}
	\label{tab:point normal}
	{\small
		%\resizebox{\textwidth}{!}{
		\renewcommand{\arraystretch}{0.7}%
		\begin{tabularx}{\textwidth}{ Z Z Z Z Z Z Z Z}
			\toprule
			\multicolumn{1}{c}{\multirow{2}{*}{Design point}} & \multicolumn{1}{c}{\multirow{2}{*}{Sample size}} & \multicolumn{2}{c}{Coverage probability} & \multicolumn{2}{c}{Median length} & \multicolumn{2}{c}{Interquantile range} \\
			\cmidrule(lrr){3-4} \cmidrule(lrr){5-6}  \cmidrule(lrr){7-8} 
			
			\multicolumn{1}{c}{}                              & \multicolumn{1}{c}{}                             & 95\%                & 99\%               & 95\%            & 99\%             & 95\%            & 99\%         \\
			\midrule
			\multirow{3}{*}{$X_1$=0.3}                            & $n=500$                                            & 94.4\%              & \multicolumn{1}{c}{97.6\%}              & 1.17            & 1.55       & 1.23            & 1.62                  \\
			& $n=~1000$                                           & 96\%                & \multicolumn{1}{c}{98.2\%}               & 0.94            & 1.23        & 0.92            & 1.19            \\
			& $n=~2000$                                           & 96\%                & \multicolumn{1}{c}{98.2\%}             & 0.79            &1.03      & 0.71            & 0.93        \\
			\midrule
			\multirow{3}{*}{$X_1$=0.5}                            & $n=~500$                                            & 95.2\%              & \multicolumn{1}{c}{98\%}              & 1.43          & 1.87     & 1.29            & 1.64                     \\
			& $n=~1000$                                           & 95.6\%                & \multicolumn{1}{c}{98.8\%}               & 1.12           & 1.47      & 0.99            & 1.3                   \\
			& $n=~2000$                                           & 96\%                & \multicolumn{1}{c}{98.6\%}             & 0.92           & 1.18      & 0.69            & 0.92      
			\\
			\midrule
			\multirow{3}{*}{$X_1$=0.7}                            & $n=~500$                                            & 90.4\%              & \multicolumn{1}{c}{94.2\%}              & 1.42            & 1.89       & 1.67            & 2.21                  \\
			& $n=~1000$                                           & 93\%                & \multicolumn{1}{c}{96.4\%}               & 1.29            & 1.67   & 1.70            & 2.21                       \\
			& $n=~2000$                                           & 93.4\%                & \multicolumn{1}{c}{96.6\%}             & 1.02            & 1.35         & 1.08           & 1.45    
			\\
			\bottomrule   
		\end{tabularx}	
	}
\end{table}

\begin{table}[h!]
	\centering
	\caption{Simulation results for pointwise confidence intervals for \emph{lmLognormal} model. For each case, the results are computed based on 500 simulated datasets with 500 bootstrap iterations.}
	\label{tab:point lognormal}
	{\small
		%\resizebox{\textwidth}{!}{
		\renewcommand{\arraystretch}{0.7}%
		\begin{tabularx}{\textwidth}{ Z Z Z Z Z Z Z Z}
			\toprule
			\multicolumn{1}{c}{\multirow{2}{*}{Design point}} & \multicolumn{1}{c}{\multirow{2}{*}{Sample size}} & \multicolumn{2}{c}{Coverage probability} & \multicolumn{2}{c}{Median length} & \multicolumn{2}{c}{Interquartile range} \\
			\cmidrule(lrr){3-4} \cmidrule(lrr){5-6}  \cmidrule(lrr){7-8} 
			
			\multicolumn{1}{c}{}                              & \multicolumn{1}{c}{}                             & 95\%                & 99\%               & 95\%            & 99\%             & 95\%            & 99\%         \\
			\midrule
			\multirow{3}{*}{$X_1$=0.3}                            & $n=500$                                            & 92.2\%              & \multicolumn{1}{c}{96.6\%}              & 3.00            & 3.73      & 4.05           & 4.82                 \\
			& $n=~1000$                                           & 94.4\%                & \multicolumn{1}{c}{97.4\%}               & 2.12            & 2.77      & 1.70           & 2.09            \\
			& $n=~2000$                                           & 92.4\%                & \multicolumn{1}{c}{96.4\%}             & 2.03            &2.65   & 1.16           & 1.49         \\
			\midrule
			\multirow{3}{*}{$X_1$=0.5}                            & $n=500$                                            & 93.8\%              & \multicolumn{1}{c}{96.8\%}              & 3.70            & 4.65    & 4.61           & 5.69                    \\
			& $n=~1000$                                           & 90.8\%                & \multicolumn{1}{c}{96.2\%}               & 2.45            & 3.26      & 2.07           & 2.76                \\
			& $n=~2000$                                           & 96.4\%                & \multicolumn{1}{c}{98.8\%}             & 2.00            & 2.62        & 1.17           & 1.46   
			\\
			\midrule
			\multirow{3}{*}{$X_1$=0.7}                            & $n=500$                                            &90.2\%              & \multicolumn{1}{c}{94.6\%}              & 4.58            & 5.63      & 5.60           & 6.39                 \\
			& $n=~1000$                                           & 92\%                & \multicolumn{1}{c}{95.8\%}               & 2.89            & 3.68         & 2.19          & 2.82               \\
			& $n=~2000$                                           & 93.6\%                & \multicolumn{1}{c}{97.4\%}             & 2.33            & 3.03   & 1.64           & 2.01        
			\\
			\bottomrule   
		\end{tabularx}	
	}
\end{table}

\begin{table}[h!]
	\centering
	\caption{Simulation results for pointwise confidence intervals for \emph{Nonlinear} model. For each case, the results are computed based on 500 simulated datasets with 500 bootstrap iterations.}
	\label{tab:point nonlinear}
	{\small
		%\resizebox{\textwidth}{!}{
		\renewcommand{\arraystretch}{0.7}%
		\begin{tabularx}{\textwidth}{ Z Z Z Z Z Z Z Z}
			\toprule
			\multicolumn{1}{c}{\multirow{2}{*}{Design point}} & \multicolumn{1}{c}{\multirow{2}{*}{Sample size}} & \multicolumn{2}{c}{Coverage probability} & \multicolumn{2}{c}{Median length} & \multicolumn{2}{c}{Interquartile range} \\
			\cmidrule(lrr){3-4} \cmidrule(lrr){5-6} \cmidrule(lrr){7-8}
			
			\multicolumn{1}{c}{}                              & \multicolumn{1}{c}{}                             & 95\%                & 99\%               & 95\%            & 99\%               & 95\%            & 99\%       \\
			\midrule
			\multirow{3}{*}{$X_1$=0.7}                            & $n=500$                                            & 95.2\%              & \multicolumn{1}{c}{97.2\%}              & 0.80            & 1.01           & 0.97            & 1.12             \\
			& $n=~1000$                                           & 95\%                & \multicolumn{1}{c}{97.6\%}               & 0.66            & 0.84    & 0.77            & 0.93                 \\
			& $n=~2000$                                           & 94.8\%                & \multicolumn{1}{c}{97\%}             & 0.50            & 0.64      & 0.55            & 0.62         \\
			\midrule
			\multirow{3}{*}{$X_1$=0.9}                            & $n=500$                                            & 91.6\%              & \multicolumn{1}{c}{95.2\%}              & 0.75            & 0.95     & 1.03            & 1.25                    \\
			& $n=~1000$                                           & 91.4\%                & \multicolumn{1}{c}{95.8\%}               & 0.64           & 0.83      & 0.85            & 1.07                  \\
			& $n=~2000$                                           & 95.2\%                & \multicolumn{1}{c}{97.8\%}             & 0.58            & 0.75     & 0.80            & 1.04        
			\\
			\midrule
			\multirow{3}{*}{$X_1$=1.1}                            & $n=500$                                            & 92\%              & \multicolumn{1}{c}{95.6\%}              & 0.87            & 1.14    & 1.18            & 1.47                      \\
			& $n=~1000$                                           & 93\%                & \multicolumn{1}{c}{96.6\%}               & 0.69           & 0.89     & 0.92            & 1.15                   \\
			& $n=~2000$                                           & 94.4\%                & \multicolumn{1}{c}{96.8\%}             & 0.57            & 0.74           & 0.79            & 1.00  
			\\
			\bottomrule   
		\end{tabularx}	
	}
\end{table}
\fi
From Tables \ref{tab:point normal} to \ref{tab:point nonlinear}, the bootstrap confidence intervals achieve satisfying coverage probabilities in all three scenarios. We point out that, in each case, the coverage probabilities at $X_1=0.7$ are slightly lower than the other two design points under the same sample size. This is because large $X_1$ results in a large variance of $Y$ which makes the estimation more difficult. We report the mean squared error of our conditional mode estimator, $\hat{m}_{\vx}$ in  Appendix  \ref{sec: MSE} to verify this. However, the pivotal  bootstrap still achieves approximately nominal coverage probabilities in such situations when the sample size is sufficiently large. Besides, we note that the length of the confidence intervals and its variability decrease with the growing sample size for each design point across all three scenarios, which agrees with our asymptotic theories. We also report oracle pivotal bootstrap confidence intervals in Appendix \ref{sec: oracle}  where we plug in $s''_{\vx}(\tau_{\vx})$ using the underlying true density or conditional quantile function. From the results there, we can see a decrease in the length  and interquartile range (in particular, the latter) for the oracle confidence intervals comparing with the results presented above under the same setting. Therefore, we conclude that the estimation of the nuisance parameters may impact the performance of the confidence intervals significantly.  
% We also note that the resulting confidence intervals tend to be ``conservative" in some cases. This is due to those extremely large confidence intervals. 

\subsubsection{Approximate confidence band}\label{sec: sCI simu}
In this section, we investigate the finite sample performance of the pivotal bootstrap in simultaneous inference problems. In particular, we construct approximate confidence bands for the three different models considered in Section \ref{sec: pCI simu}. To build an approximate confidence band, we compute simultaneous confidence intervals over a equally spaced grid of $X_1$. Specifically, we consider a grid with 21 points over interval $[0.4,0.6]$ for the \emph{lmNormal} and \emph{lmLognormal} models and over interval $[0.6,1.2]$  for the \emph{Nonlinear} model. 
%\textcolor{blue}{In the implementation, we set $\omega= 1$ for the \emph{lmNormal} model, $\omega = 0.6$ for the \emph{lmLognormal} model and $\omega = 0.75$ for the \emph{Nonlinear} model.}
%See Figure \ref{} for a visualization of an approximate confidence band. 

We repeat the simulation 500 times for each model and calculate the empirical coverage probabilities and the median lengths defined by taking the median of the median length of the simultaneous confidence intervals in one simulation. The median is used to reduce the influence of potential extreme results in the simulations. The resulting empirical coverage probabilities and median lengths of the approximate confidence bands for each model are presented in 
Table \ref{tab:band}. 

\iftrue
\begin{table}[h!]
	\centering
	\caption{Simulation results for approximate confidence bands for \emph{lmNormal}, \emph{lmLognormal} and \emph{Nonlinear} models. For each case, the results are computed based on 500 simulated datasets with 500 bootstrap iterations.}
	{\small
		%\resizebox{\textwidth}{!}{
		\renewcommand{\arraystretch}{0.7}%
		\begin{tabularx}{\textwidth}{@{} Z Z A A A A  @{} }
			\toprule
			\multicolumn{1}{c}{\multirow{2}{*}{Models}} & \multicolumn{1}{c}{\multirow{2}{*}{Sample size}} & \multicolumn{2}{c}{Coverage probability} & \multicolumn{2}{c}{Median length}  \\
			\cmidrule(lr){3-4} \cmidrule(lrr){5-6} 
			
			\multicolumn{1}{c}{}                              & \multicolumn{1}{c}{}                             & \multicolumn{1}{c}{95\%}                & \multicolumn{1}{c}{99\%}               & 95\%            & 99\%                     \\
			\midrule
			\multirow{3}{*}{lmNormal}                            & $n=500$                                            & 93.4\%              & \multicolumn{1}{c}{97\%}              & 1.71            & 2.16                 \\
			& $n=1000$                                           & 94.6\%                & \multicolumn{1}{c}{97.8\%}               & 1.35           & 1.70               \\
			& $n=2000$                                           & 94.8\%                & \multicolumn{1}{c}{98.4\%}             & 1.07           & 1.36        \\
			\midrule
			\multirow{3}{*}{lmLognormal}                            & $n=500$                                            & 95.2\%              & \multicolumn{1}{c}{97.8\%}              & 6.30           & 7.77  \\
			& $n=1000$                                           & 94.6\%                & \multicolumn{1}{c}{98\%}               & 4.53           & 5.72  \\
			& $n=2000$                                           & 97.4\%                & \multicolumn{1}{c}{99.2\%}             & 3.48            & 4.35           \\
			\midrule
			\multirow{3}{*}{Nonlinear}                            & $n=500$                                            & 96.6\%              & \multicolumn{1}{c}{98.6\%}              & 1.69          & 2.01           \\
			& $n=1000$                                           & 95.6\%                & \multicolumn{1}{c}{98.2\%}               & 1.13          & 1.36            \\
			& $n=2000$                                           & 96.6\%                & \multicolumn{1}{c}{99.2\%}             & 0.84           & 1.02        \\
			\bottomrule   
		\end{tabularx}
		%}	
	}
	\label{tab:band}
\end{table}
\fi
From Table \ref{tab:band}, the approximate confidence bands are able to capture the modes simultaneously with probability close to the nominal probability for large sample sizes. Additionally, similar to the pointwise confidence interval, the lengths of the confidence bands decrease while the sample size grows.

\subsection{U.S. wage data}
In this section, we apply the pivotal bootstrap inference framework on a real US wage data. The data are extracted from U.S. 1980 1\% metro sample from the
Integrated Public Use Microdata Series (IPUMS) website \citep{ruggles2020ipums} and the dataset used in our analysis is provided in the supplemental material. We defer more details of the extracted dataset to Appendix H. In the following, the response $Y$ is the real log annual wage (wage),  and the regressor $\vX$ consists of the highest grade of schooling (edu), age (age) and marital status (marital\_status).
%Our real data analysis consists of two parts. In the first part, 

%We investigate the marginal effect of race on the conditional modes using pivotal bootstrap testing.
%In the second part, we evaluate the performance of bootstrap confidence intervals on bootstrap subsamples of the whole data. 
%\subsubsection{Testing  marginal effect of race}
%In this section, 
We investigate whether the most common wage given the same education and age is different in single and married people.
%Specifically, we investigate whether the most common wage is different in single and married people given the same education and work experience. This can be formulated as a testing of covariate significance problem considered in the simulation study. 
Specifically, we take the two other covariates, education and age, to be the full-sample mode of each covariate and estimate the resulting conditional mode of these two groups.
%and test the equality of the resulting conditional mode between two races. 
The estimation results are presented in Table \ref{tab:realtest}.

\begin{table}[h!]
	\centering
	\caption{Mode estimates and the mode difference confidence intervals of the two groups.}
	{\small
		\renewcommand{\arraystretch}{0.7}
		\begin{tabular}{@{} A A A A @{} }
			\toprule
			\multicolumn{2}{c}{Estimated mode of wage} & \multicolumn{2}{c}{Difference confidence intervals}  \\
			\cmidrule(lr){1-2} \cmidrule(lr){3-4} 
			
			\multicolumn{1}{c}{Single}                & \multicolumn{1}{c}{Married}               & 95\%           & 99\%                     \\
			\midrule
			\multicolumn{1}{c}{9.23}             & \multicolumn{1}{c}{9.68}              & \multicolumn{1}{c}{$(0.13, 0.79)$}           &  \multicolumn{1}{c}{$(0.07,0.84)$}              \\
			\bottomrule   
		\end{tabular}	
	}
	\label{tab:realtest}
\end{table}
To provide an intuitive evaluation of the estimation, in Figure \ref{fig:density}, we collect the people with mode values of education and age from the two groups and plot KDE-based density estimates superimposed on histograms of their log annual wage, respectively. The estimated modes (based on our estimator) and sample means are also highlighted in Figure \ref{fig:density}.
\begin{figure}[h!]
	\begin{subfigure}{0.5\textwidth}
		\centering
		\includegraphics[scale=0.28]{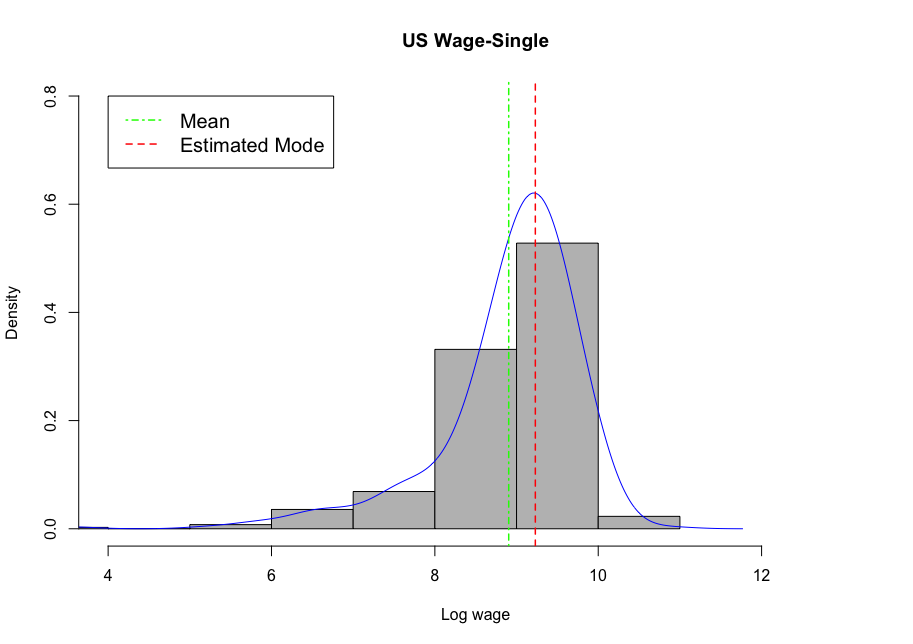}
		\caption{Single people}
		\label{rl_1}
	\end{subfigure}%
	\begin{subfigure}{0.5\textwidth}
		\centering
		\includegraphics[scale=0.28]{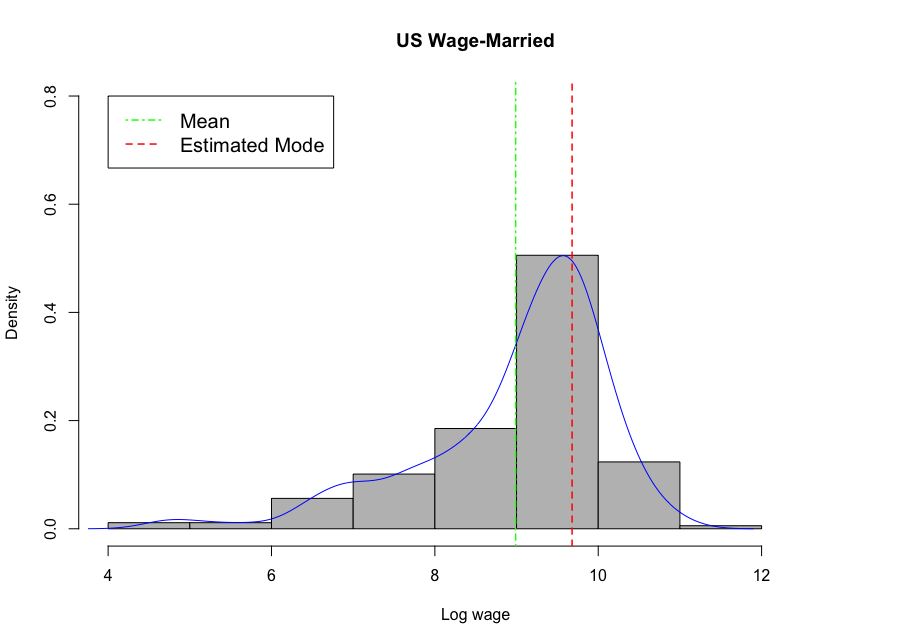}
		\caption{Married people}
		\label{rl_2}
	\end{subfigure}
	\caption{Histograms of log annual wage for single and married people with mode values of education and age based on U.S. 1980 1\% metro sample data.}\label{fig:density}
\end{figure}

From Figure \ref{fig:density}, we have several observations. First, both conditional distributions are skewed, and as argued in \cite{kemp2012regression}, the mode would be a more intuitive measure of central tendency for such skewed data. Second,  our modal estimator provides accurate estimations of conditional modes for both groups. We also present the confidence intervals for the difference of the modes of these two groups (the mode wage of single people minus the mode wage of married people) in Table \ref{tab:realtest}. Since $0$ is not contained in both 95\% and 99\% confidence intervals, we conclude that the difference of the conditional modes between the two groups is statistically significant under those two nominal levels. Therefore, the marital status can possibly be a significant factor contributing to the mode of people's wage which can be of social interest worth further research.

\section{Extension to the increasing dimension case}\label{sec: extension}

In this section, we extend the theoretical analysis to the case where the dimension $d$ of the covariate vector is allowed to increase with the sample size $n$, i.e., $d = d_n \to \infty$.
Such situations arise when we approximate conditional quantile function $Q_{\vx}(\tau)$ by a linear combination of basis functions and the approximation error is negligible (in fact, the theory of this section holds as long as the approximation error is at most of the order as the remainder term in the Bahadur representation; see Lemma \ref{prop: bahadur} in Appendix). In this case, $\vX$ is generated as basis functions of a fixed dimensional genuine covariate $\bm{Z}$, i.e., $\vX = W(\bm{Z})$, where vector $W(\bm{Z})$ includes transformations of $\bm{Z}$ that have good approximation properties such as Fourier series, splines, and wavelets;
cf. \cite{belloni2015joe,belloni2019conditional}. It is then of interest to draw simultaneous confidence intervals for the conditional mode along with values of $\bm{Z}$ which has fixed dimension although the dimension of $\vX$ increases with $n$.  

We first modify Assumption \ref{asmp: assumption1} to accommodate the case where $d=d_n \to \infty$. In what follows, constants refer to nonrandom numbers independent of $n$. 
\begin{assumption}  \label{Maup2}
		(i) There exists a constant $C_2 \ge 1$  such that $C_2^{-1} \sqrt{d} \le \| \vx \| \le C_2 \sqrt{d}$ for all $\vx \in \cX_0$; 
		(ii) There exists $\epsilon_1 \in (\epsilon,1/2)$ such that $\tau_{\vx}\in [\epsilon_1,1-\epsilon_1]$ for all $\vx \in \cX_{0}$; 
		(iii) There exists a positive constant $C_3$ such that $\P(\| \vX \| \le C_3 \sqrt{d}) = 1$.
		The Gram matrix $\E[\vX\vX^T]$ is positive definite with smallest eigenvalue $\lambda_{\min}\ge c_{\min}>0$ and largest eigenvalue $\lambda_{\max}\le c_{\max}<\infty$ for some constants $c_{\min}$ and $c_{\max}$;
		(iv) Conditions (iv)--(vii) in Assumption \ref{asmp: assumption1} hold; 	
	  (v) For any $\delta>0$, there exists a positive constant $c_4$ (that may depend on $\delta$) such that $
		\inf_{\vx\in  \cX_0}\inf_{\tau \in[\epsilon,1-\epsilon]; |\tau-\tau_{\vx}|\ge \delta} \{ s_{\vx}(\tau)-s_{\vx}(\tau_{\vx}) \} \ge c_4$;
		(vi) $d^{4} = o(n^{1-c_5})$ for some $c_5 \in (0,1)$.
\end{assumption}

Condition (i)  requires the design points of interest to be of the same order $\sqrt{d}$. We assume condition (i) to state the results in a concise way, but the $\sqrt{d}$ order can be relaxed as long as $\inf_{\vx \in \cX_0}\|\vx\|$ and $\sup_{\vx \in \cX_0} \| \vx\|$ are of the same order. 
The modified condition (ii) is assumed to avoid boundary problems of $\tau_{\vx}$ when the dimension increases.
We also assume that $\|\vX\|$ is bounded by $C_3 \sqrt{d}$ to avoid some technicalities.
In particular, under series approximation framework, this assumption is satisfied when $\vX$ is generated from basis functions such as Fourier series, B-splines and wavelet series; cf. \cite{belloni2015joe}.  The condition on the Gram matrix is satisfied under mild conditions on the distribution of the genuine covariate $\bm{Z}$ and basis functions; cf. \cite{belloni2019conditional}. 
Condition (v) is a global identification condition on $\tau_{\vx}$ that is needed to verify the uniform consistency of $\hat{\tau}_{\vx}$. If $d$ is fixed, then Condition (v) follows automatically as $\vx \mapsto \tau_{\vx}$ is continuous under Assumption \ref{asmp: assumption1} (see the proof of Lemma \ref{lem: consistency}), but if $d = d_n \to \infty$, then $s_{\vx}$ and $\tau_{\vx}$ depend on $n$, so that
we require Condition (v). Condition (vi) is used to guarantee the Bahadur representation of $\hat{\beta}(\tau)$; cf. Theorem 2 in \cite{belloni2019conditional}.

Redefine $\Psi_i$ as $\Psi_i:=(\xi_{\vx_1}(U_i,\vX_i),\dots,\xi_{\vx_{L}}\left(U_i,\vX_i\right))^T$ with 
\[
\xi_{\vx} (u,\vx') :=\frac{s_{\vx}(\tau_{\vx})}{s_{\vx}''(\tau_{\vx})\sqrt{dh}}\int \vx^TJ(t)^{-1}\vx' \{t-I\left(U\le t\right)\}K''\left(\frac{\tau_{\vx}-t}{h}\right)dt.
\]
Further, redefine the matrices $\Sigma$, $\Gamma$, and  $A$ as in Section \ref{sec: Gaussian approx} corresponding to the new definition of $\Psi_i$. For simplicity, we focus here on the studentized case where $\Gamma_k = \sqrt{D_k^T \Sigma D_k}$ for $k=1,\dots,M$. 
The reason to work with $\xi_{\vx}$ instead of $\psi_{\vx}$ is to better control the residual term in the proof of high dimensional Gaussian approximation result. Normalization by $\sqrt{d}$ ensures that the norm of $\vx/\sqrt{d}$ is bounded on $\cX_0$. 
The Gaussian approximation with $d=d_n \to \infty$ reads as follows.

\begin{theorem}[Gaussian approximation when $d = d_n \to \infty$]\label{HDCLT'}
	Suppose that Assumptions \ref{asmp: assumption2} and \ref{Maup2} hold and we also assume that	
	\begin{equation}
	\frac{d\log^7\left(Mn\right)}{nh}
	\bigvee \frac{d^4 (\log^2 n) \log^2 M}{nh^2} 
	\bigvee \frac{d^{3}(\log^2 n)\log M}{nh^{5}} \to 0 \quad \text{and} \quad  \frac{nh^{7} \log M}{d} \to 0.
	\label{eq: order}
	\end{equation}	
	Then, we have 
	\[
	\sup_{b\in \R^M}\left|\P\left(A\sqrt{nh^3d^{-1}}(\hat{m}(\vx_{\ell})-m(\vx_{\ell}))_{\ell=1}^L\le b\right)-\P\left(AG\le b\right)\right|\to 0, \quad \text{with} \ G \sim N(0,\Sigma).
	\]
\end{theorem}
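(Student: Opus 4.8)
The plan is to carry out the same two-step program that underlies Theorem~\ref{thm: HDCLT} — first a uniform asymptotic linear representation in the spirit of Proposition~\ref{prop: UAL}, then a high-dimensional Gaussian approximation — but now propagating the growing dimension $d=d_n$ through every bound. The essential new analytic inputs are: (a) a uniform-in-$\tau$ Bahadur representation $\hat\beta(\tau)-\beta(\tau)=J(\tau)^{-1}n^{-1}\sum_i\vX_i\{\tau-I(U_i\le\tau)\}+r_n(\tau)$ that remains valid when $d=d_n\to\infty$ (this is the role of Condition~(vi) of Assumption~\ref{Maup2}; cf.\ Lemma~\ref{prop: bahadur} and Theorem~2 of \cite{belloni2019conditional}), and (b) a uniform consistency/rate statement for $\hat\tau_{\vx}$ over $\vx\in\cX_0$. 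Since here $s_{\vx}$ and hence $\tau_{\vx}$ depend on $n$, the continuity-plus-compactness argument used in the fixed-$d$ case (the proof of Lemma~\ref{lem: consistency}) is unavailable, and uniform consistency of $\hat\tau_{\vx}$ must instead be obtained from the \emph{global} identification Condition~(v) of Assumption~\ref{Maup2} together with uniform consistency of the smoothed sparsity $\hat s_{\vx}(\cdot)$ and its derivatives, the latter being derived by inserting the uniform Bahadur expansion into $\hat Q_{\vx}$ and differentiating. We work throughout with the deterministic studentized normalization $A=\Gamma^{-1}D$, $\Gamma_k=\sqrt{D_k^T\Sigma D_k}$, so (unlike the bootstrap theorems) no estimation of $A$ enters.

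\textbf{Step 1 (uniform linear representation).} Write $\hat m(\vx)=\hat Q_{\vx}(\hat\tau_{\vx})$ and Taylor-expand around $\tau_{\vx}$. Using the first-order conditions $\hat Q_{\vx}''(\hat\tau_{\vx})=0=Q_{\vx}''(\tau_{\vx})$ (recall $\tau_{\vx}$ minimizes $s_{\vx}=Q_{\vx}'$) and $\hat Q_{\vx}'''(\tilde\tau)\stackrel{P}{\to}s_{\vx}''(\tau_{\vx})$ uniformly in $\vx$, one reduces to
\[
\hat m(\vx)-m(\vx)= -\frac{s_{\vx}(\tau_{\vx})}{s_{\vx}''(\tau_{\vx})}\,\hat Q_{\vx}''(\tau_{\vx}) + (\text{smoothing bias of order }h^2) + (\text{remainder})
\]
uniformly in $\vx\in\cX_0$. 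Since $\hat Q_{\vx}''(\tau_{\vx})=\int \vx^T\hat\beta(t)K_h''(\tau_{\vx}-t)\,dt$, substituting the uniform Bahadur expansion turns the leading term into $n^{-1}\sum_i\xi_{\vx}(U_i,\vX_i)$, a deterministic bias from $\int\vx^T\beta(t)K_h''(\tau_{\vx}-t)\,dt$ (which is only $O(h^2)$, because $Q_{\vx}''(\tau_{\vx})=0$ annihilates the leading smoothing term), and a stochastic remainder $n^{-1}\sum_i\int\vx^T r_n(t)K_h''(\tau_{\vx}-t)\,dt$. The target of this step is to show that, uniformly over $\cX_0$, all remainder terms are $o_P\!\big((nh^3/d)^{-1/2}(\log M)^{-1/2}\big)$ — the extra $(\log M)^{-1/2}$ being needed so they survive the anti-concentration step below — and that the bias is negligible after multiplying by $\sqrt{nh^3/d}$, which is precisely the undersmoothing condition $nh^7\log M/d\to0$ in \eqref{eq: order}. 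The remaining (dimension-dependent) requirements in \eqref{eq: order} come out of this bookkeeping: the Bahadur remainder $\sup_\tau\|r_n(\tau)\|$ is inflated by the $h^{-2}$ coming from integrating against $K_h''$ and by powers of $d$ from $\|\vx\|\asymp\sqrt d$ and from quadratic forms in $\vX$, and controlling the associated empirical-process fluctuations uniformly in $\vx$ costs further logarithmic factors, yielding the $d^{4}(\log^2 n)\log^2 M/(nh^2)\to0$ term; similarly the replacement $\hat Q_{\vx}'''\rightsquigarrow s_{\vx}''$ and the quadratic Taylor term produce the $d^{3}(\log^2 n)\log M/(nh^5)\to0$ term.

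\textbf{Step 2 (Gaussian approximation).} After Step~1 we have $A\sqrt{nh^3/d}\,(\hat m(\vx_\ell)-m(\vx_\ell))_{\ell=1}^L=n^{-1/2}\sum_{i=1}^n A\Psi_i+R_n$ with $\|R_n\|_\infty=o_P((\log M)^{-1/2})$, where $\Psi_1,\dots,\Psi_n$ are i.i.d.\ mean-zero $L$-vectors (mean zero since $\E_U[t-I(U\le t)]=0$ for $t\in(0,1)$, $U_i\perp\vX_i$, and Fubini). The decisive structural feature is that, thanks to $\P(\|\vX\|\le C_3\sqrt d)=1$ (Assumption~\ref{Maup2}~(iii)), the compact support of $K$, and the normalization built into $\xi_{\vx}$, each coordinate $D_k^T\Psi_i$ is bounded a.s.\ and has variance bounded below by $c_3$ (Assumption~\ref{asmp: assumption2}~(ii)) and above by a constant (Assumptions~\ref{asmp: assumption2}, \ref{Maup2}), uniformly in $k$ by the sparsity of $D$ (Assumption~\ref{asmp: assumption2}~(i)). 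We then invoke the high-dimensional central limit theorem over hyperrectangles of \cite{chernozhukov2017central} — in the version allowing the linear transformation $A$ — applied to the $M$-vectors $(D_k^T\Psi_i)_k$; given the a.s.\ bound, its moment/rate side condition reduces to $d\log^7(Mn)/(nh)\to0$, the first term of \eqref{eq: order}. This yields $\sup_b|\P(n^{-1/2}\sum_i A\Psi_i\le b)-\P(AG\le b)|\to0$ with $G\sim N(0,\Sigma)$, and a Gaussian anti-concentration (Nazarov) inequality absorbs $R_n$, since then $\P(n^{-1/2}\sum_iA\Psi_i+R_n\le b)$ differs from $\P(n^{-1/2}\sum_iA\Psi_i\le b)$ by at most $O(\|R_n\|_\infty\sqrt{\log M})=o_P(1)$ uniformly in $b$. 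Combining the two displays gives the assertion.

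\textbf{Main obstacle.} The technical heart of the argument is Step~1: pushing the growing $d$ through the uniform Bahadur expansion and the subsequent kernel-smoothing/differentiation operations while keeping the remainder $o_P\big((nh^3/d)^{-1/2}(\log M)^{-1/2}\big)$ \emph{uniformly} over $\vx\in\cX_0$. The difficulty is aggravated by the $h^{-3}$ blow-up from the second kernel derivative $K_h''$, by the $\sqrt d$ and $d$ factors from $\vx$ and $\vX$, and — most delicately — by the need for a uniform-in-$\vx$ rate for $\hat\tau_{\vx}$ that cannot appeal to continuity of $\vx\mapsto\tau_{\vx}$ and must instead be engineered from the global identification condition in Assumption~\ref{Maup2}~(v). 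Once Step~1 is in place at the stated rates, Step~2 is a (careful but essentially standard) application of the high-dimensional CLT and Gaussian anti-concentration tools of \cite{chernozhukov2017central}.
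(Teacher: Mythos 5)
Your proposal follows essentially the same route as the paper's proof: a uniform Bahadur representation valid for growing $d$ (the paper's Lemma~\ref{prop: bahadur}, from \cite{belloni2019conditional}) fed into the smoothed quantile derivative to produce the linear representation with the integral-form influence function $\xi_{\vx}$ (Lemmas~\ref{lem: uniform rate2} and~\ref{prop: UAL'}), with uniform consistency of $\hat\tau_{\vx}$ obtained from the global identification Condition~(v) of Assumption~\ref{Maup2}, followed by Proposition~2.1 of \cite{chernozhukov2017central} using the a.s.\ bound $\|\vX\|\le C_3\sqrt d$ and Nazarov's inequality to absorb the remainder. Your accounting of which term of \eqref{eq: order} arises from which source (the CLT side condition, the Bahadur/empirical-process remainder, the $\hat Q_{\vx}^{(3)}$ replacement, and the undersmoothing bias) matches the paper's bookkeeping.
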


Suppose that $\log M = O(\log n)$; then Condition (\ref{eq: order}) reduces to 
\[
\frac{d^4 \log^4 n}{nh^2}  \bigvee \frac{d^{3}\log^3 n}{nh^{5}} \to 0 \quad \text{and} \quad  \frac{nh^{7} \log n}{d} \to 0.
\]
If we take $h = (n/d)^{-1/7} (\log n)^{-2}$, then the condition on $d$ reduces to $d^{8} \cdot \text{polylog} (n) = o(n)$. As before, this condition can be relaxed by assuming additional smoothness conditions on the conditional density and using higher order kernels. 
Similar conditions on $d$ appear in the analysis of resampling methods for quantile regression under increasing dimensions; see, e.g., Theorem 5 in \cite{belloni2019conditional}, where $d=o(n^{1/10})$. 

We now establish the validity of the pivotal bootstrap. The theory for the nonparametric bootstrap can be shown similarly but we omit the details due to the space limit.
Redefine $\hat{\Psi}_i = (\hat{\psi}_{\vx_1}(U_i,\vX_i),\dots,\hat{\psi}_{\vx_{L}}(U_i,\vX_i))^T$ with
\[
\hat{\psi}_{\vx}(u,\vx'):=-\frac{\hat{s}_{\vx}(\hat{\tau}_{\vx})}{\hat{s}''_{\vx}(\hat{\tau}_{\vx}) \sqrt{dh}} K'\left(\frac{\hat{\tau}_{\vx}-U_i}{h}\right)\vx^T\hat{J}(\hat{\tau}_{\vx})^{-1}\vX_i
\]
Let $\hat{\Gamma}$ be as in (\ref{eq: Gamma})  corresponding to the new definition of $\hat{\psi}_{\vx}$, and let $\hat{A} = \hat{\Gamma}^{-1}D$. 

\begin{theorem}[Validity of pivotal bootstrap when $d = d_n \to \infty$]\label{boot'}
	Suppose that Assumptions \ref{asmp: assumption2}  and \ref{Maup2} hold and we also assume that 
	\[
	\frac{d\log^7\left(Mn\right)}{nh} 
	%\bigvee \ \frac{d\log^3 (Mn)}{n^{1-4/q}h} 
	\bigvee \frac{d^2(d\vee h^{-2})(\log n)\log^4 M}{nh^{3}} \to 0 \quad \text{and} \quad  h \log^2 M \to 0.
	\]
	Then, we have
	\[
	\sup_{b\in \R^M}\left|\P_{|\cD_n}\left(n^{-1/2}{\textstyle\sum}_{i=1}^n\hat{A}\hat{\Psi}_i\le b \right)-\P\left(AG\le b\right)\right|\mathop{\to}^P 0.
	\]
\end{theorem}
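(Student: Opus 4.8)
The plan is to mirror the two–step strategy behind Theorem~\ref{thm: boot}, now tracking the dependence on $d=d_n$ at every step. Write $T_n := n^{-1/2}\sum_{i=1}^n \hat A\hat\Psi_i$. Conditionally on $\cD_n$, the vectors $\hat A\hat\Psi_1,\dots,\hat A\hat\Psi_n$ are independent, and each has mean zero once $h\le\min\{\hat\tau_{\vx},1-\hat\tau_{\vx}\}$ holds for every $\vx\in\{\vx_1,\dots,\vx_L\}$; by Condition (ii) of Assumption~\ref{Maup2} together with the uniform consistency of $\hat\tau_{\vx}$ (a consequence of the uniform consistency of $\hat s_{\vx}^{(j)}(\cdot)$ and the global identification condition (v) of Assumption~\ref{Maup2}), this holds with probability tending to one, exactly as in \eqref{eq: influence}. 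The first step is then to invoke the high-dimensional central limit theorem over hyperrectangles of \cite{chernozhukov2017central}, applied conditionally on $\cD_n$ to the independent, non-identically-distributed array $\{\hat A\hat\Psi_i\}$, to obtain
\[
\sup_{b\in\R^M}\left|\P_{|\cD_n}(T_n\le b)-\P(Z\le b)\right|\stackrel{P}{\to}0,\qquad Z\mid\cD_n\sim N\!\left(0,\hat A\hat\Sigma\hat A^T\right),\quad \hat\Sigma=\tfrac1n\textstyle\sum_i\E_{|\cD_n}[\hat\Psi_i\hat\Psi_i^T].
\]
Verifying the hypotheses of that CLT requires (a) a uniform lower bound on $\E_{|\cD_n}[(D_k^T\hat\Psi_i)^2]$, which follows from Assumption~\ref{asmp: assumption2}(ii) once $\hat\Sigma$ is shown close to $\Sigma$ (Step~2 below), and (b) an envelope bound: since $|K'|$ is bounded, $\|\vx_\ell\|\asymp\sqrt d$, $\|\vX_i\|\le C_3\sqrt d$, and $\|\hat J(\hat\tau_{\vx})^{-1}\|_{\op}=O_P(1)$ (eigenvalue control under Assumption~\ref{Maup2}(iii) plus consistency of Powell's estimator), the normalization $1/\sqrt{dh}$ gives $\max_{k,i}|\hat A_k^T\hat\Psi_i|=O_P(\sqrt{d/h}\,)$; feeding this into the CLT error bound produces the leading hypothesis $d\log^7(Mn)/(nh)\to0$.

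The second step is a Gaussian comparison: by the comparison theorem over rectangles of \citet{chernozhukov2017central}, it suffices to show that $\|\hat A\hat\Sigma\hat A^T - A\Sigma A^T\|_\infty$ vanishes fast enough that its product with $\log^2 M$ still tends to zero, after which $\sup_b|\P(Z\le b)-\P(AG\le b)|\stackrel{P}{\to}0$, and combining the two steps via the triangle inequality completes the proof. Establishing this entrywise bound is where the work concentrates, and it proceeds in three sub-steps. First, replace $\hat\Gamma$ by $\Gamma$: using the closed form \eqref{eq: Gamma} together with an analogue of Lemma~\ref{lem: covariance D} adapted to increasing $d$, one gets $\max_k|\hat\Gamma_k-\Gamma_k|$ at a rate governed by the displayed rate condition, while Assumption~\ref{asmp: assumption2}(ii)--(iii) keeps $\Gamma_k$ bounded away from $0$ and $\infty$. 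Second, replace the estimated influence function $\hat\psi_{\vx}$ by its population ``frozen'' version $\psi_{\vx}/\sqrt d$ (with the true $s_{\vx}(\tau_{\vx})$, $s_{\vx}''(\tau_{\vx})$, $\tau_{\vx}$ and $J(\tau_{\vx})$): the error is controlled by the uniform consistency rates for $\hat s_{\vx}^{(j)}(\cdot)$ (Lemma~\ref{UnifQr}), for $\hat\tau_{\vx}$, and for Powell's estimator $\hat J(\tau)$, each carrying an explicit $d$-dependence, and one must also pass from $n^{-1}\sum_i\E_{|\cD_n}[\,\cdot\mid\vX_i]$ to the population expectation by a law-of-large-numbers argument for the quadratic forms $\vx_\ell^T J(\tau_{\vx_\ell})^{-1}\vX_i$, which are bounded almost surely by $O(d)$ so that a Bernstein-type bound applies; this sub-step contributes the term $d^2(d\vee h^{-2})(\log n)\log^4 M/(nh^3)\to0$. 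Third, reconcile the ``frozen'' form with the integral form $\xi_{\vx}$ that defines $\Sigma=\E[\Psi_i\Psi_i^T]$: integrating by parts in the integral defining $\xi_{\vx}$ and using $\tfrac{d}{dt}\{t-I(U\le t)\}=1-\delta_U(t)$ isolates the term $-\,h\,\vx^TJ(U)^{-1}\vx' K'(\tfrac{\tau_{\vx}-U}{h})$, which after rescaling agrees with $\psi_{\vx}(U,\vx')/\sqrt d$ up to remainders that are smooth in $t$ and hence contribute $O(h)$ after integration against $K'(\cdot/h)$; this gives $\|\Sigma-\Sigma^{\mathrm{froz}}\|_\infty=O(h)$, consistent with $h\log^2 M\to0$. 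Collecting the three sub-steps yields the required bound on $\|\hat A\hat\Sigma\hat A^T-A\Sigma A^T\|_\infty$.

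The main obstacle is Step~2, and within it the second sub-step: controlling, uniformly over $\vx\in\cX_0$ and with the sharp $d$-dependence, the propagation of the increasing-dimension Bahadur representation for $\hat\beta(\tau)$ (Lemma~\ref{prop: bahadur}, cf. \citealt{belloni2019conditional}) through (i) the differentiations of the kernel-smoothed estimator $\hat Q_{\vx}$, each amplifying the Bahadur remainder by a factor $h^{-1}$, (ii) the argmin step defining $\hat\tau_{\vx}$, and (iii) the plug-in estimation of the hard-to-estimate curvature $s_{\vx}''(\tau_{\vx})$ (equivalently $f^{(2)}$). Keeping all of these errors below the threshold $1/\log^2 M$ is precisely what dictates the rate conditions in the statement; the remaining ingredients are conceptually parallel to the fixed-$d$ arguments behind Theorems~\ref{thm: HDCLT} and \ref{thm: boot} and Theorem~\ref{HDCLT'}, with $d$-dependent bookkeeping throughout.
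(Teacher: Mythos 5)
Your proposal is correct and follows essentially the same route as the paper's proof: a conditional high-dimensional CLT over rectangles applied to the independent mean-zero array $\{\hat A\hat\Psi_i\}$ (yielding the $d\log^7(Mn)/(nh)\to 0$ condition via the $O_P(\sqrt{d/h})$ envelope), followed by Gaussian comparisons whose error is controlled by exactly the three discrepancies you identify — $\hat\Gamma$ versus $\Gamma$, the estimated influence function versus its ``frozen'' population version $\check\psi_{\vx}$ (driving the $d^2(d\vee h^{-2})(\log n)\log^4 M/(nh^3)$ condition through the increasing-dimension analogues of the Jacobian and covariance lemmas), and the frozen form versus the integral form $\xi_{\vx}$, which contributes $O(h)$ and accounts for $h\log^2 M\to 0$. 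The only cosmetic difference is that the paper organizes the Gaussian comparison as a chain $\hat A\hat G\to\check A\hat G\to\check A\check G\to AG$ and uses Rudelson's operator-norm inequality for $\|n^{-1}\sum_i\vX_i\vX_i^T-\E[\vX\vX^T]\|_{\op}$ where you propose a Bernstein bound on the scalar quadratic forms; both are adequate under the stated rate conditions.
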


\begin{remark}
	The pivotal bootstrap above is the same as the one under the fixed dimension case as the extra normalization by $\sqrt{d}$ is canceled by the multiplication by $\hat{A}$ (we introduced normalization by $\sqrt{d}$ to facilitate the proof). 
\end{remark}

\section{Summary}
\label{sec: summary}
%In this paper, we propose a novel pivotal bootstrap for uniform inference on conditional modes based on a kernel-smoothed Koenker-Bassett quantile estimator.
In this paper, we study a novel pivotal bootstrap and the nonparametric bootstrap for simultaneous inference on conditional modes based on a kernel-smoothed Koenker-Bassett quantile estimator.
%new quantile based modal estimator. 
%The new estimator is defined by minimizing a kernel-smoothed Koenker-Bassett estimator. 
%We prove a uniform asymptotic linear representation for our new estimator and the pivotal leading term suggests our novel pivotal bootstrap method. Specifically, our pivotal bootstrap method resamples the uniform random variables in the leading term conditioning on the data. 
Our bootstrap inference framework allows for simultaneous inference on multiple linear functions of different conditional modes. We establish the validity of the bootstrap inference in both fixed dimension and increasing dimension settings. 
%which is general to deal with numerous practical inference problems. 
%Building on recent high dimensional probabilistic tools, we prove a high dimensional Gaussian approximation result and the validity of the pivotal and nonparametric bootstrap methods under the fixed dimension setting. We also extend the Gaussian approximation result and the validity of the pivotal bootstrap to the increasing dimension setting. 
%We consider several different inference tasks motivated from real applications. 
The numerical results provide strong support of our theoretical results.
%The numerical results not only provide strong support of our theoretical results, but also demonstrate that the new bootstrap inference framework is a flexible and powerful tool for modal regression.
Several interesting extensions remain, including  the extension to time series or longitudinal data. In such settings, we need to modify the bootstraps and develop new technical tools to deal with dependent data. These are beyond the scope of the current paper and left for future research.

%Our work has focused on the linear quantile model. It will be interesting to extend the current inference framework to more general nonlinear quantile models. Another interesting topic will be designing new modal estimators that are robust to model misspecification.  These extensions are beyond the scope of the current paper and are topics worthy of future research.

\section*{Supplemental materials}
The supplemental materials contain  the wage dataset, R codes and an appendix containing all the proofs, additional simulation results, discussion of  model misspecification and quantile crossing, and additional details of the numerical implementation.

%\section{Acknowledgments}
%The research of Tao Zhang is partially supported by NSF grant DMS-1952306. The research of Kengo Kato is partially supported by NSF grants DMS-1952306 and DMS-2014636. The authors thank the editor, one anonymous associate editor,
%and three anonymous referees for their careful review that helped improve
%upon the quality of the article. 

\clearpage
\def\theequation{A\arabic{section}.\arabic{equation}}
\renewcommand{\thesection}{A\arabic{section}}   
\renewcommand{\thetable}{A\arabic{table}}   
\renewcommand{\thefigure}{A\arabic{figure}}
\setcounter{page}{1}
\setcounter{section}{0}
\setcounter{figure}{0}
\setcounter{table}{0}

%\appendix
\begin{appendices}
	
	%\def\theequation{A\arabic{section}.\arabic{equation}}
	%\def\thesection{S\arabic{section}}
	%\renewcommand{\thesection}{A\arabic{section}}   
	%\renewcommand{\thetable}{A\arabic{table}}   
	%\renewcommand{\thefigure}{A\arabic{figure}}
	
	%	\vspace{.25cm}
	
	%	{\large Tao Zhang and Kengo Kato}
	%	\vspace{.4cm}
	
	%	Department of Statistics and Data Science, Cornell University
	%\end{center}
	
	%This document contains all the proof of the theoretical results in paper `` Pivotal Bootstrap for Quantile-based Modal Regression". 

The Appendix is organized as follows. We present the proofs of the theoretical results in the main text in Appendices A--D. We provide additional simulation results in Appendix E. We discuss the model misspecification and quantile crossing issues in Appendix F. We provide more implementation details in Appendix G. We present more details of the wage dataset in Appendix H.
\section{Auxiliary results for Section 3.3}
\begin{proposition}[Limit distribution of maximal deviation]
	\label{prop: Gumbel}
	Suppose that Assumption \ref{asmp: assumption1} and Condition (\ref{eq: rate condition}) with $M=L$ hold. Let $\zeta_{n} := \max_{1 \le \ell \le L} \sqrt{nh^{3}}
	|\hat{m}(\vx_{\ell}) - m(\vx_{\ell})|/\sigma_{\vx_{\ell}}$ with $\sigma_{\vx}^2 = \E[\psi_{\vx}(U,\vX)^2]$. Assume $L=L_n \to \infty$, and define 
	\[
	a_n = (2\log L_n)^{1/2} \quad \text{and} \quad b_n = (2\log L_n)^{1/2} - \frac{1}{2}(2\log L_n)^{-1/2}(\log \log L_n + \log \pi).
	\]
	If, in addition, $\tau_{\vx_{1}},\dots,\tau_{\vx_{L}}$ are all distinct and $\min_{k \ne \ell}|\tau_{\vx_{k}} - \tau_{\vx_{\ell}}| > 2h$ for sufficiently large $n$, then $a_n (\zeta_n - b_n)$ converges in distribution to the Gumbel distribution, i.e., 
	\[
	\lim_{n \to \infty} \P(a_n (\zeta_n - b_n) \le t) = e^{-e^{-t}}, \ t \in \R.
	\]
\end{proposition}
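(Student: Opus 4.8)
The plan is to reduce the maximal deviation of the mode estimator to the maximum of an $L$-dimensional Gaussian vector, and then invoke classical extreme value theory for the normalized maximum of (approximately) independent standard normals. First I would use Proposition \ref{prop: UAL} to replace $\sqrt{nh^3}(\hat m(\vx_\ell) - m(\vx_\ell))$ by its linear term $n^{-1/2}h^{-3/2}\sum_{i=1}^n\psi_{\vx_\ell}(U_i,\vX_i)$: under Condition (\ref{eq: rate condition}) the bias term $\sqrt{nh^3}\cdot h^2 = \sqrt{nh^7}$ and the remainders $\sqrt{nh^3}(n^{-1/2}h^{-1}+n^{-1}h^{-4}\log n) = h^{1/2}\sqrt{n h}\cdot(\cdot)$ are all $o(1/\sqrt{\log L})$, so that after dividing by $\sigma_{\vx_\ell}$ (bounded away from $0$ and $\infty$ by Assumption \ref{asmp: assumption2}) these errors are $o_P((\log L)^{-1/2})$, hence negligible for the $a_n(\zeta_n - b_n)$ scaling since $a_n \asymp \sqrt{\log L}$. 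Then I would apply Theorem \ref{thm: HDCLT} with $D = I_L$ and $\Gamma = \diag\{\sigma_{\vx_1},\dots,\sigma_{\vx_L}\}$ (so $A\Sigma A^T$ has unit diagonal) to conclude that $\zeta_n$ has the same limiting distribution as $\max_{1\le\ell\le L}|g_\ell/\sigma_{\vx_\ell}|$ where $G=(g_1,\dots,g_L)^T\sim N(0,\Sigma)$; more precisely, the Kolmogorov distance between the laws of $\zeta_n$ and this Gaussian maximum tends to zero, which (by an argument as in the proof of \ref{eq: confidence guarantee}, cf. Lemma \ref{lem: quantile}) suffices provided the Gaussian maximum has a continuous limit law after the $a_n,b_n$ normalization.

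The second and main step is to show that the correlation structure of the normalized Gaussian vector $(g_\ell/\sigma_{\vx_\ell})_{\ell=1}^L$ is asymptotically negligible, so that the normalized maximum converges to the Gumbel law exactly as for i.i.d.\ standard normals. Here is where the hypothesis $\min_{k\ne\ell}|\tau_{\vx_k}-\tau_{\vx_\ell}|>2h$ enters. The covariance between the $k$-th and $\ell$-th coordinates of $\Psi_i$ is (up to nuisance factors)
\[
\E\!\left[ K'\!\Big(\frac{\tau_{\vx_k}-U}{h}\Big)K'\!\Big(\frac{\tau_{\vx_\ell}-U}{h}\Big)\right]\cdot \vx_k^T J(\tau_{\vx_k})^{-1}\E[\vX\vX^T]J(\tau_{\vx_\ell})^{-1}\vx_\ell,
\]
and since $K'$ is supported in $[-1,1]$, the functions $u\mapsto K'((\tau_{\vx_k}-u)/h)$ and $u\mapsto K'((\tau_{\vx_\ell}-u)/h)$ have disjoint supports whenever $|\tau_{\vx_k}-\tau_{\vx_\ell}|>2h$; hence the expectation vanishes and the off-diagonal entries of $\Sigma$ are exactly zero for $n$ large. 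Thus $(g_\ell/\sigma_{\vx_\ell})_{\ell=1}^L$ is an i.i.d.\ $N(0,1)$ vector, and the claim follows from the textbook result (e.g.\ \citealt{leadbetter1983}, Theorem 1.5.3) that for i.i.d.\ standard normals $Z_1,\dots,Z_L$, $a_n(\max_\ell Z_\ell - b_n)$ converges to the Gumbel law, combined with the corresponding two-sided statement for $\max_\ell|Z_\ell|$ (which has the same limit after replacing $L$ by $2L$ inside the logarithms, absorbed into $a_n,b_n$ up to $o(1)$; alternatively one notes $\max_\ell|Z_\ell|$ and $\max_\ell Z_\ell$ differ by a term that vanishes on the $a_n$ scale).

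The main obstacle I anticipate is the interface between the finite-sample Gaussian-approximation bound of Theorem \ref{thm: HDCLT}, which is stated as a vanishing Kolmogorov distance over hyperrectangles (hence over half-lines $\{w:\max_\ell|w_\ell|\le b\}$), and the requirement of a genuine distributional limit for $a_n(\zeta_n-b_n)$: a vanishing Kolmogorov distance to a sequence of distributions that itself converges weakly does transfer, but one must check that the rate of Gaussian approximation in Theorem \ref{thm: HDCLT} is fast enough relative to the $(\log L_n)^{-1/2}$ fluctuations of the Gumbel scaling — this is exactly what Condition (\ref{eq: rate condition}) with the extra $(\log^2 n)\log M / (nh^5)\to 0$ term buys us, and it should be stated as part of the hypothesis (it is, via ``Condition (\ref{eq: rate condition}) with $M=L$''). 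A secondary technical point is handling the replacement of $\sigma_{\vx_\ell}$ by population quantities throughout (no estimation is involved in this proposition, so this is benign), and verifying that the anti-concentration of the Gaussian maximum (Nazarov's inequality, as used in \citealt{chernozhukov2017central}) legitimizes passing from Kolmogorov-distance convergence to convergence of the normalized maximum. These are routine given the cited tools, so the proof reduces to the disjoint-support computation above plus bookkeeping.
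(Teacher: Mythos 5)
Your proposal is correct and follows essentially the same route as the paper's proof: the disjoint supports of $u\mapsto K'((\tau_{\vx_k}-u)/h)$ and $u\mapsto K'((\tau_{\vx_\ell}-u)/h)$ under the spacing condition make $\Sigma$ diagonal, Theorem \ref{thm: HDCLT} then reduces $\zeta_n$ to the maximum of $L$ independent $|N(0,1)|$ variables in Kolmogorov distance, and Theorem 1.5.3 of \cite{leadbetter1983} finishes the argument. Your additional bookkeeping (transfer of the Kolmogorov-distance approximation through the $a_n,b_n$ scaling, and the two-sided versus one-sided normalizing constants) is correct and simply makes explicit what the paper leaves as "standard extreme value theory."
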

Proposition \ref{prop: Gumbel} suggests that we can use the Gumbel approximation  to construct simultaneous confidence intervals. The proof shows that if $\min_{k \ne \ell}|\tau_{\vx_{k}} - \tau_{\vx_{\ell}}| > 2h$, then $\Sigma$ is diagonal so that $\zeta_n$ can be approximated by the maximum in absolute value of $L$ independent $N(0,1)$ random variables, which can be further approximated (after normalization) by the Gumbel distribution by  extreme value theory. Compared with the pivotal bootstrap discussed in Section 3.3.2, the Gumbel approximation leads to analytical critical values, so from a computational perspective, using the Gumbel limit seems more attractive. However, the justification of the Gumbel approximation relies on a nontrivial spacing assumption on $\tau_{\vx_k}$'s (which the pivotal bootstrap does not). More importantly, convergence of normal suprema is known to be extremely slow \citep{hall1991}, so simultaneous confidence intervals constructed from the Gumbel approximation may not have desirable coverage accuracy. 

The following lemma is useful to establish the coverage guarantee of our confidence intervals (see Example \ref{ex: simultaneous2} for more discussion).  
\begin{lemma}
	\label{lem: quantile}
	Let $Y_n,W_n,Z_n$ be sequences of random variables defined on a probability space $(\Omega,\mathcal{A},\P)$ such that (i) $Y_n$ is measurable relative to a sub-$\sigma$-field $\mathcal{C}_{n}$ (that may depend on $n$); (ii) $\sup_{t \in \R} |\P(Y_{n} \le t) - \P(Z_n \le t)| \to 0$ and $\sup_{t \in \R}|\P(W_{n} \le t \mid \mathcal{C}_{n}) - \P(Z_n \le t)| \stackrel{P}{\to} 0$; (iii) the distribution function of $Z_n$ is continuous for each $n$ ($Z_n$ need not have a limit distribution). Let $\hat{q}_{n}(\alpha)$ denote the conditional $\alpha$-quantile of $W_n$ given $\mathcal{C}_n$. Then $\P(Y_n \le \hat{q}_{n}(\alpha)) \to \alpha$. 
\end{lemma}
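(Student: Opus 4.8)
The plan is to show that the conditional quantile $\hat q_n(\alpha)$ is close (in probability) to a deterministic quantile of $Z_n$, and then transfer this to a statement about $\P(Y_n \le \hat q_n(\alpha))$ using the unconditional convergence of $Y_n$ to $Z_n$. Let $q_n(\alpha)$ denote the (ordinary) $\alpha$-quantile of $Z_n$, which is well-defined and unique-ish since $Z_n$ has a continuous distribution function $F_n$. First I would fix $\delta>0$ and use continuity of $F_n$ to find, for each $n$, points $q_n(\alpha) - \eta_n$ and $q_n(\alpha)+\eta_n$ at which $F_n$ takes values $\alpha - \delta'$ and $\alpha+\delta'$ respectively for a suitably chosen $\delta' = \delta'(\delta)$; here one must be a little careful because $Z_n$ changes with $n$, so the argument should be phrased purely in terms of the values of $F_n$ rather than fixed real numbers. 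Concretely, I would instead argue directly at the level of distribution functions: for any $\delta > 0$, on the event $E_n := \{\sup_t |\P(W_n \le t \mid \mathcal C_n) - F_n(t)| \le \delta\}$ (which has probability $\to 1$ by assumption (ii)), the conditional distribution function of $W_n$ is uniformly within $\delta$ of $F_n$, hence $\hat q_n(\alpha)$ lies between $q_n(\alpha - 2\delta)$ and $q_n(\alpha + 2\delta)$ (a standard quantile-sandwiching inequality, using that $F_n$ is continuous so its generalized inverse behaves well).

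Next I would use assumption (i), that $Y_n$ is $\mathcal C_n$-measurable together with $\hat q_n(\alpha)$, so that on $E_n$,
\[
\P\bigl(Y_n \le q_n(\alpha - 2\delta)\bigr) - \P(E_n^c) \le \P\bigl(Y_n \le \hat q_n(\alpha)\bigr) \le \P\bigl(Y_n \le q_n(\alpha + 2\delta)\bigr) + \P(E_n^c).
\]
Wait — this needs slight care since $\hat q_n(\alpha)$ is random; the correct route is to note $\{Y_n \le \hat q_n(\alpha)\} \cap E_n \subseteq \{Y_n \le q_n(\alpha+2\delta)\}$ and symmetrically $\{Y_n \le q_n(\alpha - 2\delta)\} \cap E_n \subseteq \{Y_n \le \hat q_n(\alpha)\}$, both of which hold pointwise on $E_n$ by the sandwiching of $\hat q_n(\alpha)$. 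Then taking probabilities and using $\P(E_n^c) \to 0$ gives
\[
\limsup_n \P(Y_n \le \hat q_n(\alpha)) \le \limsup_n \P(Y_n \le q_n(\alpha+2\delta)), \qquad \liminf_n \P(Y_n \le \hat q_n(\alpha)) \ge \liminf_n \P(Y_n \le q_n(\alpha - 2\delta)).
\]

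To finish, I would control $\P(Y_n \le q_n(\alpha \pm 2\delta))$ using the unconditional half of assumption (ii): $\sup_t|\P(Y_n \le t) - F_n(t)| \to 0$, so $\P(Y_n \le q_n(\alpha + 2\delta)) = F_n(q_n(\alpha+2\delta)) + o(1) \le \alpha + 2\delta + o(1)$ and likewise $\P(Y_n \le q_n(\alpha - 2\delta)) \ge \alpha - 2\delta + o(1)$, where the inequalities on $F_n$ evaluated at its own quantiles are immediate from the definition of $q_n$ (using continuity of $F_n$, $F_n(q_n(\beta)) = \beta$). Combining, $\alpha - 2\delta \le \liminf_n \P(Y_n \le \hat q_n(\alpha)) \le \limsup_n \P(Y_n \le \hat q_n(\alpha)) \le \alpha + 2\delta$, and since $\delta>0$ is arbitrary the limit equals $\alpha$.

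The main obstacle, and the point requiring the most care, is that $Z_n$ (hence $F_n$ and the quantiles $q_n$) genuinely varies with $n$ and need not converge, so every step must be uniform in $n$ and phrased in terms of $F_n$ itself rather than any limiting object; in particular the quantile-sandwiching inequality relating $\hat q_n(\alpha)$ to $q_n(\alpha \pm 2\delta)$ must be derived from the uniform closeness of the conditional CDF of $W_n$ to $F_n$ on the event $E_n$, invoking continuity of $F_n$ to rule out pathologies of the generalized inverse. Everything else is a routine $\varepsilon$-$\delta$ chase.
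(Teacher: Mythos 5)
Your proposal is correct and follows essentially the same route as the paper's proof: sandwich $\hat q_n(\alpha)$ between quantiles $q_n(\alpha\pm\delta)$ of $Z_n$ on the high-probability event where the conditional CDF of $W_n$ is uniformly $\delta$-close to $F_n$, then transfer to $Y_n$ via the unconditional approximation and $F_n(q_n(\beta))=\beta$. The only cosmetic differences are that the paper works with a single sequence $\delta_n\to 0$ chosen to dominate both approximation errors rather than a fixed $\delta$ sent to zero at the end, and your invocation of the $\mathcal{C}_n$-measurability of $Y_n$ is unnecessary (as you yourself note, the set inclusions hold pointwise on $E_n$).
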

The proofs of the above two auxiliary results can be found in Appendix C.3.4.

\section{Technical tools}
In this section, we collect technical tools that will be used in the subsequent proofs. For a probability measure $Q$ on a measurable space $(S,\cS)$ and a  class of measurable functions $\cF$ on $S$ such that $\cF \subset L^{2}(Q)$, let $N(\cF,\| \cdot \|_{Q,2},\delta)$ denote the $\delta$-covering number for $\cF$ with respect to the $L^{2}(Q)$-seminorm $\| \cdot \|_{Q,2}$.
The class $\cF$ is said to be pointwise measurable if there exists a countable subclass $\cG \subset \cF$ such that for every $f \in \cF$ there exists a sequence $g_{m} \in \cG$ with $g_{m} \to f$ pointwise. 
A function $F: S \to [0,\infty)$ is said to be an envelope for $\cF$ if $F(x) \geq \sup_{f \in \cF} |f(x)|$ for all $x \in S$. 
See Section 2.1 in \cite{vaart1996weak} for details.  For a vector-valued function $g$ defined over a set $T$, we define $\|g\|_T:=\sup_{x \in T}\|g(x)\|$.  The $L_{p,1}$ norm for a random
variable $X$ is defined as $\|X\|_{p,1}:=\int_{0}^{\infty}\P(|X|>t)^{1/p}dt$. 

\begin{lemma}[Local maximal inequality]
	\label{lem: maximal inequality}
	Let $X,X_{1},\dots,X_{n}$ be i.i.d.\ random variables taking values in a measurable space $(S,\cS)$, and let $\cF$ be a pointwise measurable class of (measurable) real-valued functions on $S$ with measurable envelope $F$. Suppose that $\cF$ is VC type, i.e., there exist constants $A \geq e$ and $V \geq 1$ such that
	\[
	\sup_{Q} N(\cF,\| \cdot \|_{Q,2},\epsilon \| F \|_{Q,2}) \leq (A/\epsilon)^{V}, \ 0 < \forall \epsilon \leq 1,
	\]
	where $\sup_{Q}$ is taken over all finitely discrete distributions on $S$.
	Furthermore, suppose that $0 < \E[F^{2}(X)] < \infty$, and let $\sigma^{2} > 0$ be any positive constant such that $\sup_{f \in \cF} \E[f^{2}(X)] \leq \sigma^{2} \leq \E[F^{2}(X)]$. 
	Define $B= \sqrt{\E[ \max_{1 \leq i \leq n} F^{2}(X_{i}) ]}$.  Then
	\[
	\begin{split}
	&\E \left [ \left \|  \frac{1}{\sqrt{n}} \sum_{j=1}^{n} \{ f(X_{j}) - \E[ f(X) ] \} \right \|_{\cF} \right ] \\
	&\quad \leq C \left [  \sqrt{V\sigma^{2} \log \left ( \frac{A \sqrt{\E[F^{2}(X)]}}{\sigma} \right ) } + \frac{VB}{\sqrt{n}} \log \left ( \frac{A \sqrt{\E[F^{2}(X)]}}{\sigma} \right ) \right ],
	\end{split}
	\]
	where $C > 0$ is a universal constant. 
\end{lemma}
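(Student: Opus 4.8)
The plan is to prove this by the classical symmetrization–chaining route for empirical processes indexed by a VC‑type class, refined by a self‑bounding argument that replaces $\E[F^2]$ by $\sigma^2$ in the leading term. Throughout write $\mathbb{G}_n f := n^{-1/2}\sum_{j=1}^n\{f(X_j)-\E[f(X)]\}$ and let $\mathbb{P}_n := n^{-1}\sum_{j=1}^n\delta_{X_j}$ be the empirical measure. First I would apply the symmetrization inequality (e.g.\ Lemma 2.3.1 in \cite{vaart1996weak}) to get $\E\|\mathbb{G}_n\|_{\mathcal{F}}\le 2\,\E\,\E_\varepsilon\|\mathbb{G}_n^\circ\|_{\mathcal{F}}$, where $\mathbb{G}_n^\circ f:=n^{-1/2}\sum_j\varepsilon_j f(X_j)$, the $\varepsilon_j$ are i.i.d.\ Rademacher signs independent of the data, and $\E_\varepsilon$ is expectation over the signs only; pointwise measurability of $\mathcal{F}$ guarantees that all the suprema in question are measurable. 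Conditionally on $X_1,\dots,X_n$ the process $f\mapsto\mathbb{G}_n^\circ f$ is sub‑Gaussian for the random $L^2(\mathbb{P}_n)$‑pseudometric, so Dudley's entropy bound gives $\E_\varepsilon\|\mathbb{G}_n^\circ\|_{\mathcal{F}}\le C\int_0^{\hat\theta_n}\sqrt{\log N(\mathcal{F},\|\cdot\|_{\mathbb{P}_n,2},\tau)}\,d\tau$ with $\hat\theta_n:=\sup_{f\in\mathcal{F}}\|f\|_{\mathbb{P}_n,2}$. The VC‑type hypothesis bounds the covering number by $(A\|F\|_{\mathbb{P}_n,2}/\tau)^V$, and the elementary estimate $\int_0^a\sqrt{\log(b/\tau)}\,d\tau\le C\,a\sqrt{\log(b/a)}$ (for $0<a\le b$) then yields $\E_\varepsilon\|\mathbb{G}_n^\circ\|_{\mathcal{F}}\le C'\sqrt{V}\,\hat\theta_n\sqrt{\log(A\|F\|_{\mathbb{P}_n,2}/\hat\theta_n)}$.

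The crux is to replace the random radius $\hat\theta_n$ by $\sigma$ and the random logarithmic argument by $A\sqrt{\E[F^2]}/\sigma$. I would bound $\hat\theta_n^2=\sup_{f\in\mathcal{F}}\mathbb{P}_n f^2\le\sigma^2+\|\mathbb{G}_n\|_{\mathcal{F}^2}/\sqrt n$, where $\mathcal{F}^2:=\{f^2:f\in\mathcal{F}\}$ has envelope $F^2$, and then control $\|\mathbb{G}_n\|_{\mathcal{F}^2}$ by symmetrization followed by the Ledoux–Talagrand contraction inequality: conditionally on the data, $t\mapsto t^2$ is $2\max_{j}F(X_j)$‑Lipschitz on the range of the relevant functions, whence $\E_\varepsilon\|\mathbb{G}_n^\circ\|_{\mathcal{F}^2}\le C\,(\max_j F(X_j))\,\E_\varepsilon\|\mathbb{G}_n^\circ\|_{\mathcal{F}}$; taking expectations over the data and applying Cauchy–Schwarz introduces the factor $B=\sqrt{\E[\max_i F^2(X_i)]}$. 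Plugging this back, using $\E[\|F\|_{\mathbb{P}_n,2}^2]=\E[F^2(X)]$ and the normalization $\sigma\le\sqrt{\E[F^2]}$, and pushing expectations through by Jensen's inequality via the concavity and monotonicity of $x\mapsto x\sqrt{\log(b/x)}$ on $(0,b/e]$, one arrives at an inequality of the schematic form $z\le C\sqrt{V}\,\sqrt{\sigma^2+Bz/\sqrt n}\,\sqrt{L}+(\text{lower-order terms})$ with $z:=\E\|\mathbb{G}_n\|_{\mathcal{F}}$ and $L:=\log\bigl(A\sqrt{\E[F^2]}/\sigma\bigr)$. Solving this quadratic inequality in $z$ (split $\sqrt{a+b}\le\sqrt a+\sqrt b$, then use the quadratic formula) gives $z\le C\bigl[\sqrt{V\sigma^2 L}+VBL/\sqrt n\bigr]$, which is the asserted bound.

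The main obstacle is the unboundedness of the envelope $F$: both the contraction step for $\mathcal{F}^2$ and the passage from a random to a deterministic entropy integral must be carried out without a uniform bound on $F$, and it is precisely this that forces the appearance of the remainder term $VB\log(\cdot)/\sqrt n$ through $B=\sqrt{\E[\max_i F^2(X_i)]}$. A lesser technical point is the bookkeeping needed so that the logarithmic factor ends up with the deterministic numerator $A\sqrt{\E[F^2]}$ rather than the random $A\|F\|_{\mathbb{P}_n,2}$, which is handled by the monotonicity/concavity of $x\mapsto x\sqrt{\log(b/x)}$ together with Jensen. I note that this inequality is of the type established in \cite{chernozhukov2014gaussian} (see also Chapter 2 of \citealt{vaart1996weak}), so an alternative is simply to cite it.
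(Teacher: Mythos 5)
Your proposal is correct and matches the paper's treatment: the paper proves this lemma simply by citing Corollary 5.1 of \cite{chernozhukov2014gaussian}, and your argument reconstructs the standard proof of that result (symmetrization, conditional Dudley entropy bound for a VC-type class, contraction to control the random radius, and solution of the resulting self-bounding inequality). The only step you gloss over is that after Cauchy--Schwarz you obtain $\sqrt{\E[W^2]}$ for the symmetrized supremum $W$ rather than $\E[W]$, and converting back requires a Hoffmann--J{\o}rgensen-type moment equivalence (as in the van der Vaart--Wellner local maximal inequality on which the cited corollary is based); this is a routine but necessary ingredient.
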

\begin{proof}
	See Corollary 5.1 in \cite{chernozhukov2014gaussian}.
\end{proof}

The following anti-concentration inequality for Gaussian measures (called Nazarov's inequality in \cite{chernozhukov2017central}), together with the Gaussian comparison inequality,   will play  crucial roles in proving the validity of the pivotal bootstrap.

\begin{lemma}[Nazarov's inequality]
	\label{lem: Nazarov}
	Let $\bm{Y}=(Y_1,\dots,Y_d)^T$ be a centered Gaussian vector in $\R^d$ such that $\E[Y^2_j]\ge \underline{\sigma}^{2}$ for all $j=1,\dots,d$ and some constant $\underline{\sigma}>0$. Then for every $\bm{y} \in \R^d$ and $\delta >0$,
	\[
	\P(\bm{Y}\le \bm{y}+\delta )-\P(\bm{Y}\le \bm{y})\le \frac{\delta}{\underline{\sigma}} (\sqrt{2\log d} + 2). 
	\]
\end{lemma}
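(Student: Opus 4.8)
This is Nazarov's anti‑concentration inequality, and I would follow the route of \cite{nazarov2003} as streamlined in \cite{chernozhukov2017central}: reduce the statement to an upper bound on the \emph{density of the maximum} of a Gaussian vector, and then control that density. First I would normalize. Since $\E[Y_j^2]\ge\underline\sigma^2$ for every $j$, the vector $\bm Y'=\bm Y/\underline\sigma$ is centered Gaussian with $\E[(Y_j')^2]\ge 1$, and $\P(\bm Y\le \bm y+\delta)-\P(\bm Y\le \bm y)=\P(\bm Y'\le \bm y'+\delta')-\P(\bm Y'\le \bm y')$ with $\bm y'=\bm y/\underline\sigma$, $\delta'=\delta/\underline\sigma$. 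Hence it suffices to treat $\underline\sigma=1$ and prove the bound $\delta(\sqrt{2\log d}+2)$.

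Next, setting $V_j:=Y_j-y_j$ and $M:=\max_{1\le j\le d}V_j$, one has $\{\bm Y\le \bm y+s\bm 1\}=\{M\le s\}$, so the quantity to bound is $\P(0<M\le\delta)=\int_0^\delta q_M(s)\,ds$, where $q_M$ denotes the density of $M$. Everything therefore reduces to the pointwise estimate $\sup_{s}q_M(s)\le \sqrt{2\log d}+2$, for $M$ the maximum of a (possibly correlated and non‑centered) Gaussian vector all of whose coordinates have variance at least $1$. To get at $q_M$ I would smooth the maximum by the log‑sum‑exp function $F_\beta(v)=\beta^{-1}\log\sum_j e^{\beta v_j}$, which satisfies $\max_j v_j\le F_\beta(v)\le \max_j v_j+\beta^{-1}\log d$ and $\partial_j F_\beta=\pi_j$ with $\pi_j\ge0$, $\sum_j\pi_j=1$; since $F_\beta(\bm V)$ sandwiches $M$, it is enough to bound the density of $W:=F_\beta(\bm V)$ and then send $\beta\to\infty$ (the slack only widens the interval $[0,\delta]$ by $\beta^{-1}\log d\to 0$). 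Writing $q_W(a)=\lim_{\epsilon\downarrow0}\E[\rho_\epsilon'(W-a)]$ for a smooth approximation $\rho_\epsilon$ of a step function and using $\rho_\epsilon'(W-a)=\sum_j\partial_j\bigl[\rho_\epsilon(F_\beta(\bm V)-a)\bigr]$ together with Gaussian integration by parts, one rewrites $q_W(a)$ as an expectation of a linear functional of $\bm V$ against $\rho_\epsilon(W-a)$. Combined with the elementary bound $f_{V_j}\le 1/\sqrt{2\pi}$ on the marginal densities and the sub‑Gaussian maximal inequality $\E[\max_j V_j]\le \max_j\E V_j+\sqrt{2\log d}$, this is engineered to deliver $\sup_s q_W(s)\le\sqrt{2\log d}+2$. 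Equivalently, one can skip the smoothing and estimate $q_M(s)=\sum_j f_{V_j}(s)\,\P(V_k<s\ \forall k\ne j\mid V_j=s)$ directly, splitting into the regime where $s$ lies near or above the typical level $\sqrt{2\log d}$ of the maximum (where $\sum_j f_{V_j}(s)$ is already small) and the regime where $s$ is well below it (where $\P(\max_k V_k\le s)$ and hence the conditional probabilities decay fast enough to compensate).

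The main obstacle is precisely this density bound. The ``obvious'' estimates — a union bound over $j$, replacing the conditional probability by $1$, or a crude Gaussian integration by parts ($\E[\sum_j\partial_j g(\bm V)]\le\|\bm 1^\top\Sigma^{-1}\bm V\|_{L^1}\le\sqrt{\bm 1^\top\Sigma^{-1}\bm 1}$) — all lose a factor of order $d$ or $\sqrt d$ instead of $\sqrt{\log d}$, and the $\sqrt{2\log d}$ rate is essentially sharp (it is attained, up to the additive constant, by $d$ i.i.d.\ standard normals, where $q_M=d\,\Phi^{d-1}\phi$ peaks at level $\approx\sqrt{2\log d}$). Getting the correct logarithmic dependence forces one to exploit the trade‑off between the smallness of the marginal densities $f_{V_j}(s)$ at a level $s$ and the rapid decay of $\P(\max_k V_k\le s)$ once $s$ falls below $\E[\max_k V_k]$, which is exactly what the $\pi_j$‑weighted integration by parts (or the two‑regime split) is designed to capture; the remaining work is a routine, if somewhat delicate, bookkeeping of constants to land on $\sqrt{2\log d}+2$.
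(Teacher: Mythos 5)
The paper does not actually prove this lemma: it is quoted verbatim from the literature, with the proof deferred to Lemma A.1 of Chernozhukov, Chetverikov and Kato (2017) and to their separate detailed-proof note. So the relevant comparison is between your sketch and that standard argument. Your opening reductions are correct and are exactly the standard ones: rescaling by $\underline\sigma$ to reduce to $\underline\sigma=1$, and recasting $\P(\bm Y\le\bm y+\delta)-\P(\bm Y\le\bm y)$ as $\P(0<M\le\delta)$ for $M=\max_j(Y_j-y_j)$, so that everything hinges on the density bound $\sup_s q_M(s)\le\sqrt{2\log d}+2$. You also correctly diagnose why the naive estimates fail (they lose a factor $d$ or $\sqrt d$) and why the rate $\sqrt{2\log d}$ is essentially sharp.

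The gap is that the one step carrying all of the content of the lemma is asserted rather than proved. Phrases such as ``this is engineered to deliver $\sup_s q_W(s)\le\sqrt{2\log d}+2$'' and ``the remaining work is a routine, if somewhat delicate, bookkeeping of constants'' conceal precisely the argument one is being asked to supply: how the $\pi_j$-weighted Gaussian integration by parts (or, in the direct route, the decomposition $q_M(s)=\sum_j f_{V_j}(s)\,\P(V_k<s\ \forall k\ne j\mid V_j=s)$) is combined with the marginal bound $f_{V_j}\le 1/\sqrt{2\pi}$ and a maximal/concentration inequality to produce the constant $\sqrt{2\log d}+2$ rather than, say, $C\sqrt{\log d}$ with an unspecified $C$. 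In the actual proof the decisive device is a split of the index set according to whether $y_j$ exceeds $\sqrt{2\log d}$ (so that $\sum_{j:y_j>\sqrt{2\log d}}\phi(y_j)\le d\cdot\phi(\sqrt{2\log d})\le 1/\sqrt{2\pi}$, contributing the additive constant, while the remaining indices are controlled through the conditional non-crossing probabilities), together with a monotonicity argument for the conditional Gaussian probabilities; your sketch gestures at a split ``over the level $s$'' instead, which is not the same and would need to be worked out. One further small point: after rescaling each coordinate by its own standard deviation $\sigma_j\ge\underline\sigma$, you must note that the interval $(y_j,y_j+\delta]$ maps into an interval of length $\delta/\sigma_j\le\delta/\underline\sigma$, which is what lets the heterogeneous-variance case ride on the unit-variance bound; your reduction as stated only normalizes the common lower bound. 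As it stands the proposal is a correct road map to the known proof, not a proof.
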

\begin{proof}
	See Lemma A.1 in \cite{chernozhukov2017central}; see also \cite{chernozhukov2017note}. 
\end{proof}

\begin{lemma}[Gaussian comparison]
	\label{lem: Gaussian comparison}
	Let $\bm{Y}$ and $\bm{W}$ be centered Gaussian random vectors in $\R^{d}$ with covariance matrices $\Sigma^{Y} = (\Sigma_{j,k}^{Y})_{1 \le j,k \le d}$ and $\Sigma^{W} = (\Sigma_{j,k}^{W})_{1 \le j,k \le d}$, respectively, and let $\Delta = \| \Sigma^{Y} - \Sigma^{W} \|_{\infty} := \max_{1 \le j,k \le d} |\Sigma_{j,k}^{Y} - \Sigma_{j,k}^{W}|$. 
	Suppose that $\min_{1 \le j \le d} \Sigma_{j,j}^{Y} \bigvee \min_{1 \le j \le d} \Sigma_{j,j}^{W} \ge \underline{\sigma}^{2}$ for some constant $\underline{\sigma} > 0$. 
	Then
	\[
	\sup_{b \in \R^{d}} | \P (\bm{Y} \le b) - \P (\bm{W} \le b) | \le C \Delta^{1/3} \log^{2/3} d,
	\]
	where $C$ is a constant that depends only on $\underline{\sigma}$.
\end{lemma}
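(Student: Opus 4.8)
The plan is to prove the bound via the classical Slepian-type interpolation (the ``smart path'') together with a smooth surrogate for the indicator of a rectangle and the anti-concentration estimate of Lemma~\ref{lem: Nazarov}. By exchanging the roles of $\bm{Y}$ and $\bm{W}$ it suffices to show $\P(\bm{Y}\le b)-\P(\bm{W}\le b)\le C\Delta^{1/3}\log^{2/3}d$ uniformly in $b\in\R^d$, with $C$ depending only on $\underline{\sigma}$; we may assume $d\ge 2$ and $\Delta>0$, the remaining cases being trivial.

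The first step is smoothing. For $\beta>0$ set $\delta:=\beta^{-1}\log d$ and let $F_\beta(w):=\beta^{-1}\log\sum_{j=1}^d e^{\beta w_j}$ be the softmax, so that $0\le F_\beta(w)-\max_j w_j\le\delta$, $\partial_j F_\beta=\pi_j\ge 0$, $\sum_j\pi_j=1$, and $\partial_k\pi_j=\beta(\pi_j\mathbf{1}_{\{j=k\}}-\pi_j\pi_k)$. I would then choose a smooth nonincreasing $g:\R\to[0,1]$ with $g\equiv 1$ on $(-\infty,\delta]$, $g\equiv 0$ on $[2\delta,\infty)$, and $\|g^{(r)}\|_\infty\le C_r\delta^{-r}$, and set $I_\beta(w):=g(F_\beta(w))$. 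One checks the sandwich $\mathbf{1}(\max_j w_j\le 0)\le I_\beta(w)\le\mathbf{1}(\max_j w_j\le 2\delta)$, and the chain rule combined with the identities for $\partial\pi$ gives the key second-derivative bound $\sum_{j,k}|\partial_j\partial_k I_\beta(w)|\le\|g''\|_\infty+2\beta\|g'\|_\infty\le C\beta^2/\log d$ for every $w\in\R^d$.

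Next, realize $\bm{Y}$ and $\bm{W}$ as independent vectors on one probability space, put $Z(t):=\sqrt{t}\,\bm{Y}+\sqrt{1-t}\,\bm{W}$ and $\Psi(t):=\E[I_\beta(Z(t)-b)]$, so that $\Psi(1)=\E[I_\beta(\bm{Y}-b)]$ and $\Psi(0)=\E[I_\beta(\bm{W}-b)]$. Differentiating in $t$ and applying Gaussian (Stein) integration by parts coordinatewise yields
\[
\Psi'(t)=\tfrac{1}{2}\sum_{j,k}\bigl(\Sigma^Y_{j,k}-\Sigma^W_{j,k}\bigr)\,\E\bigl[\partial_j\partial_k I_\beta(Z(t)-b)\bigr],
\]
hence $|\Psi(1)-\Psi(0)|\le\tfrac{1}{2}\Delta\,\sup_w\sum_{j,k}|\partial_j\partial_k I_\beta(w)|\le C\Delta\beta^2/\log d$. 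Combining this with the sandwich, $\P(\bm{Y}\le b)\le\Psi(1)\le\Psi(0)+C\Delta\beta^2/\log d\le\P(\bm{W}\le b+2\delta\mathbf{1})+C\Delta\beta^2/\log d$, and Lemma~\ref{lem: Nazarov} applied to $\bm{W}$ (whose coordinate variances are at least $\underline{\sigma}^2$) gives $\P(\bm{W}\le b+2\delta\mathbf{1})-\P(\bm{W}\le b)\le 2\delta\underline{\sigma}^{-1}(\sqrt{2\log d}+2)\le C\underline{\sigma}^{-1}\beta^{-1}\log^{3/2}d$. Therefore $\P(\bm{Y}\le b)-\P(\bm{W}\le b)\le C\Delta\beta^2/\log d+C\underline{\sigma}^{-1}\beta^{-1}\log^{3/2}d$; optimizing the free parameter by $\beta\asymp(\underline{\sigma}^{-1}\Delta^{-1}\log^{5/2}d)^{1/3}$ balances the two terms and yields the asserted bound, the reverse inequality following by the same argument with the roles of $\bm{Y}$ and $\bm{W}$ interchanged (applying Lemma~\ref{lem: Nazarov} to $\bm{Y}$) and then taking $\sup_b$.

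I expect the main obstacle to be the second-derivative estimate $\sum_{j,k}|\partial_j\partial_k I_\beta(w)|\le C\beta^2/\log d$: a crude bound on the second derivatives of a softmax composition is merely $O(\beta^2)$, and it is exactly the calibration $\delta\asymp\beta^{-1}\log d$ — matching the softmax approximation error to the smoothing width of $g$ — that improves this to $\beta^2/\log d$, which is what produces the exponent $1/3$ (rather than something weaker) after optimizing $\beta$. The only other point requiring care, and it is routine, is the justification of differentiation under the expectation and of the coordinatewise Stein identity used to obtain $\Psi'$, which is immediate because $I_\beta$ is smooth with uniformly bounded first and second derivatives.
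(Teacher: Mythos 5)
Your proof is correct, and it is essentially the argument behind the result the paper relies on: the paper offers no proof of its own, stating only that the lemma is implicit in the proof of Theorem 4.1 of \cite{chernozhukov2017central}, and that proof is precisely the combination you reconstruct --- softmax smoothing with the calibration $\delta\asymp\beta^{-1}\log d$, Slepian interpolation with Stein's identity to get the $\Delta\beta^{2}/\log d$ term, Nazarov's anti-concentration for the $\beta^{-1}\log^{3/2}d$ term, and optimization in $\beta$ yielding the exponent $1/3$. The only point worth noting is that the hypothesis $\min_{j}\Sigma^{Y}_{j,j}\vee\min_{j}\Sigma^{W}_{j,j}\ge\underline{\sigma}^{2}$ only guarantees that \emph{one} of the two vectors has nondegenerate coordinates, while your symmetric argument applies Lemma \ref{lem: Nazarov} to both; this is repaired by always anti-concentrating the same (nondegenerate) vector, e.g.\ for the reverse direction write $\P(\bm{W}\le b)\le\P(\bm{W}\le b-2\delta\mathbf{1})+\text{(Nazarov on $\bm{W}$)}$ and then compare $\P(\bm{W}\le b-2\delta\mathbf{1})$ with $\P(\bm{Y}\le b)$ through the same sandwich and interpolation bound.
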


\begin{proof}
	Implicit in the proof Theorem 4.1 in \cite{chernozhukov2017central}.
\end{proof}

\section{Proofs for Section 3}

\subsection{Uniform Convergence Rates}
We first establish uniform convergence rates of $\hat{Q}_{\vx}^{(r)}(\tau)$. 
The following Bahadur representation  of the linear quantile regression estimator $\hat{\beta} (\tau)$ will be used in the subsequent proofs.
\begin{lemma}[Bahadur representation of $\hat{\beta}(\tau)$] 
	\label{lem: bahadur}
	Under Assumption \ref{asmp: assumption1}, we have
	\[
	\hat{\beta}(\tau)-\beta(\tau)=J(\tau)^{-1}\left[\frac{1}{n}\sum_{i=1}^n\{\tau-I(U_i \le \tau)\}\vX_i\right]+o_{P}(n^{-3/4} \log n),
	\]
	uniformly in $\tau \in [\epsilon/2,1-\epsilon/2]$, where $U_1,\dots,U_n \sim U(0,1)$ i.i.d.\ that are independent of $\vX_1,\dots,\vX_n$.   In addition, we have
	\[
	\sup_{\tau \in [\epsilon/2,1-\epsilon/2]} \left \|\frac{1}{n}\sum_{i=1}^n\{\tau-I(U_i \le \tau)\}\vX_i\right \|=O_P(n^{-1/2}).
	\]
\end{lemma}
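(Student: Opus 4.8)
The proof rests on the classical change of variables for the linear quantile regression model together with a Bahadur-type expansion. Since the conditional density $f(\cdot \mid \vx)$ is bounded away from zero on $[Q_{\vx}(\epsilon/2),Q_{\vx}(1-\epsilon/2)]$ by Assumption \ref{asmp: assumption1}(v), the map $\tau \mapsto \vx^{T}\beta(\tau) = Q_{\vx}(\tau)$ is strictly increasing on $[\epsilon/2,1-\epsilon/2]$ for every $\vx \in \cX$, so we may represent the data as $Y_{i} = \vX_{i}^{T}\beta(U_{i})$ with $U_{i} \mid \vX_{i} \sim U(0,1)$; consequently $I(Y_{i} \le \vX_{i}^{T}\beta(\tau)) = I(U_{i} \le \tau)$ almost surely for $\tau$ in that range. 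Writing $g_{i}(\beta,\tau) := \{\tau - I(Y_{i} \le \vX_{i}^{T}\beta)\}\vX_{i}$, the subgradient characterization of the quantile regression estimator gives $\|n^{-1}\sum_{i=1}^{n} g_{i}(\hat\beta(\tau),\tau)\| \le d\max_{i}\|\vX_{i}\|/n$ uniformly in $\tau$, which is $O_{P}(n^{1/q-1}) = O_{P}(n^{-3/4})$ under Assumption \ref{asmp: assumption1}(iii); moreover $\E[g_{1}(\beta(\tau),\tau)] = 0$ and $\partial_{\beta}\E[g_{1}(\beta,\tau)]\big|_{\beta=\beta(\tau)} = -J(\tau)$, with $\lambda_{\min}(J(\tau))$ bounded away from zero on $[\epsilon/2,1-\epsilon/2]$ by Assumption \ref{asmp: assumption1}(iii),(v).

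The argument then proceeds in three standard steps. \emph{Step 1 (preliminary rate).} Using convexity of $\beta \mapsto \sum_{i}\rho_{\tau}(Y_{i}-\vX_{i}^{T}\beta)$, the quadratic growth of its population counterpart near $\beta(\tau)$, and a maximal inequality for the empirical process $(\beta,\tau) \mapsto n^{-1/2}\sum_{i}\{g_{i}(\beta,\tau)-\E g_{i}(\beta,\tau)\}$ over a fixed neighborhood of $\{(\beta(\tau),\tau)\}$, one obtains the uniform rate $\sup_{\tau \in [\epsilon/2,1-\epsilon/2]}\|\hat\beta(\tau)-\beta(\tau)\| = O_{P}(\sqrt{\log n/n})$. \emph{Step 2 (linearization).} With $\mathbb{G}_{n}(\beta,\tau) := n^{-1/2}\sum_{i}\{g_{i}(\beta,\tau)-\E g_{i}(\beta,\tau)\}$, decompose
\[
n^{-1}\sum_{i=1}^{n} g_{i}(\hat\beta(\tau),\tau) = n^{-1/2}\mathbb{G}_{n}(\beta(\tau),\tau) + n^{-1/2}\{\mathbb{G}_{n}(\hat\beta(\tau),\tau)-\mathbb{G}_{n}(\beta(\tau),\tau)\} + \E[g_{1}(\hat\beta(\tau),\tau)].
\]
The first term equals $n^{-1}\sum_{i}\{\tau-I(U_{i}\le\tau)\}\vX_{i}$ by the change of variables and the mean-zero property, and a second-order Taylor expansion of $\beta \mapsto \E[g_{1}(\beta,\tau)]$ (using boundedness of $f^{(1)}$, Assumption \ref{asmp: assumption1}(iv), together with Step 1) gives $\E[g_{1}(\hat\beta(\tau),\tau)] = -J(\tau)(\hat\beta(\tau)-\beta(\tau)) + O_{P}(n^{-1}\log n)$ uniformly. \emph{Step 3 (stochastic equicontinuity).} The class of functions $(\vx,u) \mapsto \{\tau - I(\vx^{T}\beta(u) \le \vx^{T}\beta)\}\vx$ indexed by $(\tau,\beta)$ in a shrinking neighborhood of $\{(\beta(\tau),\tau)\}$ is of VC type with an envelope proportional to $\|\vx\|$, and the variance of its increments over $\|\beta-\beta(\tau)\| \le \delta_{n}$ is $O(\delta_{n})$; applying the local maximal inequality of Lemma \ref{lem: maximal inequality} with $\sigma^{2} \asymp \delta_{n} = O_{P}(\sqrt{\log n/n})$ yields $\sup_{\tau}\|\mathbb{G}_{n}(\hat\beta(\tau),\tau)-\mathbb{G}_{n}(\beta(\tau),\tau)\| = O_{P}(n^{-1/4}(\log n)^{3/4})$, up to a lower-order term governed by the moment condition. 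Combining the three steps, recalling that the left-hand side of the display is $O_{P}(n^{-3/4})$, and multiplying through by the uniformly bounded inverse $J(\tau)^{-1}$, we obtain
\[
\hat\beta(\tau)-\beta(\tau) = J(\tau)^{-1}\Big[n^{-1}\sum_{i=1}^{n}\{\tau-I(U_{i}\le\tau)\}\vX_{i}\Big] + o_{P}(n^{-3/4}\log n)
\]
uniformly in $\tau$.

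The second assertion follows directly from the local maximal inequality: for each coordinate $j$, the class $\{(\vx,u) \mapsto (\tau-I(u\le\tau))x_{j} : \tau \in [\epsilon/2,1-\epsilon/2]\}$ is of VC type with envelope $|x_{j}| \le \|\vx\|$ and uniformly bounded second moment, so Lemma \ref{lem: maximal inequality} gives $\E\big[\sup_{\tau}\|n^{-1/2}\sum_{i}\{\tau-I(U_{i}\le\tau)\}\vX_{i}\|\big] = O(1)$ (the term $B/\sqrt{n}$ there is $O(n^{1/q-1/2}) = o(1)$ since $q \ge 4$), hence $\sup_{\tau}\|n^{-1}\sum_{i}\{\tau-I(U_{i}\le\tau)\}\vX_{i}\| = O_{P}(n^{-1/2})$.

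The main obstacle is Step 3: one has to organize the indexing class so that it is genuinely VC type uniformly over $\tau$, check that its envelope is square-integrable (this is essentially the only place the moment condition in Assumption \ref{asmp: assumption1}(iii) is used), and extract the local modulus of continuity sharply enough---of order $n^{-1/4}(\log n)^{3/4}$ rather than a cruder $n^{-1/4+\delta}$---which is precisely what the variance-dependent maximal inequality of \cite{chernozhukov2014gaussian} (Lemma \ref{lem: maximal inequality}) delivers over a naive global bound. Everything else is routine once the uniform rate of Step 1 is in place. An alternative to the self-contained argument above is to invoke the uniform Bahadur representations for linear quantile regression already available in the literature and then specialize to the reparametrized form in terms of the $U_{i}$.
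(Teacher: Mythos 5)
Your argument is correct, but it takes a different route from the paper: the paper does not prove this lemma at all, it simply invokes Lemma 3 of \cite{ohta2018quantile} (with further pointers to \cite{ruppert1980trimmed,gutenbrunner1992regression,he1996general}), which is exactly the ``alternative'' you mention in your closing sentence. What you have written is a self-contained reconstruction of that standard result: the change of variables $I(Y_i\le \vX_i^{T}\beta(\tau))=I(U_i\le\tau)$ with $U_i=F(Y_i\mid\vX_i)$ (this is the paper's Remark~\ref{rem: bahadur}), the subgradient bound $O_P(d\max_i\|\vX_i\|/n)=O_P(n^{1/q-1})$, a preliminary uniform rate, the linearization of $\beta\mapsto\E[g_1(\beta,\tau)]$, and stochastic equicontinuity via the local maximal inequality. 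The self-contained route buys transparency about exactly where each hypothesis of Assumption~\ref{asmp: assumption1} enters (in particular that (iii) with $q\ge 4$ is what makes both the subgradient term and the envelope term small enough); the citation route is what the paper actually relies on. One point in your Step 3 deserves a second look: the second term of Lemma~\ref{lem: maximal inequality} is $VB\log(\cdot)/\sqrt{n}$ with $B=\sqrt{\E[\max_i F^2(X_i)]}$, and the crude bound $\E[\max_i\|\vX_i\|^2]=O(n^{2/q})=O(n^{1/2})$ at $q=4$ yields only $O_P(n^{-3/4}\log n)$ for the corresponding contribution to the remainder, not the claimed $o_P(n^{-3/4}\log n)$. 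This is repaired by noting that $\E\|\vX\|^4<\infty$ implies $\E[\max_{1\le i\le n}\|\vX_i\|^4]=o(n)$, hence $B=o(n^{1/4})$, which restores the little-$o$; you should make that refinement explicit rather than leaving it as ``a lower-order term governed by the moment condition.''
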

\begin{proof}
	See Lemma 3 in \cite{ohta2018quantile}. See also \cite{ruppert1980trimmed,gutenbrunner1992regression,he1996general}.
\end{proof}
\begin{remark}\label{rem: bahadur}
	Inspection of the proof shows that $U_i=F(Y_i\mid\vX_i)$ where $F(y\mid \vX)$ is the conditional distribution function of $Y$ given $\vX$.
\end{remark}

We first prove the following technical lemma. 
\begin{lemma}\label{UnifQr} 	
	If Assumption \ref{asmp: assumption1} holds,  then for $r=1,2,3$, we have 
	\[
	\sup_{\substack{\vx\in \cX_0 \\ \tau \in [\epsilon,1-\epsilon]}}\left| \frac{1}{nh^{r}}\sum_{i=1}^n\vx^TJ(\tau)^{-1}\vX_i \left \{ K^{(r-1)}\left(\frac{\tau-U_i}{h}\right) -  h I(r=1) \right \} \right|=O_P\left(n^{-1/2}h^{-r+1/2}\sqrt{\log n } \right).
	\]
\end{lemma}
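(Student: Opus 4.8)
The plan is to rescale by $\sqrt{n}\,h^{r}$ and view the left-hand side as the supremum norm of a centered empirical process over a VC type class, to which the local maximal inequality (Lemma~\ref{lem: maximal inequality}) applies. Taking $(U_i,\vX_i)$ with $U_i=F(Y_i\mid\vX_i)$ as in Remark~\ref{rem: bahadur}, the pairs are i.i.d.\ and, by the probability integral transform, $U_i\sim U(0,1)$ is independent of $\vX_i$. For $(\vx,\tau)\in\cX_0\times[\epsilon,1-\epsilon]$ I would set
\[
f_{\vx,\tau}(u,\vx'):=\vx^{T}J(\tau)^{-1}\vx'\Big\{K^{(r-1)}\big(\tfrac{\tau-u}{h}\big)-h\,I(r=1)\Big\},
\]
so that the quantity inside the supremum equals $(\sqrt{n}\,h^{r})^{-1}\,n^{-1/2}\sum_{i=1}^{n}f_{\vx,\tau}(U_i,\vX_i)$, and it suffices to prove $\E\big[\sup_{(\vx,\tau)}\big|n^{-1/2}\sum_{i=1}^{n}f_{\vx,\tau}(U_i,\vX_i)\big|\big]=O(\sqrt{h\log n}\,)$, after which Markov's inequality and division by $\sqrt{n}\,h^{r}$ close the argument.

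First I would record a mean and a variance bound for the class $\cF:=\{f_{\vx,\tau}\}$. Once $h\le\epsilon$, the interval $[(\tau-1)/h,\tau/h]$ contains $\supp K$ for every $\tau\in[\epsilon,1-\epsilon]$, so the substitution $v=(\tau-u)/h$ gives $\int_{0}^{1}K^{(r-1)}(\tfrac{\tau-u}{h})\,du=h\int_{\R}K^{(r-1)}=h\,I(r=1)$ (the integral of a derivative of a compactly supported function vanishes for $r\ge2$, and $\int K=1$); since $U_i\perp\vX_i$ this gives $\E[f_{\vx,\tau}(U,\vX)]=0$. The same computation yields $\E[f_{\vx,\tau}(U,\vX)^{2}]=\big(\vx^{T}J(\tau)^{-1}\E[\vX\vX^{T}]J(\tau)^{-1}\vx\big)\int_{0}^{1}\big(K^{(r-1)}(\tfrac{\tau-u}{h})-h\,I(r=1)\big)^{2}\,du=O(h)$ uniformly, using compactness of $\cX_0$, the eigenvalue lower bound on $J(\tau)$ over $[\epsilon,1-\epsilon]$ noted below the definition of $J$, and Assumption~\ref{asmp: assumption1}(iii); hence I may take $\sigma^{2}\asymp h$. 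An envelope is $F(u,\vx')=C(1+\|\vx'\|)$ with $\E[F^{2}]=O(1)<\infty$ by Assumption~\ref{asmp: assumption1}(iii), and $B:=\sqrt{\E[\max_{i\le n}F^{2}(U_i,\vX_i)]}\lesssim(n\,\E\|\vX\|^{q})^{1/q}\lesssim n^{1/q}\le n^{1/4}$.

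Second I would check that $\cF$ is pointwise measurable (restrict to rational $\tau$ and a countable dense subset of $\cX_0$) and VC type with characteristics independent of $n$ and $h$. The factor $(\vx,\tau)\mapsto\vx^{T}J(\tau)^{-1}\vx'$ is Lipschitz in $(\vx,\tau)$ with constant $\lesssim\|\vx'\|$: $\tau\mapsto\beta(\tau)$ is Lipschitz on $[\epsilon/2,1-\epsilon/2]$ because $Q_\vx'(\tau)=1/f(Q_\vx(\tau)\mid\vx)$ is bounded (Assumption~\ref{asmp: assumption1}(v)) and $\E[\vX\vX^{T}]$ is nonsingular, so $\tau\mapsto J(\tau)$ is Lipschitz (Assumption~\ref{asmp: assumption1}(iv)) and hence so is $\tau\mapsto J(\tau)^{-1}$ by the eigenvalue bound; a Lipschitz-in-parameter family over the compact set $\cX_0\times[\epsilon,1-\epsilon]$ is VC type with constants depending only on its fixed dimension. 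The factor $u\mapsto K^{(r-1)}(\tfrac{\tau-u}{h})-h\,I(r=1)$ ranges, as $\tau$ varies, over translates/dilations of the fixed function $K^{(r-1)}$, which has bounded variation for $r=1,2,3$ (being continuously differentiable with compact support, as $K$ is three times differentiable), so that family is VC type with constants independent of $\tau$ and $h$. As a product of two uniformly bounded VC type classes, $\cF$ is VC type with $A=O(1)$ and $V=O(1)$.

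Finally I would apply Lemma~\ref{lem: maximal inequality}. Assumption~\ref{asmp: assumption1}(viii) forces $h\gg(\log n/n)^{1/5}$, so $\log(A\sqrt{\E F^{2}}/\sigma)\asymp\log(1/h)\lesssim\log n$ and
\[
\E\Big[\sup\nolimits_{(\vx,\tau)}\big|n^{-1/2}{\textstyle\sum}_{i=1}^{n}f_{\vx,\tau}(U_i,\vX_i)\big|\Big]\lesssim\sqrt{V\sigma^{2}\log n}+\frac{VB}{\sqrt{n}}\log n\lesssim\sqrt{h\log n}+n^{1/q-1/2}\log n.
\]
Since $q\ge4$, the same lower bound on $h$ gives $n^{2/q-1}\log n\le n^{-1/2}\log n=o(h)$, so the second term is $o(\sqrt{h\log n})$; Markov's inequality and division by $\sqrt{n}\,h^{r}$ then yield the stated $O_P(n^{-1/2}h^{-r+1/2}\sqrt{\log n})$. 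I expect the main obstacle to be making the VC type bound for $\cF$ uniform in both $n$ and $h$ — in particular ensuring the covering numbers of the rescaled kernel-derivative family $\{u\mapsto K^{(r-1)}((\tau-u)/h)\}$ do not deteriorate as $h\to0$ — together with the bookkeeping confirming the fluctuation term $VB\log(\cdot)/\sqrt{n}$ is negligible under Assumption~\ref{asmp: assumption1}(viii).
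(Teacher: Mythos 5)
Your proposal is correct and follows essentially the same route as the paper's proof: rescale, center, verify that the class is a product of a VC type kernel-dilation family and a linear family with envelope $C\|\vx'\|$, compute $\sigma^2=O(h)$ and $B=O(n^{1/4})$, and apply the local maximal inequality (Lemma~\ref{lem: maximal inequality}) so that the fluctuation term is negligible under Assumption~\ref{asmp: assumption1}(viii). The only cosmetic difference is that the paper establishes the VC property of the linear factor by noting it lies in a $d$-dimensional vector space of functions (hence VC subgraph) rather than via your Lipschitz-in-parameter argument; both yield the same uniform covering-number bound.
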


\begin{proof}
	Since $K$ is supported in $[-1,1]$,  for sufficiently large $n$, 
	\[
	\E\left [K^{(r-1)}\left(\frac{\tau-U}{h}\right)\right] =h \int_{(1-\tau)/h}^{\tau/h} K^{(r-1)} (t) dt = h \int_{\R} K^{(r-1)} (t) dt = h I(r=1).
	\]
	Consider the function class $\cF_{h}:=\{(u,\vx') \mapsto  K^{(r-1)}((\tau-u)/h) \vx^TJ(\tau)^{-1}\vx': \vx\in \cX_0, \tau\in[\epsilon,1-\epsilon] \}$  (which depends on $n$ since $h = h_n$ does). It suffices to show that 
	\[
	\E[\|\mathbb{G}_n\|_{\cF_{h}}]=O(\sqrt{h\log n}) \quad \text{with} \quad \mathbb{G}_{n} f = n^{-1/2} \sum_{i=1}^{n} \{f(U_i,\vX_i) - \E[f(U,\vX)] \}.
	\]
	To this end, we will apply Lemma \ref{lem: maximal inequality}.
	The function class $\cF_{h}$ is a subset of the pointwise product of the following two function classes (that are independent of $n$): $\cF' = \{ (u,\vx') \mapsto \vx^TJ(\tau)^{-1}\vx': \vx\in \cX_0,\ \tau \in [\epsilon,1-\epsilon] \}$ and $\cF'' = \{ (u,\vx') \mapsto K^{(r-1)} (au +b) : a,b \in \R \}$.
	The former function class $\cF'$ has envelope $F_1(u,\vx') = C \| \vx' \|$ and the latter function class $\cF''$ has envelope $F_2 (u,\vx') = C'$ where $C,C'$ are some constants independent of $n$. The function class $\cF'$ is a subset of a vector space of  dimension $d$, so that it is a VC subgraph class with VC index at most $d+2$ (cf. Lemma 2.6.15 in \cite{vaart1996weak}). Next, since $K^{(r-1)}$ is of bounded variation (i.e., it can be written as the difference of two bounded nondecreasing functions) and the function class $\{ u \mapsto au + b : a,b \in \R \}$ is a VC subgraph class (as it is a vector space of dimension $2$), the function class $\cF''$ is VC type in view of Lemma 2.6.18 in \cite{vaart1996weak}. Conclude that, for $F(u,\vx') = C C'\| \vx' \|$, there exist positive constants $A,V$ independent of $n$ such that 
	\[
	\sup_{Q}N(\cF_h,\|\cdot \|_{Q,2},\eta \| F \|_{Q,2}) \le (A/\eta)^{V}, \ 0 < \forall \eta \le 1,
	\]
	where $\sup_{Q}$ is taken over all finitely discrete distributions on $(0,1) \times \R^{d}$.

	It is not difficult to verify that, by independence between $U$ and $\vX$, 
	\[
	\begin{split}
	\sup_{\substack{\vx \in \cX_{0}\\ \tau \in [\epsilon,1-\epsilon]}} \E[\{ K^{(r-1)}((\tau-U)/h)  \vx^{T}J(\tau)^{-1}\vX\}^{2}] \le O(1) \int_{0}^{1} K^{(r-1)}((\tau-u)/h)^2 du  = O(h). 
	\end{split}
	\]
	In addition, $\E[ \max_{1 \le i \le n}F^2 (U_i,\vX_i)] \le O(1) \E[\max_{1 \le i \le n} \| \vX_i \|^2] = O(n^{1/2})$ (as $\E[\|\vX\|^{4}] < \infty$). Conclude from Lemma \ref{lem: maximal inequality} that 
	\[
	\E[\|\mathbb{G}_n\|_{\cF_{h}}] = O( \sqrt{h\log n}+ n^{-1/4}\log n) =O(\sqrt{h\log n}).
	\]
	This completes the proof. 
\end{proof}

The following lemma derives uniform convergence rates of $\hat{Q}_{\vx}^{(r)}(\tau)$. 

\begin{lemma}[Uniform convergence rates $\hat{Q}_{\vx}^{(r)}(\tau)$]
	\label{UnifQ}
	Under Assumption \ref{asmp: assumption1}, we have
	\[
	\sup_{\substack{\vx\in \cX_0 \\ \tau \in [\epsilon,1-\epsilon]}}|\hat{Q}_{\vx}^{(r)}(\tau)-Q_{\vx}^{(r)}(\tau)|=
	\begin{cases}
	O_P\left(n^{-1/2}+h^{2}\right) & \textrm{if $r=0$}\\
	O_P\left(n^{-1/2}h^{-r+1/2}\sqrt{\log n}+h^{2}\right) & \text{if $r=1$ or $2$}\\
	O_P\left(n^{-1/2}h^{-5/2}\sqrt{\log n}+h^{}\right) & \text{if $r=3$}
	\end{cases}
	\]
\end{lemma}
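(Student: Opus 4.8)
The plan is to fix $n$ large enough that $h<\epsilon/2$, so that every argument $\tau-hu$ with $\tau\in[\epsilon,1-\epsilon]$, $|u|\le 1$ lies in $[\epsilon/2,1-\epsilon/2]$ and the truncation convention for $\check{Q}_{\vx}$ is inactive. Since $K\in C^3$ by Assumption \ref{asmp: assumption1}(vii) one may differentiate under the integral sign and write $\hat{Q}_{\vx}^{(r)}(\tau)=h^{-r}\int\check{Q}_{\vx}(\tau-hu)K^{(r)}(u)\,du$ for $r=0,1,2,3$, and then decompose $\hat{Q}_{\vx}^{(r)}(\tau)-Q_{\vx}^{(r)}(\tau)=B_r(\vx,\tau)+S_r(\vx,\tau)$ with deterministic smoothing bias $B_r(\vx,\tau):=h^{-r}\int Q_{\vx}(\tau-hu)K^{(r)}(u)\,du-Q_{\vx}^{(r)}(\tau)$ and stochastic term $S_r(\vx,\tau):=h^{-r}\int\{\check{Q}_{\vx}(\tau-hu)-Q_{\vx}(\tau-hu)\}K^{(r)}(u)\,du$. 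For the bias I would integrate by parts $r$ times in $u$ (boundary terms vanish since $K$ is supported in $[-1,1]$) and use $\int K=1$ to obtain $B_r(\vx,\tau)=\int\{Q_{\vx}^{(r)}(\tau-hu)-Q_{\vx}^{(r)}(\tau)\}K(u)\,du$. Under Assumption \ref{asmp: assumption1}(iv)--(v), $Q_{\vx}$ is four times continuously differentiable with $Q_{\vx}^{(1)},\dots,Q_{\vx}^{(4)}$ bounded uniformly over $\vx\in\cX$ and $\tau\in[\epsilon/2,1-\epsilon/2]$, via the chain-rule identities $Q_{\vx}'=1/f(Q_{\vx}\mid\vx)$, $Q_{\vx}''=-f^{(1)}(Q_{\vx}\mid\vx)/f(Q_{\vx}\mid\vx)^3$, and the analogous expressions for $Q_{\vx}^{(3)},Q_{\vx}^{(4)}$, together with $|f^{(j)}|\le C_1$ and $f\ge c_1$. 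Hence for $r\in\{0,1,2\}$ a second-order Taylor expansion combined with symmetry of $K$ ($\int uK(u)\,du=0$) gives $\sup_{\vx,\tau}|B_r(\vx,\tau)|=O(h^2)$, while for $r=3$, with only a bounded fourth derivative available, a first-order expansion gives $\sup_{\vx,\tau}|B_3(\vx,\tau)|\le h\sup|Q_{\vx}^{(4)}|\int|u|K(u)\,du=O(h)$.

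For the stochastic term I would substitute the Bahadur representation of Lemma \ref{lem: bahadur}: uniformly in $t\in[\epsilon/2,1-\epsilon/2]$, $\check{Q}_{\vx}(t)-Q_{\vx}(t)=\vx^TJ(t)^{-1}\bar{Z}(t)+o_P(n^{-3/4}\log n)$, where $\bar{Z}(t):=n^{-1}\sum_{i=1}^n\{t-I(U_i\le t)\}\vX_i$ satisfies $\sup_t\|\bar{Z}(t)\|=O_P(n^{-1/2})$; after integrating against $h^{-r}K^{(r)}(u)\,du$ the remainder contributes $o_P(h^{-r}n^{-3/4}\log n)$ to $S_r$, uniformly over $\vx\in\cX_0$, $\tau\in[\epsilon,1-\epsilon]$. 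Next I would linearize by replacing $J(\tau-hu)^{-1}$ with $J(\tau)^{-1}$: the map $\tau\mapsto J(\tau)^{-1}$ is Lipschitz on $[\epsilon/2,1-\epsilon/2]$, since $\beta(\cdot)$ is Lipschitz there (from $\beta'(\tau)=J(\tau)^{-1}\E[\vX]$ and $\lambda_{\min}(J(\tau))$ bounded away from $0$, which follows from Assumption \ref{asmp: assumption1}(iii),(v)) and $|f^{(1)}|\le C_1$; together with $\sup_t\|\bar{Z}(t)\|=O_P(n^{-1/2})$ this replacement costs only $O_P(h^{-r+1}n^{-1/2})$.

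It then remains to evaluate the leading term $h^{-r}\vx^TJ(\tau)^{-1}\int\bar{Z}(\tau-hu)K^{(r)}(u)\,du$. Changing variables back to $t=\tau-hu$ and integrating by parts in $t$ — using that $t\mapsto t-I(U_i\le t)$ has distributional derivative $1-\delta_{U_i}$, that $K^{(r-1)}$ is compactly supported, and that $\int K^{(r-1)}((\tau-t)/h)\,dt=h\,I(r=1)$ — gives the identity
\[
\int\{t-I(U_i\le t)\}K^{(r)}\!\Big(\tfrac{\tau-t}{h}\Big)dt=h^2 I(r=1)-h\,K^{(r-1)}\!\Big(\tfrac{\tau-U_i}{h}\Big),
\]
so the leading term equals $-\tfrac{1}{nh^r}\sum_{i=1}^n\vx^TJ(\tau)^{-1}\vX_i\{K^{(r-1)}((\tau-U_i)/h)-h\,I(r=1)\}$, which is exactly minus the quantity bounded in Lemma \ref{UnifQr}, hence $O_P(n^{-1/2}h^{-r+1/2}\sqrt{\log n})$ uniformly. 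For $r\in\{1,2,3\}$, Assumption \ref{asmp: assumption1}(viii) ($nh^5/\log n\to\infty$) makes both error terms $o_P(h^{-r}n^{-3/4}\log n)$ and $O_P(h^{-r+1}n^{-1/2})$ of smaller order than $n^{-1/2}h^{-r+1/2}\sqrt{\log n}$, and combining with the bias bound yields the stated rates. For $r=0$ the integration-by-parts reduction is not needed: $|S_0(\vx,\tau)|\le\|K\|_1\sup_{t\in[\epsilon/2,1-\epsilon/2]}|\vx^T(\hat{\beta}(t)-\beta(t))|=O_P(n^{-1/2})$ directly from Lemma \ref{lem: bahadur}, and with $B_0(\vx,\tau)=O(h^2)$ this gives $O_P(n^{-1/2}+h^2)$.

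I expect the main obstacle to be the integration-by-parts identity that lines $S_r$ up exactly with the empirical-process bound of Lemma \ref{UnifQr}, together with the careful (but routine) bookkeeping needed to confirm that the Bahadur remainder, the linearization of $J^{-1}$, and the $r=3$ smoothing bias are all negligible relative to $n^{-1/2}h^{-r+1/2}\sqrt{\log n}$ under the bandwidth condition; the supporting claim that $J(\cdot)^{-1}$ is Lipschitz (which itself rests on Lipschitz continuity of $\beta(\cdot)$ and on $\lambda_{\min}(J(\tau))$ being bounded away from zero) also has to be established along the way.
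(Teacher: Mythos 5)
Your proposal is correct and follows essentially the same route as the paper: the same bias/stochastic decomposition, the same substitution of the Bahadur representation with the $o_P(n^{-3/4}\log n)$ remainder, the same replacement of $J(\tau-hu)^{-1}$ by $J(\tau)^{-1}$, and the same reduction (via integration by parts against $dt-\delta_{U_i}$, using $\int K^{(r-1)}((\tau-t)/h)\,dt=h\,I(r=1)$) to the empirical-process bound of Lemma \ref{UnifQr}. You simply spell out a few details the paper leaves implicit (the Lipschitz continuity of $J(\cdot)^{-1}$ and the explicit Taylor/symmetry argument for the bias).
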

\begin{proof}
	Consider first the case where $r=0$.  By definition,
	\begin{align*}
	\sup_{\substack{\vx\in \cX_0 \\ \tau \in [\epsilon,1-\epsilon]}}|\hat{Q}_{\vx}(\tau)-Q_{\vx}^{}(\tau)|&= \sup_{\substack{\vx\in \cX_0 \\ \tau \in [\epsilon,1-\epsilon]}}\left|\int\check{Q}_{\vx}(t)K_h^{}(\tau-t)dt - Q_{\vx}^{}(\tau)\right|\\
	&\le\sup_{\substack{\vx\in \cX_0 \\ \tau \in [\epsilon,1-\epsilon]}}\left|\int[\check{Q}_{\vx}(t)-Q_{\vx}(t)]K_h(\tau-t)dt \right|\\
	&\quad+\sup_{\substack{\vx\in \cX_0 \\ \tau \in [\epsilon,1-\epsilon]}}\left|\int Q_{\vx}(t)K_h^{}(\tau-t)dt - Q_{\vx}^{}(\tau) \right| \\
	&=: I + II. 
	\end{align*}
	We have $I = O_{P}(n^{-1/2})$ by Lemma \ref{lem: bahadur} and $II=O(h^2)$ by Taylor expansion. 
	
	Next, consider $1\le r\le 3$. We note that
	\begin{align*}
	\sup_{\substack{\vx\in \cX_0 \\ \tau \in [\epsilon,1-\epsilon]}}|\hat{Q}_{\vx}^{(r)}(\tau)-Q_{\vx}^{(r)}(\tau)|
	&\le \sup_{\substack{\vx\in \cX_0 \\ \tau \in [\epsilon,1-\epsilon]}}\left|\int[\check{Q}_{\vx}(t)-Q_{\vx}(t)]K_h^{(r)}(\tau-t)dt \right|\\
	&\quad+\sup_{\substack{\vx\in \cX_0 \\ \tau \in [\epsilon,1-\epsilon]}}\left|\int Q_{\vx}(t)K_h^{(r)}(\tau-t)dt - Q_{\vx}^{(r)}(\tau) \right| \\
	&=: III + IV. 
	\end{align*}
	We have $IV=O(h^2)$ for $r=1,2$ and $= O(h)$ for $r=3$ by Taylor expansion (recall that $Q_{\vx}(\tau)$ is four-times continuously differentiable). 
	Observe that, by Lemma \ref{lem: bahadur} and change of variables, 
	\[
	\begin{split}
	III
	&\le \sup_{\substack{\vx\in \cX_0 \\ \tau \in [\epsilon,1-\epsilon]}}\left| \frac{1}{nh^{r}}\sum_{i=1}^n\int \vx^TJ(\tau - th)^{-1}\vX_i\left\{\tau - th-I\left(U_i\le \tau - th\right)\right\}K^{(r)} (t) dt\right| \\
	&\quad + \underbrace{o_{P}(n^{-3/4}h^{-r} \log n)}_{=o_{P}(n^{-1/2}h^{-r+1/2}\sqrt{\log n})}.
	\end{split}
	\]
	Replacing $J(\tau - th)$ by $J(\tau)$ in the first term on the right hand side results in an error of order $O_{P}(n^{-1/2}h^{-r+1})$; this can be verified by a similar argument to the proof of the preceding lemma. Thus, it remains to bound
	\begin{align*}
	&\sup_{\substack{\vx\in \cX_0 \\ \tau \in [\epsilon,1-\epsilon]}}\left| \frac{1}{nh^{r}}\sum_{i=1}^n\int \vx^TJ(\tau)^{-1}\vX_i\left\{\tau-th-I\left(U_i\le \tau-th\right)\right\}K^{(r)}\left(t\right)dt\right| \\
	&=\sup_{\substack{\vx\in \cX_0 \\ \tau \in [\epsilon,1-\epsilon]}}\left| \frac{1}{nh^{r}}\sum_{i=1}^n\vx^TJ(\tau)^{-1}\vX_i  \left\{ K^{(r-1)}\left(\frac{\tau-U_i}{h}\right) + h\int tK^{(r)}\left(t\right)dt\right\}\right|,
	\end{align*}
	where we have used the fact that $K^{(r)}$ integrates to $0$. 
	Here, by integration by parts, 
	\[
	\int t K^{(r)}(t) dt = - \int K^{(r-1)} (t) dt = - I(r=1). 
	\]
	Thus, from Lemma \ref{UnifQr}, we have $III = O(n^{-1/2}h^{-r+1/2} \sqrt{\log n})$. This completes the proof. 
\end{proof}

\begin{remark}[Bias of $\hat{Q}_{\vx}(\tau)$ at $\tau = \tau_{\vx}$]
	\label{rem: bias}
	The bias of $\hat{Q}_{\vx}(\tau)$ can be improved to $O(h^{4})$ at $\tau = \tau_{\vx}$ by $Q_{\vx}''(\tau_{\vx}) = 0$ and symmetry of $K$. 
\end{remark}

\begin{remark}[Expansion of $\hat{Q}_{\vx}''(\tau)$]
	Inspection of the proof shows that 
	\begin{equation}
	\begin{split}
	&\hat{Q}_{\vx}''(\tau) - Q_{\vx}''(\tau) - \frac{Q_{\vx}^{(4)}(\tau)}{2} \kappa h^2 + o(h^{2}) = \frac{1}{nh^{2}}\sum_{i=1}^n K'\left(\frac{\tau-U_i}{h}\right) \vx^TJ(\tau)^{-1}\vX_i   \\
	&\quad + O_{P}(n^{-1/2}h^{-1}) + \underbrace{o_{P}(n^{-3/4}h^{-2} \log n)}_{o_{P}(n^{-1/2}h^{-1})}
	\end{split}
	\label{eq: expansion}
	\end{equation}
	uniformly in $(\tau,\vx) \in [\epsilon,1-\epsilon] \times \cX_{0}$. Recall that $\kappa = \int t^2 K(t) dt$. 
\end{remark}

\subsection{Proofs for Section \ref{sec:UAL}}
We first prove the uniform consistency of $\hat{\tau}_{\vx}$.
\begin{lemma}[Uniform consistency of $\hat{\tau}_{\vx}$]\label{lem: consistency}
	Under Assumption \ref{asmp: assumption1}, we have $\sup_{\vx\in\cX_0}|\hat{\tau}_{\vx} - \tau_{\vx}| \stackrel{P}{\to} 0$.  
\end{lemma}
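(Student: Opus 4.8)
The plan is to run the classical M-estimation consistency argument: combine (a) uniform convergence of $\hat{s}_{\vx}(\cdot)=\hat{Q}_{\vx}'(\cdot)$ to $s_{\vx}(\cdot)=Q_{\vx}'(\cdot)$ over $\cX_0\times[\epsilon,1-\epsilon]$ with (b) a uniform-in-$\vx$ well-separation property of the minimizers $\tau_{\vx}$, and then conclude by the standard comparison of objective values. Since $\hat{\tau}_{\vx}$ is guaranteed to exist by compactness of $[\epsilon,1-\epsilon]$ and smoothness of $\hat{s}_{\vx}$, the only content is (a) and (b).

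For (a), I would invoke Lemma \ref{UnifQ} with $r=1$, which gives
\[
\sup_{\vx\in\cX_0}\sup_{\tau\in[\epsilon,1-\epsilon]}\left|\hat{s}_{\vx}(\tau)-s_{\vx}(\tau)\right|=O_P\!\left(n^{-1/2}h^{-1/2}\sqrt{\log n}+h^2\right)=o_P(1),
\]
using that $h\to 0$ and that Assumption \ref{asmp: assumption1}(viii), $nh^5/\log n\to\infty$, forces $nh/\log n\to\infty$. For (b), for $\delta>0$ define
\[
\eta(\delta):=\inf_{\vx\in\cX_0}\ \inf_{\substack{\tau\in[\epsilon,1-\epsilon]\\ |\tau-\tau_{\vx}|\ge\delta}}\left\{s_{\vx}(\tau)-s_{\vx}(\tau_{\vx})\right\},
\]
and I would show $\eta(\delta)>0$. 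Here one uses that $d$ is fixed: $s_{\vx}(\tau)=\vx^TQ_{\vx}'(\tau)/\vx$, more precisely $s_{\vx}(\tau)=\vx^T\beta'(\tau)$ with $\beta(\cdot)$ four-times continuously differentiable (as $Q_{\vx}(\tau)=\vx^T\beta(\tau)$ is four-times continuously differentiable in $\tau$ under Assumption \ref{asmp: assumption1} and $\E[\vX\vX^T]$ is positive definite), so $s_{\vx}(\tau)$ is jointly continuous on the compact set $\cX_0\times[\epsilon,1-\epsilon]$. Since $\tau_{\vx}$ is the unique minimizer of the continuous map $\tau\mapsto s_{\vx}(\tau)$ and lies in $(\epsilon,1-\epsilon)$ by Assumption \ref{asmp: assumption1}(ii), a routine argmin-continuity/compactness argument shows $\vx\mapsto\tau_{\vx}$ is continuous on $\cX_0$; hence the constraint set $\{(\vx,\tau):\vx\in\cX_0,\ \tau\in[\epsilon,1-\epsilon],\ |\tau-\tau_{\vx}|\ge\delta\}$ is compact, and the continuous, strictly positive (by uniqueness) function $(\vx,\tau)\mapsto s_{\vx}(\tau)-s_{\vx}(\tau_{\vx})$ attains a strictly positive minimum there.

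To conclude, fix $\delta>0$ and work on the event $E_n:=\{\sup_{\vx,\tau}|\hat{s}_{\vx}(\tau)-s_{\vx}(\tau)|<\eta(\delta)/3\}$, which has probability tending to $1$ by (a). On $E_n$, for every $\vx\in\cX_0$,
\[
s_{\vx}(\hat{\tau}_{\vx})-s_{\vx}(\tau_{\vx})=\underbrace{[s_{\vx}(\hat{\tau}_{\vx})-\hat{s}_{\vx}(\hat{\tau}_{\vx})]}_{<\eta(\delta)/3}+\underbrace{[\hat{s}_{\vx}(\hat{\tau}_{\vx})-\hat{s}_{\vx}(\tau_{\vx})]}_{\le 0}+\underbrace{[\hat{s}_{\vx}(\tau_{\vx})-s_{\vx}(\tau_{\vx})]}_{<\eta(\delta)/3}<\eta(\delta),
\]
where the middle bracket is nonpositive because $\hat{\tau}_{\vx}$ minimizes $\hat{s}_{\vx}$ over $[\epsilon,1-\epsilon]\ni\tau_{\vx}$. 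By definition of $\eta(\delta)$, the inequality $s_{\vx}(\hat{\tau}_{\vx})-s_{\vx}(\tau_{\vx})<\eta(\delta)$ is incompatible with $|\hat{\tau}_{\vx}-\tau_{\vx}|\ge\delta$, so $|\hat{\tau}_{\vx}-\tau_{\vx}|<\delta$ for all $\vx$, i.e.\ $\sup_{\vx\in\cX_0}|\hat{\tau}_{\vx}-\tau_{\vx}|\le\delta$ on $E_n$. Letting $\delta\downarrow 0$ yields $\sup_{\vx\in\cX_0}|\hat{\tau}_{\vx}-\tau_{\vx}|\stackrel{P}{\to}0$. I expect the main obstacle to be step (b), specifically the continuity of $\vx\mapsto\tau_{\vx}$ needed to make the separation uniform over $\cX_0$; this is exactly the feature that fails when $d=d_n\to\infty$, which is why the increasing-dimension analysis instead imposes the explicit global identification condition in Assumption \ref{Maup2}(v).
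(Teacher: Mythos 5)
Your proof is correct and follows essentially the same route as the paper's: establish the uniform well-separation constant $\eta(\delta)>0$ via joint continuity of $s_{\vx}(\tau)$ and continuity of $\vx\mapsto\tau_{\vx}$ on the compact set, then conclude with the standard three-term comparison of objective values using Lemma \ref{UnifQ} with $r=1$. The only cosmetic difference is that you get joint continuity directly from smoothness of $\beta(\cdot)$, whereas the paper deduces it from convexity of $\vx\mapsto s_{\vx}(\tau)$ via Rockafellar's Theorem 10.7 and then applies Berge's maximum theorem for the continuity of $\tau_{\vx}$; both are valid.
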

\begin{proof}
	We divide the proof into two steps. 
	
	\textbf{Step 1}. We will verify that for any $\delta >0$,
	\[
	\eta_{\delta} :=
	\inf_{\vx\in  \cX_0}\inf_{\substack{\tau \in[\epsilon,1-\epsilon] \\ |\tau-\tau_{\vx}|\ge \delta}} \{ s_{\vx}(\tau)-s_{\vx}(\tau_{\vx}) \} >0.
	\]
	This follows from the following two claims: (i) 
	$s_{\vx}(\tau)-s_{\vx}(\tau_{\vx})$ is jointly continuous in $(\tau,\vx)$, (ii)
	$S_\delta:=\{ (\tau,\vx): \vx\in \cX_{0}, \tau \in [\epsilon,1-\epsilon], |\tau-\tau_{\vx}|\ge\delta \}$ is compact in $(0,1) \times \R^{d}$ and  
	the observation that $\tau_{\vx}$ is the unique minimizer of $s_{\vx}(\tau)$, i.e., $\tau_{\vx}=\argmin_{\tau \in [\epsilon,1-\epsilon]}s_{\vx}(\tau)$. Since $s_{\vx}(\tau)= \partial Q_{\vx}(\tau)/\partial \tau$ is continuous in $\tau$ for any fixed $\vx$ under Assumption \ref{asmp: assumption1} and also linear (thus convex) in $\vx$ by the linear quantile assumption, Theorem 10.7 in \cite{rockafellar1970convex} implies that $s_{\vx}(\tau)$ is jointly continuous in $(\tau,\vx$). Now, by Berge's maximum theorem (cf. Theorem 17.31 in \cite{aliprantis06}: see also their Lemma 17.6),  we see that $\tau_{\vx}$ is continuous in $\vx$. The preceding discussion also implies that $s_{\vx}(\tau)-s_{\vx}(\tau_{\vx})$ is jointly continuous in $(\tau,\vx)$. 
	Combining the continuity of $\tau_{\vx}$ and the definition of $S_{\delta}$, we can verify $S_\delta$ is closed and bounded and therefore compact.
	Thus, we have verified claims (i) and (ii) and the conclusion of this step follows. 
	
	\textbf{Step 2}. We will prove the uniform consistency of $\hat{\tau}_{\vx}$. 
	Consider the event $\mathcal{A}_{\delta} :=\{\sup_{\vx \in \cX_0} \{ s_{\vx}(\hat{\tau}_{\vx})-s_{\vx}(\tau_{\vx}) \}\ge \eta_{\delta}\}$. Observe that
	\[
	\begin{split}
	\sup_{\vx \in \cX_0}\{ &s_{\vx}(\hat{\tau}_{\vx})-s_{\vx}(\tau_{\vx}) \} \\
	&\quad \le \sup_{\vx \in \cX_0}\{ s_{\vx}(\hat{\tau}_{\vx})-\hat{s}_{\vx}(\hat{\tau}_{\vx})\}+\sup_{\vx \in \cX_0}\{ \hat{s}_{\vx}(\hat{\tau}_{\vx})-\hat{s}_{\vx}(\tau_{\vx})\}+\sup_{\vx \in \cX_0}\{\hat{s}_{\vx}(\tau_{\vx})-s_{\vx}(\tau_{\vx})\}.
	\end{split}
	\]
	The first and third terms on the right hand side are $o_{P}(1)$ by Lemma \ref{UnifQ}, while the second term is nonpositive by the definition of $\hat{\tau}_{\vx}$. This implies that $\P(\mathcal{A}_{\delta}) \le \P(\eta_{\delta} \le o_{P}(1)) = o(1)$. The uniform consistency of $\hat{\tau}_{\vx}$ follows from the fact that the event $\{ \sup_{\vx \in \cX_{0}} |\hat{\tau}_{\vx} - \tau_{\vx}| \ge \delta \}$ is included in $\mathcal{A}_{\delta}$.  
\end{proof}

The uniform consistency guarantees that the first order condition for $\hat{\tau}_{\vx}$ holds for all $\vx\in \cX_{0}$ with probability approaching one, i.e., 
\begin{equation}
\P \left ( \hat{s}_{\vx}'(\hat{\tau}_{\vx}) = 0, \forall \vx \in \cX_{0} \right ) \to 1.
\label{eq: FOC}
\end{equation}
Recall that $\hat{s}_{\vx}'(\tau) = \hat{Q}_{\vx}''(\tau)$. 
Now, we derive an asymptotic linear representation for $\hat{\tau}_{\vx}$. 
\begin{lemma}[Asymptotic linear representation of $\hat{\tau}_{\vx}$]
	\label{lem: UALtau}
	Under Assumption \ref{asmp: assumption1}, the following expansion holds uniformly in $\vx\in \cX_0$:
	\[
	\begin{split}
	&\hat{\tau}_{\vx} -\tau_{\vx} + \frac{s_{\vx}^{(3)}(\tau_{\vx})}{2s''_{\vx}(\tau_{\vx})
	} \kappa h^2 + o_P(h^2) \\
	&\quad =-\frac{1}{nh^{2}s''_{\vx}(\tau_{\vx})}\sum_{i=1}^n K'\left(\frac{\tau_{\vx}-U_i}{h}\right) \vx^TJ(\tau_{\vx})^{-1}\vX_{i}+ O_P(n^{-1/2}h^{-1} + n^{-1} h^{-4} \log n).
	\end{split}
	\]
	In addition, the first term on the right hand side is $O_{P}(n^{-1/2}h^{3/2}\sqrt{\log n})$ uniformly in $\vx \in \cX_{0}$. 
\end{lemma}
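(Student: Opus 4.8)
The plan is to read the representation off the first-order condition for $\hat{\tau}_{\vx}$ together with the expansion \eqref{eq: expansion} of $\hat{Q}_{\vx}''(\tau)$. By \eqref{eq: FOC} (itself a consequence of the uniform consistency in Lemma~\ref{lem: consistency} and Assumption~\ref{asmp: assumption1}(ii), which keeps $\tau_{\vx}$ bounded away from $\epsilon$ and $1-\epsilon$ uniformly on the compact set $\cX_0$ by continuity of $\vx\mapsto\tau_{\vx}$), one has $\hat{Q}_{\vx}''(\hat{\tau}_{\vx})=\hat{s}_{\vx}'(\hat{\tau}_{\vx})=0$ for all $\vx\in\cX_0$ with probability approaching one. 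Since $K$ is three times differentiable, $\hat{Q}_{\vx}$ is of class $C^3$ in $\tau$, so the fundamental theorem of calculus yields the exact identity
\[
0=\hat{Q}_{\vx}''(\hat{\tau}_{\vx})=\hat{Q}_{\vx}''(\tau_{\vx})+R_{\vx}(\hat{\tau}_{\vx}-\tau_{\vx}),\qquad R_{\vx}:=\int_0^1\hat{Q}_{\vx}'''\big(\tau_{\vx}+t(\hat{\tau}_{\vx}-\tau_{\vx})\big)\,dt,
\]
so the whole proof reduces to controlling $R_{\vx}$ (which should be uniformly close to $s_{\vx}''(\tau_{\vx})$, bounded away from zero on $\cX_0$ by Assumption~\ref{asmp: assumption1}(vi)) and $\hat{Q}_{\vx}''(\tau_{\vx})$.

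For $R_{\vx}$ I would split $\hat{Q}_{\vx}'''=Q_{\vx}'''+(\hat{Q}_{\vx}'''-Q_{\vx}''')$: Lemma~\ref{UnifQ} with $r=3$ bounds $\sup_{\vx\in\cX_0}\sup_{\tau\in[\epsilon,1-\epsilon]}|\hat{Q}_{\vx}'''(\tau)-Q_{\vx}'''(\tau)|$ by $O_P(n^{-1/2}h^{-5/2}\sqrt{\log n}+h)$, while the $Q_{\vx}'''$-part is handled by the uniform consistency of $\hat{\tau}_{\vx}$ and the uniform Lipschitz continuity of $\tau\mapsto Q_{\vx}'''(\tau)$ (a consequence of Assumption~\ref{asmp: assumption1}(iv)--(v), under which $Q_{\vx}$ is four times continuously differentiable with uniformly bounded derivatives). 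This gives first $\sup_{\vx\in\cX_0}|R_{\vx}-s_{\vx}''(\tau_{\vx})|=o_P(1)$, hence $\inf_{\vx\in\cX_0}|R_{\vx}|\ge c>0$ w.p.a.1, so $\hat{\tau}_{\vx}-\tau_{\vx}=-\hat{Q}_{\vx}''(\tau_{\vx})/R_{\vx}$ is well defined uniformly.

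For $\hat{Q}_{\vx}''(\tau_{\vx})$, substituting \eqref{eq: expansion} at $\tau=\tau_{\vx}$ and using the defining property $Q_{\vx}''(\tau_{\vx})=0$ of $\tau_{\vx}$ gives
\[
\hat{Q}_{\vx}''(\tau_{\vx})=\tfrac12 Q_{\vx}^{(4)}(\tau_{\vx})\kappa h^2+T_{\vx}+O_P(n^{-1/2}h^{-1})+o(h^2),\qquad T_{\vx}:=\frac{1}{nh^2}\sum_{i=1}^n K'\!\left(\frac{\tau_{\vx}-U_i}{h}\right)\vx^{T}J(\tau_{\vx})^{-1}\vX_i,
\]
where Lemma~\ref{UnifQr} with $r=2$ yields $\sup_{\vx\in\cX_0}|T_{\vx}|=O_P(n^{-1/2}h^{-3/2}\sqrt{\log n})$, and therefore $\sup_{\vx\in\cX_0}|\hat{Q}_{\vx}''(\tau_{\vx})|=O_P(h^2+n^{-1/2}h^{-3/2}\sqrt{\log n})$. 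Combined with $\inf|R_{\vx}|\ge c$ this gives the preliminary rate $\sup_{\vx\in\cX_0}|\hat{\tau}_{\vx}-\tau_{\vx}|=O_P(h^2+n^{-1/2}h^{-3/2}\sqrt{\log n})$, which fed back into the Lipschitz argument sharpens the bound to $\sup_{\vx\in\cX_0}|R_{\vx}-s_{\vx}''(\tau_{\vx})|=O_P(h+n^{-1/2}h^{-5/2}\sqrt{\log n})$. Writing $1/R_{\vx}=1/s_{\vx}''(\tau_{\vx})-(R_{\vx}-s_{\vx}''(\tau_{\vx}))/(s_{\vx}''(\tau_{\vx})R_{\vx})$, the leading term is $-\hat{Q}_{\vx}''(\tau_{\vx})/s_{\vx}''(\tau_{\vx})=-\tfrac12\frac{s_{\vx}^{(3)}(\tau_{\vx})}{s_{\vx}''(\tau_{\vx})}\kappa h^2-T_{\vx}/s_{\vx}''(\tau_{\vx})+O_P(n^{-1/2}h^{-1})+o(h^2)$ (using $s_{\vx}^{(3)}=Q_{\vx}^{(4)}$ and $s_{\vx}''=Q_{\vx}'''$), and the correction term is at most $\sup|\hat{Q}_{\vx}''(\tau_{\vx})|\cdot\sup|R_{\vx}-s_{\vx}''(\tau_{\vx})|/c^2$ in absolute value, uniformly; expanding this product produces the cross terms $h^3$ and $n^{-1/2}h^{-1/2}\sqrt{\log n}$, both of which are $o(h^2)$ under Assumption~\ref{asmp: assumption1}(viii) ($nh^5/\log n\to\infty$), together with $n^{-1}h^{-4}\log n$. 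Rearranging then gives the displayed representation, and the asserted uniform order of the leading stochastic term $-T_{\vx}/s_{\vx}''(\tau_{\vx})$ is precisely the Lemma~\ref{UnifQr} ($r=2$) bound divided by the constant $\inf_{\vx\in\cX_0}|s_{\vx}''(\tau_{\vx})|$.

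I expect the main obstacle to be the remainder bookkeeping: one must simultaneously track the two competing rates of $\hat{Q}_{\vx}''(\tau_{\vx})$ (stochastic $n^{-1/2}h^{-3/2}\sqrt{\log n}$ versus bias $h^2$) and of $R_{\vx}-s_{\vx}''(\tau_{\vx})$ (stochastic $n^{-1/2}h^{-5/2}\sqrt{\log n}$ versus $h$), and verify that their product collapses exactly to $O_P(n^{-1/2}h^{-1}+n^{-1}h^{-4}\log n)$ modulo the $o_P(h^2)$ slack, which hinges on the bandwidth condition. A secondary but essential point is that the intermediate-value step must be carried out uniformly in $\vx\in\cX_0$, which is why I would use the integral (FTC) form of the remainder and rely throughout on the uniform-in-$(\tau,\vx)$ versions of Lemmas~\ref{UnifQr} and~\ref{UnifQ} rather than any pointwise expansion.
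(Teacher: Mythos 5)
Your proposal is correct and follows essentially the same route as the paper: the first-order condition $\hat{s}_{\vx}'(\hat{\tau}_{\vx})=0$, a Taylor/mean-value expansion of $\hat{Q}_{\vx}''$ around $\tau_{\vx}$ (you use the integral form of the remainder, the paper the mean-value form — an immaterial difference for uniformity), a preliminary uniform rate for $\hat{\tau}_{\vx}-\tau_{\vx}$ obtained from Lemma~\ref{UnifQr}, and then the refinement via the expansion~(\ref{eq: expansion}) with the cross-term bookkeeping handled exactly as in the paper's Step 2. The verification that the product terms $h^3$, $n^{-1/2}h^{-1/2}\sqrt{\log n}$ and $n^{-1}h^{-4}\log n$ fall into the $o_P(h^2)$ slack or the stated remainder under Assumption~\ref{asmp: assumption1}(viii) matches the paper's argument.
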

\begin{proof}
	From the first order condition (\ref{eq: FOC}) coupled with the Taylor expansion, we have 
	\[
	0=\hat{Q}_{\vx}''(\hat{\tau}_{\vx})=\hat{Q}_{\vx}''(\tau_{\vx})+\hat{Q}_{\vx}^{(3)}(\check{\tau}_{\vx})(\hat{\tau}_{\vx}-\tau_{\vx}).
	\]
	where $\check{\tau}_{\vx}$ lies between $\hat{\tau}_{\vx}$ and $\tau_{\vx}$. 
	This yields that
	\[
	\hat{\tau}_{\vx}-\tau_{\vx}=-\hat{Q}_{\vx}^{(3)}(\check{\tau}_{\vx})^{-1}\cdot \hat{Q}_{\vx}''(\tau_{\vx}).
	\]
	The rest of the proof is divided into two steps. 
	
	\textbf{Step 1}. We will show that $\sup_{\vx \in \cX_{0}} |\hat{\tau}_{\vx} - \tau_{\vx}| = O_{P}(n^{-1/2}h^{-3/2}\sqrt{\log n} + h^{2})$. 
	Observe that $\hat{Q}_{\vx}^{(3)}(\check{\tau}_{\vx}) = Q^{(3)}_{\vx}(\check{\tau}_{\vx}) + O_{P}(n^{-1/2}h^{-5/2}\sqrt{\log n} + h) = Q^{(3)}_{\vx}(\check{\tau}_{\vx}) + o_{P}(1)$ uniformly in $\vx \in \cX_{0}$ by Lemma \ref{UnifQr},  $Q^{(3)}_{\vx}(\check{\tau}_{\vx}) = Q_{\vx}^{(3)}(\tau_{\vx}) + o_{P}(1)$ uniformly in $\vx \in \cX_0$ by the uniform consistency of $\hat{\tau}_{\vx}$, and the map $\vx \mapsto Q_{\vx}^{(3)}(\tau_{\vx})$ is bounded away from zero on $\cX_{0}$. 
	Thus, we have 
	\[
	\sup_{\vx \in \cX_{0}} |\hat{\tau}_{\vx} - \tau_{\vx}| = O_{P} \left ( \sup_{\vx \in \cX_{0}} | \hat{Q}_{\vx}''(\tau_{\vx}) |  \right ).
	\]
	However, since $Q_{\vx}''(\tau_{\vx}) = 0$, the right hand side on the above equation is $O_{P}(n^{-1/2}h^{-3/2}\sqrt{\log n} + h^{2})$ by Lemma \ref{UnifQr}. 
	
	\textbf{Step 2}. We wish to derive the conclusion of the lemma. From the preceding discussion, we see that $\hat{Q}_{\vx}^{(3)} (\check{\tau}_{\vx}) = Q_{\vx}^{(3)}(\tau_{\vx}) + O_{P}(n^{-1/2}h^{-5/2}\sqrt{\log n}+h)$ uniformly in $\vx\in\cX_{0}$, so that 
	\[
	\hat{\tau}_{\vx} - \tau_{\vx} = - Q_{\vx}^{(3)}(\tau_{\vx})^{-1} \hat{Q}_{\vx}''(\tau_{\vx}) + O_{P}(n^{-1}h^{-4}\log n + n^{-1/2}h^{-1/2}\sqrt{\log n} + h^{3})
	\]
	uniformly in $x \in \cX_{0}$. 
	The conclusion of the lemma follows from combining the expansion (\ref{eq: expansion}). 
\end{proof}

We are now in position to prove Proposition \ref{prop: UAL}. 

\begin{proof}[Proof of Proposition \ref{prop: UAL}]
	We note that, uniformly in $\vx \in \cX_{0}$, 
	\[
	\begin{split}
	&\hat{m}(\vx)- m(\vx) \\
	&\quad =\{ \hat{Q}_{\vx}(\hat{\tau}_{\vx})-Q_{\vx}(\hat{\tau}_{\vx}) \} +\{ Q_{\vx}(\hat{\tau}_{\vx})-Q_{\vx}(\tau_{\vx}) \} \\
	&\quad =Q_{\vx}(\hat{\tau}_{\vx})-Q_{\vx}(\tau_{\vx})+O_P(n^{-1/2}) + o_{P}(h^{2}) \quad \text{(by Lemma \ref{UnifQr} and  Remark \ref{rem: bias})} \\
	&\quad =Q_{\vx}'(\tau_{\vx})(\hat{\tau}_{\vx}-\tau_{\vx})+O_P\left(\sup_{\vx' \in \cX_0} |\hat{\tau}_{\vx'}-\tau_{\vx'}|^3\right)+O_P(n^{-1/2}) + o_{P}(h^{2}) \quad \text{(by $Q_{\vx}''(\tau_{\vx}) = 0$)} \\
	&\quad =\underbrace{\frac{1}{nh^{3/2}}\sum_{i=1}^n\psi_{\vx}(U_i,\vX_i)}_{=O_{P}(n^{-1/2}h^{-3/2}\sqrt{\log n})} - \frac{s_{\vx}(\tau_{\vx})s_{\vx}^{(3)}(\tau_{\vx})}{2s''(\tau_{\vx})} \kappa h^2 \\
	&\qquad +  O_P(n^{-1/2}h^{-1} + n^{-1} h^{-4} \log n) + o_{P}(h^{2}). \quad \text{(by Lemma \ref{lem: UALtau})}
	\end{split}
	\]
	This completes the proof.
\end{proof}

\begin{proof}[Proof of Corollary \ref{lem: pointwise}]
	Proposition \ref{prop: UAL} implies that,  for any fixed $\vx\in \cX_0$,
	\[
	\hat{m}(\vx)-m(\vx)=\frac{1}{nh^{3/2}}\sum_{i=1}^n\psi_{\vx}(U_i,\vX_i)+ o_P(n^{-1/2}h^{-3/2}). 
	\]
	Thus, it suffices to show that $n^{-1/2} \sum_{i=1}^{n} \psi_{\vx}(U_i,\vX_{i}) \stackrel{d}{\to} N(0,V_{\vx})$. 
	Recall that $\psi_{\vx}(U_i,\vX_{i})$ has mean zero. The above result follows from verifying the Lyapunov condition, together with the fact that $\E[\psi_{\vx}(U,\vX)^{2}] = V_{\vx}$. We omit the details for brevity. 
\end{proof}

\subsection{Proofs for Section \ref{sec:HI}}

\subsubsection{Proof of Theorem \ref{thm: HDCLT}}
%\begin{proof}[Proof of Theorem \ref{thm: HDCLT}]
	We divide the proof into two steps. 
	
	\textbf{Step 1}. 
	We will show that 
	\begin{equation}\label{term1}
	\sup_{b\in \R^M}\left|\P\left(n^{-1/2}{\textstyle \sum}_{i=1}^nA\Psi_i\le b\right)-\P\left(AG\le b\right)\right|\to 0.
	\end{equation}
	To this end, we verify Conditions (M.1), (M.2), and (E.2) in Proposition 2.1 of \cite{chernozhukov2017central}.  
	
	\textbf{Condition (M.1)}: For $k=1,\dots,M$, by definition and Assumption \ref{asmp: assumption2}, 
	\[
	\E[ (A_{k}^T\Psi_i)^2] = A_{k}^{T} \E[\Psi_{i}\Psi_{i}^{T}]A_{k} = D_{k}^{T}\Sigma D_{k}/\Gamma_{k}^2,
	\]
	which is bounded away from zero uniformly over $1 \le k \le M$. 
	
	\textbf{Condition (M.2)}: 
	For $k=1,\dots,M$, 
	\[
	\begin{split}
	&\E\left[\left|A^T_{k} \Psi_i \right|^3\right] \le \max_{\ell \in S_k}|A_{k,\ell}|^3 \E\left[ \left \| (\psi_{\vx_{\ell}}(U,\vX))_{\ell \in S_{k}} \right \|_{1}^{3} \right ] \\
	&\quad \le \max_{\ell \in S_{k}}|A_{k,\ell}|^3 \cdot  |S_{k}|^{2} \sum_{\ell \in S_{k}}\E[|\psi_{\vx_{\ell}}(U,\vX)|^{3}
	]
	\le  \max_{\ell \in S_{k}}|A_{k,\ell}|^3 \cdot  |S_{k}|^{3} \max_{1 \le \ell \le L} \E[|\psi_{\vx_{\ell}}(U,\vX)|^{3}
	]
	\end{split}
	\]
	Under our assumption, $\max_{1 \le k \le M;\ell \in S_k}|A_{k,\ell}| = O(1)$ and $\max_{1 \le k \le M} |S_{k}| = O(1)$. In addition, 
	\[
	\begin{split}
	&\max_{1 \le \ell \le L} \E[|\psi_{\vx_{\ell}}(U,\vX)|^{3}] \\
	&\quad \le O(h^{-3/2}) \E[\| \vX \|^{3}] \max_{1 \le \ell \le L} \int_{0}^{1}\left | K'\left (\frac{\tau_{\vx_{\ell}}-u}{h} \right) \right|^{3} du = O(h^{-1/2}). 
	\end{split}
	\]
	Likewise, we have $\max_{1 \le k \le M} \E[|A^T_{k} \Psi_i|^4]= O(h^{-1})$.

	\textbf{Condition (E.2)}:  Similarly to the previous case (but bounding $h^{-1/2} K'((\tau_{\vx_{\ell}}-U)/h)$ by $h^{-1/2} \| K' \|_{\infty}$), we can show that
	\[
	\E\left[\max_{1\le k \le M}\left|A^T_{k}\Psi_i\right|^{q}\right] \le O(h^{-q/2})  \E[\| \vX \|^{q}] =  O(h^{-q/2}). 
	\]
	
	Thus, we can apply Proposition 2.1 in  \cite{chernozhukov2017central}, and the conclusion of this step follows as soon as
	\[
	\frac{\log^7\left(Mn\right)}{nh} \bigvee \frac{\log^3\left(Mn\right)}{n^{1-2/q}h} \to 0,
	\]
	but this is satisfied under our assumption. 
	
	\textbf{Step 2}. Define $\delta_{n} = h^{1/2} + n^{-1/2} h^{-5/2} \log n +  n^{1/2}h^{7/2}$ and $R_n = (\hat{m}(\vx_{\ell}) - m(\vx_{\ell}))_{\ell=1}^{L} - (nh^{3/2})^{-1} \sum_{i=1}^{n} \Psi_{i}$. 
	By Proposition \ref{prop: UAL}, we know that $\sqrt{nh^{3}}\| R_n \|_{\infty} = O_{P}(\delta_{n})$, so that 
	\[
	\begin{split}
	\sqrt{nh^{3}} \| A R_{n} \|_{\infty} &\le \max_{1 \le k \le M} \sum_{\ell \in S_{k}} |A_{k,\ell}| | \sqrt{nh^{3}} R_{n,\ell} | \le \max_{1 \le k \le M; 1 \le \ell \le L} |A_{k,\ell}| \max_{1 \le \ell \le L}\sum_{\ell \in S_{k}} |\sqrt{nh^{3}} R_{n,\ell}| \\
	&\le \max_{1 \le k \le M; 1 \le \ell \le L} |A_{k,\ell}| \max_{1 \le k \le M} |S_{k}| \| \sqrt{nh^{3}} R_{n}\|_{\infty} = O_{P}(\delta_{n}). 
	\end{split}
	\]
	Thus, for any $B_n \to \infty$, we have $\P (\sqrt{nh^{3}} \| A R_{n} \|_{\infty} > B_{n}\delta_{n}) = o(1)$. 
	Now, for any $b \in \R^{M}$,
	\[
	\begin{split}
	&\P\left(A\sqrt{nh^3}(\hat{m}(\vx_{\ell})-m(\vx_{\ell}))_{\ell=1}^L\le b\right) \\
	&\le \P\left(n^{-1/2}{\textstyle \sum}_{i=1}^nA\Psi_i\le b + B_{n}\delta_{n} \right) + o(1) \\ 
	&\le \P( AG \le b + B_{n}\delta_{n}) + o(1) \quad \text{(by Step 1)} \\
	&\le \P( AG \le b ) + O(B_{n}\delta_{n}\sqrt{\log M}) + o(1), \quad \text{(by  Nazarov's inequality (Lemma \ref{lem: Nazarov}))}
	\end{split}
	\]
	where the $o$ and $O$ terms are independent of $b$.  Likewise, we have 
	\[
	\P\left(A\sqrt{nh^3}(\hat{m}(\vx_{\ell})-m(\vx_{\ell}))_{\ell=1}^L\le b\right) \ge \P(AG \le b) - O(B_{n}\delta_{n}\sqrt{\log M}) - o(1).
	\]
	Since $B_n \delta_n \sqrt{\log M} \to 0$ for sufficiently slow $B_n \to \infty$ under our assumption, we obtain the conclusion of the theorem.
%\end{proof}
\qed

\subsubsection{Proofs of Theorem \ref{thm: boot} and Proposition \ref{prop: HDCLT normalized}}

We start with proving some technical lemmas. 
We use $\|\cdot\|_{\text{op}}$ to denote the operator norm of a matrix.
\begin{lemma}\label{lem: Jacobian} 
	Under Assumption \ref{asmp: assumption1}, we have
	\[
	\sup_{\vx\in\cX_0} \| \hat{J}(\hat{\tau}_{\vx})-J(\tau_{\vx}) \|_{\op}=O_P(n^{-1/2}h^{-3/2}\sqrt{\log n}+h^2).
	\]
\end{lemma}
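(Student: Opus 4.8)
The plan is to split off the dependence on the estimated quantile index and reduce the claim to a uniform-in-$\tau$ rate for Powell's estimator. Writing $\bar{J}(\tau):=\E[K_h(Y-\vX^T\beta(\tau))\vX\vX^T]$ and $\tilde{J}(\tau):=n^{-1}\sum_{i=1}^n K_h(Y_i-\vX_i^T\beta(\tau))\vX_i\vX_i^T$ (the infeasible version using the true $\beta$), I would decompose
\[
\hat J(\hat\tau_\vx)-J(\tau_\vx)=\big[\hat J(\hat\tau_\vx)-\tilde J(\hat\tau_\vx)\big]+\big[\tilde J(\hat\tau_\vx)-\bar J(\hat\tau_\vx)\big]+\big[\bar J(\hat\tau_\vx)-J(\hat\tau_\vx)\big]+\big[J(\hat\tau_\vx)-J(\tau_\vx)\big].
\]
Since $\hat\tau_\vx\in[\epsilon,1-\epsilon]$ for every $\vx$ by construction, the first three brackets are bounded in operator norm by $\sup_{\tau\in[\epsilon,1-\epsilon]}$ of the corresponding quantities, so it suffices to bound, uniformly in $\tau\in[\epsilon,1-\epsilon]$, (a) the plug-in error $\|\hat J(\tau)-\tilde J(\tau)\|_{\op}$, (b) the centered empirical process $\|\tilde J(\tau)-\bar J(\tau)\|_{\op}$, (c) the smoothing bias $\|\bar J(\tau)-J(\tau)\|_{\op}$, and separately (d) $\sup_{\vx\in\cX_0}\|J(\hat\tau_\vx)-J(\tau_\vx)\|_{\op}$.

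Terms (b), (c), (d) are routine. For (c), a second-order Taylor expansion of $y\mapsto f(y\mid\vx')$ about $\vx'^T\beta(\tau)$, together with symmetry of $K$ (which kills the first-order term) and boundedness of $f^{(1)},f^{(2)}$ from Assumption~\ref{asmp: assumption1}(iv), gives $\sup_\tau\|\bar J(\tau)-J(\tau)\|_{\op}=O(h^2)$. For (b), I would apply the local maximal inequality (Lemma~\ref{lem: maximal inequality}) entrywise to the class $\{(y,\vx')\mapsto K_h(y-\vx'^T\beta(\tau))\,\vx'\vx'^T:\tau\in[\epsilon,1-\epsilon]\}$, which is VC type because $K$ has bounded variation and $\tau\mapsto\beta(\tau)$ is Lipschitz (cf.\ the argument in Lemma~\ref{UnifQr}), with envelope of order $\|\vX\|^2/h$ and per-coordinate second moment of order $h^{-1}$; this yields $\sup_\tau\|\tilde J(\tau)-\bar J(\tau)\|_{\op}=O_P(n^{-1/2}h^{-1/2}\sqrt{\log n})$. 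For (d), the map $\tau\mapsto J(\tau)$ is Lipschitz on $[\epsilon,1-\epsilon]$ (its derivative $\E[f^{(1)}(\vX^T\beta(\tau)\mid\vX)(\vX^T\beta'(\tau))\vX\vX^T]$ is bounded, using boundedness of $f^{(1)}$ and of $\beta'$, the latter following from four-times differentiability of $Q_\vx$), so $\sup_{\vx\in\cX_0}\|J(\hat\tau_\vx)-J(\tau_\vx)\|_{\op}\lesssim\sup_{\vx\in\cX_0}|\hat\tau_\vx-\tau_\vx|$, which is $O_P(n^{-1/2}h^{-3/2}\sqrt{\log n}+h^2)$ by Step~1 of the proof of Lemma~\ref{lem: UALtau}.

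The delicate term is (a). The naive Lipschitz bound $|K_h(u)-K_h(v)|\le h^{-2}\|K'\|_\infty|u-v|$ combined with $\sup_\tau\|\hat\beta(\tau)-\beta(\tau)\|=O_P(n^{-1/2})$ (Lemma~\ref{lem: bahadur}) only gives $O_P(n^{-1/2}h^{-2})$, which exceeds the target. Instead I would Taylor-expand $K_h(Y_i-\vX_i^T\hat\beta(\tau))$ around $Y_i-\vX_i^T\beta(\tau)$: the first-order term is $-\,n^{-1}\sum_i K_h'(Y_i-\vX_i^T\beta(\tau))\,(\vX_i\vX_i^T\!\otimes\vX_i)$ contracted with $\hat\beta(\tau)-\beta(\tau)$, and its coefficient tensor has mean $\E[K_h'(Y-\vX^T\beta(\tau))\vX\vX^T\otimes\vX]$; since $\int K'=0$, one integration by parts gives $\E[K_h'(Y-\vX^T\beta(\tau))\mid\vX]=-\int K(t)f^{(1)}(\vX^T\beta(\tau)+th\mid\vX)\,dt=O(1)$, so this mean is only $O(1)$, and inserting the Bahadur representation of Lemma~\ref{lem: bahadur} makes the first-order term $O_P(n^{-1/2})$ uniformly in $\tau$. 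The quadratic remainder is bounded crudely by $h^{-3}\|K''\|_\infty\sup_\tau\|\hat\beta(\tau)-\beta(\tau)\|^2\,n^{-1}\sum_i\|\vX_i\|^4=O_P(n^{-1}h^{-3})$, which is $o(n^{-1/2}h^{-3/2}\sqrt{\log n})$ since $nh^3\log n\to\infty$ under Assumption~\ref{asmp: assumption1}(viii). Collecting (a)--(d) gives the claimed rate. The main obstacle is precisely the sharp treatment of (a) --- recognising that replacing $\beta(\tau)$ by $\hat\beta(\tau)$ costs only $n^{-1/2}$, not $n^{-1/2}h^{-2}$, once one exploits $\int K'=0$ --- whereas the empirical-process bound in (b) and the Lipschitz estimates in (d) are standard.
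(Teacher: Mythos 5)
Your proof is correct and reaches the stated rate, but it handles the plug-in of $\hat\beta(\hat\tau_{\vx})$ by a genuinely different device than the paper. The paper never compares $K_h(Y_i-\vX_i^T\hat\beta(\hat\tau_{\vx}))$ with $K_h(Y_i-\vX_i^T\beta(\tau_{\vx}))$ at the sample level: it centers the empirical average at its own expectation \emph{frozen at the estimated coefficient}, i.e.\ it bounds $\sup_{\vx}\bigl|n^{-1}\sum_i K_h(Y_i-\vX_i^T\beta)X_{ij}X_{ik}-\E[K_h(Y-\vX^T\beta)X_jX_k]\bigr|$ at $\beta=\hat\beta(\hat\tau_{\vx})$ by the supremum over a VC-type class indexed by $\beta$, which absorbs your terms (a) and (b) into a single empirical-process bound of order $O_P(n^{-1/2}h^{-1})$ with no Taylor expansion of the kernel; the remaining comparison is then purely at the population level, where $\beta\mapsto\E[f(\vX^T\beta\mid\vX)X_jX_k]$ is Lipschitz, and is controlled by $\|\hat\beta(\hat\tau_{\vx})-\beta(\tau_{\vx})\|_{\cX_0}\le\|\hat\beta-\beta\|_{[\epsilon,1-\epsilon]}+O(\|\hat\tau_{\vx}-\tau_{\vx}\|_{\cX_0})$, a single term that subsumes your (c) and (d). Your route fixes $\beta(\tau)$ inside the empirical sum and Taylor-expands the kernel in $\beta$, which is where you need the nontrivial observation that $\int K'=0$ makes $\E[K_h'(Y-\vX^T\beta(\tau))\mid\vX]=O(1)$ rather than $O(h^{-1})$, together with the crude $O_P(n^{-1}h^{-3})$ bound on the quadratic remainder (valid since $nh^3\to\infty$ under Assumption \ref{asmp: assumption1}(viii)); the paper's ``sup over the coefficient class'' trick makes this sharp treatment unnecessary, while your version makes the source of each error more explicit. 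One small point to tighten: in your first-order term the coefficient $n^{-1}\sum_i K_h'(Y_i-\vX_i^T\beta(\tau))X_{ij}X_{ik}\vX_i$ is random, so besides noting that its mean is $O(1)$ you must also control its fluctuation around that mean uniformly in $\tau$ (it is $O_P(n^{-1/2}h^{-3/2}\sqrt{\log n})$ by Lemma \ref{lem: maximal inequality}, hence contributes $O_P(n^{-1}h^{-3/2}\sqrt{\log n})$ after multiplying by $\sup_\tau\|\hat\beta(\tau)-\beta(\tau)\|=O_P(n^{-1/2})$, which is negligible); the full Bahadur representation is not actually needed here, only that uniform $O_P(n^{-1/2})$ rate from Lemma \ref{lem: bahadur}.
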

\begin{proof}
	It suffices to show that $\sup_{\vx \in \cX_{0}} |\hat{J}_{j,k}(\hat{\tau}_{\vx})-J_{j,k}(\tau_{\vx}) |=O_P(n^{-1/2}h^{-3/2}\sqrt{\log n}+h^2)$ for any $1 \le j,k \le d$ (as the dimension $d$ is fixed). Observe that 
	\begin{align*}
	&\sup_{\vx \in \cX_{0}} |\hat{J}_{j,k}(\hat{\tau}_{\vx})-J_{j,k}(\tau_{\vx})| \\
	&\le  \sup_{\vx \in \cX_{0}} \left |  \frac{1}{n}\sum_{i=1}^{n}K_h(Y_{i}-\vX_{i}^T\hat{\beta}(\hat{\tau}_{\vx}))X_{ij}X_{ik}- \E\left[K_h(Y-\vX^T\beta)X_{j}X_{k} \right]\big|_{\beta=\hat{\beta}(\hat{\tau}_{\vx})} \right |  \\
	&\quad + \sup_{\vx \in \cX_{0}} \left | \E\left[K_h(Y-\vX^T\beta \mid \vX_{i})X_{j}X_{k} \right]\big|_{\beta=\hat{\beta}(\hat{\tau}_{\vx})}-\E\left[f(\vX^T\beta \mid  \vX)X_{j}X_{k} \right]\big|_{\beta=\hat{\beta}(\hat{\tau}_{\vx})} \right |  \\
	&\quad +\sup_{\vx \in \cX_{0}} 
	\left | \E\left[f(\vX^T\beta \mid  \vX)X_{j}X_{k}\right]\big|_{\beta=\hat{\beta}(\hat{\tau}_{\vx})}-\E\left[f(\vX^T\beta \mid  \vX )X_{j}
	X_{k} \right]\big|_{\beta=\beta(\tau_{\vx})}\right |. 
	\end{align*}
	It is routine to show that the first and second terms on the right hand side are $O_{P}(n^{-1/2}h^{-1})$ and $O(h^{2})$, respectively; cf. the proof of Lemma \ref{UnifQ}. By Taylor expansion, the last term can be bounded by $O_{P}(\| \hat{\beta} (\hat{\tau}_{\vx}) - \beta (\tau_{\vx}) \|_{\cX_{0}})$. Observe that 
	\[
	\begin{split}
	&\| \hat{\beta} (\hat{\tau}_{\vx}) - \beta (\tau_{\vx}) \|_{\cX_{0}} \le \| \hat{\beta} - \beta \|_{[\epsilon,1-\epsilon]} + \| \beta (\hat{\tau}_{\vx}) - \beta (\tau_{\vx}) \|_{\cX_{0}} \\
	&\quad \le O_{P}(n^{-1/2} + \| \hat{\tau}_{\vx} - \tau_{\vx} \|_{\cX_{0}}) =O_{P}(n^{-1/2}h^{-3/2}\sqrt{\log n} + h^{2}). 
	\end{split}
	\]
	This completes the proof. 
\end{proof}

\begin{lemma}
	\label{lem: covariance}
	Under Assumption \ref{asmp: assumption1}, we have 
	\[
	\| \hat{\Sigma} - \Sigma \|_{\infty} = O_{P}(n^{-1/2}h^{-5/2}\sqrt{\log n} + h).
	\]
\end{lemma}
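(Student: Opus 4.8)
The plan is to use the independence between $U_i$ and $\vX_i$ to factorize the conditional second moment of $\hat\Psi_i$, and thereby reduce the claim to controlling three scalar quantities. Write $a_{\vx} := -s_{\vx}(\tau_{\vx})/s_{\vx}''(\tau_{\vx})$ and $\hat a_{\vx} := -\hat s_{\vx}(\hat\tau_{\vx})/\hat s_{\vx}''(\hat\tau_{\vx})$, so that $\psi_{\vx}(u,\vx') = h^{-1/2}a_{\vx}K'((\tau_{\vx}-u)/h)\,\vx^TJ(\tau_{\vx})^{-1}\vx'$ and $\hat\psi_{\vx}$ is the same expression with hats inserted throughout. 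Since $U_i$ is independent of $\cD_n$ while the $\vX_i$-factor is $\cD_n$-measurable, taking $\E_{|\cD_n}[\cdot]$ separates the $u$-part from the $\vx'$-part and gives, for $1\le k,\ell\le L$,
\[
\hat\Sigma_{k,\ell} = \hat a_{\vx_k}\hat a_{\vx_\ell}\,\hat\Lambda_{k,\ell}\,\hat T_{k,\ell}, \qquad \Sigma_{k,\ell} = a_{\vx_k}a_{\vx_\ell}\,\Lambda_{k,\ell}\,T_{k,\ell},
\]
where $\hat\Lambda_{k,\ell} := h^{-1}\int_0^1 K'((\hat\tau_{\vx_k}-u)/h)K'((\hat\tau_{\vx_\ell}-u)/h)\,du$, $\hat T_{k,\ell} := n^{-1}\sum_{i=1}^n(\vx_k^T\hat J(\hat\tau_{\vx_k})^{-1}\vX_i)(\vx_\ell^T\hat J(\hat\tau_{\vx_\ell})^{-1}\vX_i)$, and $\Lambda_{k,\ell}$, $T_{k,\ell} = \E[(\vx_k^TJ(\tau_{\vx_k})^{-1}\vX)(\vx_\ell^TJ(\tau_{\vx_\ell})^{-1}\vX)]$ are the population analogues. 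A product rule ($|\hat x_1\hat x_2\hat x_3 - x_1x_2x_3|$ bounded by $\sum_j|\hat x_j - x_j|\prod_{j'\ne j}|\cdot|$) then reduces the claim to: each of the three factors and its population analogue is $O_P(1)$ uniformly over $k,\ell$, and $\max_{k,\ell}|\hat a_{\vx_k}\hat a_{\vx_\ell}-a_{\vx_k}a_{\vx_\ell}|$, $\max_{k,\ell}|\hat\Lambda_{k,\ell}-\Lambda_{k,\ell}|$, $\max_{k,\ell}|\hat T_{k,\ell}-T_{k,\ell}|$ are all $O_P(n^{-1/2}h^{-5/2}\sqrt{\log n}+h)$.

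For the nuisance scalar, I would use $\hat s_{\vx}(\tau)=\hat Q_{\vx}'(\tau)$, $\hat s_{\vx}''(\tau)=\hat Q_{\vx}^{(3)}(\tau)$, Lemma \ref{UnifQ} (cases $r=1$ and $r=3$), the uniform consistency $\sup_{\vx}|\hat\tau_{\vx}-\tau_{\vx}|=O_P(n^{-1/2}h^{-3/2}\sqrt{\log n}+h^2)$ from Lemma \ref{lem: UALtau} (recalling $Q_{\vx}''(\tau_{\vx})=0$), and boundedness of $Q_{\vx}^{(3)},Q_{\vx}^{(4)}$, to get $\sup_{\vx}|\hat s_{\vx}(\hat\tau_{\vx})-s_{\vx}(\tau_{\vx})|=O_P(n^{-1/2}h^{-1/2}\sqrt{\log n}+h^2)$ and $\sup_{\vx}|\hat s_{\vx}''(\hat\tau_{\vx})-s_{\vx}''(\tau_{\vx})|=O_P(n^{-1/2}h^{-5/2}\sqrt{\log n}+h)$. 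Since $s_{\vx}(\tau_{\vx})$ is bounded and $s_{\vx}''(\tau_{\vx})$ is bounded away from zero on $\cX_0$ by Assumption \ref{asmp: assumption1}(iv)--(vi) (so $\hat s_{\vx}''(\hat\tau_{\vx})$ is bounded away from zero with probability $\to1$), a ratio bound gives $\hat a_{\vx}=O_P(1)$ uniformly and $\sup_{\vx}|\hat a_{\vx}-a_{\vx}|=O_P(n^{-1/2}h^{-5/2}\sqrt{\log n}+h)$. For the kernel-overlap factor, with probability $\to1$ one has $\hat\tau_{\vx_k}\in[h,1-h]$ for all $k$ (using $\tau_{\vx}\in[\epsilon,1-\epsilon]$, $\sup_{\vx}|\hat\tau_{\vx}-\tau_{\vx}|=o_P(1)$, $h\to0$), so the compact support of $K'$ lets me replace $\int_0^1$ by $\int_{\R}$ and, after a change of variables, write $\hat\Lambda_{k,\ell}=g((\hat\tau_{\vx_\ell}-\hat\tau_{\vx_k})/h)$ and $\Lambda_{k,\ell}=g((\tau_{\vx_\ell}-\tau_{\vx_k})/h)$ with $g(s):=\int K'(v)K'(v+s)\,dv$ bounded and Lipschitz under Assumption \ref{asmp: assumption1}(vii). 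Hence $|\hat\Lambda_{k,\ell}|=O_P(1)$ and $\max_{k,\ell}|\hat\Lambda_{k,\ell}-\Lambda_{k,\ell}|\le 2\,\mathrm{Lip}(g)\,h^{-1}\sup_{\vx}|\hat\tau_{\vx}-\tau_{\vx}|=O_P(n^{-1/2}h^{-5/2}\sqrt{\log n}+h)$.

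For $\hat T_{k,\ell}-T_{k,\ell}$, I would first replace $\hat J(\hat\tau_{\vx_k})^{-1}$ and $\hat J(\hat\tau_{\vx_\ell})^{-1}$ by $J(\tau_{\vx_k})^{-1}$, $J(\tau_{\vx_\ell})^{-1}$ one at a time: $J(\tau)^{-1}$ is bounded in operator norm on $[\epsilon,1-\epsilon]$ and $\sup_{\vx}\|\hat J(\hat\tau_{\vx})-J(\tau_{\vx})\|_{\op}=O_P(n^{-1/2}h^{-3/2}\sqrt{\log n}+h^2)$ by Lemma \ref{lem: Jacobian} (so $\hat J(\hat\tau_{\vx})^{-1}$ is also bounded with probability $\to1$), whence $\sup_{\vx}\|\hat J(\hat\tau_{\vx})^{-1}-J(\tau_{\vx})^{-1}\|_{\op}=O_P(n^{-1/2}h^{-3/2}\sqrt{\log n}+h^2)$; combined with $\|\vx_k\|$ bounded on $\cX_0$ and $n^{-1}\sum_i\|\vX_i\|^2=O_P(1)$ (Assumption \ref{asmp: assumption1}(iii)), this makes the replacement error $O_P(n^{-1/2}h^{-3/2}\sqrt{\log n}+h^2)$ uniformly. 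What remains is the centered empirical average $n^{-1}\sum_i(\vx_k^TJ(\tau_{\vx_k})^{-1}\vX_i)(\vx_\ell^TJ(\tau_{\vx_\ell})^{-1}\vX_i)-T_{k,\ell}$, which I would bound uniformly over $\vx_k,\vx_\ell\in\cX_0$ via the local maximal inequality (Lemma \ref{lem: maximal inequality}) applied to the ($n$-free) VC-type class $\{\vx'\mapsto(\vx^TJ(\tau_{\vx})^{-1}\vx')(\tilde\vx^TJ(\tau_{\tilde\vx})^{-1}\vx'):\vx,\tilde\vx\in\cX_0\}$ with square-integrable envelope $C\|\vx'\|^2$ ($\E\|\vX\|^4<\infty$ by Assumption \ref{asmp: assumption1}(iii)), which gives an $O_P(n^{-1/2}\log n)$ bound — negligible against $n^{-1/2}h^{-5/2}\sqrt{\log n}$. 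So $\max_{k,\ell}|\hat T_{k,\ell}-T_{k,\ell}|=O_P(n^{-1/2}h^{-5/2}\sqrt{\log n}+h)$ and $\hat T_{k,\ell}=O_P(1)$ uniformly; feeding the three pieces into the product rule yields $\|\hat\Sigma-\Sigma\|_{\infty}=O_P(n^{-1/2}h^{-5/2}\sqrt{\log n}+h)$.

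The step needing the most care is the uniform rate for $\hat s_{\vx}''(\hat\tau_{\vx})-s_{\vx}''(\tau_{\vx})$: it combines the uniform rate for $\hat Q_{\vx}^{(3)}$ (Lemma \ref{UnifQ} with $r=3$, which itself only yields an $O_P(n^{-1/2}h^{-5/2}\sqrt{\log n}+h)$ rate) with the bound $|Q_{\vx}^{(3)}(\hat\tau_{\vx})-Q_{\vx}^{(3)}(\tau_{\vx})|\le \|Q_{\vx}^{(4)}\|_\infty|\hat\tau_{\vx}-\tau_{\vx}|$, and it is precisely this term — the third derivative of the kernel-smoothed quantile process — that determines the final $n^{-1/2}h^{-5/2}\sqrt{\log n}$ rate in the statement; everything else (and the ``$+h$'' term, which comes from the smoothing bias of $\hat Q_{\vx}^{(3)}$) is bookkeeping once the product factorization and the support-truncation caveat for $\hat\Lambda_{k,\ell}$ are in hand.
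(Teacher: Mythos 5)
Your proposal is correct and follows essentially the same route as the paper's proof: the same factorization of $\hat\Sigma_{k,\ell}-\Sigma_{k,\ell}$ into the nuisance-scalar, kernel-overlap, and quadratic-form factors, controlled respectively by Lemma \ref{UnifQ} with the uniform rate for $\hat\tau_{\vx}$, a Lipschitz argument in $\hat\tau_{\vx}/h$ using $\|K''\|_\infty$, and Lemma \ref{lem: Jacobian} together with convergence of the empirical Gram matrix. The only (harmless) slip is the claimed rate $O_P(n^{-1/2}h^{-1/2}\sqrt{\log n}+h^2)$ for $\sup_{\vx}|\hat s_{\vx}(\hat\tau_{\vx})-s_{\vx}(\tau_{\vx})|$, which ignores the $O(|\hat\tau_{\vx}-\tau_{\vx}|)=O_P(n^{-1/2}h^{-3/2}\sqrt{\log n}+h^2)$ contribution from evaluating $s_{\vx}$ at $\hat\tau_{\vx}$; this is still dominated by the $r=2$ term, so the conclusion is unaffected.
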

\begin{proof}
	For simplicity of notation, let $\hat{J}_{\vx_k} = \hat{J}(\hat{\tau}_{\vx_k})$ and $J_{\vx_k}=J(\tau_{\vx_k})$.  The difference $\hat{\Sigma}_{\ell,k} - \Sigma_{\ell,k}$ can be decomposed as 
	\begin{align*}
	&\left[\frac{\hat{s}_{\vx_k}(\hat{\tau}_{\vx_k})\hat{s}_{\vx_{\ell}}(\hat{\tau}_{\vx_{\ell}})}{\hat{s}''_{\vx_k}(\hat{\tau}_{\vx_k})\hat{s}''_{\vx_{\ell}}(\hat{\tau}_{\vx_{\ell}})}\right]\E_{|\cD_n}\left[ \frac{1}{h} K'\left(\frac{\hat{\tau}_{\vx_k}-U}{h}\right)K'\left(\frac{\hat{\tau}_{\vx_{\ell}}-U}{h}\right)\right] \vx_k^T\hat{J}_{\vx_k}^{-1} \left [ \frac{1}{n}\sum_{i=1}^n\vX_i \vX_{i}^{T} \right ] \hat{J}_{\vx_{\ell}}^{-1}\vx_{\ell} \\
	&-\left [ \frac{s_{\vx_k}(\tau_{\vx_k})s_{\vx_{\ell}}(\tau_{\vx_{\ell}})}{s''_{\vx_k}(\tau_{\vx_k})s''_{\vx_{\ell}}(\tau_{\vx_{\ell}})} \right ] \E\left[\frac{1}{h}K'\left(\frac{\tau_{\vx_k}-U}{h}\right)K'\left(\frac{\tau_{\vx_{\ell}}-U}{h}\right)\right] \vx_k^TJ_{\vx_k}^{-1}\E[\vX \vX^{T}] J_{\vx_{\ell}}^{-1}\vx_{\ell}.
	\end{align*}
	Observe that 
	\[
	\begin{split}
	&\max_{1 \le k,\ell \le L} \left | \frac{\hat{s}_{\vx_k}(\hat{\tau}_{\vx_k})\hat{s}_{\vx_{\ell}}(\hat{\tau}_{\vx_{\ell}})}{\hat{s}''_{\vx_k}(\hat{\tau}_{\vx_k})\hat{s}''_{\vx_{\ell}}(\hat{\tau}_{\vx_{\ell}})}-\frac{s_{\vx_k}(\tau_{\vx_k})s_{\vx_{\ell}}(\tau_{\vx_{\ell}})}{s''_{\vx_k}(\tau_{\vx_k})s''_{\vx_{\ell}}(\tau_{\vx_{\ell}})} \right | \\
	&\quad \le O_{P} \left ( \| \hat{s}_{\vx}(\hat{\tau}_{\vx}) - s_{\vx}(\tau_{\vx}) \|_{\cX_{0}} \bigvee \| \hat{s}''_{\vx}(\hat{\tau}_{\vx}) - s''_{\vx}(\tau_{\vx}) \|_{\cX_{0}} \right ), \quad \text{and} \\
	&\| \hat{s}^{(r)}_{\vx}(\hat{\tau}_{\vx}) - s^{(r)}_{\vx}(\tau_{\vx}) \|_{\cX_{0}} \le \| \hat{s}^{(r)}_{\vx}(\tau) - s_{\vx}^{(r)}(\tau) \|_{[\epsilon,1-\epsilon] \times \cX_{0}} + 
	\underbrace{\| s_{\vx}^{(r)}(\hat{\tau}_{\vx}) - s_{\vx}^{(r)}(\tau_{\vx}) \|_{\cX_{0}}}_{=O(\|\hat{\tau}_{\vx} - \tau_{\vx} \|_{\cX_{0}})} \\
	&\qquad \qquad = O_{P}(n^{-1/2}h^{-5/2}\sqrt{\log n} + h) \quad \text{for $r=0,2$},
	\end{split}
	\]
	where we have used Lemma \ref{UnifQ} in the last line. 
	
	Next, we note that 
	\begin{align*}
	&\max_{1 \le k,\ell \le L} \left | \vx_k^T\hat{J}_{\vx_k}^{-1}\left [ \frac{1}{n}\sum_{i=1}^n\vX_i\vX_i^T \right ] \hat{J}_{\vx_{\ell}}^{-1} \vx_{\ell} -\vx_k^TJ_{\vx_k}^{-1}\E\left[\vX_i\vX_i^T\right]J_{\vx_{\ell}}^{-1} \vx_{\ell} \right | \\
	&\le O_P\left(\max_{1 \le k \le L} \|\hat{J}_{\vx_k}^{-1}-J_{\vx_k}^{-1}\|_{\op}  \bigvee \left\|\frac{1}{n}\sum_{i=1}^n\vX_i\vX_i^T-\E\left[\vX \vX^T\right]\right\|_{\op} \right) \\
	&=O_P(n^{-1/2}h^{-3/2}\sqrt{\log n}+h^{2}),
	\end{align*}
	where we have used Lemma \ref{lem: Jacobian} in the last line. 
	
	Finally, observe that 
	\[
	\begin{split}
	&\left | \E_{|\cD_n}\left[ \frac{1}{h} K'\left(\frac{\hat{\tau}_{\vx_k}-U}{h}\right)K'\left(\frac{\hat{\tau}_{\vx_{\ell}}-U}{h}\right)\right] - \E_{|\cD_n}\left[ \frac{1}{h} K'\left(\frac{\tau_{\vx_k}-U}{h}\right)K'\left(\frac{\hat{\tau}_{\vx_{\ell}}-U}{h}\right)\right] \right | \\
	&\le \| K'' \|_{\infty} h^{-1} | \hat{\tau}_{\vx_{k}} - \tau_{\vx_{k}}| \E_{|\cD_n}\left [ \left | \frac{1}{h}K'\left(\frac{\hat{\tau}_{\vx_{\ell}}-U}{h}\right) \right | \right] = O_{P}(n^{-1/2}h^{-5/2}\sqrt{\log n} + h)
	\end{split}
	\]
	uniformly in $1 \le k,\ell \le L$. Likewise, we have
	\[
	\begin{split}
	&\left | \E_{|\cD_n}\left[ \frac{1}{h} K'\left(\frac{\tau_{\vx_k}-U}{h}\right)K'\left(\frac{\hat{\tau}_{\vx_{\ell}}-U}{h}\right)\right] - \E_{|\cD_n}\left[ \frac{1}{h} K'\left(\frac{\tau_{\vx_k}-U}{h}\right)K'\left(\frac{\tau_{\vx_{\ell}}-U}{h}\right)\right] \right |\\
	&\quad = O_{P}(n^{-1/2}h^{-5/2}\sqrt{\log n} + h)
	\end{split}
	\]
	uniformly in $1 \le k,\ell \le L$. Combining these estimates, we obtain the conclusion of the lemma. 
\end{proof}

\begin{lemma}
	\label{lem: covariance D}
	Under Assumptions \ref{asmp: assumption1} and \ref{asmp: assumption2}, we have
	\[
	\max_{1 \le k,\ell \le M} |D_{k}^{T}(\hat{\Sigma} - \Sigma)D_{\ell}| = O_{P}(n^{-1/2}h^{-5/2}\sqrt{\log n}+h). 
	\]
\end{lemma}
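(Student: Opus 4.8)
The plan is to reduce the claim directly to Lemma \ref{lem: covariance}, which already controls the entrywise sup-norm $\|\hat{\Sigma} - \Sigma\|_{\infty}$, by exploiting the sparsity of the rows of $D$ guaranteed by Assumption \ref{asmp: assumption2} (i). This mirrors the expansion-along-supports argument used in Step 2 of the proof of Theorem \ref{thm: HDCLT}.

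First I would expand the bilinear form along the supports $S_{k} = \{\ell : D_{k,\ell}\neq 0\}$ and $S_{\ell'}$ (renaming the second index to avoid clash), namely
\[
D_{k}^{T}(\hat{\Sigma}-\Sigma)D_{\ell'} = \sum_{a\in S_{k}}\sum_{b\in S_{\ell'}} D_{k,a}\, D_{\ell',b}\,(\hat{\Sigma}_{a,b}-\Sigma_{a,b}),
\]
so that, uniformly over $1\le k,\ell'\le M$,
\[
|D_{k}^{T}(\hat{\Sigma}-\Sigma)D_{\ell'}| \le \Big(\max_{1\le k\le M}|S_{k}|\Big)^{2}\Big(\max_{1\le k\le M;\, 1\le a\le L}|D_{k,a}|\Big)^{2}\,\|\hat{\Sigma}-\Sigma\|_{\infty}.
\]
By Assumption \ref{asmp: assumption2} (i), $\max_{k}|S_{k}| = O(1)$ and $\max_{k,a}|D_{k,a}| = O(1)$, and both bounds are nonrandom and independent of $k,\ell'$; hence the prefactor multiplying $\|\hat{\Sigma}-\Sigma\|_{\infty}$ is $O(1)$ uniformly over all $M^{2}$ pairs.

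Then I would invoke Lemma \ref{lem: covariance}, which gives $\|\hat{\Sigma}-\Sigma\|_{\infty} = O_{P}(n^{-1/2}h^{-5/2}\sqrt{\log n}+h)$, to conclude that $\max_{1\le k,\ell\le M}|D_{k}^{T}(\hat{\Sigma}-\Sigma)D_{\ell}| = O_{P}(n^{-1/2}h^{-5/2}\sqrt{\log n}+h)$. There is no genuine obstacle here: the statement is essentially a one-line corollary of Lemma \ref{lem: covariance} combined with the uniform sparsity and boundedness of the rows of $D$. The only point requiring a small amount of care is that the constants $|S_{k}|$ and $|D_{k,a}|$ are bounded \emph{uniformly} in the row index, which is exactly the content of Assumption \ref{asmp: assumption2} (i), so that the $O_{P}$ bound indeed holds uniformly over the $M^{2}$ pairs $(k,\ell)$ rather than merely for each fixed pair.
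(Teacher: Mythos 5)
Your proposal is correct and is essentially identical to the paper's own proof: both expand $D_{k}^{T}(\hat{\Sigma}-\Sigma)D_{\ell}$ over the supports $S_{k},S_{\ell}$, bound the sum by $\max_k|S_k|^2$ times the (squared) max entry of $D$ times $\|\hat{\Sigma}-\Sigma\|_{\infty}$, and invoke Lemma \ref{lem: covariance}. No issues.
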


\begin{proof}
	This follows from the observation that 
	\[
	\begin{split}
	&\max_{1 \le k,\ell \le M} |D_{k}^{T}(\hat{\Sigma} - \Sigma)D_{\ell}| = \max_{1 \le k,\ell \le M} \left | \sum_{k' \in S_{k}}\sum_{\ell' \in S_{\ell}} D_{k,k'}(\hat{\Sigma}_{k',\ell'} - \Sigma_{k',\ell'})D_{\ell,\ell'} \right | \\
	&\quad \le \max_{1 \le k \le M} |S_{k}|^{2} \| D \|_{\infty} \| \hat{\Sigma} - \Sigma \|_{\infty} = O_{P}(n^{-1/2}h^{-5/2}\sqrt{\log n}+h). 
	\end{split}
	\]
\end{proof}

We are now in position to prove Theorem \ref{thm: boot}. 

\begin{proof}[Proof of Theorem \ref{thm: boot}]
	Let $\hat{G}$ be an $L$-dimensional random vector such that conditionally on $\cD_{n}$, $\hat{G} \sim N(0,\hat{\Sigma})$. 
	We begin with noting that 
	\begin{align*}
	&\sup_{b\in \R^M}\left|\P_{|\cD_n}\left(n^{-1/2} {\textstyle \sum}_{i=1}^n\hat{A}\hat{\Psi}_i\le b \right)-\P\left(AG\le b\right)\right| \le  \sup_{b\in \R^M}\Big|\P_{|\cD_n}\left(n^{-1/2}{\textstyle \sum}_{i=1}^n\hat{A}\hat{\Psi}_i\le b \right)-\\
	&\P_{|\cD_n}\left(\hat{A}\hat{G} \le b \right)\Big| + \sup_{b\in \R^M}\left|\P_{|\cD_n}(\hat{A}\hat{G}\le b)-\P_{|\cD_n}(A\hat{G}\le b) \right| + \sup_{b\in \R^M}\left|\P_{|\cD_n}(A\hat{G}\le b )-\P(AG\le b)\right|\\
	&:= I + II +III.  
	\end{align*}
	
	We first analyze $II$ and $III$. 
	In view of the Gaussian comparison inequality (cf. Lemma \ref{lem: Gaussian comparison}), to show that $II \vee III = o_{P}(1)$, it suffices to verify that 
	\begin{equation}
	\left [ \| \hat{A} \hat{\Sigma} \hat{A}^{T} -A\hat{\Sigma} A^{T} \|_{\infty} \vee \| A \hat{\Sigma}A^{T} - A\Sigma A^{T} \|_{\infty} \right ] \log^2 M = o_{P}(1).
	\label{eq: Gaussian comparison}
	\end{equation}
	Indeed, by Lemma \ref{lem: covariance D} and the assumption (i) of the theorem, we deduce that the bracket on the left is $O_{P}(n^{-1/2}h^{-5/2}\sqrt{\log n} + h)$. Thus, (\ref{eq: Gaussian comparison}) holds under our assumption. 
	
	To show that $I =o_{P}(1)$, we apply Proposition 2.1 in \cite{chernozhukov2017central} conditionally on $\cD_{n}$ (recall that conditionally on $\cD_{n}$, the vectors $\hat{\Psi}_{1},\dots,\hat{\Psi}_{n}$ are independent with mean zero). By construction, $n^{-1}\sum_{i=1}^{n}\E_{|\cD_n}[(\hat{A}_{k}^{T}\hat{\Psi}_{i})^{2}] = \hat{A}_{k}^{T} \hat{\Sigma} \hat{A}_{k} = D_{k}^{T}\hat{\Sigma} D_{k}/\hat{\Gamma}_{k}^{2}$ is bounded way from $0$ uniformly in $k$ with probability approaching one. 
	Similarly to the proof of Theorem \ref{thm: HDCLT}, we can verify that $ \max_{1 \le k \le M} n^{-1}\sum_{i=1}^{n} \E_{|\cD_n}[|\hat{A}_{k}^{T}\hat{\Psi}_{i}|^{2+r}] = O_{P}(h^{-r/2})$ for $r=1,2$. Finally,
	\[
	\max_{1 \le i \le n}  \E_{|\cD_n}\left [\max_{1 \le k \le M}|\hat{A}_{k}^{T}\hat{\Psi}_{i}|^{q} \right] \le O_{P}(h^{-q/2}) \max_{1 \le i \le n} \| \vX_{i} \|^{q} = O_{P}(n h^{-q/2}). 
	\]
	Hence, applying Proposition 2.1 in \cite{chernozhukov2017central}, we see that $I = o_{P}(1)$ as soon as 
	\[
	\frac{\log^7 (Mn)}{n^{1-2/q}h} \bigvee \frac{\log^3 (Mn)}{n^{1-4/q}h} \to 0,
	\]
	but this is satisfied under our assumption. This completes the proof.
\end{proof}

\begin{proof}[Proof of Proposition \ref{prop: HDCLT normalized}]
	Theorem \ref{thm: HDCLT} implies that 
	\[
	\sup_{b \in \R^{M}} \left | \P \left ( D \sqrt{nh^{3}} (\hat{m}(\vx_{\ell}) - m(\vx_{\ell}))_{\ell=1}^{L} \le b \right ) - \P (DG \le b) \right | \to 0. 
	\]
	Since the variances of the coordinates of $G$ are bounded, we see that $\E[\| G \|_{\infty}] = O(\sqrt{\log M})$ by Lemma 2.2.2 in \cite{vaart1996weak}. Hence, we have 
	\[
	\left \|  D \sqrt{nh^{3}} (\hat{m}(\vx_{\ell}) - m(\vx_{\ell}))_{\ell=1}^{L} \right\|_{\infty} = O_{P}(\sqrt{\log M}).
	\]
	Combining Condition (i) in the statement of Theorem \ref{thm: boot}, we see that 
	\[
	\left \|  (\hat{\Gamma}^{-1}-\Gamma^{-1})D \sqrt{nh^{3}} (\hat{m}(\vx_{\ell}) - m(\vx_{\ell}))_{\ell=1}^{L} \right\|_{\infty} = o_{P}(1/\sqrt{\log M}).
	\]
The rest of the proof is analogous to the last part of Theorem \ref{thm: HDCLT}. We omit the details for brevity. 
\end{proof}

\subsubsection{Proof of Theorem \ref{thm: nonparametric bootstrap}}
We start with proving the following uniform Bahadur representation for the quantile regression estimator $\hat{\beta}^*$ based on the nonparametric bootstrap samples $(Y_i^*,\vX_i^*)_{i=1}^n$. We define $U_i^*=F(Y_i^*\mid \vX_i^*)$ where $F(y\mid \vx)$ is the conditional distribution function of $Y$ given $\vX$ (see Remark \ref{rem: bahadur}).
\begin{lemma}\label{lem: bahadur nonparametric bootstrap}
	Suppose Assumption \ref{asmp: assumption1} holds. Then we have, for arbitrarily small $\gamma>0$,
	\begin{equation}\label{eq: bahadur nonparametric bootstrap}
	\hat{\beta}^*(\tau)-\beta(\tau)=J(\tau)^{-1}\left[\frac{1}{n}\sum_{i=1}^n\{\tau-I(U^*_i \le \tau)\}\vX_i^*\right]+O_{P}(n^{-3/4+\gamma} ),
	\end{equation}	
	uniformly in $\tau \in [\epsilon/2,1-\epsilon/2]$.
\end{lemma}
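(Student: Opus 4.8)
The plan is to run the classical Bahadur expansion for quantile regression (as in the proof of Lemma \ref{lem: bahadur}), but for an i.i.d.\ sample drawn from the \emph{random} empirical measure $\P_n = n^{-1}\sum_{i=1}^n \delta_{(Y_i,\vX_i)}$, and — crucially — to perform the ``Taylor expansion'' step against the smooth population objective rather than against $\P_n$ (which is not smooth in $\beta$). Write the nonparametric bootstrap via multinomial weights $(M_{n,1},\dots,M_{n,n}) \sim \mathrm{Multinomial}(n;1/n,\dots,1/n)$, so that $\hat\beta^*(\tau)=\argmin_{\beta}\sum_{i=1}^n M_{n,i}\rho_\tau(Y_i-\vX_i^T\beta)$, and set
\[
S_n^*(\beta,\tau):=\frac1n\sum_{i=1}^n M_{n,i}\vX_i\{\tau-I(Y_i\le\vX_i^T\beta)\},\quad S_n(\beta,\tau):=\frac1n\sum_{i=1}^n \vX_i\{\tau-I(Y_i\le\vX_i^T\beta)\},
\]
and $G(\beta,\tau):=\E[\vX(\tau-F(\vX^T\beta\mid\vX))]$, which is smooth in $\beta$ with $G(\beta(\tau),\tau)=0$ and $\partial_\beta G(\beta(\tau),\tau)=-J(\tau)$ under Assumption \ref{asmp: assumption1}. \emph{Step 0} is the preliminary uniform rate $\sup_\tau\|\hat\beta^*(\tau)-\beta(\tau)\|=O_P(n^{-1/2+\gamma})$, obtained from convexity of the bootstrap objective together with (i) $\sup_\tau\|S_n^*(\beta(\tau),\tau)\|=O_P(n^{-1/2})$ (combine Lemma \ref{lem: bahadur}, which gives $\sup_\tau\|S_n(\beta(\tau),\tau)\|=O_P(n^{-1/2})$, with a conditional maximal inequality for the mean-one, $L^q$-bounded weights to control $\sup_\tau\|(S_n^*-S_n)(\beta(\tau),\tau)\|$) and (ii) the curvature bound $\inf_\tau\lambda_{\min}(J(\tau))>0$; cf.\ the references in Lemma \ref{lem: bahadur} and \cite{belloni2019conditional}.

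\emph{Step 1 (expansion).} On the event $\|\hat\beta^*(\tau)-\beta(\tau)\|\le\delta_n$ with $\delta_n\asymp n^{-1/2+\gamma}$ (for all $\tau$), the subgradient condition yields $\sup_\tau\|S_n^*(\hat\beta^*(\tau),\tau)\|\le n^{-1}\max_i M_{n,i}\|\vX_i\|=O_P(n^{-3/4+\gamma})$, using $\E[\|\vX\|^q]<\infty$ with $q\ge4$ so that $\max_i\|\vX_i\|=O_P(n^{1/q})$ and $\max_i M_{n,i}=O_P(\log n)$. Then decompose
\[
-S_n^*(\beta(\tau),\tau)=\underbrace{\big[(S_n^*-S_n)(\hat\beta^*,\tau)-(S_n^*-S_n)(\beta(\tau),\tau)\big]}_{=:R_{1,n}^*(\tau)}+\underbrace{\big[(S_n-G)(\hat\beta^*,\tau)-(S_n-G)(\beta(\tau),\tau)\big]}_{=:R_{2,n}(\tau)}+\big[G(\hat\beta^*,\tau)-G(\beta(\tau),\tau)\big]+O_P(n^{-3/4+\gamma}),
\]
and a second-order Taylor expansion of $\beta\mapsto G(\beta,\tau)$ gives $G(\hat\beta^*,\tau)-G(\beta(\tau),\tau)=-J(\tau)(\hat\beta^*-\beta(\tau))+O_P(\delta_n^2)$ uniformly in $\tau$, with $\delta_n^2=o(n^{-3/4})$ once $\gamma$ is small. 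Rearranging and premultiplying by $J(\tau)^{-1}$ (whose operator norm is uniformly bounded on $[\epsilon/2,1-\epsilon/2]$ by Assumption \ref{asmp: assumption1}) gives
\[
\hat\beta^*(\tau)-\beta(\tau)=J(\tau)^{-1}S_n^*(\beta(\tau),\tau)+J(\tau)^{-1}\big(R_{1,n}^*(\tau)+R_{2,n}(\tau)\big)+O_P(n^{-3/4+\gamma}),
\]
uniformly in $\tau$. Finally, since $U_i^*=F(Y_i^*\mid\vX_i^*)$ and the model is linear in quantiles (with the strict monotonicity of $Q_{\vx}(\cdot)$ implied by Assumption \ref{asmp: assumption1}(v)), $I(U_i^*\le\tau)=I(Y_i^*\le\vX_i^{*T}\beta(\tau))$, whence $S_n^*(\beta(\tau),\tau)=n^{-1}\sum_i\vX_i^*\{\tau-I(U_i^*\le\tau)\}$ — exactly the leading term in the statement.

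\emph{Step 2 (remainder).} It remains to show $\sup_\tau\|R_{1,n}^*(\tau)\|\vee\sup_\tau\|R_{2,n}(\tau)\|=O_P(n^{-3/4+\gamma})$. The term $R_{2,n}$ is the increment of the original-sample empirical process over $\{\,(y,\vx')\mapsto\vx'(I(y\le\vx'^T\beta)-I(y\le\vx'^T\beta(\tau))):\|\beta-\beta(\tau)\|\le\delta_n,\ \tau\in[\epsilon/2,1-\epsilon/2]\,\}$, a fixed VC-type class (with $d$ fixed) whose $L^2(\P)$-size is $O(\sqrt{\delta_n})$; Lemma \ref{lem: maximal inequality} then gives $\E\|R_{2,n}\|_{[\epsilon/2,1-\epsilon/2]}=O(\sqrt{\delta_n\log n/n}+n^{-1+1/q}\log n)=O(n^{-3/4+\gamma})$, which is precisely the bound used inside the proof of Lemma \ref{lem: bahadur}. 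The term $R_{1,n}^*$ is the increment of the \emph{bootstrap (multiplier) empirical process} over the same shrinking class; here I would invoke the multiplier inequality of \cite{han2019convergence} (together with a conditional version of Lemma \ref{lem: maximal inequality}) and the fact that the relevant $\P_n$-functionals — the envelope moments and the $L^2$-modulus over the shrinking neighborhood — concentrate around their population counterparts with high probability, to conclude $\sup_\tau\|R_{1,n}^*(\tau)\|=O_P(n^{-3/4+\gamma})$. Collecting the three pieces gives (\ref{eq: bahadur nonparametric bootstrap}).

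\textbf{Main obstacle.} The delicate step is the uniform (in $\tau$) control of $R_{1,n}^*$: the bootstrap empirical process is indexed by a function class whose ``data'' is the random measure $\P_n$, so the standard maximal inequalities do not apply verbatim, and the shrinking-neighborhood modulus must be handled simultaneously for all $\tau\in[\epsilon/2,1-\epsilon/2]$ while carrying along the randomness of $\P_n$. This is exactly where the multiplier inequality of \cite{han2019convergence} is required, and it is the reason the argument here is appreciably more involved than the one behind Theorem \ref{thm: HDCLT}.
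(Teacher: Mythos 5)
Your proposal is correct and follows essentially the same route as the paper's proof: rewrite the bootstrap via multinomial weights, obtain the preliminary $\sqrt{n}$-rate from convexity/curvature plus control of the weighted score at $\beta(\tau)$, and then bound the Bahadur remainder by combining the subgradient bound (via $\max_i M_{n,i}$ and $\max_i\|\vX_i\|$), the local maximal inequality over the shrinking neighborhood, and the multiplier inequality of \cite{han2019convergence}. The only cosmetic difference is that you split the multiplier-process increment into $R_{1,n}^*+R_{2,n}$, whereas the paper keeps it as a single centered multiplier empirical process ($\Upsilon_1$) and applies Han's Corollary 1 once after symmetrization.
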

\begin{proof}
	We will prove the following equivalent form of  (\ref{eq: bahadur nonparametric bootstrap}),
	\begin{equation}
	\hat{\beta}^*(\tau)-\beta(\tau)=J(\tau)^{-1}\left[\frac{1}{n}\sum_{i=1}^n \pi_i \{\tau-I(U_i \le \tau)\} \vX_i\right]+O_{P}(n^{-3/4+\gamma} ),
	\end{equation}
	where $(\pi_1,\dots,\pi_n)$ is a multinomial random vector with parameters $n$ and (probabilities) $1/n,\dots,1/n$. We will divide the proof into two steps. In the following proof, $C$ is a generic constant independent of $n$ whose value may vary from line to line. 
	
	\textbf{Step 1}. In this step, we will show that $\sup_{\tau \in [\epsilon/2,1-\epsilon/2]}\|\hat{\beta}^*(\tau)-\beta(\tau)\|=O_P(n^{-1/2})$. 
	To this end, we introduce the following quantities
	\begin{align*}
%	\phi_i(\beta,\tau)&:=\{ \tau-I(Y_i\le\vX_i^T\beta) \} \vX_i, \\
	R_{n}&:=\Big\{  (\tau,\beta) \in \mathcal{U} \times \R^d: \|\beta-\beta(\tau)\|\le r_n   \Big\}, \\
	\Upsilon_0&:=\sup_{\tau\in \mathcal{U}}\left \|\frac{1}{\sqrt{n}} \sum_{i=1}^n\pi_i  \{ \tau - I(Y_i \le \vX_i^T \beta (\tau))\} \vX_i\right\|, \\
	\Upsilon_1&:=\sup_{(\tau,\beta) \in R_n}  \Bigg\| \frac{1}{\sqrt{n}}\sum_{i=1}^n \big \{\pi_i \{ I (Y_i\le\vX_i^T\beta)-I(Y_i\le\vX_i^T\beta(\tau)) \}\vX_i \\
	&\qquad \qquad \qquad  -\E [ \{I(Y \le \vX^T \beta ) - \tau \} \vX] \big \} \Bigg\|,\\
	\Upsilon_2&:= \sup_{(\tau,\beta) \in R_n} n^{1/2}\Big\|  \E [ \{ \tau - I(Y \le \vX^T \beta) \} \vX]-J(\tau)(\beta-\beta(\tau))   \Big\|. 
	\end{align*}
	where we define $\mathcal{U}:=[\epsilon/2,1-\epsilon/2]$ and $(r_n)_{n=1}^{\infty}$ is a sequence of constants to be specified. 
	
	%According to Lemma 3 in \cite{belloni2019conditional}, it suffices to show $\Upsilon_0=O_P(1)$, $\Upsilon_1=o_P(1)$ and $\Upsilon_2=o_P(1)$ when $r_n=O(n^{-1/2})$. We bound the three terms as follows.
	
	In view of the proof of Lemma 3 in \cite{belloni2019conditional}, it suffices to show that $\Upsilon_0=O_P(1)$, $\Upsilon_1=o_P(1)$ and $\Upsilon_2=o_P(1)$ when $r_n=O(n^{-1/2})$. We shall bound the three terms in what follows. 
	
	The term $\Upsilon_0$ is the supremum of a multiplier empirical process, and we will apply a multiplier inequality developed in \cite{han2019convergence}. Define the function class
	\[
	\F_1:=\Big\{ (y,\vx) \mapsto (\tau-I\{y\le \vx^T\beta(\tau)\}) \alpha^T\vx:   \tau\in \mathcal{U}, \alpha\in \bS^{d-1}   \Big\},
	\]
	where $\bS^{d-1} = \{ \vx \in \R^d : \| \vx \| = 1 \}$. 
	Then $\Upsilon_0=\|n^{-1/2}\sum_{i=1}^n\pi_if(\vX_i,Y_i)\|_{\F_1}$. We first verify that
	\begin{equation}\label{eq: epsilon 0}
	\E \big[\|\G_n\|_{\F_1}\big]\le Cd^{1/2}.
	\end{equation}
	This can be proved as follows
	\begin{align*}
	\E\big[\|\G_n\|_{\F_1}\big]&\le \E\Bigg[\sup_{\substack{ \tau \in \mathcal{U}}}\Big[\sum_{j=1}^d\left(\mathbb{G}_n\big [  \{ \tau-I(Y_i \le \vX_i^T\beta(\tau)) \} X_{ij} \big ] \right)^2\Big]^{1/2}\Bigg]\\
	&\le \left[\sum_{j=1}^d\E \Big[\sup_{\substack{ \tau \in \mathcal{U}}}\left(\mathbb{G}_n\big [ \{ \tau-I(U_i \le \tau) \} X_{ij} \big ] \right)^2\Big]\right]^{1/2}.
	\end{align*}
	For any $1\le j \le d$, by Theorem 2.14.1 in \cite{vaart1996weak},
	\[
	\E \Big[\sup_{\substack{ \tau \in \mathcal{U}}}\left(\mathbb{G}_n\big [ \{ \tau-I(U_i \le \tau) \} X_{ij} \big ] \right)^2\Big] \le C \mathbb{E}\left[|X_{1j}|^2\right]=O(1). 
	\]
	This leads to (\ref{eq: epsilon 0}).
	
	Now, by Lemma 2.3.6 in \cite{vaart1996weak}, (\ref{eq: epsilon 0}) implies the following bound for the symmetrized empirical process 
	\[
	\E \Big[\Big\|\sum_{i=1}^n \xi_i f(\vX_i,Y_i)\Big\|_{\F_1}\Big]\le  Cd^{1/2}n^{1/2},
 	\]
	where $\xi_i$ ($1\le i\le n$) are i.i.d.\ Rademacher random variables independent of the data.
	Given the above bound, we can apply Corollary 1 in \cite{han2019convergence} (recall $d$ is fixed)
	%(with $\Psi_n(k)=cd^{1/2}k^{1/2}$ in the proof there) 
	to conclude that
	\[
	\E [\Upsilon_0] \le C d^{1/2} \|\pi_1\|_{2,1}=O(d^{1/2}).
	\]
	This implies that $\Upsilon_0=O_P(1)$.
	
	We can bound $\Upsilon_1$ similarly to $\Upsilon_0$. Define the function class
	\[
	\begin{split}
	\F_2=\Bigl\{  (y,\vx) &\mapsto \{ I(y\le \vx^T\beta)-I(y\le \vx^T\beta(\tau)) \} \alpha^T\vx - \E\big[\{ I(Y\le \vX^T\beta) - \tau \} \alpha^T\vX \big] \\
	&\qquad : (\tau,\beta) \in R_n, \alpha\in \bS^{d-1}     \Bigr\}, 
	\end{split}
	\]
	so that  $\Upsilon_1=\|n^{-1/2}\sum_{i=1}^n\pi_if(\vX_i,Y_i)\|_{\F_2}$. Applying Lemma \ref{lem: maximal inequality}, we have
	\[
	\E \Big[ \Big\|\sum_{i=1}^nf(\vX_i,Y_i)\Big\|_{\F_2}\Big]=o(n^{1/4}\log n).
	\]
	Now we apply Corollary 1 in \cite{han2019convergence} (with $\Psi_n(t)=Ct^{1/4+\gamma}$ for some arbitrary small $\gamma>0$ in the proof there) to conclude that
	\[
	\E[\Upsilon_1]\le Cn^{-1/4+\gamma},
	\]
	which implies that $\Upsilon_1=O_P(n^{-1/4+\gamma})=o_P(1)$.
	
	For $\Upsilon_2$, we proceed as in Lemma 33 of \cite{belloni2019conditional} to see that
	\[
	\Upsilon_2\le \sup_{\substack{(\tau,\beta) \in R_n\\ \alpha \in \bS^{d-1}}} \sqrt{n}\Big|\E\big[\big(f(\vX^T\tilde{\beta}(\tau) \mid \vX)-f(\vX\beta(\tau) \mid \vX)\big)\cdot (\alpha^T\vX) \cdot \big(\vX^T(\beta-\beta(\tau))\big) \big] \Big|,
	\]
	where $\tilde{\beta}(\tau)$ lies on the line segment between $\beta(\tau)$ and $\beta$.
	We further bound the right-hand side as 
	\begin{align*}
	\Upsilon_2&\le C \sqrt{n} \sup_{\substack{(\tau,\beta) \in R_n\\ \alpha \in \bS^{d-1}}}\E \Big[ |\alpha^T\vX| \big| (\tilde{\beta}(\tau)-\beta(\tau))^T\vX\vX^T(\beta-\beta(\tau))  \big|\Big]\\
	&\le C \sqrt{n} \sup_{\substack{(\tau,\beta) \in R_n\\ \alpha \in \bS^{d-1}}}  \sqrt{\E\left[\alpha^T\vX\vX^T\alpha\right]}\cdot \sqrt{\E\Big[ \big| (\tilde{\beta}(\tau)-\beta(\tau))^T\vX\vX^T(\beta-\beta(\tau))  \big|^2\Big] }\\
	&\le C \sqrt{n} \cdot O(1) \cdot r_n^2\sqrt{\E\|\vX\|^4}=O(n^{-1/2})=o(1).
	\end{align*}
	%By Lemma 3 in \cite{belloni2019conditional}, we proved the uniform rate of convergence of $\hat{\beta}^*(u)$.
	
	These bounds imply the conclusion of this step, in view of the proof of Lemma 3 in \cite{belloni2019conditional}. 
	
	\textbf{Step 2.} We finish the proof of the lemma. Define
	\[
	r(\tau)=J(\tau)\big(\hat{\beta}^*(\tau)-\beta(\tau)\big)-\frac{1}{n} \sum_{i=1}^n \pi_i \{ \tau-I(U_i \le \tau)\} \vX_i.
	\]
	It suffices to show that $\sup_{\tau \in \mathcal{U}}\|r(\tau)\|=O_P(n^{-3/4+\gamma})$ for arbitrarily small $\gamma>0$.
	We note that, by Step 1, for any $B_n \to \infty$ arbitrarily slowly, $(\tau,\beta^*(\tau))\in R_n$ with $r_n=B_nn^{-1/2}$ holds with probability approaching one. Hence,  taking such $R_n$, with probability $1-o(1)$,
	\[
	\sup_{\tau \in \mathcal{U}}\|r(\tau)\|\le n^{-1/2}(\Upsilon_1+\Upsilon_2+\Upsilon_3),
	\]
	where $\Upsilon_3:= \sup_{\tau \in \mathcal{U}} \| n^{-1/2}\sum_{i=1}^n\pi_i  \{ \tau - I(Y_i \le \vX^T_i \hat{\beta}^* (\tau)) \} \vX_i\|$.
	By Step 1,  taking $B_n \to \infty$ sufficiently slowly, we have 
	\[
	\Upsilon_1=O_P(n^{-1/4+\gamma}) \quad \text{and} \quad \Upsilon_2=O_P(n^{-1/2+\gamma}).
	\]
	To bound $\Upsilon_3$, from the proof of Lemma 34 in \cite{belloni2019conditional},we can deduce that
	\begin{align*}
	\Upsilon_3\le \frac{d}{\sqrt{n}} \max_{1\le i \le n}\|\pi_i\vX_i\|.
	\end{align*}
	We further bound the right-hand side as 
	\begin{align*}
	\Upsilon_3&\le \frac{d}{\sqrt{n}} \cdot \max_{1\le i \le n}|\pi_i|\cdot \max_{1\le i \le n}\|\vX_i\|\\
	&= \frac{d}{\sqrt{n}} \cdot o_P\Big(\frac{\log n}{\log\log n}\Big)\cdot o_P(n^{1/q}) \qquad (\text{Section 4 of \cite{raab1998balls}})\\
	&=o_P( d n^{1/q-1/2}\log n/\log\log n).
	\end{align*}
	Hence we have shown that
	\begin{align*}
	\sup_{\tau \in \mathcal{U}}\|r(\tau)\|&=O_P(n^{-3/4+\gamma}+n^{-1+\gamma}+d n^{1/q-1}\log n/\log\log n) \\
	&=O_P(n^{-3/4+\gamma}),
	\end{align*}
	which finishes the proof.
\end{proof}
Define $\bar{\Psi}:=n^{-1}\sum_{i=1}^{n}\Psi_i$ and $\tilde{\Sigma}:=n^{-1}\sum_{i=1}^n(\Psi_i-\bar{\Psi})(\Psi_i-\bar{\Psi})^T$.
\begin{lemma}
	\label{lem: nb covariance}
	Under Assumption \ref{asmp: assumption1}, we have 
	\[
	\| \tilde{\Sigma} - \Sigma \|_{\infty} = O_{P}(n^{-1/2}h^{-1/2}\sqrt{\log n} + n^{2/q-1}h^{-1}\log n).
	\]
\end{lemma}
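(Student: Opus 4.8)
The plan is to split $\tilde{\Sigma}-\Sigma$ into an (uncentered) empirical process term and a lower‑order correction. Setting $S_n := n^{-1}\sum_{i=1}^{n}\Psi_i\Psi_i^T$, we have $\tilde{\Sigma} = S_n - \bar{\Psi}\bar{\Psi}^T$, so $\tilde{\Sigma}-\Sigma = (S_n-\Sigma) - \bar{\Psi}\bar{\Psi}^T$. For the correction, each coordinate of $\bar{\Psi}=n^{-1}\sum_i\Psi_i$ has mean zero by (\ref{eq: influence}), and rescaling the bound in Proposition \ref{prop: UAL} by $h^{3/2}$ gives $\max_{1\le k\le L}|\bar{\Psi}_k|\le \sup_{\vx\in\cX_0}\big|n^{-1}\sum_{i=1}^n\psi_{\vx}(U_i,\vX_i)\big| = O_P(n^{-1/2}\sqrt{\log n})$, hence $\|\bar{\Psi}\bar{\Psi}^T\|_{\infty}=O_P(n^{-1}\log n)$, which (since $h\to 0$) is $o_P(n^{-1/2}h^{-1/2}\sqrt{\log n})$ and therefore absorbed. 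Thus it remains to control $\|S_n-\Sigma\|_{\infty}$.

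Because $\{\vx_1,\dots,\vx_L\}\subset\cX_0$, we have $\|S_n-\Sigma\|_{\infty}\le n^{-1/2}\|\mathbb{G}_n\|_{\mathcal{G}}$, where $\mathbb{G}_n f = n^{-1/2}\sum_{i=1}^n\{f(U_i,\vX_i)-\E[f(U,\vX)]\}$ and $\mathcal{G}:=\{(u,\vx')\mapsto \psi_{\vx}(u,\vx')\psi_{\tilde{\vx}}(u,\vx') : \vx,\tilde{\vx}\in\cX_0\}$. I would bound $\E[\|\mathbb{G}_n\|_{\mathcal{G}}]$ with the local maximal inequality (Lemma \ref{lem: maximal inequality}), mirroring the proof of Lemma \ref{UnifQr}. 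Since $s_{\vx}(\tau_{\vx})/s''_{\vx}(\tau_{\vx})$ and $\vx^T J(\tau_{\vx})^{-1}$ are uniformly bounded on $\cX_0$ (Assumption \ref{asmp: assumption1}) and $|K'|\le\|K'\|_{\infty}$, the class $\mathcal{G}$ has envelope $F(u,\vx')=Ch^{-1}\|\vx'\|^2$, so $\E[F^2]=O(h^{-2})$ and $B:=\sqrt{\E[\max_{1\le i\le n}F^2(U_i,\vX_i)]}=O(h^{-1}n^{2/q})$ using $\E[\|\vX\|^q]<\infty$. For the variance proxy, independence of $U$ and $\vX$ together with the fact that $K'((\tau_{\vx}-u)/h)$ vanishes off an interval of length $2h$ gives $\sup_{\vx,\tilde{\vx}\in\cX_0}\E[(\psi_{\vx}\psi_{\tilde{\vx}})^2(U,\vX)]\le O(h^{-2})\,\P(|\tau_{\vx}-U|\le h)\,\E[\|\vX\|^4]=O(h^{-1})$, so one may take $\sigma^2=O(h^{-1})$. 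As in Lemma \ref{UnifQr}, $\mathcal{G}$ lies in the pointwise product of finitely many VC‑type classes with bounded envelopes (the scalar classes $\{u\mapsto K'((\tau-u)/h)\}$, of bounded variation, and the finite‑dimensional classes $\{\vx'\mapsto \vx^T J(\tau_{\vx})^{-1}\vx'\}$), hence is VC type with characteristics $A,V$ independent of $n$; note $d$ is fixed here. Lemma \ref{lem: maximal inequality}, together with $\log(A\sqrt{\E[F^2]}/\sigma)\lesssim \log(1/h)\lesssim\log n$ (Assumption \ref{asmp: assumption1}(viii)), then yields
\[
\E[\|\mathbb{G}_n\|_{\mathcal{G}}] = O\!\left(\sqrt{h^{-1}\log n} + \frac{h^{-1}n^{2/q}}{\sqrt n}\log n\right) = O\!\left(h^{-1/2}\sqrt{\log n} + h^{-1}n^{2/q-1/2}\log n\right),
\]
and by Markov's inequality $\|S_n-\Sigma\|_{\infty}\le n^{-1/2}\|\mathbb{G}_n\|_{\mathcal{G}} = O_P(n^{-1/2}h^{-1/2}\sqrt{\log n}+n^{2/q-1}h^{-1}\log n)$. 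Combining with the correction bound gives the claim.

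The step needing the most care is the variance estimate $\sigma^2=O(h^{-1})$ and the accompanying bookkeeping of the powers of $h$ and $n$ in the maximal inequality: one must use $U\perp\vX$ and the $O(h)$‑support of $K'((\tau_{\vx}-\cdot)/h)$ to convert the crude $h^{-2}$ into $h^{-1}$, and must track the envelope moment $B=O(h^{-1}n^{2/q})$ precisely to land the sharp $n^{2/q-1}$ exponent (rather than an $n^{-1/2}$‑type term) in the second summand. Everything else is a routine repetition of the argument already carried out for Lemma \ref{UnifQr}, and the $\bar{\Psi}\bar{\Psi}^T$ term is comfortably negligible.
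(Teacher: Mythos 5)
Your proposal is correct and follows essentially the same route as the paper: the paper likewise splits $\tilde{\Sigma}-\Sigma$ entrywise into the centered second-moment empirical process (its term $I_{jk}$, bounded via Lemma \ref{lem: maximal inequality} over the product class of $h^{-1}K'K'$ times the quadratic forms, with the same variance proxy $O(h^{-1})$ and envelope-moment bookkeeping yielding $n^{-1/2}h^{-1/2}\sqrt{\log n}+n^{2/q-1}h^{-1}\log n$) plus the product-of-sample-means correction (its term $II_{jk}$), which it also shows is $O_P(n^{-1}\log n)$. The only cosmetic difference is that you obtain the $\bar{\Psi}\bar{\Psi}^T$ bound by rescaling the uniform rate in Proposition \ref{prop: UAL} rather than by a second application of the maximal inequality, which gives the same result.
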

\begin{proof}
Recall that $J_{\vx}=J(\tau_{\vx})$. 
	The difference $\tilde{\Sigma}_{j,k}-\Sigma_{j,k}$ can be decomposed as
	\begin{align*}
	&\Bigg [ \frac{s_{\vx_k}(\tau_{\vx_k})s_{\vx_{j}}(\tau_{\vx_{j}})}{s''_{\vx_k}(\tau_{\vx_k})s''_{\vx_{j}}(\tau_{\vx_{j}})} \Bigg ]
	\Bigg\{ \frac{1}{n}\sum_{i=1}^n\frac{1}{h}K'\left(\frac{\tau_{\vx_k}-U_i}{h}\right)K'\left(\frac{\tau_{\vx_{j}}-U_i}{h}\right)\vx_k^TJ_{\vx_k}^{-1}\vX_i \vX_i^{T} J_{\vx_{j}}^{-1}\vx_{j} \\ 
&\qquad	 -\E\left[\frac{1}{h}K'\left(\frac{\tau_{\vx_k}-U}{h}\right)K'\left(\frac{\tau_{\vx_{j}}-U}{h}\right)\right] \vx_k^TJ_{\vx_k}^{-1}\E[\vX \vX^{T}] J_{\vx_{j}}^{-1}\vx_{j}\\
&\qquad	 -\frac{1}{h}\Bigg[\sum_{i=1}^nK'\left(\frac{\tau_{\vx_k}-U_i}{h}\right)\vx_k^TJ_{\vx_k}^{-1}\vX_i \Bigg] 
	\Bigg[\sum_{i=1}^nK'\left(\frac{\tau_{\vx_j}-U_i}{h}\right)\vx_j^TJ_{\vx_j}^{-1}\vX_i \Bigg]
	\Bigg \}.
	\end{align*}
	We define 
	\begin{align*}
	\mathllap{I_{jk}}&:=\frac{1}{n}\sum_{i=1}^n\frac{1}{h}K'\left(\frac{\tau_{\vx_k}-U_i}{h}\right)K'\left(\frac{\tau_{\vx_{j}}-U_i}{h}\right)\vx_k^TJ_{\vx_k}^{-1}\vX_i \vX_i^{T} J_{\vx_{j}}^{-1}\vx_{j} \\
	&\qquad -\E\left[\frac{1}{h}K'\left(\frac{\tau_{\vx_k}-U}{h}\right)K'\left(\frac{\tau_{\vx_{j}}-U}{h}\right)\right] \vx_k^TJ_{\vx_k}^{-1}\E[\vX \vX^{T}] J_{\vx_{j}}^{-1}\vx_{j}; \\
	\mathllap{II_{jk}}&:=\frac{1}{h}\Bigg[\sum_{i=1}^nK'\left(\frac{\tau_{\vx_k}-U_i}{h}\right)\vx_k^TJ_{\vx_k}^{-1}\vX_i \Bigg] 
	\Bigg[\sum_{i=1}^nK'\left(\frac{\tau_{\vx_j}-U_i}{h}\right)\vx_j^TJ_{\vx_j}^{-1}\vX_i \Bigg].
	\end{align*}
	We first bound $I_{jk}$. Define the function class
	\begin{align*}
	\F_1 :=\Bigl\{ (u,\vx) &\mapsto h^{-1}K'((\tau_1-u)/h)K'((\tau_{2}-u)/h)(\vx_1^TJ_{\vx_1}^{-1}\vx)( \vx^{T} J_{\vx_2}^{-1}\vx_2):\\
	&\qquad  \tau_1, \tau_2\in [\epsilon,1-\epsilon], \  \vx_1, \vx_2\in \mathcal{X}_0\Bigr\}.
	\end{align*}
	Then we have $\max_{1 \le j,k \le L}|I_{jk}|\le \sup_{f\in \F_1}|n^{-1}\sum_{i=1}^n\{f(U_i,\vX_{i})-\E[f(U,\vX)]\}|$. Applying Lemma \ref{lem: maximal inequality}, we have
	\[
	\sup_{f\in \F_1}\left | n^{-1}\sum_{i=1}^n\{f(U_i,\vX_{i})-\E[f(U,\vX)]\} \right | =O_P\big(n^{-1/2}h^{-1/2}\sqrt{\log n}+n^{2/q-1}h^{-1}\log n\big).
	\]
	Hence we have shown that
	\[
	\max_{1 \le j,k \le L}|I_{jk}|=O_P\big(n^{-1/2}h^{-1/2}\sqrt{\log n}+n^{2/q-1}h^{-1}\log n\big).
	\]
	
	For $II_{jk}$, we define the following function class
	\begin{align*}
	\F_2:=\Bigl\{   (u,\vx) \mapsto h^{-1/2} K' ((\tau-u)/h)\vx_0^TJ_{\vx_0}^{-1}\vx :\tau\in [\epsilon,1-\epsilon],  \vx_0 \in \mathcal{X}_0\Bigr\}.
	\end{align*}
	Then we have 
	\[
	\max_{1 \le j,k \le L}|II_{jk}|\le  \sup_{f\in \F_2}\left\{n^{-1}\sum_{i=1}^n\{f(U_i,\vX_{i})-\E[f(U,\vX)]\}\right\}^2.
	\]
	Similarly to the previous case, by using Lemma \ref{lem: maximal inequality}, we can show that
	\[
	\sup_{f\in \F_2}\left | n^{-1}\sum_{i=1}^n\{f(U_i,\vX_{i})-\E[f(U,\vX)]\} \right | =O_P(n^{-1/2}\sqrt{\log n}),
	\]
	which implies that $\max_{1 \le j,k \le L}|II_{jk}|=O_P(n^{-1}\log n)$. Combining the above bounds, we obtain the desired result. 
\end{proof}
Using Lemma \ref{lem: nb covariance}, we have the following result; cf. the proof of Lemma \ref{lem: covariance D}.
\begin{lemma}
	\label{lem: covariance D nb}
	Under Assumptions \ref{asmp: assumption1} and \ref{asmp: assumption2}, we have
	\[
	\max_{1 \le k,\ell \le M} |D_{k}^{T}(\tilde{\Sigma} - \Sigma)D_{\ell}| = O_{P}(n^{-1/2}h^{-1/2}\sqrt{\log n} + n^{2/q-1}h^{-1}\log n).
	\]
\end{lemma}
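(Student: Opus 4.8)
The plan is to reduce the claim to the entrywise bound on $\tilde{\Sigma} - \Sigma$ already established in Lemma \ref{lem: nb covariance}, following the same two-line argument used to prove Lemma \ref{lem: covariance D}. First I would expand the bilinear form over the supports $S_k$ and $S_\ell$ of the rows $D_k$ and $D_\ell$:
\[
D_k^T(\tilde{\Sigma} - \Sigma)D_\ell = \sum_{k' \in S_k}\sum_{\ell' \in S_\ell} D_{k,k'}\,(\tilde{\Sigma}_{k',\ell'} - \Sigma_{k',\ell'})\,D_{\ell,\ell'},
\]
and then bound the absolute value by $|S_k|\,|S_\ell|\,\big(\max_{k',\ell'}|D_{k,k'}|\,|D_{\ell,\ell'}|\big)\,\| \tilde{\Sigma} - \Sigma \|_{\infty}$.

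Next I would invoke Assumption \ref{asmp: assumption2} (i): since $\max_{1 \le k \le M}|S_k| = O(1)$ and $\max_{1 \le k \le M; 1 \le \ell \le L}|D_{k,\ell}| = O(1)$, the prefactor is $O(1)$ uniformly in $1 \le k,\ell \le M$. Combining this with the rate $\| \tilde{\Sigma} - \Sigma \|_{\infty} = O_{P}(n^{-1/2}h^{-1/2}\sqrt{\log n} + n^{2/q-1}h^{-1}\log n)$ from Lemma \ref{lem: nb covariance} and taking the maximum over $k,\ell$ yields the stated bound. The only structural difference from the proof of Lemma \ref{lem: covariance D} is that here one plugs in the slower nonparametric-bootstrap rate from Lemma \ref{lem: nb covariance} in place of the pivotal-bootstrap rate.

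There is essentially no obstacle beyond bookkeeping; the one point worth emphasizing is that the sparsity of $D$ (Assumption \ref{asmp: assumption2} (i)) is exactly what keeps the number of summands in the double sum of constant order, so that the entrywise $\ell^\infty$ bound transfers to a bound on the bilinear forms with no loss depending on $M$ or $L$. Hence the argument goes through verbatim from the fixed-dimension case with the updated rate.
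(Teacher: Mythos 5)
Your argument is exactly the paper's: the paper proves this lemma by citing Lemma \ref{lem: nb covariance} and referring back to the two-line computation in the proof of Lemma \ref{lem: covariance D}, which is precisely the support-expansion and sparsity bound you write out. Correct, and no further comment is needed.
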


We are now in position to prove Theorem \ref{thm: nonparametric bootstrap}. 
\begin{proof}[Proof of Theorem \ref{thm: nonparametric bootstrap}]
	We begin with noting that,  using the Bahadur representation in Lemma \ref{lem: bahadur nonparametric bootstrap},we can establish  the following asymptotic linear representation for the bootstrap mode estimator by a similar analysis to the proof of Theorem \ref{thm: HDCLT} coupled with the multiplier inequality techniques as in the proof of Lemma \ref{lem: bahadur nonparametric bootstrap}:
	\begin{equation}
	\begin{aligned}
	(\hat{m}^*(\vx_{\ell}) - m(\vx_{\ell}))_{\ell=1}^{L} &= (nh^{3/2})^{-1} \textstyle\sum_{i=1}^{n} \Psi^*_{i}+R^*_n,\\ \sqrt{nh^3}\|R_n^*\|_{\infty}&=O_P(h^{1/2}+n^{-1/2+\gamma}h^{-5/2}+n^{1/2}h^{7/2}),
	\end{aligned} 
	\end{equation}
	where $\Psi_{i}^*=(\psi_{\vx_{1}}(U_i^*,\vX_i^*),\cdots,\psi_{\vx_{L}}(U_i^*,\vX_i^*))^T$. Hence we have
	\begin{align*}
	\sqrt{nh^3}\hat{A}(\hat{m}^*(\vx_{\ell}) - \hat{m}(\vx_{\ell}))_{\ell=1}^{L} &= n^{-1/2} \textstyle\sum_{i=1}^{n}\hat{A} (\Psi^*_{i}-\Psi_i)+\sqrt{nh^3}\hat{A}(R^*_n-R_n)\\
	&:=n^{-1/2} \textstyle\sum_{i=1}^{n} (\hat{A}\Psi^*_{i}-\hat{A}\bar{\Psi})+\sqrt{nh^3}\hat{A}\tilde{R}_n.
	\end{align*}
	Now, we divide the rest of the proof into two steps.
	
	\textbf{Step 1.} 
	We will show that 
	\begin{equation}\label{eq: thm3 step1}
	\sup_{b\in \R^M}\left|\P_{|\cD_n}\left(n^{-1/2}{\textstyle \sum}_{i=1}^n(\hat{A}\Psi^*_{i}-\hat{A}\bar{\Psi})\le b\right)-\P\left(AG\le b\right)\right|\stackrel{P}{\to} 0.
	\end{equation}
	We note that
	\begin{align*}
	&\sup_{b\in \R^M}\left|\P_{|\cD_n}\left(n^{-1/2}{\textstyle \sum}_{i=1}^n(\hat{A}\Psi^*_{i}-\hat{A}\bar{\Psi})\le b\right)-\P\left(AG\le b\right)\right|\\
	&\le \sup_{b\in \R^M}\left|\P_{|\cD_n}\left(n^{-1/2}{\textstyle \sum}_{i=1}^n(\hat{A}\Psi^*_{i}-\hat{A}\bar{\Psi})\le b\right)-\P_{|\cD_n}\left(\hat{A}\tilde{G}\le b\right)\right|\\
	&+ \sup_{b\in \R^M}\left|\P_{|\cD_n}\left(\hat{A}\tilde{G}\le b\right)-\P_{|\cD_n}\left(A\tilde{G}\le b\right)\right|+ \sup_{b\in \R^M}\left|\P_{|\cD_n}\left(A\tilde{G}\le b\right)-\P\left(AG\le b\right)\right|\\&:= I+II+III,
	\end{align*}
	where $\tilde{G}\sim N(0,\tilde{\Sigma})$ and recall $\tilde{\Sigma}:=n^{-1}\sum_{i=1}^n(\Psi_i-\bar{\Psi})(\Psi_i-\bar{\Psi})^T$.
	
	We first analyze $II$ and $III$. 
	In view of the Gaussian comparison inequality (cf. Lemma \ref{lem: Gaussian comparison}), to show that $II \vee III = o_{P}(1)$, it suffices to verify that 
	\begin{equation}
	\left [ \| \hat{A} \tilde{\Sigma} \hat{A}^{T} -A\tilde{\Sigma} A^{T} \|_{\infty} \vee \| A \tilde{\Sigma}A^{T} - A\Sigma A^{T} \|_{\infty} \right ] \log^2 M = o_{P}(1).
	\label{eq: Gaussian comparison nb}
	\end{equation}
	Indeed, by Lemma \ref{lem: covariance D nb} and Condition (i) of the theorem, we can deduce that the bracket on the left hand side is $O_{P}(n^{-1/2}h^{-5/2}\sqrt{\log n} + h)$. Thus, (\ref{eq: Gaussian comparison nb}) holds under our assumption. 
	
	To show that $I =o_{P}(1)$, we apply Proposition 2.1 in \cite{chernozhukov2017central} conditionally on $\cD_{n}$ (recall that conditionally on $\cD_{n}$, the vectors $\Psi^*_{1}-\bar{\Psi},\dots,\Psi^*_{n}-\bar{\Psi}$ are independent with mean zero). By construction, $n^{-1}\sum_{i=1}^{n}\E_{|\cD_n}[(\hat{A}_{k}^{T}\Psi^*_{i}-\hat{A}_{k}^{T}\bar{\Psi})^{2}] = \hat{A}_{k}^{T} \tilde{\Sigma} \hat{A}_{k} = D_{k}^{T}\tilde{\Sigma} D_{k}/\hat{\Gamma}_{k}^{2}$ is bounded away from zero uniformly over $1 \le k \le M$ with probability approaching one. 
	Similarly to the proof of Theorem \ref{thm: HDCLT}, we can verify that $ \max_{1 \le k \le M} n^{-1}\sum_{i=1}^{n} \E_{|\cD_n}[|\hat{A}_{k}^{T}\Psi^*_{i}-\hat{A}_{k}^{T}\bar{\Psi}|^{2+r}] = O_{P}(h^{-r/2})$ for $r=1,2$.
	Finally,
	\begin{align*}
	\max_{1 \le i \le n}  \E_{|\cD_n}\left [\max_{1 \le k \le M}|\hat{A}_{k}^{T}\Psi^*_{i}-\hat{A}_{k}^{T}\bar{\Psi}|^{q} \right] 
	&\le O(1)  \max_{1 \le i \le n}\max_{1 \le k \le M}|\hat{A}_{k}^{T}\Psi_{i}|^{q}  \\
	&\le O_{P}(h^{-q/2}) \max_{1 \le i \le n} \| \vX_{i} \|^{q} = O_{P}(n h^{-q/2}). 
	\end{align*}
	Hence, applying Proposition 2.1 in \cite{chernozhukov2017central}, we see that $I = o_{P}(1)$ as soon as 
	\[
	\frac{\log^7 (Mn)}{n^{1-2/q}h} \bigvee \frac{\log^3 (Mn)}{n^{1-4/q}h} \to 0,
	\]
	but this is satisfied under our assumption. This completes  Step 1.
	
	\textbf{Step 2.} We finish the proof by a similar analysis as Step 2 in the proof of Theorem \ref{thm: HDCLT}.
	Define $\tilde{\delta}_{n} =h^{1/2}+n^{-1/2+\gamma}h^{-5/2}+n^{1/2}h^{7/2}$.
	Combining the analysis before Step 1 and the fact that $\sqrt{nh^{3}}\| R_n \|_{\infty} = O_{P}(h^{1/2}+n^{-1/2}h^{-5/2}\log n+n^{1/2}h^{7/2})$,  we have $\sqrt{nh^{3}}\| \tilde{R}_n \|_{\infty} = O_{P}(\tilde{\delta}_{n})$. Similarly to Step 2 in the proof of Theorem \ref{thm: HDCLT}, we can show that
	$\sqrt{nh^{3}} \| \hat{A} \tilde{R}_{n} \|_{\infty} = O_{P}(\tilde{\delta}_{n}). $
	The rest of the proof is analogous to the last part of Theorem \ref{thm: HDCLT}. We omit the details for brevity. 
\end{proof}

\subsubsection{Proofs for Appendix A}
\begin{proof}[Proof of Proposition \ref{prop: Gumbel}]
	Since $K$ is supported in $[-1,1]$, if $| \tau_{\vx_{k}} - \tau_{\vx_{\ell}} | > 2h$, then 
	\[
	\E \left [ K'\left ( \frac{\tau_{\vx_{k}}-U}{h} \right ) K'\left ( \frac{\tau_{\vx_{\ell}}-U}{h} \right ) \right ] = 0.
	\]
	Thus, $\Sigma = \diag \{ \sigma_{\vx_{1}}^{2},\dots,\sigma_{\vx_{L}}^{2} \}$, so that Theorem \ref{thm: HDCLT} implies that 
	\[
	\sup_{b \in \R}\left | \P (\zeta_n \le b) - \P\big(\max_{1 \le \ell \le L} |W_{\ell}| \le b\big) \right | \to 0,
	\]
	where $W_1,\dots,W_L \sim N(0,1)$ i.i.d.\ 
	The rest of the proof follows from standard extreme value theory; cf. Theorem 1.5.3 in \cite{leadbetter1983}. 
\end{proof}

\begin{proof}[Proof of Lemma \ref{lem: quantile}]
	Let $q_{n}(\alpha)$ denote the $\alpha$-quantile of $Z_n$. By assumption, we may choose a sequence $\delta_n \to 0$ such that 
	\[
	\begin{split}
	&\sup_{t \in \R}|\P(Y_n \le t) - \P(Z_n \le t)| \le \delta_n \quad \text{and} \\
	&\P \left ( \sup_{t \in \R} |\P(W_n \le t \mid \mathcal{C}_n) - \P(Z_n \le t)| > \delta_n \right )\le \delta_n. 
	\end{split}
	\]
	The latter follows from the fact that the Ky Fan metric metrizes convergence in probability. Define the event $E_n = \{ \sup_{t \in \R} |\P(W_n \le t \mid \mathcal{C}_n) - \P(Z_n \le t)| \le  \delta_n \}$.
	On this event, 
	\[
	\P(W_n \le q_{n}(\alpha+\delta_n) \mid \mathcal{C}_n) \ge \underbrace{\P(Z_n \le q_{n}(\alpha+\delta_n))}_{=\alpha+\delta_n} - \delta_n =  \alpha,
	\]
	so that $\hat{q}_{n}(\alpha) \le q_n (\alpha+\delta_n)$. 
	Thus, 
	\[
	\P(Y_n \le \hat{q}_{n}(\alpha)) \le \P(Y_n \le q_n(\alpha + \delta_n)) + \delta_n \le \P(Z_n \le q_n(\alpha+\delta_n)) + 2\delta_n = \alpha + 3\delta_n. 
	\]
	Likewise, on the event $E_n$,
	\[
	\P(Z_n \le t)|_{t = \hat{q}_{n}(\alpha)} \ge \underbrace{\P(W_n \le t \mid \mathcal{C}_{n})|_{t = \hat{q}_{n}(\alpha)}}_{\ge \alpha} - \delta_n  \ge \alpha - \delta_n, 
	\]
	so that $\hat{q}_{n}(\alpha) \ge q_n (\alpha-\delta_n)$. Arguing as in the previous case, we see that $\P(Y_n \le \hat{q}_{n}(\alpha)) \ge \alpha-3\delta_n$. This completes the proof. 
	
\end{proof}

\section{Proofs for Section \ref{sec: extension}}
 Recall that $\bS^{d-1}$ is the unit sphere in $\R^{d}$, i.e., $\bS^{d-1} := \{ \vx \in \R^{d} : \| \vx \|=1 \}$. Also recall that in Section 5, we allow $d=d_n \to \infty$.

\subsection{Proof of Theorem \ref{HDCLT'}}
Overall, the proof is analogous to that of Theorem \ref{thm: HDCLT}. 
The following Banadur representation is taken from \cite{belloni2019conditional}.

\begin{lemma}\label{prop: bahadur}
	Under Assumption \ref{Maup2}, we have 
	\[
	\hat{\beta}(\tau)-\beta(\tau)=J(\tau)^{-1}\left[\frac{1}{n}\sum_{i=1}^n\left\{\tau -I(U_i\le \tau)) \right\}\vX_i\right]+\check{R}_n(\tau)
	\]
	with $\| \check{R}_{n} \|_{[\epsilon/2,1-\epsilon/2]}=O_P(n^{-3/4}d\sqrt{\log n})$ and $\|n^{-1}\sum_{i=1}^n\left\{\tau -I(U_i\le \tau) \right\}\vX_i\|_{[\epsilon/2,1-\epsilon/2]}=O_P(\sqrt{d/n})$
\end{lemma}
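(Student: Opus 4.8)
The statement is, in essence, a specialization to the present setting of the uniform Bahadur representation for quantile regression with many regressors established in \cite{belloni2019conditional} (their Theorem 2 and the accompanying remainder analysis, which our Condition (vi) was tailored to invoke). Accordingly, the plan is not to reprove it from scratch but to \textbf{(a)} invoke their Bahadur representation, \textbf{(b)} verify that its hypotheses hold under Assumption \ref{Maup2} and track how its generic remainder rate collapses in our regime, and \textbf{(c)} prove the auxiliary bound on the linear term directly.

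For steps (a)--(b), I would first recall the structure of \cite{belloni2019conditional}: they assume a design with $\| \vX \|$ bounded a.s.\ by a (possibly growing) $\xi_d$, a Gram matrix with eigenvalues bounded away from $0$ and $\infty$, a conditional density bounded and bounded away from $0$ on the relevant range with bounded first derivative, and a growth condition linking $\xi_d$, $d$, $n$, and $\log n$. Under Assumption \ref{Maup2}, Condition (iii) supplies $\xi_d = C_3\sqrt d$ together with $c_{\min}\le\lambda_{\min}\le\lambda_{\max}\le c_{\max}$; Condition (iv) imports Assumption \ref{asmp: assumption1} (iv)--(v), which give the density bounds on $[Q_{\vx}(\epsilon/2),Q_{\vx}(1-\epsilon/2)]$ and the bound on $f^{(1)}$; and Condition (vi), $d^{4}=o(n^{1-c_5})$, is precisely what makes the relevant growth conditions (which involve powers of $\xi_d^2 d = C_3^2 d^2$ divided by $n$, up to $\log n$ factors) tend to zero. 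Translating notation via $U_i = F(Y_i\mid\vX_i)$ (Remark \ref{rem: bahadur}), so that $I(Y_i\le\vX_i^T\beta(\tau)) = I(U_i\le\tau)$, the leading term of their expansion is exactly $J(\tau)^{-1}n^{-1}\sum_{i=1}^n\{\tau - I(U_i\le\tau)\}\vX_i$, and their remainder, of order $\xi_d\sqrt d\, n^{-3/4}\sqrt{\log n}$ up to lower-order terms, becomes $O_P(n^{-3/4}d\sqrt{\log n})$ on substituting $\xi_d = C_3\sqrt d$ (when the linear model is not assumed exactly, the series approximation error is, by hypothesis in Section \ref{sec: extension}, of no larger order and is absorbed into $\check R_n$). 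This bookkeeping --- reading off the exponents of $\xi_d$ and $d$ correctly from their theorem and confirming that our growth condition dominates theirs --- is the main (and essentially the only) obstacle; it is a matter of careful matching of hypotheses rather than new analysis. (An alternative, fully self-contained route would mimic the proof of Lemma \ref{lem: bahadur nonparametric bootstrap}: first get $\|\hat\beta(\tau)-\beta(\tau)\|_{[\epsilon/2,1-\epsilon/2]} = O_P(\sqrt{d/n})$, then linearize the expected score within that neighbourhood and bound the empirical-process fluctuation with Lemma \ref{lem: maximal inequality}; but given the explicit citation this seems unnecessary.)

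For step (c), the bound $\|n^{-1}\sum_{i=1}^n\{\tau - I(U_i\le\tau)\}\vX_i\|_{[\epsilon/2,1-\epsilon/2]} = O_P(\sqrt{d/n})$ I would prove directly, mirroring the computation leading to (\ref{eq: epsilon 0}). Since $U_i$ is independent of $\vX_i$ and $\E[\tau - I(U_i\le\tau)]=0$, each coordinate process is mean zero, so
\[
\sup_{\tau\in[\epsilon/2,1-\epsilon/2]}\Bigl\|\tfrac{1}{n}\sum_{i=1}^n\{\tau - I(U_i\le\tau)\}\vX_i\Bigr\|^{2}\le\frac{1}{n}\sum_{j=1}^{d}\sup_{\tau}\bigl(\mathbb{G}_n[\{\tau - I(U_i\le\tau)\}X_{ij}]\bigr)^{2}.
\]
Taking expectations and applying Theorem 2.14.1 in \cite{vaart1996weak} to the one-parameter empirical process in each coordinate gives $\E[\sup_\tau(\mathbb{G}_n[\{\tau - I(U_i\le\tau)\}X_{ij}])^{2}]\le C\,\E[X_{1j}^{2}]$; summing over $j$ and using $\sum_{j=1}^d\E[X_{1j}^{2}] = \operatorname{tr}(\E[\vX\vX^T])\le c_{\max}d$ yields $\E[\sup_\tau\|\cdot\|^{2}] = O(d/n)$, and Markov's inequality gives the claim. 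Combining this with the representation from step (b) completes the proof.
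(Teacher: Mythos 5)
Your proposal is correct and matches the paper's approach: the paper's entire proof of this lemma is a citation to Theorems 1 and 2 of \cite{belloni2019conditional}, which is exactly the route you take, with your hypothesis-matching and the coordinate-wise bound for the linear term (mirroring the computation around (\ref{eq: epsilon 0})) simply filling in details the paper leaves implicit. No gap.
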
 

\begin{proof}
	See Theorems 1 and 2 in \cite{belloni2019conditional}. 
\end{proof}

The rates of convergence of $\hat{Q}_{\vx}^{(r)}(\tau_{\vx})$ change as follows. 
\begin{lemma}
	\label{lem: uniform rate2}
	Under the conditions of Theorem \ref{HDCLT'}, we have
	\[
	\sup_{\substack{\vx \in \mathcal{X}_0 \\ \tau \in [\epsilon,1-\epsilon]}}|\hat{Q}_{\vx}^{(r)}(\tau)-Q_{\vx}^{(r)}(\tau)|=
	\begin{cases}
	O_P\left(n^{-1/2}d+h^{2}\right) & \text{if $r=0$}\\
	O_P\left(n^{-1/2}h^{-r+1/2}d\sqrt{\log n}+h^{2}\right) & \text{if $r=1$ or $2$}\\
	O_P\left(n^{-1/2}h^{-5/2}d\sqrt{\log n}+h \right) & \text{if $r=3$}
	\end{cases}
	\]
\end{lemma}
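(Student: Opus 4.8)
The plan is to re-run the proof of Lemma \ref{UnifQ} while carrying the dimension $d$ through every estimate, using the increasing-dimension Bahadur representation of Lemma \ref{prop: bahadur} in place of Lemma \ref{lem: bahadur}. First I would split $\hat Q_{\vx}^{(r)}(\tau)-Q_{\vx}^{(r)}(\tau)$ into a deterministic ``bias'' part and a stochastic part exactly as in the two displays following Lemma \ref{UnifQ}. The bias part, $\int Q_{\vx}(t)K_h^{(r)}(\tau-t)\,dt-Q_{\vx}^{(r)}(\tau)$, is handled by integrating by parts $r$ times and Taylor-expanding $Q_{\vx}$; under Assumption \ref{Maup2} the derivatives $Q_{\vx}^{(j)}$ for $j\le 4$ are bounded by constants uniformly over $\vx$ (since $|f^{(j)}|\le C_1$ and $f\ge c_1$ on the relevant range), so with $K$ symmetric this yields $O(h^2)$ for $r=0,1,2$ and $O(h)$ for $r=3$, uniformly in $(\vx,\tau)$ --- no change from the fixed-$d$ case, cf.\ Remark \ref{rem: bias}.

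For the stochastic part I would substitute $\check Q_{\vx}(t)-Q_{\vx}(t)=\vx^T(\hat\beta(t)-\beta(t))$ and plug in Lemma \ref{prop: bahadur}. The contribution of the Bahadur remainder $\check R_n$ is bounded by $\|\vx\|\,\|\check R_n\|_{[\epsilon/2,1-\epsilon/2]}\int|K_h^{(r)}| = O_P\bigl(\sqrt d\cdot n^{-3/4}d\sqrt{\log n}\cdot h^{-r}\bigr)=O_P(n^{-3/4}d^{3/2}h^{-r}\sqrt{\log n})$, which I claim is of smaller order than the target rate: the conditions of Theorem \ref{HDCLT'} force $d^2=o(\sqrt n\,h)$ (from the second term of \eqref{eq: order}) and, via Assumption \ref{asmp: assumption1}(viii), $h\gg n^{-1/2}$, whence $n^{-1/4}d^{1/2}=o(h^{1/2})$. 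For the leading term $\int\vx^TJ(t)^{-1}\bigl[\,n^{-1}\sum_i\{t-I(U_i\le t)\}\vX_i\,\bigr]K_h^{(r)}(\tau-t)\,dt$ I would change variables $t=\tau-sh$, replace $J(\tau-sh)^{-1}$ by $J(\tau)^{-1}$ up to a lower-order error --- this uses the \emph{dimension-free} bounds $\|J(\tau)^{-1}\|_{\op}\le(c_1c_{\min})^{-1}$ and $\|J(\tau)-J(\tau')\|_{\op}\le (C_1c_{\max}/c_1)|\tau-\tau'|$, which follow from $f\ge c_1$, $|f'|\le C_1$, $\lambda_{\min}(\E[\vX\vX^T])\ge c_{\min}$ and $\lambda_{\max}\le c_{\max}$ --- and then integrate by parts to reduce to the increasing-dimension analogue of Lemma \ref{UnifQr}:
\[
\sup_{\substack{\vx\in\cX_0\\ \tau\in[\epsilon,1-\epsilon]}}\left|\frac{1}{nh^{r}}\sum_{i=1}^n\vx^TJ(\tau)^{-1}\vX_i\left\{K^{(r-1)}\!\left(\tfrac{\tau-U_i}{h}\right)-hI(r=1)\right\}\right|=O_P\!\left(n^{-1/2}h^{-r+1/2}d\sqrt{\log n}\right).
\]

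To establish this I would apply the local maximal inequality (Lemma \ref{lem: maximal inequality}) to the centered class $\cF_h=\{(u,\vx')\mapsto K^{(r-1)}((\tau-u)/h)\vx^TJ(\tau)^{-1}\vx':\vx\in\cX_0,\ \tau\in[\epsilon,1-\epsilon]\}$. As in the proof of Lemma \ref{UnifQr}, $\cF_h$ sits inside the pointwise product of a class contained in a $d$-dimensional linear space (VC subgraph with index $\le d+2$, with dimension-free constants in the uniform covering-number bound; Lemma 2.6.15 in \cite{vaart1996weak}) and the fixed VC-type class of dilations/translations of the bounded-variation function $K^{(r-1)}$ (Lemma 2.6.18 in \cite{vaart1996weak}); hence $\cF_h$ is VC type with envelope $F(u,\vx')=C\|\vx'\|$, $V\asymp d$ and $A$ absolute. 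The key point is that, because $\|\vX\|\le C_3\sqrt d$ almost surely (Assumption \ref{Maup2}(iii)), there is no moment blow-up: $\E[F^2]\lesssim d$ and $B^2:=\E[\max_{i\le n}F^2(U_i,\vX_i)]\lesssim d$; and by independence of $U$ and $\vX$, $\sup_{f\in\cF_h}\E[f^2]\lesssim\|J(\tau)^{-1}\vx\|^2\,h\lesssim dh$. Lemma \ref{lem: maximal inequality} then gives $\E[\|\G_n\|_{\cF_h}]\lesssim\sqrt{d\cdot dh\cdot\log n}+d^{3/2}n^{-1/2}\log n=d\sqrt{h\log n}\,(1+o(1))$, the second term being absorbed since $d\log n=o(nh)$ (a consequence of $d\log^7(Mn)/(nh)\to 0$). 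Dividing by $\sqrt n$ and multiplying by $h^{-r}$ gives the displayed bound, and combining with the bias estimate gives the rates for $r=1,2,3$. For $r=0$ one argues directly: $\int|\check Q_{\vx}(t)-Q_{\vx}(t)|K_h(\tau-t)\,dt\le\sup_{t\in[\epsilon/2,1-\epsilon/2]}|\vx^T(\hat\beta(t)-\beta(t))|\le\|\vx\|\,\|\hat\beta-\beta\|_{[\epsilon/2,1-\epsilon/2]}=O_P(n^{-1/2}d)$ from Lemma \ref{prop: bahadur} (its remainder $O_P(n^{-3/4}d^{3/2}\sqrt{\log n})$ being $o(n^{-1/2}d)$ since $d=o(\sqrt n/\log n)$ under Assumption \ref{Maup2}(vi)), plus the $O(h^2)$ bias.

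The hard part will be the $d$-bookkeeping, specifically: (i) pinning down that $J(\tau)^{-1}$ is bounded in operator norm and Lipschitz in $\tau$ with constants that do not depend on $d$, so that the $J(\tau-sh)^{-1}\rightsquigarrow J(\tau)^{-1}$ swap does not acquire extra powers of $d$; and (ii) checking that the Bahadur-remainder contribution $n^{-3/4}d^{3/2}h^{-r}\sqrt{\log n}$ is genuinely dominated by $n^{-1/2}h^{-r+1/2}d\sqrt{\log n}$, which needs the bandwidth lower bound from Assumption \ref{asmp: assumption1}(viii) together with the joint growth restrictions in \eqref{eq: order}. Modulo these points, the argument is a routine re-run of the fixed-dimension proof with $d$ carried along.
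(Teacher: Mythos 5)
Your proposal follows essentially the same route as the paper's proof: a two-step argument that first establishes the increasing-dimension analogue of Lemma \ref{UnifQr} via the local maximal inequality (Lemma \ref{lem: maximal inequality}) with $V\asymp d$ and moments controlled by the a.s.\ bound $\|\vX\|\le C_3\sqrt d$, and then combines it with the Bahadur representation of Lemma \ref{prop: bahadur}, the $J(\tau-th)^{-1}\rightsquigarrow J(\tau)^{-1}$ swap, and the Taylor-expansion bias, with the same domination checks for the remainder terms. One bookkeeping slip: since $\|\vx\|\asymp\sqrt d$ on $\cX_0$, the envelope must be $F(u,\vx')=C\sqrt d\,\|\vx'\|$ rather than $C\|\vx'\|$ (so $\E[F^2]\lesssim d^2$ and $B\lesssim d$, making the secondary term in the maximal inequality $d^2n^{-1/2}\log n$ rather than your $d^{3/2}n^{-1/2}\log n$), but this term is still dominated by $d\sqrt{h\log n}$ under \eqref{eq: order}, so the stated rates are unaffected.
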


%This can be shown by keeping track of dimension $d$ when we apply the local maximal inequality (Lemma \ref{lem: maximal inequality}) as in the proof of Lemma \ref{UnifQr}.

\begin{proof}
	We divide the proof into two steps.
	
	\textbf{Step 1}. We will show that
	\[
	\sup_{\substack{\vx\in \cX_0 \\ \tau \in [\epsilon,1-\epsilon]}}\left| \frac{1}{nh^{r}}\sum_{i=1}^n\vx^TJ(\tau)^{-1}\vX_i \left \{ K^{(r-1)}\left(\frac{\tau-U_i}{h}\right) -  h I(r=1) \right \} \right|=O_P\left(n^{-1/2}h^{-r+1/2}d\sqrt{\log n } \right),
	\]
	for $r=1,2,3$.
	The proof is analogous to that of Lemma \ref{UnifQr}, so we only point out required modifications. The envelope function $F$ should be modified to $F(u,\vx') = C\sqrt{d} \| \vx' \|$ for some constant $C$, and note that the VC constant $V$ is of order $V = O(d)$. 
	Observe that 
	\[
	\sup_{\substack{\vx \in \cX_{0}\\ \tau \in [\epsilon,1-\epsilon]}} \E[\{ K^{(r-1)}((\tau-U)/h)  \vx^{T}J(\tau)^{-1}\vX\}^{2}] \le O(d) \int_{0}^{1} K^{(r-1)}((\tau-u)/h)^2 du  = O(hd),
	\]
	and $\E[\max_{1 \le i \le n}F^2(U_i,\vX_i)] = O(d^2)$ (as $\| \vX \| \le C_3 \sqrt{d}$). 
	Applying Lemma \ref{lem: maximal inequality} leads to the above rates. 
	
	\textbf{Step 2}. We will show the conclusion of the lemma. This part is analogous to the proof of Lemma \ref{UnifQ}, so we only point out required modifications. The $r=0$ follows from Lemma \ref{prop: bahadur} and Taylor expansion. 
	%Consider first the case where $r=0$.  By definition,
	%\begin{align*}
	%\sup_{\substack{\vx\in \cX_0 \\ \tau \in [\epsilon,1-\epsilon]}}|\hat{Q}_{\vx}(\tau)-Q_{\vx}^{}(\tau)|&= \sup_{\substack{\vx\in \cX_0 \\ \tau \in [\epsilon,1-\epsilon]}}\left|\int\check{Q}_{\vx}(t)K_h^{}(\tau-t)dt - Q_{\vx}^{}(\tau)\right|\\
	%&\le\sup_{\substack{\vx\in \cX_0 \\ \tau \in [\epsilon,1-\epsilon]}}\left|\int[\check{Q}_{\vx}(t)-Q_{\vx}(t)]K_h(\tau-t)dt \right|\\
	%&\quad+\sup_{\substack{\vx\in \cX_0 \\ \tau \in [\epsilon,1-\epsilon]}}\left|\int Q_{\vx}(t)K_h^{}(\tau-t)dt - Q_{\vx}^{}(\tau) \right| \\
	%&=: I + II. 
	%\end{align*}
	%We have $I = O_{P}(n^{-1/2}d)$ by Lemma \ref{prop: bahadur} and $II=O(h^2)$ by Taylor expansion. 
	For $1\le r\le 3$, combining Lemma \ref{prop: bahadur}, change of variables, and Taylor expansion, we can bound $\sup_{\vx\in \cX_0; \tau \in [\epsilon,1-\epsilon]}|\hat{Q}_{\vx}^{(r)}(\tau)-Q_{\vx}^{(r)}(\tau)|$ by
	\begin{align*}
	%&\sup_{\substack{\vx\in \cX_0 \\ \tau \in [\epsilon,1-\epsilon]}}|\hat{Q}_{\vx}^{(r)}(\tau)-Q_{\vx}^{(r)}(\tau)| \\
	&\sup_{\substack{\vx\in \cX_0 \\ \tau \in [\epsilon,1-\epsilon]}}\left| \frac{1}{nh^{r}}\sum_{i=1}^n\int \vx^TJ(\tau - th)^{-1}\vX_i\left\{\tau - th-I\left(U_i\le \tau - th\right)\right\}K^{(r)} (t) dt\right| \\
	&\quad + \underbrace{O_{P}(n^{-3/4}d^{3/2}h^{-r} \sqrt{\log n})}_{=o_{P}(n^{-1/2}dh^{-r+1/2}\sqrt{\log n})} + O(h^2 I(r=1,2) + h I(r=3)). 
	%&\le \sup_{\substack{\vx\in \cX_0 \\ \tau \in [\epsilon,1-\epsilon]}}\left| \frac{1}{nh^{r}}\sum_{i=1}^n\int \vx^TJ(\tau - th)^{-1}\vX_i\left\{\tau - th-I\left(U_i\le \tau - th\right)\right\}K^{(r)} (t) dt\right|
	%&\le \sup_{\substack{\vx\in \cX_0 \\ \tau \in [\epsilon,1-\epsilon]}}\left|\int[\check{Q}_{\vx}(t)-Q_{\vx}(t)]K_h^{(r)}(\tau-t)dt \right|\\
	%&\quad+\sup_{\substack{\vx\in \cX_0 \\ \tau \in [\epsilon,1-\epsilon]}}\left|\int Q_{\vx}(t)K_h^{(r)}(\tau-t)dt - Q_{\vx}^{(r)}(\tau) \right| \\
	%&=: III + IV. 
	\end{align*}
	%We have $IV=O(h^2)$ for $r=1,2$ and $= O(h)$ for $r=3$ by Taylor expansion (recall that $Q_{\vx}(\tau)$ is four-times continuously differentiable). 
	%Observe that, by Lemma \ref{prop: bahadur} and change of variables, 
	%\[
	%\begin{split}
	%III
	%=\sup_{\substack{\vx\in \cX_0 \\ \tau \in [\epsilon,1-\epsilon]}}\left|\int \vx^T J^{-1}(t)\left[ \frac{1}{n}\sum_{i=1}^n\left\{t-I\left(U_i\le t\right)\right\}\vX_i+R_n(t)\right]h^{-(r+1)}K^{(r)}\left(\frac{\tau-t}{h}\right)dt\right|\\
	%&\le \sup_{\substack{\vx\in \cX_0 \\ \tau \in [\epsilon,1-\epsilon]}}\left| \frac{1}{nh^{r}}\sum_{i=1}^n\int \vx^TJ(\tau - th)^{-1}\vX_i\left\{\tau - th-I\left(U_i\le \tau - th\right)\right\}K^{(r)} (t) dt\right| \\
	%&\quad + \underbrace{O_{P}(n^{-3/4}d^{3/2}h^{-r} \sqrt{\log n})}_{=o_{P}(n^{-1/2}dh^{-r+1/2}\sqrt{\log n})}. 
	%\end{split}
	%\]
	Replacing $J(\tau - th)$ by $J(\tau)$ in the first term on the right hand side results in an error of order $O_{P}(n^{-1/2}dh^{-r+1})$.
	Given Step 1, the rest of the proof is completely analogous to the last part of the proof of Lemma \ref{UnifQ}.
	%this can be verified by a similar argument to the proof of the preceding lemma. Thus, it remains to bound
	%\begin{align*}
	%&\sup_{\substack{\vx\in \cX_0 \\ \tau \in [\epsilon,1-\epsilon]}}\left| \frac{1}{nh^{r}}\sum_{i=1}^n\int \vx^TJ(\tau)^{-1}\vX_i\left\{\tau-th-I\left(U_i\le \tau-th\right)\right\}K^{(r)}\left(t\right)dt\right| \\
	%&=\sup_{\substack{\vx\in \cX_0 \\ \tau \in [\epsilon,1-\epsilon]}}\left| \frac{1}{nh^{r}}\sum_{i=1}^n\vx^TJ(\tau)^{-1}\vX_i  \left\{ K^{(r-1)}\left(\frac{\tau-U_i}{h}\right) + h\int tK^{(r)}\left(t\right)dt\right\}\right|,
	%\end{align*}
	%where we have used the fact that $K^{(r)}$ integrates to $0$. 
	%Here, by integration by parts, 
	%\[
	%\int t K^{(r)}(t) dt = - \int K^{(r-1)} (t) dt = - I(r=1). 
	%\]
	%Thus, from Lemma \ref{UnifQr}, we have $III = O(n^{-1/2}dh^{-r+1/2} \sqrt{\log n})$. This completes the proof. 
\end{proof}
\begin{remark}[Expansion of $\hat{Q}_{\vx}''(\tau)$]
	Inspection of the proof shows that 
	\[
	\begin{split}
	&\hat{Q}_{\vx}''(\tau) - Q_{\vx}''(\tau) = \frac{1}{nh^{3}}\sum_{i=1}^n\int \vx^TJ(t )^{-1}\vX_i\left\{t-I\left(U_i\le t \right)\right\}K^{''}\left ( \frac{\tau-t}{h} \right ) dt   \\
	&\quad + O_{P}(n^{-3/4}h^{-2}d^{3/2} \sqrt{\log n})+O(h^2)
	\end{split}
	\]
	uniformly in $(\tau,\vx) \in [\epsilon,1-\epsilon] \times \cX_{0}$, and the uniform rate over  $(\tau,\vx) \in [\epsilon,1-\epsilon] \times \cX_{0}$ of the first term on the right hand side is $O_{P}(n^{-1/2}h^{-3/2} d\sqrt{\log n})$. 
\end{remark}

Recall the definition of $\xi_{\vx}$.
In view of the proof of Lemma \ref{lem: consistency}, the following lemma follows relatively directly from Lemma \ref{lem: uniform rate2}. 

\begin{lemma}
	\label{prop: UAL'}
	Under the conditions of Theorem \ref{HDCLT'}, the following asymptotic linear representation holds uniformly in $\vx\in \cX_0$:	
	\[
	\hat{m}(\vx)-m(\vx) = \frac{\sqrt{d}}{nh^{3/2}}\sum_{i=1}^n\xi_{\vx}(U_i,\vX_i)+ O_P(n^{-3/4}h^{-2}d^{3/2}\sqrt{\log n} + n^{-1}h^{-4}d^2\log n + h^{2})
	\]
	where $U_{1},\dots,U_{n} \sim U(0,1)$ i.i.d.\ independent of $\vX_{1},\dots,\vX_{n}$. 
	In addition, we have 
	\[\sup_{\vx\in\cX_0}\left|\frac{1}{nh^{3/2}}\sum_{i=1}^n\psi_{\vx}(U_i,\vX_i)\right|=O_P(n^{-1/2}h^{-3/2}d\sqrt{\log n}).\]
	%The uniform rate over $\vx \in \cX_{0}$ of the term term on the right hand side is $O_P(n^{-1/2}h^{-3/2}d\sqrt{\log n})$.
	%\begin{align*}
	%&\sup_{\vx\in\cX_0}\left|\frac{\sqrt{d}}{nh^{3/2}}\sum_{i=1}^n\xi_{\vx}(U_i,\vX_i)\right|=O_P(n^{-1/2}h^{-3/2}d\sqrt{\log n}).
	%\end{align*}
\end{lemma}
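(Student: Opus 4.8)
The plan is to transplant the three-step argument behind Proposition~\ref{prop: UAL} to the regime $d=d_n\to\infty$, using Lemma~\ref{lem: uniform rate2} and the expansion of $\hat Q_{\vx}''$ in the remark following it in place of Lemma~\ref{UnifQ}, and replacing the continuity/compactness proof of uniform consistency of $\hat\tau_{\vx}$ (which breaks down once $s_{\vx}$ and $\tau_{\vx}$ are allowed to depend on $n$) by the global identification Condition~(v) of Assumption~\ref{Maup2}.

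\textbf{Step 1 (uniform consistency of $\hat\tau_{\vx}$).} Following the two-step scheme of the proof of Lemma~\ref{lem: consistency}: fix $\delta>0$ and let $c_4$ be the constant from Condition~(v) of Assumption~\ref{Maup2}. On the event $\{\sup_{\vx\in\cX_0}\{s_{\vx}(\hat\tau_{\vx})-s_{\vx}(\tau_{\vx})\}\ge c_4\}$, bound $s_{\vx}(\hat\tau_{\vx})-s_{\vx}(\tau_{\vx})$ by $\{s_{\vx}(\hat\tau_{\vx})-\hat s_{\vx}(\hat\tau_{\vx})\}+\{\hat s_{\vx}(\hat\tau_{\vx})-\hat s_{\vx}(\tau_{\vx})\}+\{\hat s_{\vx}(\tau_{\vx})-s_{\vx}(\tau_{\vx})\}$; the middle term is $\le 0$ by the definition of $\hat\tau_{\vx}$, and the first and third terms are $o_P(1)$ uniformly in $\vx$ by Lemma~\ref{lem: uniform rate2} with $r=1$ under the rate conditions~(\ref{eq: order}). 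Hence $\P(\sup_{\vx\in\cX_0}|\hat\tau_{\vx}-\tau_{\vx}|\ge\delta)\to 0$, and the first-order condition $\hat Q_{\vx}''(\hat\tau_{\vx})=0$ holds for all $\vx\in\cX_0$ with probability tending to one.

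\textbf{Step 2 (from $\hat\tau_{\vx}$ to $\hat m$).} A Taylor expansion of the first-order condition gives $\hat\tau_{\vx}-\tau_{\vx}=-\hat Q_{\vx}''(\tau_{\vx})/\hat Q_{\vx}^{(3)}(\check\tau_{\vx})$ with $\check\tau_{\vx}$ between $\hat\tau_{\vx}$ and $\tau_{\vx}$. By Lemma~\ref{lem: uniform rate2} ($r=3$) and Step~1, $\hat Q_{\vx}^{(3)}(\check\tau_{\vx})=s_{\vx}''(\tau_{\vx})+o_P(1)$ uniformly, and $s_{\vx}''(\tau_{\vx})=Q_{\vx}^{(3)}(\tau_{\vx})$ is bounded away from zero on $\cX_0$ by Condition~(vi) of Assumption~\ref{asmp: assumption1}; moreover $\sup_{\vx\in\cX_0}|\hat Q_{\vx}''(\tau_{\vx})|=O_P(n^{-1/2}h^{-3/2}d\sqrt{\log n}+h^2)$ by Lemma~\ref{lem: uniform rate2} ($r=2$), since $Q_{\vx}''(\tau_{\vx})=0$, whence $\sup_{\vx\in\cX_0}|\hat\tau_{\vx}-\tau_{\vx}|$ is of the same order. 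Writing $1/\hat Q_{\vx}^{(3)}(\check\tau_{\vx})=1/s_{\vx}''(\tau_{\vx})+O_P(|\hat Q_{\vx}^{(3)}(\check\tau_{\vx})-s_{\vx}''(\tau_{\vx})|)$, substituting the expansion of $\hat Q_{\vx}''(\tau_{\vx})$ from the remark after Lemma~\ref{lem: uniform rate2} (keeping $J(t)$ inside the integral), and collecting the cross term with the $r=3$ rate yields, uniformly in $\vx\in\cX_0$,
\[
\hat\tau_{\vx}-\tau_{\vx}=-\frac{1}{s_{\vx}''(\tau_{\vx})}\cdot\frac{1}{nh^{3}}\sum_{i=1}^{n}\int\vx^{T}J(t)^{-1}\vX_i\{t-I(U_i\le t)\}K''\Big(\frac{\tau_{\vx}-t}{h}\Big)dt+O_P\big(n^{-3/4}h^{-2}d^{3/2}\sqrt{\log n}+n^{-1}h^{-4}d^{2}\log n+h^{2}\big).
\]
Finally $\hat m(\vx)-m(\vx)=\{\hat Q_{\vx}(\hat\tau_{\vx})-Q_{\vx}(\hat\tau_{\vx})\}+\{Q_{\vx}(\hat\tau_{\vx})-Q_{\vx}(\tau_{\vx})\}$: the first brace is $O_P(n^{-1/2}d+h^2)$ by Lemma~\ref{lem: uniform rate2} ($r=0$), and a Taylor expansion with $Q_{\vx}''(\tau_{\vx})=0$ turns the second brace into $Q_{\vx}'(\tau_{\vx})(\hat\tau_{\vx}-\tau_{\vx})+O_P(\sup_{\vx}|\hat\tau_{\vx}-\tau_{\vx}|^{3})$, the cubic remainder being of lower order under~(\ref{eq: order}). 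Multiplying the display above by $Q_{\vx}'(\tau_{\vx})=s_{\vx}(\tau_{\vx})$ and rewriting the $K''$-weighted integral in the form appearing in the definition of $\xi_{\vx}$ identifies the leading term as $\frac{\sqrt d}{nh^{3/2}}\sum_{i}\xi_{\vx}(U_i,\vX_i)$; the ``in addition'' order bound is precisely Step~1 of the proof of Lemma~\ref{lem: uniform rate2} with $r=2$ (the factor $d\sqrt{\log n}$ coming from the envelope $C\sqrt d\,\|\vx'\|$ and the $O(d)$ VC index of the relevant function class).

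\textbf{Main obstacle.} The delicate part is controlling, uniformly over $\vx\in\cX_0$ and with $d=d_n\to\infty$, the propagation of the Bahadur remainder $\check R_n$ (only $O_P(n^{-3/4}d\sqrt{\log n})$) through the third-derivative smoothing, where it is inflated by $h^{-3}$, while simultaneously absorbing the cross term $O_P(|\hat Q_{\vx}''(\tau_{\vx})|\,|\hat Q_{\vx}^{(3)}(\check\tau_{\vx})-s_{\vx}''(\tau_{\vx})|)$ and the cubic remainder $O_P(\sup_{\vx}|\hat\tau_{\vx}-\tau_{\vx}|^{3})$; this bookkeeping is what dictates the rate conditions~(\ref{eq: order}) and what motivates retaining $J(t)$ inside $\xi_{\vx}$ rather than replacing it by $J(\tau_{\vx})$ (the latter substitution costing an extra $O_P(n^{-1/2}dh^{-2})$ term that is harder to dominate). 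A secondary point is that uniform consistency of $\hat\tau_{\vx}$ can no longer be obtained from joint continuity of $(\tau,\vx)\mapsto s_{\vx}(\tau)$ and compactness, which is exactly why the explicit uniform-in-$n$ identification Condition~(v) of Assumption~\ref{Maup2} is imposed.
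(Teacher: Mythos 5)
Your proposal is correct and follows exactly the route the paper intends: the paper gives no written-out proof of this lemma, stating only that it ``follows relatively directly from Lemma~\ref{lem: uniform rate2}'' in view of the proof of Lemma~\ref{lem: consistency}, and your argument is precisely that — Condition~(v) of Assumption~\ref{Maup2} replacing the continuity/compactness step for uniform consistency of $\hat\tau_{\vx}$, followed by the first-order-condition Taylor expansion of Lemma~\ref{lem: UALtau} and Proposition~\ref{prop: UAL} redone with the increasing-dimension rates and with $J(t)$ retained inside the integral defining $\xi_{\vx}$. The rate bookkeeping for the Bahadur remainder, the cross term, and the cubic remainder matches the stated error bound.
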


%This can be shown by keeping track of dimension $d$ when we apply the local maximal inequality (Lemma \ref{lem: maximal inequality}) as in the proof of Lemma \ref{UnifQr}.

We are now in position to prove Theorem \ref{HDCLT'}.

\begin{proof}[Proof of Theorem \ref{HDCLT'}]
	As before, we split the proof into two parts. 
	
	\textbf{Step 1}. We will apply Proposition 2.1 in \cite{chernozhukov2017central} to $n^{-1/2}\sum_{i=1}^{n}A\Psi_{i}$. To this end, we will check Conditions (M.1), (M.2), and (E.1) of \cite{chernozhukov2017central}. Condition (M.1) follows automatically, so we will verify Conditions (M.2) and (E.1). 
	
	%\textbf{Condition (M.1)}: This is again automatically satisfied under our set up.
	
	\textbf{Condition (M.2)}. 
	Recall that $\|\vx\|/\sqrt{d} \le C_{2}$ for all $\vx \in \cX_{0}$. Observe that  %$\max_{1 \le k \le M} \E\left[ \left \| (\xi_{\vx_{\ell}}(U_{i},\vX_{i}))_{\ell \in S_{k}} \right \|_{1}^{3} \right]$.
	\begin{align*}
	&\max_{1 \le k \le M} \E\left[ \left \| (\xi_{\vx_{\ell}}(U_{i},\vX_{i}))_{\ell \in S_{k}} \right \|_{1}^{3} \right]\\
	&\le O(h^{-3/2}) \sup_{\alpha \in \bS^{d-1}} \E[|\alpha^{T}\vX|^{3}]  \max_{1\le \ell \le L} \int \left| K''\left(\frac{\tau_{\vx_\ell}-t}{h}\right) \right|^{3} dt=O(h^{-1/2}d^{1/2}),
	\end{align*}
	where we used the fact that 
	\[
	\sup_{\alpha \in \bS^{d-1}}\E [|\alpha^T\vX |^3]\le C_{3} \sqrt{d} \sup_{\alpha \in \bS^{d-1}}\E [(\alpha^T \vX)^{2}] = C_3 \sqrt{d} \| \E[\vX\vX^{T}] \|_{\op}=O(\sqrt{d}). 
	\]
	This implies that $\max_{1 \le k \le M}\E[|A_{k}^{T}\Psi_{i}|^{3}] = O(d^{1/2}h^{-1/2})$. 
	Likewise,  $\max_{1 \le \ell \le M} \E[|A^T_{k} \Psi_i|^4]= O(dh^{-1})$. 
	
	\textbf{Condition (E.2)}. Since $\| \vX \| \le C_{3}\sqrt{d}$, we have $|A^T_{k}\Psi_i| \le \text{const.}~h^{-1/2}d^{1/2}$.
	
	Thus, applying Proposition 2.1 in  \cite{chernozhukov2017central}, we have
	\[
	\sup_{b \in \R^{M}}\left| \P \left ( n^{-1/2}{\textstyle \sum}_{i=1}^{n}A\Psi_{i} \le b \right) - \P(AG \le b) \right | \to 0,
	\]
	provided that
	\[
	\frac{d\log^7\left(Mn\right)}{nh} \to 0,
	%\bigvee \frac{d\log^3\left(Mn\right)}{n^{1-q/2}h}\\
	%= o(1),
	\]
	which is satisfied under our assumption.
	
	\textbf{Step 2}. Observe that 
	\begin{equation}
	\begin{split}
	&\left \| A \sqrt{nh^{3}d^{-1}} (\hat{m}(\vx_{\ell}) - m(\vx_{\ell}))_{\ell=1}^{L} - n^{-1/2}{\textstyle \sum}_{i=1}^{n}A\Psi_{i} \right \|_{\infty} \\
	&=  O_{P}(n^{-1/4}h^{-1/2}d\sqrt{\log n} + n^{-1/2}h^{-5/2}d^{3/2}\log n + n^{1/2}h^{7/2}d^{-1/2}). 
	\end{split}
	\label{eq: remainder}
	\end{equation}
	In view of the proof of Step 2 in Theorem \ref{thm: HDCLT}, the desired conclusion follows if the right hand side on (\ref{eq: remainder}) is $o_{P}(1/\sqrt{\log M})$, which is satisfied under our assumption. 
\end{proof}

\subsection{Proof of Theorem \ref{boot'}}
Define $\check{\Psi}_i:=(\check{\psi}_{\vx_1}(U_i,\vX_i),\dots,\check{\psi}_{\vx_{L}}(U_i,\vX_i))^T$ with 
\[
\check{\psi}_{\vx}(u,\vx'):=\frac{s_{\vx}(\tau_{\vx})}{s''_{\vx}(\tau_{\vx})\sqrt{dh}}K'\left(\frac{\tau_{\vx}-u}{h}\right)\vx^TJ(\tau_{\vx})^{-1}\vx'.
\]
Further, define $\check{\Sigma}=\E\left[\check{\Psi}_i\check{\Psi}_i^T\right]$,  $\check{\Gamma}:=\diag\{\check{\sigma}_1,\dots,\check{\sigma}_M\}$ with $\check{\sigma}^2_{i}:= D_i^T \check{\Sigma} D_i$, and $\check{A}:=\check{\Gamma}^{-1}D$. 

The following operator norm bound is in parallel to Lemma \ref{lem: Jacobian} for the fixed dimensional case.
\begin{lemma}\label{lem: Jacobian'}
	Under the conditions of Theorem \ref{boot'}, we have
	\[
	\sup_{\vx\in\cX_0} \| \hat{J}(\hat{\tau}_{\vx})-J(\tau_{\vx}) \|_{\op}=O_P(n^{-1/2}h^{-3/2}d^{3/2}\sqrt{\log n}+h^2).
	\]
\end{lemma}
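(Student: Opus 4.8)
The plan is to parallel the proof of Lemma \ref{lem: Jacobian}, carefully tracking the dependence on $d$. Writing $\tilde{J}(\beta) := \E[\check{K}_{h}(Y-\vX^{T}\beta)\vX\vX^{T}]$ (recall $\check{K}=K$) and $\bar{J}(\beta):=\E[f(\vX^{T}\beta\mid\vX)\vX\vX^{T}]$, so that $\bar{J}(\beta(\tau)) = J(\tau)$, I would decompose, uniformly in $\vx \in \cX_{0}$, $\hat{J}(\hat{\tau}_{\vx}) - J(\tau_{\vx}) = T_{1}(\vx) + T_{2}(\vx) + T_{3}(\vx)$ with $T_{1}(\vx) = \hat{J}(\hat{\tau}_{\vx}) - \tilde{J}(\hat{\beta}(\hat{\tau}_{\vx}))$, $T_{2}(\vx) = \tilde{J}(\hat{\beta}(\hat{\tau}_{\vx})) - \bar{J}(\hat{\beta}(\hat{\tau}_{\vx}))$, and $T_{3}(\vx) = \bar{J}(\hat{\beta}(\hat{\tau}_{\vx})) - \bar{J}(\beta(\tau_{\vx}))$. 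The term $T_{2}$ is pure smoothing bias: a Taylor expansion of $y\mapsto f(y\mid\vX)$ using $\int K = 1$, $\int tK = 0$ and $|f^{(j)}|\le C_{1}$ (Assumption \ref{asmp: assumption1}(iv), in force by Assumption \ref{Maup2}(iv)) shows $|\int\check{K}_{h}(y-\vX^{T}\beta)f(y\mid\vX)\,dy - f(\vX^{T}\beta\mid\vX)| = O(h^{2})$ uniformly in $\beta$ and $\vX$, whence $\|T_{2}(\vx)\|_{\op} \le O(h^{2})\,\|\E[\vX\vX^{T}]\|_{\op} = O(h^{2})$ uniformly in $\vx$ by Assumption \ref{Maup2}(iii).

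For $T_{3}$ I would split $T_{3}(\vx) = (\bar{J}(\hat{\beta}(\hat{\tau}_{\vx})) - \bar{J}(\beta(\hat{\tau}_{\vx}))) + (\bar{J}(\beta(\hat{\tau}_{\vx})) - \bar{J}(\beta(\tau_{\vx})))$ and use $\|M\|_{\op} = \sup_{u\in\bS^{d-1}}|u^{T}Mu|$ with a first-order Taylor expansion of $\beta\mapsto f(\vX^{T}\beta\mid\vX)$: for a unit vector $u$, $|u^{T}(\bar{J}(\beta_{1})-\bar{J}(\beta_{2}))u| \le C_{1}\,\E[\,|\vX^{T}(\beta_{1}-\beta_{2})|\,(u^{T}\vX)^{2}\,]$, and I then bound $|\vX^{T}(\beta_{1}-\beta_{2})|$ uniformly over the support of $\vX$ and pull it out, using $\E[(u^{T}\vX)^{2}]\le c_{\max}$. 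For the first increment ($\beta_{1}=\hat{\beta}(\hat{\tau}_{\vx})$, $\beta_{2}=\beta(\hat{\tau}_{\vx})$) this uniform bound is $\|\vX\|\,\|\hat{\beta}-\beta\|_{[\epsilon,1-\epsilon]}\le C_{3}\sqrt{d}\,\|\hat{\beta}-\beta\|_{[\epsilon,1-\epsilon]} = O_{P}(\sqrt{d}\cdot\sqrt{d/n}) = O_{P}(d/\sqrt{n})$ by the Bahadur representation (Lemma \ref{prop: bahadur}); for the second ($\beta_{1}=\beta(\hat{\tau}_{\vx})$, $\beta_{2}=\beta(\tau_{\vx})$) I exploit the linear quantile model to write $\vX^{T}(\beta(\hat{\tau}_{\vx})-\beta(\tau_{\vx})) = Q_{\vX}(\hat{\tau}_{\vx})-Q_{\vX}(\tau_{\vx})$, which is bounded by $c_{1}^{-1}\|\hat{\tau}_{\vx}-\tau_{\vx}\|_{\cX_{0}}$ on the event $\{\hat{\tau}_{\vx}\in[\epsilon/2,1-\epsilon/2]\ \forall\vx\}$ (of probability $\to 1$ by the uniform consistency of $\hat{\tau}_{\vx}$, which follows as in Lemma \ref{lem: consistency} using Assumption \ref{Maup2}(v) and Lemma \ref{lem: uniform rate2}, together with Assumption \ref{asmp: assumption1}(v)). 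Arguing as in Step 1 of the proof of Lemma \ref{lem: UALtau} with Lemma \ref{lem: uniform rate2} in place of Lemma \ref{UnifQ} and using $Q_{\vx}''(\tau_{\vx}) = 0$ gives $\|\hat{\tau}_{\vx}-\tau_{\vx}\|_{\cX_{0}} = O_{P}(n^{-1/2}h^{-3/2}d\sqrt{\log n}+h^{2})$, so $\|T_{3}(\vx)\|_{\op} = O_{P}(d/\sqrt{n} + n^{-1/2}h^{-3/2}d\sqrt{\log n}+h^{2})$ uniformly in $\vx$.

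The main work is the empirical-process term $T_{1}$. Since $\hat{\beta}(\hat{\tau}_{\vx})\in\R^{d}$ for every $\vx$, I would bound $\sup_{\vx}\|T_{1}(\vx)\|_{\op} \le \sup_{u\in\bS^{d-1}}\sup_{\beta\in\R^{d}}\big|n^{-1}\sum_{i=1}^{n}\{\check{K}_{h}(Y_{i}-\vX_{i}^{T}\beta)(u^{T}\vX_{i})^{2} - \E[\check{K}_{h}(Y-\vX^{T}\beta)(u^{T}\vX)^{2}]\}\big|$ and apply the local maximal inequality (Lemma \ref{lem: maximal inequality}) to $\mathcal{G} := \{(y,\vx')\mapsto\check{K}_{h}(y-\vx'^{T}\beta)(u^{T}\vx')^{2}: \beta\in\R^{d},\ u\in\bS^{d-1}\}$. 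This is the pointwise product of a class built from the bounded-variation function $\check{K}$ composed with affine maps of $(y,\vx')$ (VC type with index $O(d)$, as in the proof of Lemma \ref{lem: uniform rate2}) and the class of quadratic forms $\vx'\mapsto(u^{T}\vx')^{2}$, which lie in a $d(d+1)/2$-dimensional linear space (VC type with index $O(d^{2})$), so $\mathcal{G}$ is VC type with index $V=O(d^{2})$, has the deterministic envelope $F(y,\vx') = h^{-1}\|K\|_{\infty}\|\vx'\|^{2} \le h^{-1}\|K\|_{\infty}C_{3}^{2}d$ (hence $B = O(d/h)$), and variance proxy $\sigma^{2} = O(d/h)$ (condition on $\vX$, use $f\le C_{1}$, and $\E[(u^{T}\vX)^{4}]\le C_{3}^{2}d\,\E[(u^{T}\vX)^{2}] = O(d)$). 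Feeding $V=O(d^{2})$, $\sigma^{2}=O(d/h)$, $B=O(d/h)$ and the entropy logarithm $O(\log n)$ into Lemma \ref{lem: maximal inequality} yields $\sup_{\vx}\|T_{1}(\vx)\|_{\op} = O_{P}(n^{-1/2}d^{3/2}h^{-1/2}\sqrt{\log n} + n^{-1}d^{3}h^{-1}\log n)$.

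Collecting the three bounds, every term is dominated by $n^{-1/2}h^{-3/2}d^{3/2}\sqrt{\log n}+h^{2}$ under the rate conditions of Theorem \ref{boot'}, which imply $d^{3}\log n = o(nh^{3})$ (so that $n^{-1}d^{3}h^{-1}\log n$ and $d/\sqrt{n}$ are negligible) and $d^{2}\log n = o(nh^{5})$ (which also secures the uniform consistency of $\hat{Q}_{\vx}^{(3)}$ used in the $\hat{\tau}$ expansion). The main obstacle will be the operator-norm control of $T_{1}$: one must pick a function class whose VC index and variance proxy grow only polynomially in $d$ and $1/h$, and then verify that the extra factor $d^{3/2}$ picked up relative to the fixed-dimensional Lemma \ref{lem: Jacobian} is still absorbed by the stated rate.
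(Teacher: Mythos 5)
Your proposal is correct and follows essentially the same route as the paper: the identical three-term decomposition (empirical-process fluctuation, kernel-smoothing bias, and the error from replacing $\hat{\beta}(\hat{\tau}_{\vx})$ by $\beta(\tau_{\vx})$), controlled respectively by Lemma \ref{lem: maximal inequality}, a Taylor expansion, and the Bahadur representation together with the uniform rate for $\hat{\tau}_{\vx}$. The only differences are bookkeeping: you take $V=O(d^{2})$ for the quadratic forms where the paper effectively uses $O(d)$ (via $\sup_{\alpha}\E[|\alpha^{T}\vX|^{4}]=O(d)$), and you handle the $\beta(\hat{\tau}_{\vx})-\beta(\tau_{\vx})$ increment through the quantile-function identity rather than through $\|\beta(\hat{\tau}_{\vx})-\beta(\tau_{\vx})\|$ times $\sup_{\alpha}\E[|\alpha^{T}\vX|^{3}]=O(d^{1/2})$; both variants stay within the stated rate, as you verify.
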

\begin{proof}
	Observe that the left hand side can be bounded by
	\[
	\begin{split}
	&\left\| \frac{1}{n}\sum_{i=1}^{n}K_h(Y_i-\vX_i^T\hat{\beta}(\hat{\tau}_{\vx}))(\alpha^T\vX_{i})^2- \E\left[K_h(Y_i-\vX_i^T\beta) (\alpha^T\vX)^2\right]\big|_{\beta=\hat{\beta}(\hat{\tau}_{\vx})} \right\|_{\bS^{d-1} \times \cX_0}\\
	&\quad +  \left\| \E\left[K_h(Y_i-\vX_i^T\beta) (\alpha^T\vX)^2\right]\big|_{\beta=\hat{\beta}(\hat{\tau}_{\vx})}-\E\left[f(\vX^T\beta \mid  \vX)(\alpha^T\vX)^2\right]\big|_{\beta=\hat{\beta}(\hat{\tau}_{\vx})} \right\|_{\bS^{d-1} \times \cX_0}\\
	&\quad +\left\| \E\left[f(\vX^T\beta \mid  \vX)(\alpha^T\vX)^2\right]\big|_{\beta=\hat{\beta}(\hat{\tau}_{\vx})}-\E\left[f(\vX^T\beta(\tau_{\vx}) \mid  \vX)(\alpha^T\vX)^2\right]\right\|_{\bS^{d-1} \times \cX_0}\\
	&=: I+II+III,
	\end{split}
	\]
	where $\| \cdot \|_{\bS^{d-1} \times \cX_{0}} = \sup_{(\alpha,\beta) \in \bS^{d-1} \times \cX_{0}} | \cdot |$. 
	By Taylor expansion and $\sup_{\alpha \in \bS^{d-1}}\E[(\alpha^{T}\vX)^{2}] = \| \E[\vX\vX^{T}] \|_{\op} = O(1)$, we see that $II = O(h^{2})$. 
	Next, applying the local maximal inequality (Lemma \ref{lem: maximal inequality}) combined with the fact that $\sup_{\alpha \in \bS^{d-1}} \E[|\alpha^{T}\vX|^4] = O(d)$, we can show that  $\rom{1}=O_P(\sqrt{n^{-1}h^{-1}d^2\log n})$. Finally, the term $III$ is bounded by 
	\[
	\begin{split}
	&C_{1}\| \hat{\beta}(\hat{\tau}_{\vx}) - \beta (\tau_{\vx}) \|_{\cX_{0}} \underbrace{\sup_{\alpha \in \bS^{d-1}}
		\E[|\alpha^{T}\vX|^{3}]}_{=O(d^{1/2})} \quad \text{and} \\
	&\|\hat{\beta}(\hat{\tau}_{\vx})-\beta(\tau_{\vx})\|_{\cX_0}=O_P\left(\|\hat{\beta}-\beta\|_{[\epsilon,1-\epsilon]} \bigvee \left\|\beta(\hat{\tau}_{\vx})-\beta(\tau_{\vx})\right\|_{\mathcal{X}_0}  \right)\\
	&\quad =O_P\left(n^{-1/2}d^{1/2} \bigvee n^{-1/2}h^{-3/2}d\sqrt{\log n}\right)= O_P(n^{-1/2} h^{-3/2}d\sqrt{\log n}),
	\end{split}
	\]
	where we used the observation that $Q'_{\vx}(\tau)=\vx^T\beta'(\tau)$ is bounded in $(\tau,\vx) \in [\epsilon,1-\epsilon] \times \cX$. Conclude that $III = O_{P}(n^{-1/2}h^{-3/2}d^{3/2}\sqrt{\log n})$.
\end{proof}
Similarly, we have the following lemma in parallel to Lemma \ref{lem: covariance}.
\begin{lemma}
	Under Assumption \ref{Maup2}, we have 
	\[
	\| \hat{\Sigma} - \check{\Sigma} \|_{\infty} = O_{P}(n^{-1/2}h^{-3/2}d(d^{1/2}\vee h^{-1})\sqrt{\log n} + h).
	\]
\end{lemma}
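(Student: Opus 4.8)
The plan is to mimic the proof of Lemma~\ref{lem: covariance}, writing $\hat{\Sigma}_{j,k}$ and $\check{\Sigma}_{j,k}$ each as a product of three factors and bounding $\max_{1\le j,k\le L}|\hat{\Sigma}_{j,k}-\check{\Sigma}_{j,k}|$ by a telescoping argument. Concretely, after integrating out $U_i$ (which is legitimate since $\vX_1,\dots,\vX_n$ are fixed given $\cD_n$), one has the factorization $\hat{\Sigma}_{j,k}=(a)_{j,k}\,(b)_{j,k}\,(c)_{j,k}$ with
\[
(a)_{j,k}=\frac{\hat{s}_{\vx_j}(\hat{\tau}_{\vx_j})\hat{s}_{\vx_k}(\hat{\tau}_{\vx_k})}{\hat{s}''_{\vx_j}(\hat{\tau}_{\vx_j})\hat{s}''_{\vx_k}(\hat{\tau}_{\vx_k})},\quad
(b)_{j,k}=\frac1h\int_0^1 K'\!\Big(\tfrac{\hat{\tau}_{\vx_j}-u}{h}\Big)K'\!\Big(\tfrac{\hat{\tau}_{\vx_k}-u}{h}\Big)\,du,\quad
(c)_{j,k}=\frac1d\,\vx_j^T\hat{J}(\hat{\tau}_{\vx_j})^{-1}\Big(\tfrac1n\textstyle\sum_i\vX_i\vX_i^T\Big)\hat{J}(\hat{\tau}_{\vx_k})^{-1}\vx_k,
\]
and $\check{\Sigma}_{j,k}$ is the same object with the hats removed, $\hat{\tau}$ replaced by $\tau$, and $n^{-1}\sum_i\vX_i\vX_i^T$ replaced by $\E[\vX\vX^T]$. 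Since each of the three factors is $O(1)$ or $O_P(1)$ uniformly in $j,k$ (the scalar ratio by Assumption~\ref{Maup2} and the lower bounds on $|s_\vx(\tau_\vx)|,|s''_\vx(\tau_\vx)|$; the quadratic form because $\|\vx\|\asymp\sqrt d$ on $\cX_0$ and $J(\tau)$, hence $\hat J(\hat\tau_\vx)$ with probability approaching one, has eigenvalues bounded away from $0$ and $\infty$), adding and subtracting mixed products reduces the bound to the sum of the uniform errors incurred in replacing $(a),(b),(c)$ by their population analogues.

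For factor $(a)$: by Lemma~\ref{lem: uniform rate2} (with $r=1$ and $r=3$), the uniform consistency of $\hat{\tau}_\vx$, and the bound $\sup_{\vx\in\cX_0}|\hat{\tau}_\vx-\tau_\vx|=O_P(n^{-1/2}h^{-3/2}d\sqrt{\log n}+h^2)$ (which follows from the increasing-dimension analogue of Lemma~\ref{lem: UALtau}, obtained from Lemma~\ref{lem: uniform rate2} and the remark on the expansion of $\hat{Q}_\vx''$), we get $\|\hat{s}_\vx(\hat{\tau}_\vx)-s_\vx(\tau_\vx)\|_{\cX_0}\vee\|\hat{s}''_\vx(\hat{\tau}_\vx)-s''_\vx(\tau_\vx)\|_{\cX_0}=O_P(n^{-1/2}h^{-5/2}d\sqrt{\log n}+h)$, so the error in $(a)$ is of the same order. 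For factor $(b)$: the change of variables $v=(\hat{\tau}_{\vx_j}-u)/h$ shows $(b)_{j,k}=\int K'(v)K'\!\big(v+(\hat{\tau}_{\vx_k}-\hat{\tau}_{\vx_j})/h\big)\,dv$, which is smooth in $(\hat{\tau}_{\vx_k}-\hat{\tau}_{\vx_j})/h$ with derivative $O(h^{-1})$ in each $\hat{\tau}$; hence $|(b)_{j,k}-(b)^{(0)}_{j,k}|\le O(h^{-1})(|\hat{\tau}_{\vx_j}-\tau_{\vx_j}|+|\hat{\tau}_{\vx_k}-\tau_{\vx_k}|)=O_P(n^{-1/2}h^{-5/2}d\sqrt{\log n}+h)$ uniformly. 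For factor $(c)$: using $\|\vx\|\le C_2\sqrt d$, the uniform eigenvalue control on $\hat J(\hat\tau_\vx)$ and $J(\tau_\vx)$, and the matrix concentration $\|n^{-1}\sum_i\vX_i\vX_i^T-\E[\vX\vX^T]\|_{\op}=o_P(1)$ (of smaller order than the $\hat J$ error, by a matrix Bernstein bound under $\|\vX\|\le C_3\sqrt d$), the error in $(c)$ is $d^{-1}\cdot O(d)\cdot O_P\big(\sup_{\vx\in\cX_0}\|\hat{J}(\hat{\tau}_\vx)^{-1}-J(\tau_\vx)^{-1}\|_{\op}\big)=O_P(n^{-1/2}h^{-3/2}d^{3/2}\sqrt{\log n}+h^2)$ by Lemma~\ref{lem: Jacobian'}. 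Taking the maximum of the three contributions gives $O_P(n^{-1/2}h^{-5/2}d\sqrt{\log n}\vee n^{-1/2}h^{-3/2}d^{3/2}\sqrt{\log n}+h)=O_P\big(n^{-1/2}h^{-3/2}d(d^{1/2}\vee h^{-1})\sqrt{\log n}+h\big)$, as claimed.

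The main obstacle is factor $(c)$: establishing the uniform operator-norm rate for $\hat{J}(\hat{\tau}_\vx)$ in Lemma~\ref{lem: Jacobian'} and the attendant invertibility/eigenvalue control for $\hat{J}(\hat{\tau}_\vx)^{-1}$ with the correct explicit dependence on $d$ — this is where the $d^{3/2}$ in the first term originates, and it requires applying the local maximal inequality (Lemma~\ref{lem: maximal inequality}) to the Powell-type kernel estimator with a $d$-dependent VC index and envelope $F(u,\vx')=C\sqrt d\,\|\vx'\|$, together with the increasing-dimension Bahadur representation for $\hat{\beta}$. Once that rate is available, factors $(a)$ and $(b)$ are routine, relying only on the increasing-dimension uniform rates already recorded in Lemma~\ref{lem: uniform rate2} and the asymptotic expansion underlying Lemma~\ref{prop: UAL'}.
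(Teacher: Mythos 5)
Your proof is correct and follows essentially the same route as the paper: it reruns the three-factor decomposition of Lemma \ref{lem: covariance} with the $d$-dependent uniform rates of Lemma \ref{lem: uniform rate2} and the operator-norm bound of Lemma \ref{lem: Jacobian'}, correctly identifying the $\hat{J}(\hat{\tau}_{\vx})$ term as the source of the $d^{3/2}$ contribution. The only extra ingredient, concentration of the empirical Gram matrix in operator norm, is handled in the paper via Rudelson's inequality (giving $O_P(\sqrt{d(\log d)/n})$) where you invoke matrix Bernstein; both yield a lower-order term, so the difference is immaterial.
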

\begin{proof}
	The proof is analogous to the proof of Lemma \ref{lem: covariance}, given that $\vx_{\ell}/\sqrt{d} \le C_2$ and we added normalization by $\sqrt{d}$ in the definition of $\psi_{\vx}$. The only missing part is a bound on
	\[
	\left \| \frac{1}{n} \sum_{i=1}^{n} \vX_i \vX_i^{T} - \E[\vX\vX^T] \right \|_{\op},
	\]
	but Rudelson's inequality yields that the above term is $O_{P}(\sqrt{d(\log d)/n})$; cf. \cite{rudelson1999}. 
\end{proof}

We are now in position to prove Theorem \ref{boot'}.
\begin{proof}[Proof of Theorem \ref{boot'}]
	
	%\textcolor{red}{Edit the following text.}
	%We emphasize that the bootstrap quantities we use is intrinsically the same as the fixed dimension case except for a $\sqrt{d}$ normalization (see the discussion after Theorem \ref{boot'}), though we use a new $\Psi$ in our increasing dimensional Gaussian approximation result. To prove Theorem \ref{boot'}, we first try to ``bridge the gap" between the newly-defined $\Psi$ and ``old" bootstrap quantity $\hat{\Psi}$ as following,
	Observe that
	\begin{equation}
	\begin{split}
	&\sup_{b\in \R^M}\left|\P_{|\cD_n}\left(n^{-1/2}{\textstyle\sum}_{i=1}^n\hat{A}\hat{\Psi}_i\le b\right)-\P(AG\le b)\right|\\
	&\le \sup_{b\in \R^M}\left|\P_{|\cD_n}\left(n^{-1/2}{\textstyle\sum}_{i=1}^n\hat{A}\hat{\Psi}_i\le b\right)-\P(\check{A}\check{G}\le b)\right| \\
	&\quad +\sup_{b\in \R^M}\left|\P(\check{A}\check{G}\le b)-\P(AG\le b) \right|,
	\end{split}
	\label{eq: decomp2}
	\end{equation}
	where $\check{G}\sim N(0,\check{\Sigma})$.
	The first term on the right hand side of (\ref{eq: decomp2}) is bounded by
	\[
	\begin{split}
	&\underbrace{\sup_{b\in \mathbb{R}^M}\left|\mathbb{P}_{U}\left(n^{-1/2}{\textstyle\sum}_{i=1}^n\hat{A}\hat{\Psi}_i\le b \right)-\mathbb{P}_{U}(\hat{A}\hat{G}\le b )\right|}_{\rom{1}}\\
	& + \underbrace{\sup_{b\in \mathbb{R}^M}\left|\mathbb{P}_{U}(\hat{A}\hat{G}\le b)-\P_{|\cD_n}(\check{A}\hat{G}\le b) \right|}_{II} + \underbrace{ \sup_{b\in \mathbb{R}^M}\left|\mathbb{P}_{U}(\check{A}\hat{G}\le b)-\mathbb{P}(\check{A}\check{G}\le b)\right|}_{III},
	\end{split}
	\]
	where $\hat{G} \sim N(0,\hat{\Sigma})$ conditionally on $\cD_{n}$. 
	%we define $\hat{G:=n^{-1/2}\sum_{i=1}^ng_i$ with i.i.d.\ $g_i \mathop{\sim} N\left(0, \mathbb{E}_{U}\left[\hat{\Psi}_i\hat{\Psi}_{i}^{T} \right]\right) $. 
	For $I$, we can apply Proposition 2.1 in \cite{chernozhukov2017central} conditionally on $\cD_n$.
	Similarly to the last part of the proof of Theorem \ref{thm: boot},  we can show that $I = o_{P}(1)$ if 
	\[
	\frac{d\log^7\left(Mn\right)}{nh} \to 0,
	\]
	which is satisfied under our assumption.
	We can analyze $II$ and $III$ as in the proof of Theorem \ref{thm: boot} and show that $II \vee III = o_{P}(1)$ if $n^{-1/2}h^{-3/2}d(d^{1/2}\vee h^{-1})(\sqrt{\log n})\log^2 M=o(1)$ and $ h\log^2 M=o(1)$, which is satisfied under our assumption. 
	%For $\rom{2}$ and $\rom{3}$, we proceed exactly as the fixed dimensional case but replace the convergence rate as the rate under increasing dimension. Finally, we need $n^{-1/2}h^{-3/2}d(d^{1/2}\vee h^{-1})(\sqrt{\log n})\log^2 M=o(1)$ and $ h\log^2 M=o(1)$ to ensure both $\rom{2}$ and $\rom{3}$ converge to $0$ in probability, which are satisfied under our conditions. \\
	
	Finally, in view of the Gaussian comparison inequality (Lemma \ref{lem: Gaussian comparison}), we see that the second term on the right hand side of (\ref{eq: decomp2}) is $o(1)$ if $\| \check{A} \check{\Sigma} \check{A}^{T} - A\Sigma A^{T}\|_{\infty}\log^2 M = o(1)$. It is not difficult to see that 
	\[
	\| \check{A} \check{\Sigma} \check{A}^{T} - A\Sigma A^{T}\|_{\infty}
	=O\left(\sup_{\vx_1,\vx_2 \in \mathcal{X}_0} |\mathbb{E}[\xi_{\vx_1}\xi_{\vx_2}-\check{\psi}_{\vx_1}\check{\psi}_{\vx_2}]|\right)
	=O(h) = o(1/\log^2 M).
	\]
	This completes the proof.
\end{proof}

\section{Additional simulation results}
\subsection{Nonparametric bootstrap pointwise confidence intervals}\label{sec : np bootstrap simulation}
In this section, we present simulation results for the nonparametric bootstrap. Due to the heavy computational burden of the nonparametric bootstrap, we only consider pointwise confidence intervals in the simulation.  We consider the $lmNormal$, $lmLognormal$ and $Nonlinear$ models as in Section \ref{sec: pCI simu}, together with the same subsample sizes, $n=500$, $1000$ and $2000$, and repetition number $s= 500$. The results are presented in Tables \ref{tab:point normal nonparametric}--\ref{tab:point nonlinear nonparametric}.
\begin{table}[h!]
	\centering
	\caption{Nonparametric bootstrap pointwise confidence intervals for \emph{lmNormal} model.}
	\label{tab:point normal nonparametric}
	{\small
		%\resizebox{\textwidth}{!}{
		\begin{tabularx}{\textwidth}{ Z Z Z Z Z Z Z Z}
			\toprule
			\multicolumn{1}{c}{\multirow{2}{*}{Design point}} & \multicolumn{1}{c}{\multirow{2}{*}{Sample size}} & \multicolumn{2}{c}{Coverage probability} & \multicolumn{2}{c}{Median length} & \multicolumn{2}{c}{Interquartile range} \\
			\cmidrule(lrr){3-4} \cmidrule(lrr){5-6} \cmidrule(lrr){7-8} 
			
			\multicolumn{1}{c}{}                              & \multicolumn{1}{c}{}                             & 95\%                & 99\%               & 95\%            & 99\%   & 95\%            & 99\%                      \\
			\midrule
			\multirow{3}{*}{$X_1$=0.3}                            & $n=500$                                            & 92.4\%              & \multicolumn{1}{c}{97.6\%}              & 0.80            & 1.14       & 0.38            & 0.52                  \\
			& $n=~1000$                                           & 93.2\%                & \multicolumn{1}{c}{98.8\%}               & 0.69            & 0.95     & 0.29          & 0.39              \\
			& $n=~2000$                                           & 92\%                & \multicolumn{1}{c}{99.2\%}             & 0.57            & 0.79    & 0.25            & 0.33            \\
			\midrule
			\multirow{3}{*}{$X_1$=0.5}                            & $n=500$                                            & 92.4\%              & \multicolumn{1}{c}{98.4\%}              & 0.91            & 1.25   & 0.40            & 0.57                        \\
			& $n=~1000$                                           & 93.4\%                & \multicolumn{1}{c}{98.2\%}               & 0.76            & 1.05     & 0.30            & 0.44                     \\
			& $n=~2000$                                           & 93.4\%                & \multicolumn{1}{c}{98\%}             & 0.64            & 0.88       & 0.22            & 0.32       
			\\
			\midrule
			\multirow{3}{*}{$X_1$=0.7}                            & $n=500$                                            & 92.4\%              & \multicolumn{1}{c}{98.6\%}              & 1.10            & 1.57   & 0.54            & 0.80                        \\
			& $n=~1000$                                           & 91.4\%                & \multicolumn{1}{c}{97.6\%}               & 0.91            & 1.26  & 0.40            & 0.58                        \\
			& $n=~2000$                                           & 93.8\%                & \multicolumn{1}{c}{97.6\%}             & 0.79            & 1.07      & 0.27           & 0.40         
			\\
			\bottomrule   
		\end{tabularx}	
	}
\end{table}

\begin{table}[h!]
	\centering
	\caption{Nonparametric bootstrap pointwise confidence intervals for \emph{lmLognormal} model.}
	\label{tab:point lognormal nonparametric}
	{\small
		%\resizebox{\textwidth}{!}{
		\begin{tabularx}{\textwidth}{ Z Z Z Z Z Z Z Z}
			\toprule
			\multicolumn{1}{c}{\multirow{2}{*}{Design point}} & \multicolumn{1}{c}{\multirow{2}{*}{Sample size}} & \multicolumn{2}{c}{Coverage probability} & \multicolumn{2}{c}{Median length}  & \multicolumn{2}{c}{Interquartile range}\\
			\cmidrule(lrr){3-4} \cmidrule(lrr){5-6}  \cmidrule(lrr){7-8}
			
			\multicolumn{1}{c}{}                              & \multicolumn{1}{c}{}                             & 95\%                & 99\%               & 95\%            & 99\%         & 95\%            & 99\%              \\
			\midrule
			\multirow{3}{*}{$X_1$=0.3}                            & $n=500$                                            & 87.8\%              & \multicolumn{1}{c}{95.8\%}              & 2.85            & 4.31     & 2.70            & 4.06                   \\
			& $n=~1000$                                           & 86.2\%                & \multicolumn{1}{c}{94.4\%}               & 1.86            & 3.27        & 2.05            & 3.61             \\
			& $n=~2000$                                           & 89.2\%                & \multicolumn{1}{c}{95.4\%}             & 1.47           & 2.69       & 2.07            & 3.36      \\
			\midrule
			\multirow{3}{*}{$X_1$=0.5}                            & $n=500$                                            & 85\%              & \multicolumn{1}{c}{95.2\%}              & 3.28            & 5.05     & 2.86            & 4.14                       \\
			& $n=~1000$                                           & 88\%                & \multicolumn{1}{c}{95.8\%}               & 2.03           & 3.57     & 2.39           & 3.99                      \\
			& $n=~2000$                                           & 84.4\%                & \multicolumn{1}{c}{92.6\%}             & 1.55           & 2.76       & 1.99           & 3.49        
			\\
			\midrule
			\multirow{3}{*}{$X_1$=0.7}                            & $n=500$                                            & 82\%              & \multicolumn{1}{c}{92.8\%}              & 4.51            & 6.64      & 4.27            & 6.69                      \\
			& $n=~1000$                                           & 86.2\%                & \multicolumn{1}{c}{96\%}               & 2.76            & 5.39        & 3.10            & 4.43                   \\
			& $n=~2000$                                           & 83.6\%                & \multicolumn{1}{c}{94.4\%}             & 1.75           & 3.13        & 2.07            & 3.95        
			\\
			\bottomrule   
		\end{tabularx}	
	}
\end{table}

\begin{table}[h!]
	\centering
	\caption{Nonparametric bootstrap pointwise confidence intervals for \emph{Nonlinear} model.}
	\label{tab:point nonlinear nonparametric}
	{\small
		%\resizebox{\textwidth}{!}{
		\begin{tabularx}{\textwidth}{ Z Z Z Z Z Z Z Z}
			\toprule
			\multicolumn{1}{c}{\multirow{2}{*}{Design point}} & \multicolumn{1}{c}{\multirow{2}{*}{Sample size}} & \multicolumn{2}{c}{Coverage probability} & \multicolumn{2}{c}{Median length} & \multicolumn{2}{c}{Interquartile range} \\
			\cmidrule(lrr){3-4} \cmidrule(lrr){5-6}  \cmidrule(lrr){7-8} 
			
			\multicolumn{1}{c}{}                              & \multicolumn{1}{c}{}                             & 95\%                & 99\%               & 95\%            & 99\%        & 95\%            & 99\%               \\
			\midrule
			\multirow{3}{*}{$X_1$=0.7}                            & $n=500$                                            & 94.8\%              & \multicolumn{1}{c}{99\%}              & 1.05            & 1.62      & 1.55           & 2.62                      \\
            & $n=~1000$                                           & 92.2\%                & \multicolumn{1}{c}{98.4\%}               & 0.78            & 1.27   & 1.01          & 1.91                        \\
            & $n=~2000$                                           & 94.8\%                & \multicolumn{1}{c}{98.8\%}             & 0.67           & 1.08     & 0.92           & 1.32           
            \\
			\midrule
			\multirow{3}{*}{$X_1$=0.9}                            & $n=500$                                            & 94\%              & \multicolumn{1}{c}{99.4\%}              & 1.11          & 1.59     & 1.25            & 1.77                    \\
			& $n=~1000$                                           & 94.0\%                & \multicolumn{1}{c}{98\%}               & 0.84          & 1.42   & 1.18            & 1.57                     \\
			& $n=~2000$                                           & 94.8\%                & \multicolumn{1}{c}{98\%}             & 0.60           & 1.04       & 0.86           & 1.25    
			\\
			\midrule
			\multirow{3}{*}{$X_1$=1.1}                            & $n=500$                                            & 94\%              & \multicolumn{1}{c}{99\%}              & 0.80            & 1.49      & 1.05           & 1.17                      \\
			& $n=~1000$                                           & 95.2\%                & \multicolumn{1}{c}{99.2\%}               & 0.56            & 1.08   & 0.80          & 1.14                        \\
			& $n=~2000$                                           & 92.8\%                & \multicolumn{1}{c}{97.8\%}             & 0.36           & 0.60     & 0.39           & 0.74           
			\\
			\bottomrule   
		\end{tabularx}	
	}
\end{table}
From the simulation results, the nonparametric bootstrap confidence intervals achieve close to nominal coverage probabilities under large sample sizes for the \emph{lmNormal} and \emph{Nonlinear} models.  
For the \emph{lmLognormal} model, the nonparametric bootstrap confidence intervals have lower coverage probabilities than the nominal level.
%tend to be ``over-confident" in finite samples, i.e., they result in coverage probabilities lower than the nominal level. 
This may be due to the slow convergence rate of the bootstrap approximation to the sampling distribution under such a data generating process.  Compared with the pivotal bootstrap confidence intervals in the other two models, the nonparametric bootstrap provides shorter and more stable confidence intervals, i.e., less variable interval lengths, in the \emph{lmNormal} model while the pivotal bootstrap performs better in the \emph{Nonlinear} model.

To further demonstrate the computational advantage of the pivotal bootstrap over the nonparametric bootstrap, we report the average running time of these two bootstraps in the \emph{Nonlinear} model with design point $X_1=0.9$ (results in other scenarios are similar). The simulation results are obtained in the R environment with 28 Intel Xeon processors and 240 Gbytes RAM over Red Hat OpenStack Platform. We measure the average running time in seconds and report the results in Table \ref{tab: run time}. From the table, we can see that  the pivotal bootstrap requires substantially less computational time than the nonparametric bootstrap as predicted in Remark \ref{rem: bootstrap comparison} in the main text. 

\begin{table}[h!]
	\centering
	\caption{Running time comparison between the pivotal and  nonparametric bootstraps.}
	\label{tab: run time}
	{\small
		%\resizebox{\textwidth}{!}{
		\begin{tabularx}{\textwidth}{ Z Z Z Z }
			\toprule
			\multicolumn{1}{c}{\multirow{1}{*}{Design point}} & \multicolumn{1}{c}{\multirow{1}{*}{Sample size}} & \multicolumn{1}{c}{Pivotal bootstrap} & \multicolumn{1}{c}{Nonparametric bootstrap}  \\
			\midrule
			\multirow{3}{*}{$X_1$=0.9}                            & $n=500$                                            & 1.02              & \multicolumn{1}{c}{22.23}                               \\
			& $n=~1000$                                           & 1.43                & \multicolumn{1}{c}{33.79}                                    \\
			& $n=~2000$                                           & 2.27               & \multicolumn{1}{c}{58.08}            
			\\
			\bottomrule   
		\end{tabularx}	
	}
\end{table}

\subsection{Mean squared error comparison with existing modal estimators}\label{sec: MSE}
In the following, we present the mean squared error of our modal estimator $\hat{m}_{\vx}$, which is defined by
\[ MSE(\hat{m}_{\vx}):= s^{-1}\sum_{i=1}^s (\hat{m}_{\vx}^{(i)}-m_{\vx})^2,\]
where $m_{\vx}$ is the true conditional mode and $\hat{m}_{\vx}^{(i)}$ is our modal estimator in the $i$-th repetition. We compare our method with existing methods by \cite{ohta2018quantile}, \cite{kemp2012regression} and \cite{yao2014new}. We consider the $lmNormal$, $lmLognormal$ and $Nonlinear$ models as in Section \ref{sec: pCI simu}. The same subsample sizes, $n=500$, $1000$ and $2000$, and repetition number $s= 500$ are considered. The results are presented in Tables \ref{tab: normal MSE}--\ref{tab: nonlinear MSE}. The KS-YL estimator refers to the linear modal estimator studied by \cite{kemp2012regression} and \cite{yao2014new}.

\begin{table}[h!]
	\centering
	\caption{Mean squared error comparison: \emph{lmNormal}.}
	\label{tab: normal MSE}
	\resizebox{\textwidth}{!}{
		\begin{tabularx}{\textwidth}{ Z Z Z Z Z Z Z Z }
			\toprule
			\multicolumn{1}{c}{\multirow{1}{*}{Design point}} & \multicolumn{1}{c}{\multirow{1}{*}{Sample size}} & \multicolumn{1}{c}{Our estimator } & \multicolumn{1}{c}{\cite{ohta2018quantile}} & \multicolumn{1}{c}{KS-YL estimator} \\
			\midrule
			\multirow{3}{*}{$X_1$=0.3}                            & $n=500$                                            & 0.020             & \multicolumn{1}{c}{0.063}              &  0.207                   \\
			& $n=1000$                                           & 0.018               & \multicolumn{1}{c}{0.043}               & 0.208                    \\
			& $n=2000$                                           & 0.010                & \multicolumn{1}{c}{0.032}             & 0.275            \\
			\midrule
			\multirow{3}{*}{$X_1$=0.5}                            & $n=500$                                            & 0.026             & \multicolumn{1}{c}{0.091}              & 0.712                  \\
			& $n=1000$                                           & 0.017                & \multicolumn{1}{c}{0.063}               & 0.713                    \\
			& $n=2000$                                           & 0.012                & \multicolumn{1}{c}{0.047}             & 0.928          
			\\
			\midrule
			\multirow{3}{*}{$X_1$=0.7}                            & $n=500$                                            & 0.109             & \multicolumn{1}{c}{0.167}              & 1.559            \\
			& $n=1000$                                           & 0.026                & \multicolumn{1}{c}{0.128}               & 1.840                  \\
			& $n=2000$                                           & 0.024                & \multicolumn{1}{c}{0.103}             & 1.954           
			\\
			\bottomrule   
		\end{tabularx}	
	}
\end{table}

\begin{table}[h!]
	\centering
	\caption{Mean squared error comparison: \emph{lmLognormal}.}
	\label{tab: lognormal MSE}
	\resizebox{\textwidth}{!}{
		\begin{tabularx}{\textwidth}{ Z Z Z Z Z Z Z Z }
			\toprule
			\multicolumn{1}{c}{\multirow{1}{*}{Design point}} & \multicolumn{1}{c}{\multirow{1}{*}{Sample size}} & \multicolumn{1}{c}{Our estimator } & \multicolumn{1}{c}{\cite{ohta2018quantile}} & \multicolumn{1}{c}{KS-YL estimator} \\
			\midrule
			\multirow{3}{*}{$X_1$=0.3}                            & $n=500$                                            & 0.286             & \multicolumn{1}{c}{0.329}              &  0.146                   \\
			& $n=1000$                                           & 0.162               & \multicolumn{1}{c}{0.296}               & 0.124                    \\
			& $n=2000$                                           & 0.118               & \multicolumn{1}{c}{0.259}             & 0.066            \\
			\midrule
			\multirow{3}{*}{$X_1$=0.5}                            & $n=500$                                            & 0.421             & \multicolumn{1}{c}{0.508}              & 0.204                 \\
			& $n=1000$                                           & 0.219                & \multicolumn{1}{c}{0.466}               & 0.160                   \\
			& $n=2000$                                           & 0.151                & \multicolumn{1}{c}{0.410}             & 0.094         
			\\
			\midrule
			\multirow{3}{*}{$X_1$=0.7}                            & $n=500$                                            & 0.845             & \multicolumn{1}{c}{0.774}              & 0.407           \\
			& $n=1000$                                           & 0.491               & \multicolumn{1}{c}{0.657}               & 0.269                 \\
			& $n=2000$                                           & 0.217               & \multicolumn{1}{c}{0.605}             & 0.224          
			\\
			\bottomrule   
		\end{tabularx}	
	}
\end{table}

\begin{table}[h!]
	\centering
	\caption{Mean squared error comparison: \emph{Nonlinear}.}
	\label{tab: nonlinear MSE}
	\resizebox{\textwidth}{!}{
		\begin{tabularx}{\textwidth}{ Z Z Z Z Z Z Z Z }
			\toprule
			\multicolumn{1}{c}{\multirow{1}{*}{Design point}} & \multicolumn{1}{c}{\multirow{1}{*}{Sample size}} & \multicolumn{1}{c}{Our estimator } & \multicolumn{1}{c}{\cite{ohta2018quantile}} & \multicolumn{1}{c}{KS-YL estimator} \\
			\midrule
			\multirow{3}{*}{$X_1$=0.3}                            & $n=500$                                            & 0.0090             & \multicolumn{1}{c}{0.0023}              &  0.051                   \\
			& $n=1000$                                           & 0.0071              & \multicolumn{1}{c}{0.00082}               & 0.041                   \\
			& $n=2000$                                           & 0.0044                & \multicolumn{1}{c}{0.00045}             & 0.031            \\
			\midrule
			\multirow{3}{*}{$X_1$=0.5}                            & $n=500$                                            & 0.016             & \multicolumn{1}{c}{0.0052}              & 0.075                  \\
			& $n=1000$                                           & 0.010                & \multicolumn{1}{c}{0.0046}               & 0.063                    \\
			& $n=2000$                                           & 0.0086               & \multicolumn{1}{c}{0.0043}             & 0.049          
			\\
			\midrule
			\multirow{3}{*}{$X_1$=0.7}                            & $n=500$                                            & 0.026             & \multicolumn{1}{c}{0.025}              & 0.074            \\
			& $n=1000$                                           & 0.017                & \multicolumn{1}{c}{0.023}               & 0.068                  \\
			& $n=2000$                                           & 0.012                & \multicolumn{1}{c}{0.022}             & 0.052           
			\\
			\bottomrule   
		\end{tabularx}	
	}
\end{table}
From the simulation results, while no method dominates in all the three scenarios, the proposed modal estimator performs reasonably well uniformly in all the settings. In particular, our estimator and \cite{ohta2018quantile}'s estimator outperform the KS-YL linear modal estimator in the \emph{Nonlinear} model as expected.    

\subsection{Simulation results for the pivotal bootstrap testing}\label{sec: addtest}\label{sec: hp simu}
In this section, we consider testing significance of a covariate on the conditional mode. Suppose covariate $\vX=(1,X_1,X_2)$ where $X_1$ is continuous and $X_2$ is binary (0 or 1). We want to test the null hypothesis $H_0:$ $m(X_1,0)=m(X_1,1)$ versus the alternative hypothesis ${H}_{1}:$ $m(X_1,0)\ne m(X_1,1)$, where $m(x_1,x_2)$ is the conditional mode of $Y$ given $X_1=x_1$ and $X_2=x_2$. We will generate $\vX$ according to $X_1\sim \text{Unif}(0,1)$ and $X_2\sim \text{Binomial}(0.5)$. For the outcome $Y$, two generation schemes are considered: (1) $Y=1+3X_1+\xi$ and (2) $Y=1+3X_1+\alpha X_2+\xi$, where we take $\xi \sim N(0,1)$ and $\alpha \ne 0$ in both models. The corresponding mode functions are $m(\vX)=1+3X_1$ and $m(\vX)=1+3X_1+\alpha X_2$, respectively. Therefore, the two generation schemes correspond to $H_0$ being true and false respectively which allows us to evaluate both power and size of our bootstrap testing procedure. We will take $\alpha=0.8$ or $1$ in the simulation. In the current setup, the limiting Gaussian distribution given by Theorem \ref{thm: HDCLT} is one dimensional and the corresponding variance can be calculated explicitly based on the above setup. Therefore, an oracle test procedure can be constructed by using the quantiles of the corresponding limiting Gaussian distribution to define the test rejection region. We will compare the performance of our bootstrap testing with this benchmark oracle testing.

We conduct hypothesis testing of nominal level $0.05$ and $0.01$ for $X_1$ taking value at 0.3, 0.5 and 0.7. For each value of $X_1$, three different sample sizes from 500 to 2000 are considered. We report the empirical size and power of both bootstrap testing and oracle testing based on 500 simulations in Tables \ref{tab:hp} and \ref{tab:addhp0.8}.
\begin{table}[h!]
	\centering
	\caption{Size and power for  bootstrap testing and oracle testing ($\alpha=1$).}
	\label{tab:hp}
	{\small
		%	\resizebox{\textwidth}{!}{
		\begin{tabularx}{\textwidth}{ A A Z Z Z Z Z Z Z Z }
			\toprule
			\multicolumn{1}{c}{} & \multicolumn{1}{c}{} &  \multicolumn{4}{c}{Bootstrap testing} & \multicolumn{4}{c}{Oracle testing}\\ 
			\cmidrule(lr){3-6} \cmidrule(lr){7-10} 
			\multicolumn{1}{c}{Design Point}& \multicolumn{1}{c}{Sample size} & \multicolumn{2}{c}{Size} & \multicolumn{2}{c}{Power} & \multicolumn{2}{c}{Size} & \multicolumn{2}{c}{Power} \\
			\cmidrule(lr){3-4} \cmidrule(lr){5-6} \cmidrule(lr){7-8} \cmidrule(lr){9-10}
			
			\multicolumn{1}{c}{}                              & \multicolumn{1}{c}{}                             & \multicolumn{1}{c}{0.05}                & \multicolumn{1}{c}{0.01}               & \multicolumn{1}{c}{0.05}            & \multicolumn{1}{c}{0.01}  &   \multicolumn{1}{c}{0.05} &\multicolumn{1}{c}{0.01} &\multicolumn{1}{c}{0.05} &\multicolumn{1}{c}{0.01}        \\
			\midrule
			\multirow{3}{*}{$X_1=0.3$}                            & $n=500$                                            & 0            & \multicolumn{1}{c}{0}              & 0.622           & 0.348      & 0.012            & \multicolumn{1}{c}{0}              & 1           & 0.996                \\
			
			& $n=1000$                                           & 0.006               & \multicolumn{1}{c}{0.002}               & 0.814            & 0.628       & 0.022              & \multicolumn{1}{c}{0.004}               & 1            & 1               \\
			
			& $n=2000$                                           & 0                & \multicolumn{1}{c}{0}             & 0.932           & 0.874          & 0.032                & \multicolumn{1}{c}{0.002}             & 1           & 1        \\
			\midrule
			\multirow{3}{*}{$X_1=0.5$}                            & $n=500$                                            & 0.004            & \multicolumn{1}{c}{0}              & 0.55           & 0.322                 & 0.016            & \multicolumn{1}{c}{0.002}              & 0.994            & 0.984               \\
			& $n=1000$                                           & 0.004               & \multicolumn{1}{c}{0}               & 0.794            & 0.608            & 0.048                & \multicolumn{1}{c}{0.002}               & 1            & 0.996        \\
			& $n=2000$                                           & 0.004                & \multicolumn{1}{c}{0}             & 0.928           & 0.84        & 0.022                & \multicolumn{1}{c}{0.006}             & 1            & 1              \\
			\midrule
			\multirow{3}{*}{$X_1=0.7$}                            & $n=500$                                            & 0.002            & \multicolumn{1}{c}{0}              & 0.596           & 0.408               & 0.012            & \multicolumn{1}{c}{0.02}              & 0.998           & 0.994       \\
			& $n=1000$                                           & 0               & \multicolumn{1}{c}{0}               & 0.816            & 0.644           & 0.024               & \multicolumn{1}{c}{0.006}               & 0.998            & 0.996         \\
			& $n=2000$                                           & 0.004                & \multicolumn{1}{c}{0}             & 0.928           & 0.842         & 0.038                & \multicolumn{1}{c}{0.006}             & 1           & 1         \\
			\bottomrule   
		\end{tabularx}
		%	}
	}
\end{table}

\iftrue
\begin{table}[h!]
	\centering
	\caption{Power for  bootstrap testing and oracle testing with $\alpha=0.8$.}
	\label{tab:addhp0.8}
	\resizebox{\textwidth}{!}{
		\begin{tabularx}{\textwidth}{ Z Z Z Z Z Z }
			\toprule
			\multicolumn{1}{c}{\multirow{2}{*}{Design point}} & \multicolumn{1}{c}{\multirow{2}{*}{Sample size}} & \multicolumn{2}{c}{Bootstrap testing} & \multicolumn{2}{c}{Oracle testing}  \\
			\cmidrule(lrr){3-4} \cmidrule(lrr){5-6} 
			
			\multicolumn{1}{c}{}                              & \multicolumn{1}{c}{}                             & 0.05                & 0.01               & 0.05            & 0.01                     \\
			\midrule
			\multirow{3}{*}{$X_1=0.3$}                            & $n=500$                                            & 0.39            & \multicolumn{1}{c}{0.176}              & 0.978           & 0.894                   \\
			
			& $n=1000$                                           & 0.66               & \multicolumn{1}{c}{0.406}               & 0.992            & 0.98                   \\
			
			& $n=2000$                                           & 0.858                & \multicolumn{1}{c}{0.696}             & 1           & 1               \\
			\midrule
			\multirow{3}{*}{$X_1=0.5$}                            & $n=500$                                            & 0.392            & \multicolumn{1}{c}{0.18}              & 0.962          & 0.846                              \\
			& $n=1000$                                           & 0.628               & \multicolumn{1}{c}{0.396}               & 0.992            & 0.968                   \\
			& $n=2000$                                           & 0.874                & \multicolumn{1}{c}{0.706}             & 1          & 0.998                  \\
			\midrule
			\multirow{3}{*}{$X_1=0.7$}                            & $n=500$                                            & 0.416            & \multicolumn{1}{c}{0.21}              & 0.976           & 0.894                    \\
			& $n=1000$                                           & 0.66               & \multicolumn{1}{c}{0.432}               & 0.996            & 0.982                \\
			& $n=2000$                                           & 0.872                & \multicolumn{1}{c}{0.708}             & 1           & 0.998                \\
			\bottomrule   
		\end{tabularx}
	}
\end{table}
\fi

From the tables, the Type \rom{1} errors are well preserved for both tests at three design points while the bootstrap testing committed slightly fewer Type \rom{1} errors. We can see the decrease of power of both bootstrap testing and oracle testing under the same design point ($X_1$) and subsample size ($n$) as $\alpha$ gets smaller. For a fixed design point, the power of both tests approaches $1$ with the increasing subsample size which supports our theory. In particular, the good performance of the oracle testing justifies our normal approximation theory. The performance of the proposed bootstrap testing is inferior to the oracle testing under the same design point and subsample size. This may due to several reasons including the bootstrap approximation error and the bias in the estimation of the nuisance parameters. However, the performance of bootstrap testing is reasonable when the sample size is sufficiently large which agrees with our asymptotic theory.    

We also remark that we here test the significance of the covariates for the conditional mode by testing the change of the conditional mode due to the change of design points, instead of testing the corresponding coefficient in the quantile regression slope vector. This is due to the following observation. Although we assume a linear quantile model, it does not imply a direct modeling on the conditional mode function. That is, the conditional mode depends on the covariates in an implicit way under our quantile-based mode regression, i.e. under our modeling, the mode function is $m(\vx)=\vx^T\beta(\tau_{\vx})$ where the coordinates of $\beta(\tau_{\vx})$ are functions of $\vx$ that may have arbitrary forms (as long as they satisfy some natural restrictions resulted from quantile functions). 
It is possible that one coefficient of the $\beta(\tau_{\vx})$ is nonzero while the corresponding covariate does not contribute to the conditional mode function. 
This is in contrast to the linear modal regression where the mode function is assumed to be $m(\vx)=\vx^T\beta$ where $\beta$ is a constant vector (independent of $\vx$) and therefore testing the significance of the covariates is equivalent to testing the coefficients in $\beta$ being $0$ or not. 

\subsection{Pivotal bootstrap confidence intervals using oracle information}
\label{sec: oracle}
In this section, we provide simulation results of pivotal bootstrap inference using oracle model information, i.e., we estimate the nuisance parameters in the influence function based on the underlying true density or conditional quantile function. We reexamine the setups in Section \ref{sec: pCI simu} and the corresponding results for pointwise confidence intervals are presented in Tables \ref{tab:point normal oracle} to \ref{tab:point nonlinear oracle} and Table \ref{tab:oracle band} for approximate confidence bands. 
%\subsubsection{Pointwise confidence intervals}
\begin{table}[h!]
	\centering
	\caption{Oracle pointwise confidence intervals for \emph{lmNormal} model.}
	\label{tab:point normal oracle}
	{\small
		%\resizebox{\textwidth}{!}{
		\begin{tabularx}{\textwidth}{ Z Z Z Z Z Z Z Z}
			\toprule
			\multicolumn{1}{c}{\multirow{2}{*}{Design point}} & \multicolumn{1}{c}{\multirow{2}{*}{Sample size}} & \multicolumn{2}{c}{Coverage probability} & \multicolumn{2}{c}{Median length} & \multicolumn{2}{c}{Interquartile range}  \\
			\cmidrule(lrr){3-4} \cmidrule(lrr){5-6} \cmidrule(lrr){7-8}
			
			\multicolumn{1}{c}{}                              & \multicolumn{1}{c}{}                             & 95\%                & 99\%               & 95\%            & 99\%        & 95\%            & 99\%               \\
			\midrule
			\multirow{3}{*}{$X_1$=0.3}                            & $n=500$                                            & 95.4\%              & \multicolumn{1}{c}{99.6\%}              & 0.94            & 1.24    & 0.16            & 0.19                   \\
			& $n=~1000$                                           & 95\%                & \multicolumn{1}{c}{99.2\%}               & 0.79            & 1.02     & 0.13            & 0.14            \\
			& $n=~2000$                                           & 96.6\%                & \multicolumn{1}{c}{99\%}             & 0.64            & 0.85    & 0.08           & 0.10       \\
			\midrule
			\multirow{3}{*}{$X_1$=0.5}                            & $n=500$                                            & 95.2\%              & \multicolumn{1}{c}{99.2\%}              & 1.04            & 1.38    & 0.24            & 0.32                    \\
			& $n=~1000$                                           & 94.6\%                & \multicolumn{1}{c}{98.6\%}               & 0.86            & 1.14   & 0.15            & 0.20                    \\
			& $n=~2000$                                           & 95\%                & \multicolumn{1}{c}{99.2\%}             & 0.71            & 0.94      & 0.11            & 0.13     
			\\
			\midrule
			\multirow{3}{*}{$X_1$=0.7}                            & $n=500$                                            & 96.6\%              & \multicolumn{1}{c}{98.8\%}              & 1.29            & 1.70      & 0.37            & 0.41                 \\
			& $n=~1000$                                           & 96\%                & \multicolumn{1}{c}{99.4\%}               & 1.06            & 1.42       & 0.22            & 0.27                \\
			& $n=~2000$                                           & 95.6\%                & \multicolumn{1}{c}{99.6\%}             & 0.88            & 1.16     & 0.16            & 0.21       
			\\
			\bottomrule   
		\end{tabularx}	
	}
\end{table}

\begin{table}[h!]
	\centering
	\caption{Oracle pointwise confidence intervals for \emph{lmLognormal} model.}
	\label{tab:point lognormal oracle}
	{\small
		%\resizebox{\textwidth}{!}{
		\begin{tabularx}{\textwidth}{ Z Z Z Z Z Z Z Z}
			\toprule
			\multicolumn{1}{c}{\multirow{2}{*}{Design point}} & \multicolumn{1}{c}{\multirow{2}{*}{Sample size}} & \multicolumn{2}{c}{Coverage probability} & \multicolumn{2}{c}{Median length}  & \multicolumn{2}{c}{Interquartile range} \\
			\cmidrule(lrr){3-4} \cmidrule(lrr){5-6}  \cmidrule(lrr){7-8} 
			
			\multicolumn{1}{c}{}                              & \multicolumn{1}{c}{}                             & 95\%                & 99\%               & 95\%            & 99\%     & 95\%            & 99\%                 \\
			\midrule
			\multirow{3}{*}{$X_1$=0.3}                            & $n=500$                                            & 77\%              & \multicolumn{1}{c}{80.6\%}              & 1.44            & 1.86     & 0.91         & 1.20                   \\
			& $n=~1000$                                           & 94\%                & \multicolumn{1}{c}{96\%}               & 1.34           & 1.75    & 0.52            & 0.63              \\
			& $n=~2000$                                           & 96.4\%                & \multicolumn{1}{c}{99.8\%}             & 1.01            & 1.33     & 0.28            & 0.35       \\
			\midrule
			\multirow{3}{*}{$X_1$=0.5}                            & $n=500$                                            & 83.4\%              & \multicolumn{1}{c}{84.8\%}              & 2.07            & 2.67    & 1.31            & 1.77                    \\
			& $n=~1000$                                           & 96\%                & \multicolumn{1}{c}{97.8\%}               & 1.85            & 2.43    & 0.79         & 1.00                    \\
			& $n=~2000$                                           & 97.4\%                & \multicolumn{1}{c}{99.8\%}             & 1.52            & 2.02     & 0.54            & 0.69      
			\\
			\midrule
			\multirow{3}{*}{$X_1$=0.7}                            & $n=500$                                            & 76.6\%              & \multicolumn{1}{c}{81\%}              & 2.33           & 3.19       & 1.65           & 2.26                 \\
			& $n=~1000$                                           & 93.4\%                & \multicolumn{1}{c}{94.8\%}               & 2.25            & 3.03      & 1.36            & 1.61                   \\
			& $n=~2000$                                           & 96.2\%                & \multicolumn{1}{c}{98.6\%}             & 1.84           & 2.43    & 0.76            & 0.92       
			\\
			\bottomrule   
		\end{tabularx}	
	}
\end{table}

\begin{table}[h!]
	\centering
	\caption{Oracle pointwise confidence intervals for \emph{Nonlinear} model.}
	\label{tab:point nonlinear oracle}
	{\small
		%\resizebox{\textwidth}{!}{
		\begin{tabularx}{\textwidth}{ Z Z Z Z Z Z Z Z}
			\toprule
			\multicolumn{1}{c}{\multirow{2}{*}{Design point}} & \multicolumn{1}{c}{\multirow{2}{*}{Sample size}} & \multicolumn{2}{c}{Coverage probability} & \multicolumn{2}{c}{Median length}  & \multicolumn{2}{c}{Interquartile range}\\
			\cmidrule(lrr){3-4} \cmidrule(lrr){5-6}  \cmidrule(lrr){7-8} 
			
			\multicolumn{1}{c}{}                              & \multicolumn{1}{c}{}                             & 95\%                & 99\%               & 95\%            & 99\%         & 95\%            & 99\%             \\
			\midrule
			\multirow{3}{*}{$X_1$=0.7}                            & $n=500$                                            & 92.4\%              & \multicolumn{1}{c}{97.4\%}              & 0.58           & 0.73     & 0.48           & 0.47                 \\
	 	   & $n=~1000$                                           & 96\%                & \multicolumn{1}{c}{98.8\%}               & 0.45            & 0.59      & 0.45          & 0.49                 \\
	    	& $n=~2000$                                           & 97\%                & \multicolumn{1}{c}{99\%}             & 0.35           & 0.46        & 0.22           & 0.29   
		    \\
			\midrule
			\multirow{3}{*}{$X_1$=0.9}                            & $n=500$                                            & 94.4\%              & \multicolumn{1}{c}{97.2\%}              & 0.62            & 0.81   & 0.46           & 0.57                    \\
			& $n=~1000$                                           & 94.8\%                & \multicolumn{1}{c}{97.6\%}               & 0.50           & 0.66    & 0.24           & 0.32                  \\
			& $n=~2000$                                           & 96\%                & \multicolumn{1}{c}{98.8\%}             & 0.45            & 0.58        & 0.19           & 0.24 
			\\
			\midrule
			\multirow{3}{*}{$X_1$=1.1}                            & $n=500$                                            & 93.2\%              & \multicolumn{1}{c}{96.8\%}              & 0.53           & 0.70      & 0.18           & 0.26                 \\
			& $n=~1000$                                           & 94.8\%                & \multicolumn{1}{c}{97.6\%}               & 0.46            & 0.59     & 0.17           & 0.21                  \\
			& $n=~2000$                                           & 94\%                & \multicolumn{1}{c}{98\%}             & 0.37           & 0.48        & 0.10           & 0.14   
			\\
			\bottomrule   
		\end{tabularx}	
	}
\end{table}

\begin{table}[h!]
	\centering
	\caption{Oracle approximate confidence bands for \emph{lmNormal}, \emph{lmLognormal} and \emph{Nonlinear} models.}
	{\small
		%\resizebox{\textwidth}{!}{
		\begin{tabularx}{\textwidth}{@{} Z Z A A A A @{} }
			\toprule
			\multicolumn{1}{c}{\multirow{2}{*}{Models}} & \multicolumn{1}{c}{\multirow{2}{*}{Sample size}} & \multicolumn{2}{c}{Coverage probability} & \multicolumn{2}{c}{Median length}  \\
			\cmidrule(lr){3-4} \cmidrule(lrr){5-6} 
			
			\multicolumn{1}{c}{}                              & \multicolumn{1}{c}{}                             & \multicolumn{1}{c}{95\%}                & \multicolumn{1}{c}{99\%}               & 95\%            & 99\%                     \\
			\midrule
			\multirow{3}{*}{lmNormal}                            & $n=500$                                            & 94.8\%              & \multicolumn{1}{c}{98.6\%}              & 1.14            & 1.48                 \\
			& $n=1000$                                           & 94.6\%                & \multicolumn{1}{c}{98.8\%}               & 0.97           & 1.22              \\
			& $n=2000$                                           & 93.4\%                & \multicolumn{1}{c}{98.8\%}             & 0.79           & 1.02        \\
			\midrule
			\multirow{3}{*}{lmLognormal}                            & $n=500$                                            & 82.6\%              & \multicolumn{1}{c}{84.2\%}              & 2.39           & 3.06  \\
			& $n=1000$                                           & 95.2\%                & \multicolumn{1}{c}{97.1\%}               & 2.03           & 2.65  \\
			& $n=2000$                                           & 97.6\%                & \multicolumn{1}{c}{99.4\%}             & 1.73            & 2.18           \\
			\midrule
			\multirow{3}{*}{Nonlinear}                            & $n=500$                                            & 94.6\%              & \multicolumn{1}{c}{97.6\%}              & 1.12           & 1.23           \\
			& $n=1000$                                           & 96\%                & \multicolumn{1}{c}{97.8\%}               & 0.82         & 0.98            \\
			& $n=2000$                                           & 94.2\%                & \multicolumn{1}{c}{98\%}             & 0.49          & 0.59        \\
			\bottomrule   
		\end{tabularx}
		%}	
	}
	\label{tab:oracle band}
\end{table}
From the simulation results, the oracle confidence intervals achieve close to nominal level coverage probabilities when the sample size is sufficiently large, which supports our theoretical results.

\section{Additional discussion: model misspecification and quantile crossing}
In theory, the quantile crossing problem does not happen since we assume that
\[
s_{\vx}(\tau) = Q_{\vx}'(\tau) =1/f(Q_{\vx}(\tau) \mid \vx ) \ge 1/c_1 > 0, \ \tau \in [\epsilon,1-\epsilon].
\]
See  Condition (v) in Assumption 1. This implies the quantile function is strictly increasing in the quantile index $\tau \in [\epsilon,1-\epsilon]$. Further, under our assumption, $\hat{s}_{\vx}(\tau) = \hat{Q}_{\vx}'(\tau)$ is uniformly consistent over $\tau \in [\epsilon,1-\epsilon]$ and $\vx \in \mathcal{X}_0$ (see Lemma 7 in Appendix), so that the estimated conditional quantile function $\tau \mapsto \hat{Q}_{\vx}(\tau)$ is strictly increasing on $[\epsilon,1-\epsilon]$ with probability approaching one.

Of course,  the quantile crossing problem may happen in the finite sample  even if the model is correctly specified (Koenker 2005). Several methods have been proposed to deal with the quantile crossing problem in the literature, cf. He (1997), Chernozhukov et al. (2009) and Bondell et al. (2010). We can apply any of such monotonization methods to the estimated conditional quantile function to prevent quantile crossing; our theoretical results continue to hold for such a monotonized conditional quantile estimate, as the monotonized estimate agrees with the vanilla conditional quantile estimate with probability approaching one. 

Under model misspecification, the fitted linear quantile function can be interpreted as the best linear approximation to the quantile function under some quadratic discrepancy measure cf. section 2.9 in Koenker (2005) and Angrist et al. (2006). Thus, our $\tau_x$ can be interpreted as a minimization point of the derivative of the best linear approximation. 
%Therefore, our modal estimator can be interpreted in such sense accordingly.
Significant number of quantile crossings also implies the misspecification of the model (Koenker (2005)). If this happens, we suggest using the series approximation approach of our method as discussed in Section 5.

\section{More implementation details of Section~4.1}
In this section, we provide more implementation details of estimating $f^{(2)}(m(\vx)~|~\vx)$ in the simulation.  % In general, we use the kernel method as in remark 9 of \cite{ohta2018quantile}  (we plug in $\hat{Q}_{\vx}(\hat{\tau}_{\vx})$ and $\hat{s}_{\vx}(\hat{\tau}_{\vx})$ for $m(\vx)$ and $s_{\vx}(\tau_{\vx})$, respectively).  
We use the following kernel estimator for $f^{(2)}(m(\vx)~|~\vx)$ (recall that $\vX=(1,X_1)$):
\[
\hat{f}^{(2)}(\hat{m}(\vx)~|~\vx)=\frac{(nb_{Y}^3b_{X_1})^{-1}\textstyle\sum_{i=1}^n K_1''((\hat{m}(\vx)-Y_i)/b_Y) K_2((x_1-X_{i1})/b_{X_1})  }{(nb_{X_1})^{-1}\textstyle\sum_{i=1}^n K_2((x_1-X_{i1})/b_{X_1})},
\]
where $X_{i1}$ is the observed value of $X_1$ in the $i$-th data point. We use the Gaussian kernel for $K_1$
and the Epanechnikov kernel for $K_2$ in the simulation.
We use the following bandwidths for covariate $X_1$ and $Y$ in our simulation, respectively:
\begin{equation}
b_{X_1}=\omega \cdot n^{-1/5}\hat{\sigma}_X \quad \text{ and } \quad b_{Y}=\omega \cdot n^{-1/9}\hat{\sigma}_Y,
\end{equation}
where $\hat{\sigma}_{\cdot}$ is the corresponding sample standard deviation and $\omega>0$. To select $\omega$ in a data-driven approach, we propose the following procedure.  The procedure is motivated by the $L_{\infty}$-based bandwidth selector in \cite{bissantz2007non}. 
\begin{enumerate}
	\item[\textbf{Step 1}.] Choose a proper grid $G_1$ with $J$ values for $\omega$. 
	\item[\textbf{Step 2}.] Generate a dataset $\mathcal{D}_n$ of size $n$ and then compute $\hat{f}^{(2)}(\hat{m}(\vx)~|~\vx)$ by taking $\omega=G_1(j)$ ($1\le j \le J$) and denote the resulting estimator by $\hat{f}_j^{(2)}$. Compute $d_{j,j+1}:=\big|\hat{f}_{j+1}^{(2)}-\hat{f}_j^{(2)}\big|$ ($1\le j \le J-1$). Choose 
	$\omega^*:= \max\{ G_1(j): d_{j,j+1}\ge t\cdot d_{J-1,J}, 1\le j\le J-1 \}$ for some $t>1$. Repeat the computation of $\omega^*$ for $N$ times to get $\omega^*_1,\cdots,\omega^*_N$ and denote the mode of $\{\omega^*_1,\cdots,\omega^*_N\}$s by $\omega^*_{opt}$. 
	\item[\textbf{Step 3}.] Choose a subgrid $G_2\subset G_1$ centering at $\omega^*_{opt}$. Then we proceed as Step 2 and output the final selected $\omega$.
\end{enumerate} 
In fact, we may iterate the above procedure for more times by using a further subgrid based on the output of Step 3. However, we find the above three-step algorithm works reasonably well in our numerical experiments. In our simulation of pointwise confidence intervals, we take $G_1$ as a equally spaced grid on $[0.05,1.25]$ with $J=13$; $ 2\le t \le 4$ ($t=2$ for \emph{lmNormal} and $3\le t\le4$ for \emph{lmLogNormal} and \emph{Nonlinear});  $N=500$ and $G_2$ of length $7$.
To relieve the computational burden, for each model we only select $\omega$ once for a particular design point $\vx$ at sample size $n=2000$. The selected values of $\omega$ for pointwise confidence intervals are reported in Tables \ref{tab:omega1} and \ref{tab:omega2}.  For the simulation of confidence bands, we use a common $\omega$ for all the different design points in the considered models to reduce the computational burden and the corresponding values are reported in Table \ref{tab:omega3}. Based on our numerical experience, we recommend a slightly larger $\omega$ than the pointwisely selected $\omega$ for the confidence bands if a common $\omega$ is used. In practice, we may use pointwisely selected $\omega$ for different design points when constructing confidence bands.
 For the practical use, $\mathcal{D}_n$ in Step 2 may be taken as the bootstrap subsamples. 

\begin{table}[h!]
	\centering
	\caption{Values of $\omega$ selected for \emph{lmNormal} and \emph{lmLognormal} models}
	{\small
		%\resizebox{\textwidth}{!}{
		\begin{tabularx}{\textwidth}{@{} Z Z Z Z  @{} }
			\toprule
			\multicolumn{1}{c}{\multirow{2}{*}{Models}} & \multicolumn{1}{c}{\multirow{2}{*}{$X_1=0.3$}} & \multicolumn{1}{c}{\multirow{2}{*}{$X_1=0.5$}} & \multicolumn{1}{c}{\multirow{2}{*}{$X_1=0.7$}} \\

			\multicolumn{1}{c}{}                              & \multicolumn{1}{c}{}                             & \multicolumn{1}{c}{}               & \multicolumn{1}{c}{}                              \\
			\midrule
			\multirow{1}{*}{lmNormal}                                                                     &0.75             & 0.85              & 0.95                  \\
			%\midrule
			\multirow{1}{*}{lmLogormal}                                                                     &0.35             & 0.45              & 0.55                  \\
			\bottomrule   
		\end{tabularx}
		%}	
	}
	\label{tab:omega1}
\end{table}

\begin{table}[h!]
	\centering
	\caption{Values of $\omega$ selected for \emph{Nonlinear} model}
	{\small
		%\resizebox{\textwidth}{!}{
		\begin{tabularx}{\textwidth}{@{} Z Z Z Z  @{} }
			\toprule
			\multicolumn{1}{c}{\multirow{2}{*}{Models}} & \multicolumn{1}{c}{\multirow{2}{*}{$X_1=0.7$}} & \multicolumn{1}{c}{\multirow{2}{*}{$X_1=0.9$}} & \multicolumn{1}{c}{\multirow{2}{*}{$X_1=1.1$}} \\

			\multicolumn{1}{c}{}                              & \multicolumn{1}{c}{}                             & \multicolumn{1}{c}{}               & \multicolumn{1}{c}{}                              \\
			\midrule

			\multirow{1}{*}{Nonlinear}                                                                     &0.55             & 0.65              & 0.75                  \\
			\bottomrule   
		\end{tabularx}
		%}	
	}
	\label{tab:omega2}
\end{table}

\begin{table}[h!]
	\centering
	\caption{Values of $\omega$ selected for approximate confidence bands}
	{\small
		%\resizebox{\textwidth}{!}{
		\begin{tabularx}{\textwidth}{@{} Z Z Z Z  @{} }
			\toprule
			\multicolumn{1}{c}{\multirow{2}{*}{Models}} & \multicolumn{1}{c}{\multirow{2}{*}{lmNormal}} & \multicolumn{1}{c}{\multirow{2}{*}{lmLognormal}} & \multicolumn{1}{c}{\multirow{2}{*}{Nonlinear}} \\

			\multicolumn{1}{c}{}                              & \multicolumn{1}{c}{}                             & \multicolumn{1}{c}{}               & \multicolumn{1}{c}{}                              \\
			\midrule
			\multirow{1}{*}{$\omega$}                                                                     &1.00             & 0.60              & 0.80                  \\
			%\midrule
			\bottomrule   
		\end{tabularx}
		%}	
	}
	\label{tab:omega3}
\end{table}

\section{More details on the U.S. wage dataset}
The data are extracted from U.S. 1980 1\% metro sample from the
Integrated Public Use Microdata Series (IPUMS) website \citep{ruggles2020ipums}. We first collect data of black and white people aged 30 - 60 with at least kindergarten level of education (nursery school is excluded), with positive annual earnings in the year preceding the census. Individuals with missing values for age, education and earnings are also excluded from the sample. Then we randomly sample $10,000$ people from the single and married groups, respectively and combine the resulting $20,000$ data as the final U.S. wage dataset.

The wage variable (wage) is the log annual wage, calculated as the
log of the reported annual income from work in the previous year. 
The education variable (edu) corresponds to the highest grade of school
completed starting from $0$ which indicates the kindergarten level. For the marital status variable (marital\_status), 0 stands for "being single" and 1 stands for "being married". For the race variable (race), 1 corresponds to white people and 2 corresponds to black people. For the sex variable (sex), 1 corresponds to male and 2 correspond to female.
\end{appendices}

\bibliographystyle{apalike}
\bibliography{Bibs} 

\begin{thebibliography}{}

\bibitem[Aliprantis and Border, 2006]{aliprantis06}
Aliprantis, C.~D. and Border, K.~C. (2006).
\newblock {\em Infinite Dimensional Analysis: a Hitchhiker's Guide}.
\newblock Springer, Berlin; London.

\bibitem[Autor et~al., 2008]{autor2008trends}
Autor, D.~H., Katz, L.~F., and Kearney, M.~S. (2008).
\newblock Trends in us wage inequality: Revising the revisionists.
\newblock {\em The Review of Economics and Statistics}, 90(2):300--323.

\bibitem[Bamford et~al., 2008]{bamford2008revealing}
Bamford, S.~P., Rojas, A.~L., Nichol, R.~C., Miller, C.~J., Wasserman, L.,
  Genovese, C.~R., and Freeman, P.~E. (2008).
\newblock Revealing components of the galaxy population through non-parametric
  techniques.
\newblock {\em Monthly Notices of the Royal Astronomical Society},
  391(2):607--616.

\bibitem[Belloni et~al., 2019]{belloni2019conditional}
Belloni, A., Chernozhukov, V., Chetverikov, D., and Fernández-Val, I. (2019).
\newblock Conditional quantile processes based on series or many regressors.
\newblock {\em Journal of Econometrics}, 213(1):4 -- 29.

\bibitem[Belloni et~al., 2015]{belloni2015joe}
Belloni, A., Chernozhukov, V., Chetverikov, D., and Kato, K. (2015).
\newblock Some new asymptotic theory for least squares series: Pointwise and
  uniform results.
\newblock {\em Journal of Econometrics}, 186:345--366.

\bibitem[Bissantz et~al., 2007]{bissantz2007non}
Bissantz, N., D{\"u}mbgen, L., Holzmann, H., and Munk, A. (2007).
\newblock Non-parametric confidence bands in deconvolution density estimation.
\newblock {\em Journal of the Royal Statistical Society: Series B (Statistical
  Methodology)}, 69(3):483--506.

\bibitem[Bound and Krueger, 1991]{bound1991extent}
Bound, J. and Krueger, A.~B. (1991).
\newblock The extent of measurement error in longitudinal earnings data: Do two
  wrongs make a right?
\newblock {\em Journal of Labor Economics}, 9(1):1--24.

\bibitem[Buchinsky, 1994]{buchinsky1994changes}
Buchinsky, M. (1994).
\newblock Changes in the {U.S.} wage structure 1963-1987: Application of
  quantile regression.
\newblock {\em Econometrica}, 62(2):405--458.

\bibitem[Chac\'{o}n, 2018]{chacon2018}
Chac\'{o}n, J. (2018).
\newblock The modal age of statistics.
\newblock arXiv:1807.02789.

\bibitem[Chen and Kato, 2020]{chen2019jackknife}
Chen, X. and Kato, K. (2020).
\newblock Jackknife multiplier bootstrap: finite sample approximations to the
  u-process supremum with applications.
\newblock {\em Probability Theory and Related Fields}, 176(3):1097--1163.

\bibitem[Chen, 2018]{chen2018modal}
Chen, Y.-C. (2018).
\newblock Modal regression using kernel density estimation: A review.
\newblock {\em Wiley Interdisciplinary Reviews: Computational Statistics},
  10(4):e1431.

\bibitem[Chen et~al., 2016]{chen2016nonparametric}
Chen, Y.-C., Genovese, C.~R., Tibshirani, R.~J., and Wasserman, L. (2016).
\newblock Nonparametric modal regression.
\newblock {\em The Annals of Statistics}, 44(2):489--514.

\bibitem[Cheng, 1995]{cheng1995mean}
Cheng, Y. (1995).
\newblock Mean shift, mode seeking, and clustering.
\newblock {\em IEEE Transactions on Pattern Analysis and Machine Intelligence},
  17(8):790--799.

\bibitem[Chernozhukov et~al., 2014]{chernozhukov2014gaussian}
Chernozhukov, V., Chetverikov, D., and Kato, K. (2014).
\newblock Gaussian approximation of suprema of empirical processes.
\newblock {\em The Annals of Statistics}, 42(4):1564--1597.

\bibitem[Chernozhukov et~al., 2016]{chernozhukov2016empirical}
Chernozhukov, V., Chetverikov, D., and Kato, K. (2016).
\newblock Empirical and multiplier bootstraps for suprema of empirical
  processes of increasing complexity, and related gaussian couplings.
\newblock {\em Stochastic Processes and their Applications},
  126(12):3632--3651.

\bibitem[Chernozhukov et~al., 2017a]{chernozhukov2017central}
Chernozhukov, V., Chetverikov, D., and Kato, K. (2017a).
\newblock Central limit theorems and bootstrap in high dimensions.
\newblock {\em The Annals of Probability}, 45(4):2309--2352.

\bibitem[Chernozhukov et~al., 2017b]{chernozhukov2017note}
Chernozhukov, V., Chetverikov, D., and Kato, K. (2017b).
\newblock Detailed proof of nazarov's inequality.
\newblock arXiv:1711.10696.

\bibitem[Chernozhukov et~al., 2009]{chernozhukov2009}
Chernozhukov, V., Hansen, C., and Jansson, M. (2009).
\newblock Finite sample inference for quantile regression models.
\newblock {\em Journal of Econometrics}, 152:93--103.

\bibitem[Deng and Zhang, 2017]{deng2017beyond}
Deng, H. and Zhang, C.-H. (2017).
\newblock Beyond gaussian approximation: Bootstrap for maxima of sums of
  independent random vectors.
\newblock {\em arXiv preprint arXiv:1705.09528}.

\bibitem[Einbeck and Tutz, 2006]{einbeck2006modelling}
Einbeck, J. and Tutz, G. (2006).
\newblock Modelling beyond regression functions: an application of multimodal
  regression to speed--flow data.
\newblock {\em Journal of the Royal Statistical Society: Series C (Applied
  Statistics)}, 55(4):461--475.

\bibitem[Feng et~al., 2020]{feng2020statistical}
Feng, Y., Fan, J., and Suykens, J.~A. (2020).
\newblock A statistical learning approach to modal regression.
\newblock {\em Journal of Machine Learning Research}, 21(2):1--35.

\bibitem[Gutenbrunner and Jureckov{\'a}, 1992]{gutenbrunner1992regression}
Gutenbrunner, C. and Jureckov{\'a}, J. (1992).
\newblock Regression rank scores and regression quantiles.
\newblock {\em The Annals of Statistics}, pages 305--330.

\bibitem[Hall, 1991]{hall1991}
Hall, P. (1991).
\newblock On convergence rates of suprema.
\newblock {\em Probability Theory and Related Fields}, 89(4):447--455.

\bibitem[Han and Wellner, 2019]{han2019convergence}
Han, Q. and Wellner, J.~A. (2019).
\newblock Convergence rates of least squares regression estimators with
  heavy-tailed errors.
\newblock {\em Annals of Statistics}, 47(4):2286--2319.

\bibitem[He, 2017]{he2017resampling}
He, X. (2017).
\newblock Resampling methods.
\newblock In {\em Handbook of quantile regression}, pages 7--19. Chapman and
  Hall/CRC.

\bibitem[He and Shao, 1996]{he1996general}
He, X. and Shao, Q.-M. (1996).
\newblock A general bahadur representation of m-estimators and its application
  to linear regression with nonstochastic designs.
\newblock {\em The Annals of Statistics}, 24(6):2608--2630.

\bibitem[Heckman et~al., 2001]{heckman2001molecular}
Heckman, D.~S., Geiser, D.~M., Eidell, B.~R., Stauffer, R.~L., Kardos, N.~L.,
  and Hedges, S.~B. (2001).
\newblock Molecular evidence for the early colonization of land by fungi and
  plants.
\newblock {\em Science}, 293(5532):1129--1133.

\bibitem[Hedges and Shah, 2003]{hedges2003comparison}
Hedges, S.~B. and Shah, P. (2003).
\newblock Comparison of mode estimation methods and application in molecular
  clock analysis.
\newblock {\em BMC Bioinformatics}, 4(1):31.

\bibitem[Ho et~al., 2017]{ho2017}
Ho, C., Damien, P., and Walker, S. (2017).
\newblock Bayesian mode regression using mixtures of triangular densities.
\newblock {\em Journal of Econometrics}, 197(2):273--283.

\bibitem[Hu and Schennach, 2008]{hu2008instrumental}
Hu, Y. and Schennach, S.~M. (2008).
\newblock Instrumental variable treatment of nonclassical measurement error
  models.
\newblock {\em Econometrica}, 76(1):195--216.

\bibitem[Kemp and Santos-Silva, 2012]{kemp2012regression}
Kemp, G.~C. and Santos-Silva, J. (2012).
\newblock Regression towards the mode.
\newblock {\em Journal of Econometrics}, 170(1):92--101.

\bibitem[Khardani and Yao, 2017]{khardani2017}
Khardani, S. and Yao, A. (2017).
\newblock Non linear parametric mode regression.
\newblock {\em Communications in Statistics-Theory and Methods},
  46(6):3006--3024.

\bibitem[Koenker, 2005]{koenker2005quantile}
Koenker, R. (2005).
\newblock {\em Quantile Regression}.
\newblock Econometric Society Monographs. Cambridge University Press.

\bibitem[Koenker, 2017]{koenker2017quantile}
Koenker, R. (2017).
\newblock Quantile regression: 40 years on.
\newblock {\em Annual Review of Economics}, 9:155--176.

\bibitem[Koenker and Bassett, 1978]{koenker1978regression}
Koenker, R. and Bassett, G. (1978).
\newblock Regression quantiles.
\newblock {\em Econometrica}, 46(1):33--50.

\bibitem[Krief, 2017]{krief2017semi}
Krief, J.~M. (2017).
\newblock Semi-linear mode regression.
\newblock {\em The Econometrics Journal}, 20(2):149--167.

\bibitem[Leadbetter et~al., 1983]{leadbetter1983}
Leadbetter, M., Lindgren, G., and Rootz{\'e}n, H. (1983).
\newblock {\em Extremes and Related Properties of Random Sequences and
  Processes}.
\newblock Springer.

\bibitem[Lee, 1989]{lee1989mode}
Lee, M.-J. (1989).
\newblock Mode regression.
\newblock {\em Journal of Econometrics}, 42(3):337--349.

\bibitem[Lee, 1993]{lee1993}
Lee, M.-J. (1993).
\newblock Quadratic mode regression.
\newblock {\em Journal of Econometrics}, 57(1-3):1--19.

\bibitem[Lee and Kim, 1998]{lee1998semiparametric}
Lee, M.-J. and Kim, H. (1998).
\newblock Semiparametric econometric estimators for a truncated regression
  model: A review with an extension.
\newblock {\em Statistica Neerlandica}, 52(2):200--225.

\bibitem[Manski, 1991]{manski1991regression}
Manski, C.~F. (1991).
\newblock Regression.
\newblock {\em Journal of Economic Literature}, 29(1):34--50.

\bibitem[Ota et~al., 2019]{ohta2018quantile}
Ota, H., Kato, K., and Hara, S. (2019).
\newblock Quantile regression approach to conditional mode estimation.
\newblock {\em Electronic Journal of Statistics}, 13(2):3120--3160.

\bibitem[Parzen et~al., 1994]{parzen1994}
Parzen, M.~I., Wei, L.~J., and Ying, Z. (1994).
\newblock A resampling method based on pivotal estimating equations.
\newblock {\em Biometrika}, 81:341--350.

\bibitem[Politis et~al., 1999]{politis1999}
Politis, D., Romano, J., and Wolf, M. (1999).
\newblock {\em Subsampling}.
\newblock Springer.

\bibitem[Powell, 1986]{powell1986censored}
Powell, J.~L. (1986).
\newblock Censored regression quantiles.
\newblock {\em Journal of Econometrics}, 32(1):143--155.

\bibitem[Raab and Steger, 1998]{raab1998balls}
Raab, M. and Steger, A. (1998).
\newblock "balls into bins" - a simple and tight analysis.
\newblock In {\em Proceedings of the Second International Workshop on
  Randomization and Approximation Techniques in Computer Science}, RANDOM '98,
  page 159–170, Berlin, Heidelberg. Springer-Verlag.

\bibitem[Rockafellar, 1970]{rockafellar1970convex}
Rockafellar, R.~T. (1970).
\newblock {\em Convex Analysis}.
\newblock Princeton Mathematical Series. Princeton University Press, Princeton,
  N. J.

\bibitem[Rudelson, 1999]{rudelson1999}
Rudelson, M. (1999).
\newblock Random vectors in the isotropic position.
\newblock {\em Journal of Functional Analysis}, 164:60--72.

\bibitem[Ruggles et~al., 2020]{ruggles2020ipums}
Ruggles, S., Flood, S., Goeken, R., Grover, J., Meyer, E., Pacas, J., and
  Sobek, M. (2020).
\newblock Ipums usa: Version 10.0 [dataset]. minneapolis, mn: Ipums; 2020.

\bibitem[Ruppert and Carroll, 1980]{ruppert1980trimmed}
Ruppert, D. and Carroll, R.~J. (1980).
\newblock Trimmed least squares estimation in the linear model.
\newblock {\em Journal of the American Statistical Association},
  75(372):828--838.

\bibitem[Sager and Thisted, 1982]{sager1982maximum}
Sager, T.~W. and Thisted, R.~A. (1982).
\newblock Maximum likelihood estimation of isotonic modal regression.
\newblock {\em Ann. Statist.}, 10(3):690--707.

\bibitem[Sasaki et~al., 2016]{sasaki2016}
Sasaki, H., Ono, Y., and Sugiyama, M. (2016).
\newblock Modal regression via direct log-density derivative estimation.
\newblock In {\em International Conference on Neural Information Processing},
  pages 108--116.

\bibitem[van~der Vaart, 2000]{van2000asymptotic}
van~der Vaart, A.~W. (2000).
\newblock {\em Asymptotic Statistics}, volume~3.
\newblock Cambridge University Press.

\bibitem[van~der Vaart and Wellner, 1996]{vaart1996weak}
van~der Vaart, A.~W. and Wellner, J.~A. (1996).
\newblock {\em Weak convergence and empirical processes: with applications to
  statistics}.
\newblock Springer.

\bibitem[Wang et~al., 2017]{wang2017regularized}
Wang, X., Chen, H., Cai, W., Shen, D., and Huang, H. (2017).
\newblock Regularized modal regression with applications in cognitive
  impairment prediction.
\newblock In {\em Advances in Neural Information Processing Systems 30}, pages
  1448--1458.

\bibitem[Western and Rosenfeld, 2011]{western2011unions}
Western, B. and Rosenfeld, J. (2011).
\newblock Unions, norms, and the rise in us wage inequality.
\newblock {\em American Sociological Review}, 76(4):513--537.

\bibitem[Williams, 2012]{williams2012using}
Williams, R. (2012).
\newblock Using the margins command to estimate and interpret adjusted
  predictions and marginal effects.
\newblock {\em The Stata Journal}, 12(2):308--331.

\bibitem[Yao and Li, 2014]{yao2014new}
Yao, W. and Li, L. (2014).
\newblock A new regression model: modal linear regression.
\newblock {\em Scandinavian Journal of Statistics}, 41(3):656--671.

\bibitem[Yao et~al., 2012]{yao2012}
Yao, W., Lindsay, B., and Li, R. (2012).
\newblock Local modal regression.
\newblock {\em Journal of Nonparametric Statistics}, 24(3):647--663.

\bibitem[Yao and Xiang, 2016]{yao2016nonparametric}
Yao, W. and Xiang, S. (2016).
\newblock Nonparametric and varying coefficient modal regression.
\newblock {\em arXiv preprint arXiv:1602.06609}.

\end{thebibliography}

\end{document}